% Template for the submission to:
%   The Annals of Probability           [aop]
%   The Annals of Applied Probability   [aap]
%   The Annals of Statistics            [aos] 
%   The Annals of Applied Statistics    [aoas]
%   Stochastic Systems                  [ssy]
%
%Author: In this template, the places where you need to add information
%        (or delete line) are indicated by {???}.  Mostly the information
%        required is obvious, but some explanations are given in lines starting
%Author:
%All other lines should be ignored.  After editing, there should be
%no instances of ??? after this line.

% use option [preprint] to remove info line at bottom
% journal options: aop,aap,aos,aoas,ssy
% natbib option: authoryear
\documentclass[aap,reqno,preprint]{imsart}
\pdfoutput=1
\setattribute{journal}{name}{}
%%%%%%%%%%%%%%%%%%%%%%%%%%%%%%%%%%%%%%%%%%%%%%%%%%%%%%%%
\usepackage{a4wide}% increases side margins

\usepackage{amsthm,amsmath}
%\RequirePackage[colorlinks,citecolor=blue,urlcolor=blue]{hyperref}

% provide arXiv number if available:
%\arxiv{arXiv:0000.0000}

\usepackage[numbers]{natbib}
\usepackage{appendix}

\numberwithin{equation}{section}
\allowdisplaybreaks[3]
\clubpenalty 10000
\widowpenalty 10000

%%%%%%%%%%%%%%%%%%%%%%%%%%%%%%%%%%%%%%%%%%%%%%%%%%%%%%%%
%% Theorem environment
\theoremstyle{plain} %default (text in italic)
\newtheorem{theorem}{Theorem}[section]
\newtheorem{lemma}[theorem]{Lemma}
\newtheorem{proposition}[theorem]{Proposition}
\newtheorem{corollary}[theorem]{Corollary}

\theoremstyle{definition}
\newtheorem{definition}[theorem]{Definition}

\newtheorem{example}[theorem]{Example}

\newtheorem{remark}{Remark}
\newtheorem{problem}{Problem}

\usepackage[utf8]{inputenc}
\usepackage{amssymb, color, mathbbol}
\usepackage[shortlabels]{enumitem}
\usepackage[margin=10pt,font=small,labelfont = bf]{caption}
\usepackage{graphicx}
\usepackage{hyperref}
\usepackage{multirow}
\usepackage{verbatim}
\usepackage[rgb]{xcolor}
	\definecolor{burgundy}{rgb}{0.5, 0.0, 0.13}
\usepackage{makecell}
\usepackage{multicol}
\usepackage{tikz}
\usetikzlibrary{snakes}
\usetikzlibrary {shapes}
\usetikzlibrary {arrows}
\usetikzlibrary {positioning}
\usetikzlibrary {calc}
\usetikzlibrary{quotes}
\usetikzlibrary{fit}
\usetikzlibrary{patterns}
% fitting shapes to coordinates
\usetikzlibrary{backgrounds}	% drawing the background after the foreground

\usepackage{centernot}

\usepackage[noend]{algpseudocode}
\usepackage{algorithm}

\startlocaldefs

\newcommand{\beao}{\begin{eqnarray*}}
\newcommand{\eeao}{\end{eqnarray*}\noindent}

\newcommand{\beam}{\begin{eqnarray}}
\newcommand{\eeam}{\end{eqnarray}\noindent}

\newcommand{\barr}{\begin{array}}
\newcommand{\earr}{\end{array}}

\DeclareMathOperator{\C}{\mathcal{C}}
\DeclareMathOperator{\R}{\mathbb{R}}
\DeclareMathOperator{\E}{\mathcal{E}}

%Shorthands TODO: CLEAN UP

\renewcommand{\P}{\mathbb{P}}

\newcommand{\Rplus}{\R_>}

\newcommand{\ci}{\,{\perp\!\!\!\perp}\,}
\newcommand{\notci}{\centernot{\ci}}

\newcommand{\D}{\mathcal{D}}

\newcommand{\dse}{\perp_{\D}}

\newcommand{\sg}{{\perp_\sigma}}
\newcommand{\starse}{\perp_{*}}
\newcommand{\tildese}{\perp_{\D^\ast}}
\newcommand{\critsep}{\perp_{C^*}}
\newcommand{\cd}{\,|\,}
\renewcommand{\dag}{\mathcal{D}}
\newcommand{\an}{{\rm an}}

\newcommand{\pa}{{\rm pa}}
\newcommand{\ch}{{\rm ch}}

\newcommand{\calx}{\mathcal{X}}

\newcommand{\I}{\mathcal{I}}

\newcommand{\halmos}{\quad\hfill\mbox{$\Box$}}

%Comments

\endlocaldefs
\begin{document}
\begin{frontmatter}

% "Title of the paper"
\title{Conditional Independence \\ in Max-linear Bayesian Networks}
\runtitle{Conditional Independence in Max-linear Bayesian Networks}

% indicate corresponding author with \corref{}
% \author{\fnms{John}
% \affiliation{Some University}

\author{\fnms{Carlos} \snm{Am\'endola}\ead[label=e1]{carlos.amendola@tum.de}} 
\address{Center for Mathematical Sciences\\Technical University of Munich\\ Boltzmanstrasse 3\\ 85748 Garching, Germany\\  \printead{e1}}
\and
\author{\fnms{Claudia} \snm{Kl\"uppelberg}\ead[label=e2]{cklu@tum.de}}
\address{Center for Mathematical Sciences\\Technical University of Munich\\ Boltzmanstrasse 3\\ 85748 Garching, Germany\\ \printead{e2}}
\affiliation{Technical University of Munich}
\and
\author{\fnms{Steffen} \snm{Lauritzen}\ead[label=e3]{lauritzen@math.ku.dk}}
\address{Department of Mathematical Sciences\\ University of Copenhagen \\ Universitetsparken 5 \\ 2100 Copenhagen, Denmark \\ \printead{e3}}
\affiliation{University of Copenhagen}
\and
\author{\fnms{Ngoc M.} \snm{Tran}\ead[label=e4]{ntran@math.utexas.edu}}
\address{Department of Mathematics\\ University of Texas at Austin \\
Speedway 2515 Stop C1200\\ Austin TX 78712, USA \\ \printead{e4}}
\affiliation{University of Texas at Austin}

\runauthor{Am\'endola, Kl\"uppelberg, Lauritzen, Tran}

\begin{abstract}
Motivated by extreme value theory, max-linear Bayesian networks have been recently introduced and studied as an alternative to linear structural equation models. However, for max-linear systems the classical independence results for Bayesian networks are far from  exhausting valid conditional independence statements. We use tropical linear algebra to derive a compact representation of the conditional distribution given a partial observation, and exploit this to obtain a complete description of all conditional independence relations. In the context-specific case, where conditional independence is queried relative to a specific value of the conditioning variables, we introduce the notion of a \emph{source DAG} to disclose the valid conditional independence relations. In the context-free case we characterize conditional independence through a modified separation concept, $\ast$-separation, combined with a tropical eigenvalue condition. We also introduce the notion of an \emph{impact graph} which describes how extreme events spread deterministically through the network and we give a complete characterization of such impact graphs. Our analysis opens up several interesting questions concerning conditional independence and tropical geometry. 
\end{abstract}

\begin{keyword}[class=MSC]
\kwd{62H22} %Graphical models
\kwd{60G70} %Extreme value theory, extremal processes
\kwd{14T90} %Applications of Tropical Geometry
\kwd{62R01} %Algebraic Statistics
\end{keyword}

\begin{keyword}
\kwd{Bayesian network}
\kwd{conditional independence}
\kwd{directed acyclic graph}
\kwd{$d$-separation}
\kwd{faithfulness}
\kwd{hitting scenario}
\kwd{Markov properties}
\kwd{max-linear model}
\kwd{tropical geometry}
\end{keyword}

\end{frontmatter}

\section{Introduction}

Max-linear graphical models were introduced in \cite{GK1} to model causal dependence between extreme events. The underlying graphical structure of the model is a directed acyclic graph (DAG) and to emphasize this aspect, we shall here use the term \emph{max-linear Bayesian network}, to allow for generalizations and extensions (see Section~\ref{sec:extensions} at the end of this paper).

A \emph{max-linear Bayesian network} is specified by a random vector $X=(X_1,\ldots,X_d)$, a directed acyclic graph $\D=(V,E)$ with nodes $V=\{1,\ldots, d\}$, non-negative {\em edge weights\/} $c_{ij}\geq 0$ for $i,j \in V$, and independent positive random variables $Z_1, \dots, Z_d$. These, known as \emph{innovations}, have support $\Rplus:=(0,\infty)$ and have atom-free distributions. Then $X$ is specified by a \emph{recursive system of max-linear structural equations} as
\begin{equation}\label{def:ml}
X_i = \bigvee_{j \in {{\pa}}(i)} c_{ij} X_j \vee  Z_i, \quad i=1,\ldots,d.    
\end{equation}
Without loss of generality, we assume that the basic probability space is $\Omega=\Rplus^{V}$ equipped with the standard Borel $\sigma$-algebra, so all randomness in the system originates from the innovations $Z_1, \dots, Z_d$.  
The equation system \eqref{def:ml} has solution
\begin{equation}\label{def:mlin}
X_i = \bigvee_{j \in {\an(i)\cup\{i\}}} c^*_{ij} Z_j, \quad i=1,\ldots,d,   
\end{equation}
where $\an(i)$ denotes the set of nodes $j$ where there is a directed path from $j$ to $i$, and $c^*_{ij}$ is a maximum taken over all the products along such paths (see \cite{GK1}, Theorem~2.2). Any such path that realizes this maximum is called \emph{critical} (max-weighted under $C$). The \emph{max-linear coefficient matrix} $C^* = (c^\ast_{ij})$ is also known from tropical linear algebra as the \emph{Kleene star} of $C = (c_{ij})$, cf.\ \eqref{ml} below.

In \cite{KL2017}, it was observed that the conditional independence properties for max-linear Bayesian networks are very different from those in a standard Bayesian network. In particular, they are often not faithful to their underlying DAG $\D$. This means that the usual $d$-separation criterion (\citep{geiger:verma:pearl:90}) on the DAG typically will not identify all valid conditional independence relations, in contrast to the situation for most Bayesian networks based on discrete random variables or linear structural equations. 
 Example~\ref{ex:diamond0} below gives a simple example of this phenomenon. 

\begin{example}[Diamond]\label{ex:diamond0} Consider the DAG in Figure~\ref{fig:diamondsep0}. 
\begin{figure}[tb]
\begin{center}
\begin{tikzpicture}
\begin{scope}[->,every node/.style={circle,draw},line width=1pt, node distance=1.8cm]
\node (1) {$1$};
\node (2) [below left of=1, fill=red!75] {$2$};
\foreach \from/\to in {1/2}
\draw (\from) -- (\to);
\path[every node/.style={font=\sffamily\small}]
(1) -- (2) node [near start, left] {$c_{21}$};
\node (3) [below right of=1] {$3$}; 
\foreach \from/\to in {1/3}
\draw (\from) -- (\to);
\path[every node/.style={font=\small}]
(1) -- (3) node [near start, right] {$c_{31}$};
\node (4) [below right of=2] {$4$};
\foreach \from/\to in {2/4,3/4}
\draw (\from) -- (\to);
\path[every node/.style={font=\small}]
(2) -- (4) node [near end, left] {$c_{42}$};
\path[every node/.style={font=\small}]
(3) -- (4) node [near end, right] {$c_{43}$};
\end{scope}
\end{tikzpicture}
\end{center}
\caption{Diamond graph with the set $K=\{2\}$ being observed, as indicated by red color. If  $c_{42}c_{21} \geq c_{43}c_{31}$, it holds that $X_1\ci X_4 \cd X_2$.}\label{fig:diamondsep0}
\end{figure}
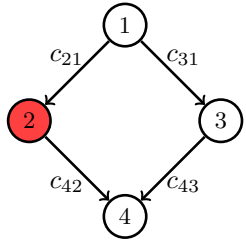
The path $1\to 2\to 4$ is critical if and only if $c_{42}c_{21} \geq c_{43}c_{31}$. If this is the case, the joint distribution of $(X_1,X_2,X_4)$ has the representation
\[ X_1=Z_1,\quad X_2=c_{21}X_1\vee Z_2, \]
and
\begin{align*}
X_4 &= c_{42}X_2 \vee Z_4 \vee c_{43}X_3 \\
&= c_{42}(Z_2 \vee c_{21}Z_1) \vee Z_4 \vee c_{43}(Z_3 \vee c_{31}Z_1) \\
&= c_{42}Z_2 \vee c_{42}c_{21}Z_1 \vee Z_4 \vee c_{43}Z_3 \vee c_{43}c_{31} Z_1 \\
&= c_{42}Z_2 \vee c_{42}c_{21}Z_1 \vee Z_4 \vee c_{43}Z_3 \quad \mbox{ since } c_{42}c_{21} \geq c_{43}c_{31} \\
&= c_{42}X_2 \vee Z_4 \vee c_{43}Z_3
\end{align*}
and hence we have $X_1\ci X_4\cd X_2$ which does \emph{not} follow from the $d$-separation criterion. Here, the fact that $1 \to 2 \to 4$ is \emph{critical} renders the path $1 \to 3 \to 4$ unimportant for the conditional independence $X_1\ci X_4\cd X_2$, even if $1 \to 3 \to 4$ were also critical (that is, even if $c_{42}c_{21} = c_{43}c_{31}$). 
\halmos
\end{example}

In Example~\ref{ex:diamond0}, the complicating issue was associated with paths being critical or not. However, this is not the only way standard $d$-separation fails. In Example~\ref{ex:cassiopeia0} below, the complications are associated with double colliders along a path.

\begin{example}[Cassiopeia]\label{ex:cassiopeia0} 
We shall show later (see Example~\ref{ex:cassiopeia2}) that a max-linear Bayesian network on the graph in Figure~\ref{fig:cassiopeia0}  
\begin{figure}[tb]
\begin{center}
\begin{tikzpicture}[->,every node/.style={circle,draw},line width=1pt, node distance=1.5cm]
  \node (1)  {$1$};
  \node (4) [below right of=1,fill=red!75]{$4$};
  \node (2) [above right of=4] {$2$};
  \node (5) [below right of=2,fill=red!75] {$5$};
  \node (3) [above right of=5] {$3$}; 
\foreach \from/\to in {1/4,2/4,2/5,3/5}
\draw (\from) -- (\to);   
 \end{tikzpicture}  
 \caption{The Cassiopeia graph with observed nodes $K=\{4,5\}$.  
 Here it holds that $X_1\ci X_3\cd  X_{\{4,5\}}$.
 }\label{fig:cassiopeia0}
\end{center}
\end{figure}
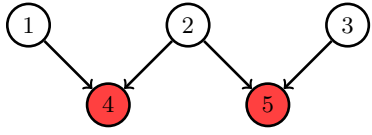
 will satisfy $X_1\ci X_3\cd X_{\{4,5\}}$ for all coefficient matrices $C$.  However, this conditional independence statement does \emph{not} follow from the $d$-separation criterion since the path from $1$ to $3$ is $d$-connecting relative to $\{4,5\}$. 
 
\halmos
\end{example}

 Example~\ref{ex:cassiopeia0} shows that not only are max-linear Bayesian networks often not faithful to $d$-separation, but $d$-separation is also not \emph{complete}  in the sense of \citep{geiger:verma:pearl:90} for conditional independence in these networks. That is, there are conditional independence statements which are valid for any choice of coefficients $C$, but cannot be derived from $d$-separation.
 
Also, in contrast to standard results for Bayesian networks, some conditional independence relations are highly \emph{context-specific,} i.e.\ depend drastically on the particular values of the conditioning variables,  as in Example~\ref{ex:tent0} below.  To control this, we introduce the notion of a \emph{source DAG} $\C(X_K=x_K)$ for a given \emph{context} $\{X_K=x_K\}$, see Definition~\ref{defn:context.graph} for details.

\begin{example}[Tent]\label{ex:tent0}
Consider the DAG $\D$ to the left in Figure~\ref{fig:tentA}
with all edge weights $c_{ij}=1$. Let $K=\{ 4,5 \}$ be the set of observed nodes; we seek all independence relations conditionally valid in the context $X_4=X_5=2$.
Writing out the model \eqref{def:ml} we find
\begin{align*}
X_1 &= Z_1,\quad X_2 = Z_2,\quad X_3 = Z_3\vee X_1 \vee X_2\\
X_4 &= Z_4\vee X_1 \vee X_2 = 2 \\
X_5 &= Z_5\vee X_1 \vee X_2 = 2.
\end{align*}
Since $Z_1,\dots,Z_5$ are a.s.\ different when the innovations have atom-free distributions,  it holds apart from a null-set that $X_1\vee X_2=Z_1\vee Z_2=2$. This introduces bounds on the innovations; we must have $Z_1,Z_2, Z_4,Z_5 \le 2$ and it also holds that $X_3 \ge 2$. Further, we then have
\begin{align*}
X_1 &= Z_1,\quad X_2 = Z_2,\quad  X_1\vee X_2=2, \quad X_3 = Z_3\vee2,\\
X_4 &= Z_4\vee 2 = 2 \\
X_5 &= Z_5\vee 2 = 2,
\end{align*}
whence we conclude that $X_3\ci (X_1, X_2)\cd X_4=X_5=2$, since now the dependence of $X_3$ on $X_1,X_2$ has disappeared. This independence statement is reflected in the lack of edges $1 \to 3$ and $2 \to 3$ in the source DAG $\C(X_4 = X_5 = 2)$, shown to the right in Figure~\ref{fig:tentA}.
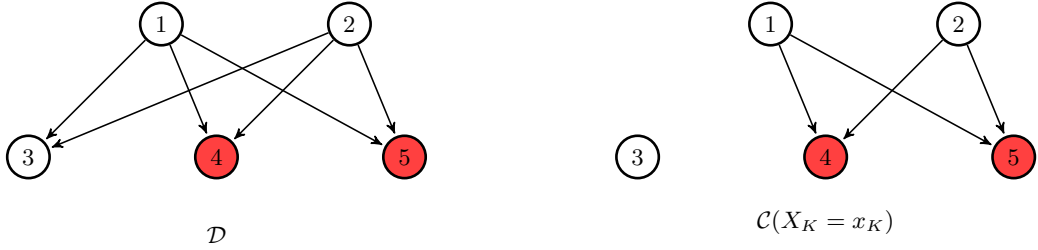
\begin{figure}
\begin{center}
\begin{tikzpicture}[->,>=stealth',shorten >=1pt,auto,node distance=2.5cm,semithick]
  \tikzstyle{every node}=[circle,line width =1pt]
  \node (1) [draw] {$1$};
  \node (2) [right of=1,draw] {$2$};
  \node (3) [below left of=1,draw] {$3$};  
  \node (4) [right of = 3,draw,fill=red!75] {$4$};
  \node (5) [right of = 4,draw,fill=red!75] {$5$};
  \path (1) edge (3);
  \path (1) edge (4);  
  \path (1) edge (5);    
  \path (2) edge (3);
  \path (2) edge (4);
  \path (2) edge (5);
  \node [below = 0.4 cm of 4] {$\D$};
  
  \node (a3) [right= 2.5cm of 5, draw] {$3$};
  \node (a1) [above right of=a3,draw] {$1$};  
  \node (a2) [right of=a1,draw] {$2$};
  \node (a4) [right of = a3,draw,fill=red!75] {$4$};
  \node (a5) [right of = a4,draw,fill=red!75] {$5$};
  \path (a1) edge (a4);  
  \path (a1) edge (a5);    
  \path (a2) edge (a4);
  \path (a2) edge (a5);
   \node [below = -0.5 cm of a4] {$\C(X_K = x_K)$};
\end{tikzpicture}
\end{center} \vspace{-1cm}
\caption{The left-hand figure displays what we shall name the tent DAG $\D$. For all coefficients equal to 1,  the source DAG $\C(X_K = x_K)$ when the observed nodes are $K=\{4,5\}$ with observed values $x_4=x_5=2$, is obtained from the left-hand figure by removing the edges $1 \to 3$ and $2 \to 3$, which become redundant in the context $\{X_4=X_5=2\}$, see Section~\ref{sec:source}.} \label{fig:tentA} 
\end{figure}
\halmos
\end{example}

In this paper we give a complete description of valid conditional independence statements for a given matrix $C$,  conditional independence statements that hold for all $C$ supported on a given DAG $\mathcal{D}$, as well as those that depend on the specific values of the conditioning variables.  
We achieve this by introducing three separation criteria. These are less restrictive than $d$-separation, as they focus on paths that are \emph{critical}  (see Example~\ref{ex:diamond0}), do not have \emph{multiple colliders} (see Example~\ref{ex:cassiopeia0}), and, for a given context, refer to the source DAG, obtained by removing edges that are \emph{redundant} in the context (see Example~\ref{ex:tent0}). 

Before we state and prove results for conditional independence, we investigate how extreme events at selected nodes spread through the network. 
We define an \emph{impact graph}  as a random graph on $V$, containing the edge $j\to i\iff X_i=c_{ij}^* Z_j$, i.e.\ if $X_i$ is realized (determined) by $Z_j$ (see Definition~\ref{defn:impact}). Since the distributions of the innovations are atom-free, it holds with probability one that any node $i$ has at most one parent in such a graph. 
We give a complete description of all impact graphs with positive probability in {\bf Theorem}~\ref{thm:impact}. As we shall see in Remark \ref{rmk:linear.pieces}, the impact graphs index partitions of the innovation space into regions of linearity for the max-linear map in \eqref{def:mlin}.

Impact graphs can be compatible with a context $\{X_K=x_K\}$ or not, and vice versa, a context $\{X_K=x_K\}$ can be possible under a certain impact graph or not. 
For instance, for the Cassiopeia graph in Example~\ref{ex:cassiopeia0}, the possible impact graphs are: the empty graph, all subgraphs with a single edge, and the four subgraphs with two edges displayed in Figure~\ref{fig:cass.impact}. 
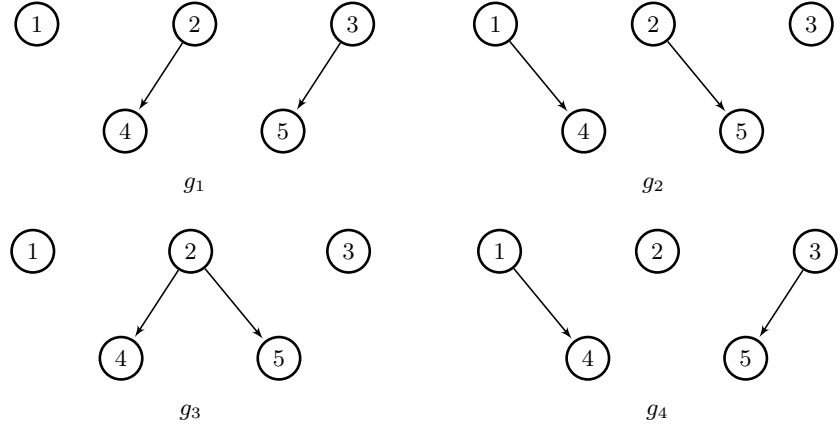
\begin{figure}
\begin{center}
\begin{tikzpicture}[->,>=stealth',shorten >=1pt,auto,node distance=1.5cm,semithick]
  \tikzstyle{every node}=[circle,line width =1pt]  
      \node(a) at (0,0)[draw] {$1$};
      \node(b) [right =  of a,draw] {$2$};
      \node(c) [right =  of b,draw] {$3$}; 
      \node(d) [below right =1cm and 0.75cm of a,draw]{$4$};
      \node(e) [below right = 1cm and 0.75cm of b,draw]{$5$};
 \node(f) [below= 1.5cm of b]{$g_1$};
    %% directed edges
    \begin{scope}[->, > = latex']
		%\draw (a) -- (d);
    \draw (b) -- (d);
   %\draw (b) -- (e);
	\draw (c) -- (e);
    \end{scope}

    \end{tikzpicture}\qquad \qquad \begin{tikzpicture}[->,>=stealth',shorten >=1pt,auto,node distance=1.5cm,semithick,draw=black]
  \tikzstyle{every node}=[circle,line width =1pt]
    %% nodes      
    %\begin{scope}
   %   \tikzstyle{every node} = [shape = circle,
      \node(a) at (0,0)[draw] {$1$};
      \node(b) [right = of a,draw] {$2$};
      \node(c) [right = of b,draw] {$3$}; 
      \node(d) [below right = 1cm and 0.75cm of a,draw]{$4$};
      \node(e) [below right = 1cm and 0.75cm of b,draw]{$5$};
   % \end{scope}  
 \node(f) [below= 1.5cm of b]{$g_2$};
    %% directed edges
    \begin{scope}[->, > = latex']
		\draw (a) -- (d);
   % \draw (b) -- (d);
   \draw (b) -- (e);
	%\draw (c) -- (e);
    \end{scope}

    \end{tikzpicture}
    \end{center}
   \begin{center} 
   \begin{tikzpicture}[->,>=stealth',shorten >=1pt,auto,node distance=1.5cm,semithick,draw=black]
  \tikzstyle{every node}=[circle,line width =1pt]
    %% nodes   

      \node(a) at (0,0)[draw] {$1$};
      \node(b) [right =  of a,draw] {$2$};
      \node(c) [right = of b,draw] {$3$}; 
      \node(d) [below right = 1cm and 0.75cm  of a,draw]{$4$};
      \node(e) [below right = 1cm and 0.75cm  of b,draw]{$5$};
     
    \node(f) [below= 1.5cm of b]{$g_3$};

    %% directed edges
    \begin{scope}[->, > = latex']
		%\draw (a) -- (d);
    \draw (b) -- (d);
   \draw (b) -- (e);
	%\draw (c) -- (e);
    \end{scope}

    \end{tikzpicture}
    \qquad\qquad \begin{tikzpicture}[->,>=stealth',shorten >=1pt,auto,node distance=1.5cm,semithick]
   \tikzstyle{every node}=[circle,line width =1pt]
      \node(a) at (0,0)[draw] {$1$};
      \node(b) [right = of a,draw] {$2$};
      \node(c) [right = of b,draw] {$3$}; 
      \node(d) [below right = 1cm and 0.75cm of a,draw]{$4$};
      \node(e) [below right = 1cm and 0.75cm of b,draw]{$5$};
   % \end{scope}  
 \node(f) [below= 1.5cm of b]{$g_4$};
    %% directed edges
    \begin{scope}[->, > = latex']
		\draw (a) -- (d);
   %\draw (b) -- (d);
   %\draw (b) -- (e);
	\draw (c) -- (e);
    \end{scope}

    \end{tikzpicture}
    \end{center}
    \caption{The four impact graphs with two edges in the Cassiopeia graph of  Example~\ref{ex:cassiopeia0} as depicted in Figure~\ref{fig:cassiopeia0}. Suppose all coefficients are equal to one. Then, only the impact graphs $g_2$ and $g_4$ are compatible with the  context $\{X_4=3,X_5=2\}$, whereas only the impact graph $g_3$ is compatible with the context $\{X_4=X_5=2\}$, see Example~\ref{ex:cass_possible} below. \label{fig:cass.impact}}
    \end{figure}
On the other hand, the impact graph $g_2$ implies that $X_4>X_5$, so only events satisfying this restriction are possible under $g_2$. 

The union of all impact graphs compatible with a context $\{X_K=x_K\}$ describes all possible ways that an extreme innovation could spread across the network while conforming with the context. However, as seen in Example~\ref{ex:tent0}, the given context can cause max-linear combinations of variables to be constant under specific scenarios, such that they do not influence the distribution of random variables $X_v, v\notin K$ as expected.
This effect is taken care of by the removal of edges to yield the \emph{source DAG} $\C(X_K=x_K)$ compatible with the context. 

Moreover, we classify all nodes into non-constant nodes (active) and constant nodes with  specific properties (see Proposition~\ref{cor:dag.partition}). 
This classification plays an important role when modifying the solution in \eqref{def:mlin} to obtain a compact representation of the conditional distribution. 

This compact representation is given in {\bf Theorem}~\ref{thm:source.rep} and
can be seen as a version of Theorem~6.7 in \cite{GK1} and of Theorem~1 in \cite{Wang2011}. More precisely, \cite{Wang2011} studies a general max-linear model where the max-linear coefficient matrix $C^*$ is not necessarily the Kleene star of a max-linear Bayesian network, hence not necessarily idempotent; they further give a more detailed description of the conditional distribution, using  a collection of \emph{hitting scenarios}, describing specific elements of $Z$ which obtain their upper bounds. An important endeavour of the present article is to further identify characteristics of the hitting scenarios, exploiting the graphical structure of the model, and this is done in Theorem~\ref{thm:source.rep}.

We formulate three different theorems to clarify conditional independence for max-linear Bayesian networks.
All three have the following structure, using  what we shall term $*$-separation ($\starse$) in appropriate derived DAGs.

\medskip 

\noindent
{\bf Theorem} 
{\it Let $X$ be a max-linear Bayesian network over a directed acyclic graph $\D = (V,E)$.  
Then for all $I,J,K \subseteq V$,}
\begin{align*}%\label{condind} 
I \starse J \cd K \mbox{ in } \widetilde\D \implies X_I \ci X_J \cd X_K. 
\end{align*}

\medskip

The DAG $\widetilde \D$ --- derived from $\D$, $C$, and the specific {\em context} $\{X_K=x_K\}$ --- %, as well as the $*$-separation 
depends on the situation and we distinguish the following three: 
{\bf Theorem}~\ref{thm:starse.xk} refers to a fixed $C$ and also a specific {context} $\{X_K=x_K\}$, thus yielding conditional independence relations that are valid for the particular values $x_K$; 
     {\bf Theorem}~\ref{thm:starse.C} considers a fixed coefficient matrix $C$ and yields independence relations that may depend on  $C$ which are valid for all possible contexts; whereas in
     {\bf Theorem}~\ref{thm:starse.ind}, the coefficient matrix
     $C$ is arbitrary with support included in $\D$ and this yields conditional independence relations that are universally valid under these conditions.
     In all three scenarios, the derived DAG $\widetilde\D$ is different, and the $*$-separation has to be considered in this derived DAG.
     In addition, we give conditions for these criteria to be \emph{complete} in the sense of \cite{geiger:verma:pearl:90}, that is, they yield all conditional independence statements that are valid under the specified conditions.  
     In Section 6 we investigate in which sense these results are as strong as possible. For a fixed matrix $C$, Theorem~\ref{thm:starse.C} may not yield all conditional independence results, as  specific relations between the coefficients could imply additional independences, similar to what is known as path cancellations for linear Bayesian networks. We provide the tools to identify exactly what these are in Theorem~\ref{thm:faith.C}.

The paper is organized as follows. 
In Section~\ref{sec:prelim} we introduce basic concepts and notation. We define the impact graphs, describing how effects of extremes spread to other variables, and the source DAG, describing the possible sources for a given value of observations, in Section~\ref{sec:impact}.   Section~\ref{sec:representation} is devoted to deriving a compact representation of the conditional distribution and the conditional independence results are stated and proved in Section~\ref{sec:cond.indep}. Section~\ref{sec:faith} is devoted to the discussion of completeness. 
We conclude by indicating potential future work and research directions in Section~\ref{sec:outlook}.

\section{Preliminaries}\label{sec:prelim}
\subsection{Graph terminology} \label{prelim:graph}
We use the same graph notation as in \cite{KL2017}. A \emph{directed graph} is a pair $g=(V,E)$ of a \emph{node set} $V = \{1,\dots,d\}$ and \emph{edge set} $E = \{j \to i: i,j \in V, i \neq j\}$. An edge $j \to i$ points  \emph{from} $j$ \emph{to} $i$, 
with $j$ called a \emph{parent} of $i$ and $i$ is a \emph{child} of $j$. In a graph $g$, the set of parents of $i$ is $\pa_g(i)$ and the set of children of $i$ is $\ch_g(i)$. A \emph{path} from $j$ to $i$ of \emph{length} $n\geq 1$ is a sequence of distinct nodes $[j=k_0, k_1, \ldots, k_n=i]$ such that $k_{r-1} \to k_{r} \in E$ or $k_{r} \to k_{r-1} \in E$  for all $r=1,\ldots,n$ and we say that $i$ and $j$ are \emph{connected}. We also say that the path is \emph{between} $i$ and $j$. A graph is \emph{connected} if there is a path between any two vertices.

A \emph{directed path} from $j$ to $i$ has $k_{r-1} \to k_{r} \in E$ for all $r$.
 If there is a directed path from $j$ to $i$ in $g$, we say that $j$ is an \emph{ancestor} of $i$ and $i$ a \emph{descendant} of $j$. 
 A  \emph{directed cycle} is a directed path with the modification that $i=j$.
 A directed acyclic graph (abbreviated DAG) is a directed graph with no directed cycles. A DAG is \emph{well-ordered} if all edges point from low to high, that is, $j\to i \implies j<i$.
 A connected DAG is a \emph{tree} if every node has at most one parent. The \emph{root} of a tree is the unique node in the tree without parents. The \emph{height} of a tree is the length of the longest directed path in the tree.  A \emph{forest} is a collection of trees. A \emph{star} is a tree of height at most one, and we call a forest of stars a \emph{galaxy}. For a forest $g$ on node set $V$ and $i \in V$, we let $R_g(i)$ denote the root of the tree containing $i$ and $R(g)$ denotes the set of roots in $g$.
 A matrix $A \in  \R_\geq^{d \times d}$ defines a \emph{weighted directed graph} $\D(A)$, where $j \to i \in \D(A)$ if and only if its \emph{edge weight} $a_{ij} > 0$. The \emph{weight of a path} $\pi$ in $\D(A)$ is then the product of its edge weights.

\subsection{Tropical linear algebra}\label{prelim:trop}
A number of theorems in our paper are proved using techniques from tropical linear algebra. Here we recall some essential facts of this field. For a comprehensive text, we recommend \cite{BCOQ} and \cite{But2010}; see also \cite{joswig:20} and \cite{MS}.  

Tropical linear algebra is linear algebra with arithmetic in the \emph{max-times semiring} $(\mathbb{R}_\ge, \vee,\odot)$, defined by
$$a \vee b := \max(a,b), \quad a \odot b := a b \quad  \mbox{ for } a,b \in \R_\ge:=[0,\infty).$$
Note that many authors (including those above) use the isomorphic semirings max-plus or min-plus, but we have chosen max-times to conform with the literature on extreme value theory.
The operations extend to $\R_\ge^d$ coordinate-wise and to corresponding matrix multiplication for $A\in \R_\ge^{m \times n}$ and $B\in \R_\ge^{n\times p}$ as 
\[(A\odot B)_{ij}= \bigvee_{\ell=1}^n a_{i\ell}b_{\ell j}\]and we also write
$$\lambda\odot x =(\lambda x_1,\ldots, \lambda x_d)\quad  \mbox{ for } \lambda\in \R_\ge \mbox{ and } x\in \R_\ge^d.$$ 
For example,
\begin{equation}\label{ex:eig}
    \begin{pmatrix}
1 & 8  \\
2 & 3 \\
\end{pmatrix} \odot
\begin{pmatrix}
2 \\
1 \\
\end{pmatrix} =  \begin{pmatrix}
1\cdot 2 \, \vee \, 8\cdot 1  \\ 2 \cdot 2 \, \vee \, 3 \cdot 1  \\
\end{pmatrix} = \begin{pmatrix}
8 \\
4 \\
\end{pmatrix} = 4 \odot \begin{pmatrix}
2 \\
1 \\
\end{pmatrix} .
\end{equation}
The recursive structural equation system  \eqref{def:ml} can be written as a tropically linear equation:
\beam\label{eq:matrixrec}
X= C\odot X \vee Z
\eeam
where $C = (c_{ij}) \in \R_{\geq}^{d \times d}$ and $X, Z \in \Rplus^{d}$.
We consider the \emph{weak transitive closure} (\citep[Section 1.6.2]{But2010}) $\Gamma=\Gamma(C)=(\gamma_{ij})$ of $C$ given as
\begin{align}\label{eq:transclos}
\Gamma = \Gamma(C) =\bigvee_{k=1}^{d-1}C^{\odot k}.
\end{align}

Here $\gamma_{ij} > 0$ if and only if there exists a directed path in $\D(C)$ from $j$ to $i$, and $\gamma_{ij}$ equals the maximum weight over all such paths. 
We name $\D^\ast(C)$ the \emph{weighted reachability DAG} of $\D(C)$ and $\D^\ast$ the unweighted counterpart. 
When $\D(C)$ is a DAG, by \cite[Theorem 3.17]{BCOQ}, \eqref{eq:matrixrec} can be solved uniquely for $X$ as
\begin{equation}\label{ml}
  X = C^\ast \odot Z, 
\end{equation} 
where $C^\ast= I_d \vee \Gamma(C)$ is the \emph{Kleene star} of $C$ and $I_d$ the $d\times d$ identity matrix.
Since Kleene stars are idempotent, that is, $C^\ast \odot C^\ast = C^\ast$, we also have 
\begin{equation}\label{eqn:idempotent}
    X = C^\ast \odot X.
\end{equation}
If $V$ is well-ordered, the matrix $C$ is lower triangular and so are $\Gamma$ and $C^\ast$. The Kleene star $C^\ast$ corresponds to the max-linear coefficient matrix $B$ in \cite{GK1}, \cite{KL2017}, and in particular, \cite[Theorem 2.2]{GK1} is a special instance of \cite[Theorem 3.17]{BCOQ}. For $K\subseteq V$ we let
\begin{equation}\label{eqn:feasible}\mathcal{L}^C_K=\left\{x_K: \exists z\in \Rplus^V \text{ with } x_K=(C^\ast\odot z)_K\right\}\end{equation}
denote the image of the projection to $K$-coordinates of the max-linear map determined by $C^\ast$.
A matrix $A = (a_{ij}) \in \R_{\geq}^{d \times d}$ has \emph{tropical eigenvalue} $\lambda$ and \emph{tropical eigenvector} $x \in \R_{\geq}^d$ if 
\begin{equation}\label{eigen}
A \odot x = \lambda \odot x. 
\end{equation}
For example, equation \eqref{ex:eig} shows that $x = (2,1)$ is a tropical eigenvector for the given matrix, with tropical eigenvalue $\lambda=4$.

The maximum geometric mean of weights along a directed cycle in $\D(A)$ is the \emph{maximum cycle mean} of $A$, denoted $\lambda(A)$.
Note that if $\D(A)$ is acyclic then $\lambda(A)=0$. For any matrix $A$, the number $\lambda(A)\geq 0$ is always a tropical eigenvalue, called the \emph{principal eigenvalue} of $A$ \cite[Theorem~4.2.4]{But2010}. A cycle achieving the maximum mean is a \emph{critical} cycle. 
Similarly, a vector $x \in \Rplus^d$ is called a {\em tropical subeigenvector of $A$} for $\lambda>0$ if 
\begin{equation}\label{subeigen}
A \odot x \leq \lambda \odot x.
\end{equation}
The following fact about  tropical subeigenvectors will be useful. 
\begin{proposition}\label{prop:lambda.1}
Let $A \in \R_{\geq}^{V \times V}$. We then have 
\begin{enumerate}
	\item[(a)] There exists $x \in \Rplus^V$ such that $A\odot x \leq x$ if and only if $\lambda(A) \leq 1$.
	\item[(b)] Suppose $\lambda(A) = 1$, $S \subseteq V$ is the union of the support of its critical cycles, and that $x \in \Rplus^V$ satisfies $A\odot x \leq x$.  Then $A_{SS}\odot x_S = x_S$. 
	\item[(c)] There exists $x \in \Rplus^V$ such that $A\odot x < x$ if and only if $\lambda(A) < 1$. 
\end{enumerate}
\end{proposition}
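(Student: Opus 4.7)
The plan is to prove (a) and (c) by a cycle-averaging argument applied to the chain of inequalities $A\odot x \leq x$, and to derive (b) from the equality case of the same argument. The unifying observation is that chaining the entrywise inequalities along any directed cycle in $\D(A)$ causes the $x$-ratios to telescope to $1$, leaving a bound on the product of edge weights.

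For the forward direction of (a), I would fix any directed cycle $i_0 \to i_1 \to \cdots \to i_k = i_0$ in $\D(A)$ and multiply the componentwise inequalities $a_{i_{r+1}i_r} x_{i_r} \leq x_{i_{r+1}}$ for $r=0,\ldots,k-1$. Since $x \in \Rplus^V$, the $x$-ratios cancel and yield $\prod_{r=0}^{k-1} a_{i_{r+1}i_r} \leq 1$; taking $k$-th roots and maximising over all cycles gives $\lambda(A) \leq 1$. For the converse, I would invoke the weak transitive closure from \eqref{eq:transclos}: when $\lambda(A)\leq 1$, any walk of length greater than $|V|-1$ contains a cycle of weight at most $1$, so it can be shortened without decreasing its weight. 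Iterating shows $A^{\odot k} \leq \Gamma(A)$ for every $k\geq 1$, so the Kleene star $A^\ast := I_d \vee \Gamma(A)$ is well defined and satisfies $A\odot A^\ast \leq A^\ast$. Setting $x := A^\ast \odot \mathbf{1}$ produces a vector with $x\geq \mathbf{1} > 0$ and $A\odot x \leq x$.

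Part (c) follows by rescaling. The forward implication is the same cycle argument with strict inequalities throughout. For the converse, if $\lambda(A)<1$, I would pick $\alpha \in (\lambda(A),1)$ and apply (a) to the rescaled matrix $\alpha^{-1}A$, whose principal eigenvalue $\lambda(A)/\alpha$ is at most $1$. The resulting $x \in \Rplus^V$ satisfies $\alpha^{-1}A \odot x \leq x$, hence $A \odot x \leq \alpha x < x$. For (b), I would exploit the equality case of the cycle argument. On any critical cycle $i_0 \to \cdots \to i_k = i_0$, the product of edge weights equals $\lambda(A)^k = 1$, while each factor $a_{i_{r+1}i_r} x_{i_r}/x_{i_{r+1}} \leq 1$ and the factors multiply to $1$; hence every inequality is tight. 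Consequently, for each $i \in S$ there is a predecessor $j \in S$ along some critical cycle with $a_{ij}x_j = x_i$, giving $(A_{SS}\odot x_S)_i \geq x_i$. The reverse bound $(A_{SS}\odot x_S)_i \leq (A\odot x)_i \leq x_i$ completes the proof.

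The main obstacle is the boundary case $\lambda(A)=1$ of (a): one must argue that even though cycles of weight exactly $1$ can be traversed indefinitely without decreasing walk weights, they also cannot strictly inflate them, so that $\Gamma(A)$ still dominates all higher powers $A^{\odot k}$ and the Kleene-star construction remains finite. Everything else is a routine consequence of the cycle-telescoping identity.
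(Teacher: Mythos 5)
Your proof is correct, and it differs from the paper's in several genuine ways. The paper does not prove (a) at all --- it simply cites Butkovi\v{c}, Theorem 1.6.18 --- whereas you supply both directions, the converse via the Kleene-star subeigenvector $x=A^\ast\odot\mathbf{1}$. One small repair is needed there: the intermediate claim $A^{\odot k}\leq\Gamma(A)$ can fail on the diagonal (for $d=2$ with $a_{12}=a_{21}=1$ one has $(A^{\odot 2})_{11}=1>0=\Gamma(A)_{11}$); what cycle removal actually yields is $A^{\odot k}\leq I_d\vee\Gamma(A)=A^\ast$, which is all you need to conclude $A\odot A^\ast\leq A^\ast$. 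For (b), the paper first treats $S=V$ and then reduces the general case by passing to the submatrix $A_{SS}$ via $x_S\geq A_{SS}\odot x_S$ and $\lambda(A_{SS})=1$; your argument handles general $S$ in one pass, since tightness of every factor along each critical cycle directly gives $(A_{SS}\odot x_S)_i\geq a_{ij}x_j=x_i$ for a critical predecessor $j\in S$ of each $i\in S$. The content is the same, yours is slightly more direct. For (c), the paper's converse takes a principal eigenvector and writes $A\odot x=\lambda(A)\odot x<x$, which tacitly presumes a strictly positive eigenvector exists (delicate when $A$ is reducible or $\lambda(A)=0$); your rescaling trick --- apply (a) to $\alpha^{-1}A$ with $\lambda(A)<\alpha<1$ --- sidesteps that issue cleanly and is arguably the better argument. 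Finally, the ``main obstacle'' you flag is not one: removing a weight-one cycle leaves a walk's weight unchanged, which your own shortening argument already covers.
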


\begin{proof}
Statement (a) is shown in \cite[Theorem 1.6.18]{But2010}. Now we prove (b). First consider the case $S = V$. Let $x$ be such that $A\odot x \leq x$. Fix any $i \in V$. Then $i$ belongs to some critical cycle $\sigma$ of length $r$ that achieves the tropical eigenvalue. For each edge 
$v \to u$ in this cycle, $A\odot x \leq x$ implies
\begin{equation}\label{eqn:avu}
a_{uv}x_v \leq x_u.
\end{equation}
Therefore, 
\begin{align*}
\prod_{v \to u \in \sigma}a_{uv} &\leq \prod_{v \to u \in \sigma}\frac{x_u}{x_v} \\
&= 1 \mbox{ since } \sigma \mbox{ is a cycle}.
\end{align*}
But $\sigma$ is critical, so
$$ \prod_{v \to u \in \sigma}a_{uv} = c(\sigma) = \lambda^r = 1. $$
Thus all the inequalities in \eqref{eqn:avu} must be equalities; that is, $a_{uv}x_v = x_u$ for all nodes $u,v$ in the support of $\sigma$. In particular, this holds for $u = i$. Thus, for the edge $v \to i \in \sigma$,
$$ x_i \geq (A\odot x)_i \geq a_{iv}x_v = x_i. $$
So $(A\odot x)_i = x_i$. Since $i$ was chosen arbitrarily, it follows that $A\odot x = x$. Now suppose $S \subset V$. Let $\bar{S} = V \backslash S$. Then
$$ x_S \geq (A\odot x)_S = A_{SS}\odot x_S \vee A_{S\bar{S}}\odot x_{\bar{S}} \geq A_{SS}\odot x_S. $$
Since $\lambda(A_{SS}) = 1$, applying the previous argument to $A_{SS}$ gives $A_{SS}\odot x_S = x_S$. 

Now we prove (c). Suppose $\lambda(A) < 1$. Let $x$ be an associated eigenvector to the principal eigenvalue of $A$. Then $A\odot x = \lambda(A) x < x$.   For the converse,  if $A\odot x < x$ it also satisfies $A\odot x \leq x$ so by (a) we have $\lambda(A)\leq 1$. If $\lambda(A) = 1$ then by (b), there exists some $S \subseteq V$, $|S| \geq 2$, such that $A_{SS}\odot x_S = x_S$. But then
$$ x_S=A_{SS}\odot x_S \leq (A\odot x)_S < x_S, $$
a contradiction. Thus we conclude that $\lambda(A) < 1$ and the proof is complete. 
\end{proof}
We recall one more useful fact from tropical linear algebra:
\begin{lemma}[\cite{But2010}, Lemma 1.6.19]
\label{lem:critical.edges}
Let $A \in \R_{\geq}^{V \times V}$ with $\lambda(A) = 1$ and eigenvector $x \in \Rplus^V$. Let $\sigma$ be a critical cycle in $A$. Then for all edges $v \to u \in \sigma$, 
$$ a_{uv}x_v = x_u. $$
\end{lemma}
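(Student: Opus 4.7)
The plan is to mimic the cycle-product argument already used inside the proof of Proposition~\ref{prop:lambda.1}(b), since that argument establishes exactly this statement as a by-product. I would proceed as follows.

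First, I would use that $x$ is an eigenvector with eigenvalue $\lambda(A)=1$, so $A\odot x = x$ componentwise. Writing this coordinate-wise at a node $u$,
\begin{equation*}
x_u = (A\odot x)_u = \bigvee_{v\in V} a_{uv} x_v,
\end{equation*}
which yields the pointwise inequality $a_{uv} x_v \leq x_u$ for every pair $u,v$, and in particular for every edge $v\to u$ appearing in the critical cycle $\sigma$.

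Second, I would take the product of these inequalities along $\sigma$. If $\sigma$ has length $r$, each node of $\sigma$ appears exactly once as a head and once as a tail, so all the $x$-factors cancel in the product:
\begin{equation*}
\prod_{v\to u\in\sigma} a_{uv} \;\leq\; \prod_{v\to u\in\sigma} \frac{x_u}{x_v} \;=\; 1.
\end{equation*}
By definition, however, the geometric mean of weights along $\sigma$ equals $\lambda(A)=1$, hence $\prod_{v\to u\in\sigma} a_{uv}=1$. Therefore the product inequality above is an equality, which forces every factor-wise inequality $a_{uv}x_v\leq x_u$ with $v\to u\in\sigma$ to be an equality as well.

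The only subtle point is making sure each $x_v$ appearing in the telescoping product is strictly positive so that dividing is legitimate; this is guaranteed because $x\in\Rplus^V$ by assumption. No further obstacle arises: the proof is essentially a one-step telescoping argument around the critical cycle, combined with the defining equation $A\odot x=x$.
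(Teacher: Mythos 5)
Your proof is correct, and it is essentially the same telescoping-product argument that the paper itself deploys in the proof of Proposition~\ref{prop:lambda.1}(b); the paper does not prove Lemma~\ref{lem:critical.edges} directly but simply cites \cite{But2010}. The one detail worth stating explicitly is that criticality of $\sigma$ means it attains the maximum cycle mean $\lambda(A)=1$, so its weight is $\lambda(A)^{r}=1$, which is exactly the equality you use to force all the factor-wise inequalities to be equalities.
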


\subsection{Conditional independence}\label{sec:indep}

Conditional independence is concerned with probability distributions on product spaces $\calx=\prod_{i\in V}\calx_i$, where $\calx_i$ are measurable spaces. For $I\subseteq V$ we write $x_I=(x_v, v\in I)$ to denote a generic element in $\calx_I=\prod_{v\in I}\calx_v$, and similarly $X_I=(X_v)_{v\in I}$. 
If $\P$ is a probability distribution on $\calx$, we use the short notation
\[I\ci J\cd K \iff X_I\ci X_J\cd X_K\] where $\ci$ denotes probabilistic conditional independence w.r.t.\ $\P$. 

Graphical models identify conditional independence relations through a \emph{separation criterion} $\sg$ applied to a graph. 
Such a separation criterion is, for example, given by \emph{$d$-separation} $\dse$ (\cite{geiger:verma:pearl:90}) for a given DAG  $\D$; see for example \cite{KF}, \cite{Lauritzen1996}, or  \cite{Lauritzen1990}  for further details. 
A separation criterion $\sg$ is \emph{complete} relative to a family $\mathcal{P}$ of probability distributions if 
$$ I \ci J \cd K  \text{ for all $\mathbb{P}\in \mathcal{P}$}\iff I \sg J \cd K. $$
A probability distribution $\mathbb{P}$ of $X$ is \emph{faithful} to $\sg$ if for all disjoint subsets $I,J,K$ of $V$ and that specific $\mathbb{P}$ it holds that $$ I \ci J \cd K \iff I \sg J \cd K. $$
Thus the distribution of $X$ is in particular \emph{Markov} w.r.t.\ $\sg$. If there exists a faithful distribution $\mathbb{P}\in \mathcal{P}$ for $\sg$, the separation criterion is obviously complete.

\section{Auxiliary graphs}\label{sec:impact}

In this section we introduce the concept of an impact graph, an impact graph compatible with a context, and a source DAG.
These are devices that translate probabilistic statements to graph-theoretic and algebraic statements, and at the same time keep track of all deterministic relationships in a max-linear Bayesian network.

\subsection{The context-free impact graph}

\begin{definition}\label{defn:impact}
Consider the max-linear Bayesian network \eqref{eq:matrixrec} with fixed coefficient matrix $C$. The (context-free) \emph{impact graph} is a random graph $G=G(Z)$ on $V$ consisting of the following edges:
\[j\to i \iff  X_i = c^\ast_{ij}Z_j\]
and we let $\mathcal{E}(g)= \{z \in \Rplus^V: G(z)=g\}$ denote the event that $\{G=g\}$.
\end{definition}

In the following, we let $\mathfrak{G}=\mathfrak{G}(C)$ denote the set of impact graphs that have positive probability for a given coefficient matrix $C$. 

\begin{remark}\label{rmk:onesource} 
Since the distributions of the $Z_j$ are atom-free, it holds with probability one that any node $i$ has at most one parent and thus if $\P(\E(g)>0)$, i.e.\ $g\in \mathfrak{G}$, $g$ will be a forest. We shall only consider configurations of $Z$ that conform with this and we emphasize that we are only ignoring a null-set in $\Omega=\Rplus^{V}$. 
\end{remark}

\begin{remark}\label{rmk:linear.pieces}
Define the restricted Kleene star $C^\ast_g$ as
\begin{equation}\label{eqn:rest_Kleene}(C^*_g)_{ij}=
\begin{cases} 1 & \mbox{if $i=j\in R(g)$}\\
c^\ast_{ij} &\mbox{if $j\to i\in g$}\\
0&\mbox{otherwise.}
\end{cases}
\end{equation}
The impact graphs induce a partition of $\Rplus^V$ into regions where the map $Z\to X$ is linear with matrix $C^*_g$. In other words, we have an alternative representation of $X$ as 
\begin{equation}\label{eq:linearpieces}
X=C^*\odot Z\stackrel{\text{a.s.}}{=}C^*_G\odot Z=C^*_GZ,
\end{equation}
where the product in the rightmost expression is a standard linear matrix product as $C^*_G$ has exactly one positive number in each row.  See also Example~\ref{ex:all.impact.graphs} below. 
\end{remark} 

The main result of this section is Theorem \ref{thm:impact}, which gives a precise and complete characterization of all impact graphs $\mathfrak{G}$ in a max-linear Bayesian network \eqref{eq:matrixrec}. To establish this characterization, we need to define the impact exchange matrix of a given forest $g$. Recall that $\ch_g(i)$ denotes the set of children of $i$ in $g$ (Section~\ref{prelim:graph}).  
\begin{definition}
Consider a DAG $\D$ with coefficient matrix $C$ and Kleene star $C^*$ and let $g$ be a forest with root set $R=R(g)$.  
The \emph{impact exchange matrix} $M(g)=M(g,C^*)$ of $g$ with respect to $C^*$ is an $|R| \times |R|$ matrix with entries defined by
$m_{rr} = 0$ for all $r \in R$, and for $r\neq r'$: 
\begin{equation}\label{eqn:Lambda}
m_{rr'} := \max_{i\in \ch_{g}(r)} \dfrac{c^*_{ir'}}{c^*_{ir}}.
\end{equation}
\end{definition}
Note that $m_{rr'}=0$ if $\ch_g(r) = \emptyset$. Finally, recall from Section~\ref{prelim:trop} that $\D^\ast$ is the reachability DAG and $\lambda(M(g))$ is the principal eigenvalue of $M(g)$. % \CK{perhaps other section}
We now have the following fundamental theorem:
\begin{theorem}\label{thm:impact}
Consider a max-linear Bayesian network with coefficient matrix $C$ and Kleene star $C^*$. Then $g\in\mathfrak{G}$ if and only if the following four conditions hold:
\begin{enumerate}[{\rm(a)}]
\item $g$ is a subgraph of $\D^\ast$.%=\D(C^*)$.
\item $g$ is a galaxy, i.e.\ a forest of stars.
\item If $j\to i$ in $g$ and $c_{ij}^*=c^*_{ik}c^*_{kj}$ then $k\not\to i$ and $j\to k$ in $g$.
\item $\lambda(M(g)) < 1$. 
\end{enumerate}

\end{theorem}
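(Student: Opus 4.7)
The plan is to establish necessity and sufficiency of (a)--(d) separately. Necessity will follow from manipulating the defining equalities $X_i = c^*_{ij}Z_j$ on $\mathcal{E}(g)$ together with Proposition~\ref{prop:lambda.1}; sufficiency will follow from an explicit construction of a positive-measure set of innovations $Z$ realizing $g$, together with a careful eigenvalue analysis of an augmented matrix.

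For necessity, (a) is immediate from $c^*_{ij}>0 \iff j\in\an_\D(i)$. For (b), the ``forest'' part is Remark~\ref{rmk:onesource}; the ``galaxy'' part (height at most one) I would prove by contradiction, since $k\to j\to i$ in $g$ forces both $Z_j < c^*_{jk}Z_k$ (from $X_j = c^*_{jk}Z_k$) and $Z_j\geq c^*_{jk}Z_k$ (from the chain $X_i = c^*_{ij}Z_j \geq c^*_{ik}Z_k \geq c^*_{ij}c^*_{jk}Z_k$). For (c), a critical factorization $c^*_{ij} = c^*_{ik}c^*_{kj}$ combined with $j\to i$ in $g$ yields $X_i \geq c^*_{ik}X_k \geq c^*_{ij}Z_j = X_i$, forcing $X_k = c^*_{kj}Z_j$ and hence $j\to k$ in $g$, while $k\not\to i$ by parent-uniqueness. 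For (d), on $\mathcal{E}(g)$ the strict inequalities $c^*_{ir}Z_r > c^*_{ir'}Z_{r'}$ over $i\in\ch_g(r)$ and $r'\in R(g)\setminus\{r\}$ amount exactly to $M(g)\odot Z_R < Z_R$, whence $\lambda(M(g))<1$ by Proposition~\ref{prop:lambda.1}(c).

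For sufficiency, starting from a strict subeigenvector $Z_R^*$ of $M(g)$ supplied by (d) and Proposition~\ref{prop:lambda.1}(c), I would extend to all of $V$ by choosing $Z_i$ small for every non-root $i$; since all constraints are open, the resulting region has positive Lebesgue measure. Producing $G(Z)=g$ requires in addition the \emph{ancestry} constraints $Z_r > c^*_{rr'}Z_{r'}$ for every $r\in R(g)$ and $r'\in\an_\D(r)\cap R(g)$, which enforce that each root $r$ has no parent in the induced impact graph. When $\ch_g(r)\neq\emptyset$, the Kleene inequality $c^*_{ir'}\geq c^*_{ir}c^*_{rr'}$ applied to any $i\in\ch_g(r)$ gives $m_{rr'}\geq c^*_{rr'}$, so the ancestry bound is already implied by $M(g)\odot Z_R^* < Z_R^*$. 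For isolated roots $r$, where $m_{rr'}=0$, these ancestry inequalities must be captured separately; this leads to an augmented matrix $\tilde M(g)$ defined by $\tilde m_{rr'} = \max\{m_{rr'}, c^*_{rr'}\}$ when $r'\in\an_\D(r)\cap R(g)$ and $\tilde m_{rr'} = m_{rr'}$ otherwise, and sufficiency reduces to the key claim $\lambda(\tilde M(g))<1$.

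The main obstacle is this claim $\lambda(\tilde M(g))<1$ under (a)--(c) and (d). My approach is a contraction argument: assume for contradiction that there is a cycle $r_0\to r_1\to\cdots\to r_k=r_0$ in $\tilde M(g)$ of product at least one. Whenever an isolated-root node $r_j$ with $r_{j-1}\neq r_{j+1}$ appears, the Kleene identity $c^*_{ir_j}c^*_{r_jr_{j+1}}\leq c^*_{ir_{j+1}}$ gives $\tilde m_{r_{j-1}r_j}\tilde m_{r_jr_{j+1}}\leq \tilde m_{r_{j-1}r_{j+1}}$, so contracting $r_j$ produces a cycle of length $k-1$ whose product is at least that of the original. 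Iterating reduces either to a cycle consisting entirely of non-isolated roots, whose product is bounded by $\lambda(M(g))^k < 1$ via (d), or to a length-two cycle $r\to r'\to r$ with $r'$ isolated, whose product equals $m_{rr'}c^*_{r'r}\leq 1$. Equality in the latter case would require a critical factorization $c^*_{ir}=c^*_{ir'}c^*_{r'r}$ for some $i\in\ch_g(r)$; by condition (c), that factorization would force $r\to r'$ in $g$, contradicting $r'\in R(g)$. Every branch therefore contradicts the assumed product $\geq 1$, so $\lambda(\tilde M(g))<1$ and sufficiency follows.
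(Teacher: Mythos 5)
Your proof is correct, and while the necessity half follows the paper's route (with the small improvement that you obtain (d) by feeding the full system $M(g)\odot Z_R < Z_R$ into Proposition~\ref{prop:lambda.1}(c) rather than tracing a single critical cycle), the sufficiency half is genuinely different and in fact more careful than the paper's. The paper builds an event $\mathcal{E}$ from a tropical eigenvector of $M(g)$ and argues that every node $i$ in the star of $r$ satisfies $c^*_{ir}Z_r > c^*_{ir'}Z_{r'}$ because $Z_r > m_{rr'}Z_{r'}$; but that inference needs $m_{rr'}\geq c^*_{ir'}/c^*_{ir}$, which for $i=r$ itself reads $m_{rr'}\geq c^*_{rr'}$ and fails exactly when $r$ is an isolated root having another root $r'$ among its ancestors (then $m_{rr'}=0$ while $c^*_{rr'}>0$). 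These are precisely your ancestry constraints $Z_r > c^*_{rr'}Z_{r'}$, and they are not implied by the paper's event: in the half-butterfly of Example~\ref{ex:impacts}, the galaxy $g=\{1\to 4\}$ satisfies (a), (b) and (d) (its impact exchange matrix has $\lambda=0$) but violates (c) and is not an impact graph, since node $4$ forces $Z_1>Z_3$ while the isolated root $3$ forces $Z_3>c^*_{31}Z_1=Z_1$; yet the paper's sufficiency argument, which never invokes (c), would apparently apply to it. Your augmented matrix $\tilde M(g)$ packages these extra inequalities, and your contraction argument --- contract isolated roots via $c^*_{ir_j}c^*_{r_jr_{j+1}}\leq c^*_{ir_{j+1}}$, bound surviving all-non-isolated cycles by (d), and use condition (c) to exclude the equality case $c^*_{ir}=c^*_{ir'}c^*_{r'r}$ in the residual two-cycles --- is exactly the step that reveals where (c) enters the sufficiency direction. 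The only points worth spelling out are the degenerate endpoints of the contraction (a cycle consisting entirely of isolated roots has product zero because $\D^*$ is acyclic) and the final assembly of the event (a strict subeigenvector of $\tilde M(g)$ from Proposition~\ref{prop:lambda.1}(c) fixes $Z_R$ on an open set, and taking all non-root innovations below a sufficiently small $\epsilon$ handles the remaining comparisons); both are routine.
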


Before we proceed to the proof of this result, some explanation of the elements of the theorem might be appropriate. 
Theorem~\ref{thm:impact} describes all possible impact scenarios. With probability one, any outcome of the max-linear Bayesian network has a system of (extreme) root variables $Z_R$ and the value at all other nodes will be a.s. constant and appropriate multiples of these, their impact spreading across the network as determined by the galaxy $g\in \mathfrak{G}$. 

The conditions (a) and (b) in Theorem~\ref{thm:impact} are necessary, but not sufficient. %\sll{I removed the fig illustrating the triangle condition}
To understand condition (d), consider the definition of the impact exchange matrix $M(g)$. Intuitively, the entry $m_{rr'}$ measures the worst possible relative cost for a node $i$ to be reassigned from root $r$ to root $r'$ in $g$. The graph induced by positive entries of $M(g)$ may have directed cycles. A directed cycle in this graph starting at a root $r$ creates an inequality involving $Z_r$. Condition (d) of Theorem~\ref{thm:impact} ensures that this inequality can be satisfied. Example~\ref{ex:eigencondition} below shows that a violation of the condition on the principal eigenvalue $\lambda(M(g))$ of the impact exchange matrix $M(g)$ yields an inconsistency, even if the other conditions are satisfied. The argument in Example~\ref{ex:eigencondition} below illustrates the key step in the proof that establishes the necessity of condition (d).

\begin{example}[Bipartite]\label{ex:eigencondition}
Consider the weighted graph $\D$ with weights given in Figure~\ref{fig:eigencondition}.  %with 2 out-degrees and 2 in-degrees to the left in Figure \ref{fig:eigencondition}. 
\begin{figure}[tb] 
\begin{tikzpicture}[->,>=stealth',shorten >=1pt,node distance=1.8cm,semithick, el/.style = inner sep=2pt, align=left, every label/.append style = {font=\tiny}]
  \tikzstyle{every node}=[circle,line width =1pt]
  %\node[draw=none] (fakebase){\strut};
  \node (a) [draw] {$1$};
  \node (b) [right of=a,draw] {$2$} ;
  \node (c) [below of=a,draw] {$3$};
  \node (d) [below of=b, draw] {$4$};
  \path (a) edge (c);
  \path (a) edge (d);
  \path (b) edge (c);
  \path (b) edge (d);
    \node [below right= 0.5cm of c] {$\dag(C)$};
  \node (alab)[ below left  =.3cm and 0.0cm of a]{$\frac12$};
  \node (blab)[ below right =.3cm and 0.0cm of b]{$\frac12$};
 \node (clab)[below right =.3cm and 0.05cm of a]{$1$};
 \node (dlab)[below left =.3cm and 0.05cm of b]{$1$};
  \node (a2) [right =5cm of a,draw] {$1$};
  \node (b2) [right of=a2,draw] {$2$} ;
  \node (c2) [below of=a2,draw] {$3$};
  \node (d2) [below of=b2, draw] {$4$};
  \path (a2) edge  (c2);
  \path (b2) edge  (d2);
  \node [below right= -0.1cm and -0.2cm of c2] {$\lambda(M(g))=2$};
  %\path (a2) edge [loop] (a2);
  %\path (b2) edge [loop] (b2);
  \node [below right =0.4cm and 0.4cm of a2]{$g$};
  \end{tikzpicture}  
  \vspace*{-1cm}
  \caption{Bipartite DAG: The subgraph $g$ to the right is not an impact graph for the weighted DAG $\D(C)$ to the left as it violates the principal eigenvalue condition (d) in Theorem~\ref{thm:impact}.} \label{fig:eigencondition}
\end{figure}
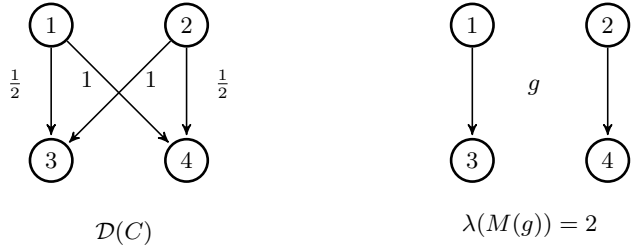 
The subgraph $g$ to the right in Figure \ref{fig:eigencondition} satisfies conditions (a)-(c) of Theorem \ref{thm:impact}. However, it fails to satisfy condition (d) because
$$ M(g) = \begin{bmatrix}
0 & 2 \\
2 & 0 \\
\end{bmatrix}, $$
since
$$ m_{12} = \max_{i \in \ch_g(1) = \{3\}} \frac{c^\ast_{i2}}{c^\ast_{i1}} = \frac{c^\ast_{32}}{c^\ast_{31}} = \frac{1}{1/2} = 2, \quad m_{21} = \max_{i \in \ch_g(2) = \{4\}} \frac{c^\ast_{i1}}{c^\ast_{i2}} = \frac{c^\ast_{41}}{c^\ast_{42}} = \frac{1}{1/2} = 2. $$
Then $\lambda(M(g)) = \sqrt{2 \cdot 2} = 2 > 1$, so $g$ is not an impact graph. Indeed if it were, $1\to 3$ would imply $X_3=\frac12 Z_1> Z_2$ but $2\to 4$ would imply $X_4=\frac12 Z_2>Z_1$ and thus $Z_1>2Z_2>4Z_1$, which is inconsistent since $Z_1>0$.
\halmos
\end{example}

\begin{proof}[Proof of Theorem \ref{thm:impact}]
First we show that all conditions are necessary. 

To prove (a), let $g \in \mathfrak{G}$. If $c^\ast_{ij}=0$, then $X_i > 0 = c^\ast_{ij} Z_j$, which means that $j \to i \notin g$.
So $g$ is a subgraph of $\D^\ast$ which proves (a). 

To establish (b) we first note that  $g$ must be a forest from Remark~\ref{rmk:onesource}.
We shall then argue that any tree in the forest has height at most one.  Suppose $j \to i \in g$. Then
$$ X_i = c^\ast_{ij}Z_j > Z_i \mbox{ on } \mathcal{E}(g). $$
Now, for any $k \in V$, either $c^\ast_{ki} = 0$ so $i \to k \notin g$ by (a), or we have by the idempotency of $C^*$ and \eqref{eqn:idempotent} that
$$X_k \geq  c^\ast_{ki}c^\ast_{ij}Z_j > c^\ast_{ki}Z_i \mbox{ on } \mathcal{E}(g),$$
and therefore there is no edge $i\to k$ in $g$ which proves (b). 

Next we establish (c). Consider a triple of nodes $i,j, k$ with $j\to i$ and $c^*_{ij}=c^*_{ik}c^*_{kj}$. Since $g$ is a forest and $j\to i$, we must have $k \to i \notin g$. Also, since $j\to i$ we have as before
\[X_i = c^*_{ij}Z_j \mbox{ on $\mathcal{E}(g)$}.\]
Using \eqref{eqn:idempotent} again, we know that $X_i \geq c^*_{ik}X_k $. Then the relation $c^*_{ij}=c^*_{ik}c^*_{kj}$ yields
\[X_k \geq c^*_{kj}Z_j=\frac{c^*_{ij}}{c^*_{ik}}{Z_j}
=\frac{X_i}{c^*_{ik}} \geq X_k \mbox{ on $\mathcal{E}(g)$}\]
and hence we must have equality  so $X_k = c^*_{kj}Z_j$ on $\mathcal{E}(g)$. This proves (c). 

For condition (d) we first note that if $\lambda(M)=0$ then it is certainly less than 1. So assume $\lambda(M)>0$. Then there exists a critical cycle $r_1 \leftarrow r_2 \dots \leftarrow r_k \leftarrow r_1$ with $r_1,\dots,r_k\in R$ such that 
\begin{equation}\label{eqn:c.geq.1}
0 < (\lambda(M))^k = m_{r_1r_2}m_{r_2r_3} \dots m_{r_kr_1}.
\end{equation}
In particular, this implies each edge in the cycle is not $0$, so for each edge, say, $r_2 \to r_1$, there exists a node $i \in V$ that achieves this maximum so that $r_1\to i$ in $g$ and
$$ \frac{c^\ast_{ir_2}}{c^\ast_{ir_1}} = m_{r_1r_2}. $$
Now, since $r_1\to i$ in $g$ and $i$ has at most one parent, this implies
$c^*_{ir_1}Z_{r_1}>c^*_{ir_2}Z_{r_2}$, 
%$$ c^\ast_{ij_1}Z_1 > c^\ast_{ij_2}Z_2, $$
whereby $ Z_{r_1} > m_{r_1r_2}Z_{r_2} $
by rearranging. 
Tracing this cycle, we obtain the equation 
$$ Z_{r_1} > \left(m_{r_1r_2}m_{r_2r_3} \dots m_{r_kr_1}\right) Z_{r_1}.$$
Dividing by $Z_{r_1}>0$, we obtain from \eqref{eqn:c.geq.1} that $\lambda(M)<1$.
\begin{comment}
For condition (d),
suppose for contradiction that $\lambda(M)=\lambda(M(g)) \geq 1$. Since the principal eigenvalue coincides with the maximum cycle mean, there exists a cycle $r_1 \leftarrow r_2 \dots \leftarrow r_k \leftarrow r_1$ with $r_1,\dots,r_k\in R$ such that 
\begin{equation}\label{eqn:c.geq.1}
(\lambda(M))^k = m_{r_1r_2}m_{r_2r_3} \dots m_{r_kr_1} \geq 1.
\end{equation}
In particular, this implies each edge in the cycle is not $0$, so for each edge, say, $r_2 \to r_1$, there exists a node $i \in V$ that achieves this maximum so that $r_1\to i$ in $g$ and
$$ \frac{c^\ast_{ir_2}}{c^\ast_{ir_1}} = m_{r_1r_2}. $$
Now, since $r_1\to i$ in $g$ and $i$ has at most one parent, this implies
$c^*_{ir_1}Z_{r_1}>c^*_{ir_2}Z_{r_2}$, 
%$$ c^\ast_{ij_1}Z_1 > c^\ast_{ij_2}Z_2, $$
whereby $ Z_{r_1} > m_{r_1r_2}Z_{r_2} $
by rearranging. 
Tracing this cycle, we obtain the equation 
$$ Z_{r_1} > \left(m_{r_1r_2}m_{r_2r_3} \dots m_{r_kr_1}\right) Z_{r_1}.$$
But by \eqref{eqn:c.geq.1}, this is a contradiction and hence we must have $\lambda(M)<1$.
\end{comment}
Thus, all four conditions are necessary. \\

To see that the conditions are sufficient, let $g$ be a graph that satisfies all four conditions. Let $\epsilon > 0$ be an arbitrarily small constant, and 
\begin{equation}\label{eqn:alpha}\alpha = \max_{i,j,k,\ell: c^\ast_{ji},c^\ast_{\ell k} > 0} \frac{c^\ast_{\ell k}}{c^\ast_{ji}}.\end{equation}
%\CA{Since $v=0$ is a tropical eigenvector when $\lambda(M)=0$, I have simplified the two cases into one (old proof is commented out).}
Let $v$ be a tropical eigenvector of $M$ for $\lambda(M) < 1$. This means $$(M\odot v)_r = \lambda(M)v_r < v_r,$$ for $v_r>0$, so that the event
\begin{align*}
\E = \{&v_r > Z_r > (M \odot v)_r \mbox{ for } r \in R \mbox{ s.t. } v_r > 0 \, \, , \, \, Z_r > \alpha\epsilon \mbox{ for } r \in R \mbox{ s.t. } v_r = 0,  \\
&\mbox{ and }  Z_j < \epsilon \mbox{ for } j \notin R \}.
\end{align*}
\begin{comment}
If $\lambda(M) = 0$, define the event
\begin{align*}
\E = \{Z_r > \alpha\epsilon \mbox{ for } r \in R, \mbox{ and }  Z_j < \epsilon \mbox{ for } j \notin R \}.
\end{align*}
while if $\lambda(M)>0$, define it as
\begin{align*}
\E = \{&v_r > Z_r > (M \odot v)_r \mbox{ for } r \in R \mbox{ s.t. } v_r > 0, Z_r > \alpha\epsilon \mbox{ for } r \in R \mbox{ s.t. } v_r = 0,  \\
&\mbox{ and }  Z_j < \epsilon \mbox{ for } j \notin R \}.
\end{align*}
where $v$ is a tropical eigenvector of $M$ for $\lambda(M) < 1$. This means $$(M\odot v)_r = \lambda(M)v_r < v_r.$$ 
\end{comment}
satisfies $\P(\E) > 0$. We now argue that $\E$ is a subevent of $\mathcal{E}(g)$. Since the collection of events $\{\mathcal{E}(g): g\in \mathfrak{G}\}$ partitions the innovation space $\E$, the event $\E$ must be partitioned into finitely many events, each with positive probability, which we denote $\E \cap\, \mathcal{E}(g'_1), \cdots, \E \cap\,  \mathcal{E}(g'_s)$. By definition of $g$, each $i$ belongs to a unique  star with root $r$. %, that is, $y_{ij} = 1$ for some $j \in J$. 
Under the event $\E$, for all $r' \in R$, $r' \neq r$,
$$ Z_r > \max_{r'} m_{rr'}v_{r'} > \max_{r'} m_{rr'}Z_{r'}, $$
therefore,
$ Z_r > m_{rr'}Z_{r'} $
 that is,
$ c^\ast_{ir}Z_r > c^\ast_{ir'}Z_{r'}$
%for all $j' \in J, j' \neq j$. 
whence $r'\not\to i$ for all $g' \in \{g'_1,\dots,g'_s\}$. Similarly, for any $r' \notin R$ with $c^\ast_{ir'} > 0$, 
$$ c^\ast_{ir}Z_i > c^\ast_{ir}\alpha \geq c^\ast_{ir}\frac{c^\ast_{ir'}}{c^\ast_{ir}}\epsilon > c^\ast_{ir'}Z_{r'} $$
and hence $r'\not\to i$ in any of  $g' \in \{g'_1,\dots,g'_s\}$. Thus $r\to i$ in any $g'$ and we must have $\E \subseteq \mathcal{E}(g)$, so $\P(\mathcal{E}(g)) \geq \P(\E) > 0$, as needed.
\end{proof}

\begin{example}[Half-butterfly]\label{ex:impacts}
Let $\D$ be the weighted DAG given in the leftmost part of Figure~\ref{fig:impacts}. Its weighted reachability DAG $\D^\ast(C)$ is shown to its right. For example, we have $c^\ast_{41}=c^\ast_{43}c^\ast_{31}=3$. %\CA{strange. half of the example was missing (no mention of $g_1,g_2$ at all, only $g$). Fixed now.}
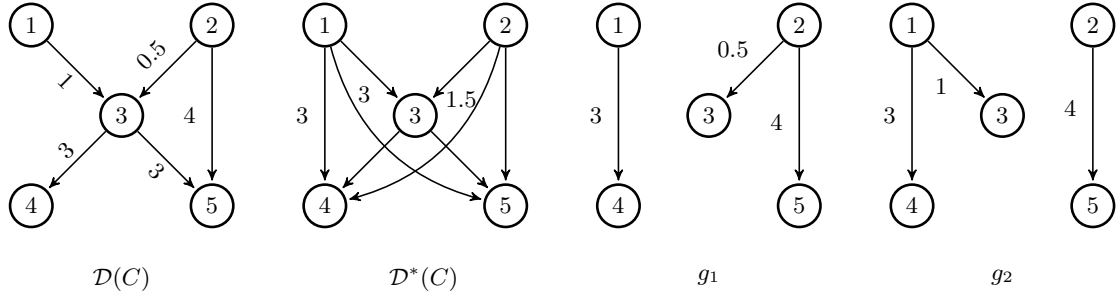
\begin{figure}
\begin{center} 
\begin{tikzpicture}[->,>=stealth',shorten >=1pt,swap,node distance=1.7cm,semithick, el/.style = {inner sep=1pt, align=left, sloped},
every label/.append style = {font=\tiny}]
  \tikzstyle{every node}=[circle,line width =1pt]
  \node (aa) [draw] {$1$};
  \node (cc) [below right of=aa,draw] {$3$} ;
  \node (bb) [above right of=cc,draw] {$2$} ;
  \node (dd) [below left of=cc,draw] {$4$};
  \node (ee) [below right of=cc, draw] {$5$};
  \path (aa) edge node[every node/.style={font=\sffamily\footnotesize},el,below]  {$1$}   (cc);
  \path (bb) edge node[every node/.style={font=\sffamily\footnotesize}, el,above]  {$0.5$} (cc);   
  \path (cc) edge  node[every node/.style={font=\sffamily\footnotesize},el,above]  {$3$} (dd);
  \path (cc) edge node[every node/.style={font=\sffamily\footnotesize},el,below]  {$3$} (ee);
  \path (bb) edge ["$4$"] (ee);
  \node (lab) [below = 1.3cm of cc] {$\D(C)$};
  
     \node (a4) [right = .9cm of bb, draw] {$1$};
  \node (c4) [below right of=a4,draw] {$3$} ;
  \node (b4) [above right of=c4,draw] {$2$} ;
  \node (d4) [below left of=c4,draw] {$4$};
  \node (e4) [below right of=c4,draw] {$5$};
  \path (a4) edge   (c4);
  \path (b4) edge  (c4);   
  \path (c4) edge  (d4);
  \path (c4) edge  (e4);
  \path (b4) edge  (e4);
  \path (a4) edge ["$3$"] (d4); 
  \path (a4) edge [bend right=30, align=left, below](e4);
  \path (b4) edge [bend left=30, align=left, above](d4);
  \node [below right =0.5cm and 0.1cm of a4]{$3$};
  \node [below left =0.5cm and 0.1cm of b4]{$1.5$};
  \node [right= 2.8cm of lab] {$\D^\ast(C)$};
  
  \node (a) [right=.9cm of b4, draw] {$1$};
  \node (c) [below right of=a,draw] {$3$} ;
  \node (b) [above right of=c,draw] {$2$} ;
  \node (d) [below left of=c,draw] {$4$};
  \node (e) [below right of=c, draw] {$5$};
  \path (a) edge ["$3$"] (d);
  \path (b) edge [pos=0.55,"$4$"](e);
  \path (b) edge ["$0.5$"](c);
  %\path (aa) edge [loop] (aa);
  %\path (bb) edge [loop] (bb);
    \node [below = 1.5cm of c] {$g_1$};
  
   \node (a3) [right = .9cm of b, draw] {$1$};
  \node (c3) [below right of=a3,draw] {$3$} ;
  \node (b3) [above right of=c3,draw] {$2$} ;
  \node (d3) [below left of=c3,draw] {$4$};
  \node (e3) [below right of=c3,draw] {$5$};
  \path (a3) edge ["$1$"] (c3);
  \path (a3) edge ["$3$"] (d3);
  \path (b3) edge [pos=0.45,"$4$"] (e3);
  %\path (a3) edge [loop] (a3);
  %\path (b3) edge [loop] (b3);
    \node [below = 1.5cm of c3] {$g_2$};
  \end{tikzpicture}  
  \caption{The half-butterfly graph $\D(C)$ and its weighted reachability DAG $\D^\ast(C)$ where only edge weights for additional edges are indicated. The galaxy $g_1$ is not an impact graph for this DAG as it violated the as it violates the triangle condition (c) in Theorem~\ref{thm:impact}, while the galaxy $g_2$ is. 
  %\sll{Add some numbers to the figure so one can see this.} \CA{weights added, along with the original graph.}
  } \label{fig:impacts}
\end{center}
\end{figure}
 Now consider the two different subgalaxies $g_1$ and $g_2$ shown to the right in Figure~\ref{fig:impacts}. We shall see that $g_2$ is an impact graph for the given coefficient matrix $C$ while $g_1$ is not. Indeed, $g_1 \notin \mathfrak{G}$ as it violates the triangle condition (c) in Theorem~\ref{thm:impact}: $1\to 4 \in g_1$ and $c^\ast_{41}=c^\ast_{43}c^\ast_{31}$ but $1 \to 3 \notin g_1$. On the other hand, $1\to 3 \in g_2$ as required. Furthermore, $g_2$ has impact exchange matrix given by
$$M(g_2) =\begin{bmatrix}
0 & \frac12 \\
\frac34 & 0 \\
\end{bmatrix}, $$
because
$$m_{12} =  \max_{i\in \ch_{g_2}(1)= \{3,4\} } \frac{c^*_{i2}}{c^*_{i1}} = \max \left\lbrace \frac{1/2}{1} , \frac{3/2}{3} \right\rbrace = \frac12, \quad m_{21} = \max_{i \in \ch_{g_2}(2) = \{5\}} \frac{c^\ast_{i1}}{c^\ast_{i2}} = \frac{c^\ast_{51}}{c^\ast_{52}} = \frac{3}{4}.$$
We have then $\lambda(M(g_2)) = \sqrt{\frac12 \cdot \frac34} = \sqrt{\frac38} < 1$ and so condition (d) also holds. We conclude by Theorem \ref{thm:impact} that $g_2 \in \mathfrak{G}$. A possible realization in terms of $Z$ is given by  $Z=(z_1,z_2,z_3,z_4,z_5)=(2,3,0.1,0.4,0.2)$  leading to $X=(x_1,x_2,x_3,x_4,x_5)=(2,3,2,6,12)$.
\halmos
\end{example}
The following simple lemma shall be used in the subsequent analysis. 
\begin{lemma}\label{lem:zy}
Consider the max-linear Bayesian network \eqref{eq:matrixrec} with fixed coefficient matrix $C$. Let $g\in \mathfrak{G}$ be an impact graph with root set $R=R(g)$. Then it holds for all $z \in \mathcal{E}(g)$ that
\begin{equation}\label{eqn:lambda.z.z}
M\odot z_R \leq z_R %\mbox{ a.s.}
\end{equation}
where $M=M(g,C)$ is the impact exchange matrix of $g$ and $z_R = (z_r)_{r \in R}$ is the truncation of $z$ to the root set. 
\end{lemma}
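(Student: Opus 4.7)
The plan is a direct unpacking of the definitions, using the fact that $z \in \mathcal{E}(g)$ encodes precisely which entries of $C^\ast \odot z$ achieve the maximum.

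Fix an arbitrary root $r \in R$. I want to show $(M \odot z_R)_r \leq z_r$. Since $m_{rr} = 0$, we have
\[ (M \odot z_R)_r = \bigvee_{r' \in R \setminus \{r\}} m_{rr'} z_{r'} = \bigvee_{r' \in R \setminus \{r\}} \Big(\max_{i \in \ch_g(r)} \frac{c^*_{ir'}}{c^*_{ir}}\Big) z_{r'}. \]
If $\ch_g(r) = \emptyset$, the right-hand side is $0 \leq z_r$ and we are done, so assume $\ch_g(r) \neq \emptyset$.

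For any $i \in \ch_g(r)$, the edge $r \to i$ belongs to $g$, which (using that $g$ is a galaxy by Theorem \ref{thm:impact}(b)) means $i$ has only the parent $r$ in $g$, and by Definition \ref{defn:impact},
\[ X_i = c^*_{ir} z_r. \]
On the other hand, from the general formula $X = C^\ast \odot z$ we have, for every $r' \in R \setminus \{r\}$,
\[ X_i = \bigvee_{j} c^*_{ij} z_j \geq c^*_{ir'} z_{r'}. \]
Combining these two and dividing by $c^*_{ir} > 0$ gives $z_r \geq (c^*_{ir'}/c^*_{ir})\, z_{r'}$. Taking the maximum over $i \in \ch_g(r)$ and then over $r' \in R \setminus \{r\}$ yields
\[ z_r \geq \bigvee_{r' \neq r} m_{rr'} z_{r'} = (M \odot z_R)_r, \]
as required.

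There is no real obstacle here; the only thing to verify carefully is that the galaxy property ensures each $i \in \ch_g(r)$ has $r$ as its unique parent in $g$, so that $X_i$ is indeed realized as $c^*_{ir} z_r$ on $\mathcal{E}(g)$. Once this is observed, the inequality follows directly from comparing this realization with the defining expression $X_i = \bigvee_j c^*_{ij} z_j$.
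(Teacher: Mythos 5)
Your proof is correct and takes essentially the same route as the paper's: both arguments rest on the observation that $r \to i \in g$ forces $X_i = c^\ast_{ir} z_r$ while the max-linear representation gives $X_i \geq c^\ast_{ir'} z_{r'}$, so $c^\ast_{ir} z_r \geq c^\ast_{ir'} z_{r'}$. The paper packages this as a contradiction on the sub-events $\{m_{rr'} z_{r'} > z_r\}$ using only the maximizing child $i$, whereas you derive the inequality directly for every $i \in \ch_g(r)$ and then take maxima, but the mathematical content is identical.
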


\begin{proof}
Let $\E' = \{z: \mbox{$z$  does not satisfy } \eqref{eqn:lambda.z.z}\}$. Note that $\E'$ decomposes as the union of $R(R-1)$ sub-events $\E'_{rr'}$, where
$$ \E'_{rr'} = \{m_{rr'}z_{r'} > z_r \}. $$
We shall next show that $\mathcal{E}(g)\cap \E'_{rr'} = \emptyset$ for each pair $r,r' \in R$ with $r \neq r'$. %Since the roots have no parents in $g$, we get in fact for all $j\notin R$ 
%$$ \E'_{rr'} = \{m_{rr'}z_{r'} > z_r \}.  $$
Suppose for contradiction that there exists some $z \in \mathcal{E}(g) \cap \E'_{rr'}$. By definition, $m_{rr'} = \max_{i \in \ch_g(r)} \frac{c^\ast_{ir'}}{c^\ast_{ir}}$. Let $i \in \ch_g(r)$ be a node that achieves this maximum. Then $z \in \E'_{rr'}$ implies 
$$ c^\ast_{ir'}z_{r'} > c^\ast_{ir}z_{r}. $$
%\CA{changed from $ c^\ast_{ir'}z_j' > c^\ast_{ir}z_{r}. $}
But now the max-linear representation of $X$ implies that on $\E'_{rr'}$, 
$$ x_i \neq c^\ast_{ir}z_r$$
which contradicts that $r\to i$ in $g$. Hence we conclude that $\mathcal{E}(g) \cap \E'_{rr'} = \emptyset$ and thus further that
$$\mathcal{E}(g) \cap\E'=\E(g)\cap\bigcup_{rr'} \E'_{rr'} = \emptyset,$$ as needed.
\end{proof}

\subsection{Impact graphs compatible with a context}

As mentioned in Remark~\ref{rmk:linear.pieces}, the impact graphs represent a partition of the innovation space $\E= \Rplus^V$ into regions of linearity. We can also represent these as linear maps $L_g: z \in \Rplus^{R(g)} \mapsto x \in \Rplus^V$ via
$$ L_g(z)_r=z_r,\quad L_g(z)_i = c^\ast_{ir}z_r \quad\mbox{ iff }\quad r \to i \text{ in } g.$$% \text{ and $L_g(z)_i=0$ otherwise.}$$
We shall illustrate this in a small example.

%\CK{the next example is context-free, move?}
%\sll{We can, but then we should move the remarks above it as well. I have done this.}

\begin{example}[Bipartite]\label{ex:all.impact.graphs}
Consider again the DAG and coefficient matrix of Example~\ref{ex:eigencondition} as depicted in Figure~\ref{fig:eigencondition}. Figure~\ref{fig:all.impact.graphs} displays all impact graphs for this DAG, save for their symmetric counterparts. 
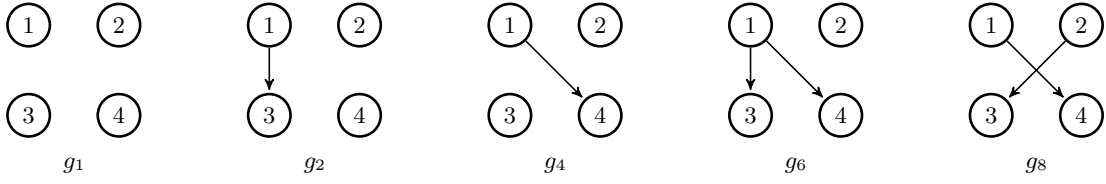
\begin{figure}
\begin{center} 
\begin{tikzpicture}[->,>=stealth',shorten >=1pt,auto,node distance=1.2cm,semithick, el/.style = {inner sep=2pt, align=left, sloped}, font = \footnotesize],
%every label/.append style = {font=\footnotesize}]
  \tikzstyle{every node}=[circle,line width =1pt]
  \node (a) [draw] {$1$};
  \node (b) [right of=a,draw] {$2$} ;
  \node (c) [below of=a,draw] {$3$};
  \node (d) [below of=b, draw] {$4$};
  
  \node (g) [below right = 0.2cm and 0.15 cm  of c] {$g_1$};
  
   \node (a1) [right = 1.4cm of b, draw] {$1$};
  \node (b1) [right of=a1,draw] {$2$} ;
  \node (c1) [below of=a1,draw] {$3$};
  \node (d1) [below of=b1, draw] {$4$};
  \path (a1) edge  (c1);
  \node (g1) [below right = 0.2cm and 0.15 cm  of c1] {$g_2$};

   \node (a2) [right = 1.4cm of b1, draw] {$1$};
  \node (b2) [right of=a2,draw] {$2$} ;
  \node (c2) [below of=a2,draw] {$3$};
  \node (d2) [below of=b2, draw] {$4$};
  \path (a2) edge  (d2);
  %\path (b2) edge  (d2);
  %\path (a2) edge [loop] (a2);
  %\path (b2) edge [loop] (b2);
  \node (g2) [below right = 0.2cm and 0.15 cm  of c2] {$g_4$};
  \node (a3) [right = 1.4cm of b2, draw] {$1$};
  \node (b3) [right of=a3,draw] {$2$} ;
  \node (c3) [below of=a3,draw] {$3$};
  \node (d3) [below of=b3, draw] {$4$};
  \path (a3) edge  (d3);
  \path (a3) edge (c3);
  \node (g3) [below right = 0.2cm and 0.15 cm  of c3] {$g_6$};
  \node (a4) [right = 1.4cm of b3, draw] {$1$};
  \node (b4) [right of=a4,draw] {$2$} ;
  \node (c4) [below of=a4,draw] {$3$};
  \node (d4) [below of=b4, draw] {$4$};
  \path (a4) edge  (d4);
  \path (b4) edge (c4);
  \node (g4) [below right = 0.2cm and 0.15 cm  of c4] {$g_8$};
  \end{tikzpicture}  
\end{center}
\vspace*{-5mm}
\caption{Impact graphs $\mathfrak{G}$ for the bipartite DAG in Figure~\ref{fig:eigencondition}. There are a total of eight such graphs, the remaining three ($g_3$, $g_5$, and $g_7$) obtained by the reflection $(1,3)\leftrightarrow (2,4)$ of $g_2$, $g_4$, and $g_6$. }
\label{fig:all.impact.graphs}
\end{figure}

Of the 16 edge-induced subgraphs of the DAG $\D$, only nine are forests and one of them, displayed to the right in Figure~\ref{fig:eigencondition}, violates the principal eigenvalue condition; so there are eight valid galaxies left, five of which are displayed in Figure~\ref{fig:all.impact.graphs}, and the remaining three obtained by appropriate relabeling.
The max-linear map is
\[C^*\odot z=\begin{bmatrix}
1 & 0 & 0 & 0 \\
0 & 1 & 0 & 0 \\
\frac12 & 1 & 1 & 0 \\
1 & \frac12 & 0 & 1 
\end{bmatrix}\odot\begin{bmatrix}z_1\\z_2\\z_3\\z_4\end{bmatrix}=\begin{bmatrix}z_1\\z_2\\\frac12z_1\vee z_2\vee z_3\\z_1\vee \frac12z_2\vee z_4\end{bmatrix}\]
and the corresponding matrices $C^*_g$ for the pieces of linearity are 
\[\begin{bmatrix}
1 & 0 & 0 & 0 \\
0 & 1 & 0 & 0 \\
0 & 0 & 1 & 0 \\
0 & 0 & 0 & 1 
\end{bmatrix}\quad
\begin{bmatrix}
1 & 0 & 0 & 0 \\
0 & 1 & 0 & 0 \\
\frac12 & 0 & 0 & 0 \\
0 & 0 & 0 & 1 
\end{bmatrix}\quad\begin{bmatrix}
1 & 0 & 0 & 0 \\
0 & 1 & 0 & 0 \\
0 & 0 & 1 & 0 \\
1 & 0 & 0 & 0 
\end{bmatrix}\quad\begin{bmatrix}
1 & 0 & 0 & 0 \\
0 & 1 & 0 & 0 \\
\frac12 & 0 & 0 & 0 \\
1 & 0 & 0 & 0 
\end{bmatrix}\quad\begin{bmatrix}
1 & 0 & 0 & 0 \\
0 & 1 & 0 & 0 \\
0 & 1 & 0 & 0 \\
1 & 0 & 0 & 0 
\end{bmatrix}
\]
mapping $z$ respectively into
\[\begin{bmatrix}z_1\\z_2\\z_3\\z_4\end{bmatrix}\qquad
\begin{bmatrix}z_1\\z_2\\\frac12z_1\\z_4\end{bmatrix}\qquad 
\begin{bmatrix}z_1\\z_2\\z_3\\z_1\end{bmatrix}\qquad\begin{bmatrix}z_1\\z_2\\\frac12z_1\\z_1\end{bmatrix}\qquad\begin{bmatrix}z_1\\z_2\\z_2\\z_1\end{bmatrix}.  \]
These maps can also be considered as maps $L_g$ from the root set  to the node set and would then have matrices
\[\begin{bmatrix}
1 & 0 & 0 & 0 \\
0 & 1 & 0 & 0 \\
0 & 0 & 1 & 0 \\
0 & 0 & 0 & 1 
\end{bmatrix}\quad
\begin{bmatrix}
1 & 0  & 0 \\
0 & 1  & 0 \\
\frac12 & 0 &  0 \\
0 & 0 & 1 
\end{bmatrix}\quad\begin{bmatrix}
1 & 0 & 0  \\
0 & 1 & 0 \\
0 & 0 & 1 \\
1 & 0 & 0 
\end{bmatrix}\quad\begin{bmatrix}
1 & 0  \\
0 & 1   \\
\frac12 & 0   \\
1 & 0 
\end{bmatrix}\quad\begin{bmatrix}
1 & 0   \\
0 & 1   \\
0 & 1   \\
1 & 0 
\end{bmatrix}
\]
where the roots $(1,2,4)$ in $g_2$ have been renumbered as $(1,2,3)$. Indeed these matrices are simply obtained by removing zero-columns in the first set of matrices. Note that the rank $r_g$ of the maps are all equal to $r_g=|R(g)|$, the number of stars in the galaxy, i.e.\ $4,3,3,2,2$ in these cases. 
\halmos
\end{example}
\begin{definition}\label{def:possible}
Let $K\subseteq V$ and $\Pi_K(x)=x_K$ be the projection onto coordinates in $K$. For $x_K\in \mathcal{L}^C_K$ as defined in \eqref{eqn:feasible}, we define
\begin{enumerate}[(a)]
\item A graph $g \in \mathfrak{G}$ is \emph{compatible with the context} $\{X_K =x_K\}$ if  the following are true:
\begin{enumerate}[(i)] 
\item $\E(g)\cap \{X_K=x_K\}\neq \emptyset$,
    \item the rank of $\Pi_K\circ L_g$ is minimal among those $g\in \mathfrak{G}$ which satisfy (i).
\end{enumerate}
\item The set of compatible graphs $g$ is called the \emph{impact graphs for the context $\{X_K =x_K\}$}, denoted $\mathfrak{G}(X_K =x_K)$. 
\item  We further say that the context $\{X_K =x_K\}$ is \emph{possible} if $\mathfrak{G}(X_K =x_K)\neq \emptyset$ and \emph{possible under $g$} if $g\in \mathfrak{G}(X_K =x_K)$. Else the context $\{X_K=x_K\}$ is said to be \emph{impossible} or \emph{impossible under $g$} respectively. For brevity we shall also use the expression that $x_K$ is \emph{possible}. 
\end{enumerate}
\end{definition}

Note that although all events of the form $\{X_K=x_K\}$ have probability zero, we are now distinguishing between those that are exceptions from events of the form $\E(g)$ (impossible contexts) and those that are not (possible contexts). In other words, $x_K$ might still satisfy $x_K\in \mathcal{L}^C_K$  without being possible.  In the following we shall only pay attention to possible contexts. Furthermore, this definition also applies to the special case $K=V$ so we now can speak about $\{X=x\}$ being possible or impossible under $g\in \mathfrak{G}$.

The rank condition (a) (ii) ensures that if any subevent $\E(g^*)$ includes $x_K$  and the map $\Pi_K\circ L_{g^*}$ has higher rank than $\Pi_K\circ L_{g}$, then the entire collection of contexts $\{X_K=x_K\}$ in the image of $\Pi_K\circ L_{g}$ is a null-set in $\E(g^*)$. Therefore, the set of points in $\mathcal{L}^C_K$ that are not possible has measure zero and can be ignored when discussing conditional distributions.
\begin{example}\label{ex:cass_possible}
Consider the Cassiopeia graph in Example~\ref{ex:cassiopeia0} with all coefficients equal to one and the event $\{X_4=X_5=2\}$. 
The impact graphs $g_3$ and $g_4$ are the only impact graphs among those in Figure~\ref{fig:cass.impact} that satisfy condition (i) in Definition~\ref{def:possible}, as the other impact graphs imply strict inequalities between $x_4$ and $x_5$.
In addition, the empty galaxy, and all galaxies with a single edge satisfy condition (i). However, the rank of $\Pi_K\circ L_{g_3}$ is one, whereas the rank of all other maps $\Pi_K\circ L_{g}$ is two. Hence only $g_3$ is compatible with $\{X_4=X_5=2\}$.\halmos
\end{example}

%[Constant variables and nodes]
\begin{definition}\label{defn:fixed} 
Suppose $\{X_K = x_K\}$ is possible. 
We say that $X_j$ is \emph{constant on $\{X_K =x_K\}$} if there exists $x^*_j \in \Rplus$ such that
$$\{X_{K\cup j}=x_{K\cup j}\} \text{ is possible if and only if $x_j=x^*_j$}.$$
Similarly, $X_j$ is \emph{constant on
$\{X_K =x_K\}\cap \E(g)$} if there exists $x^*_j \in \Rplus$ such that
$$\{X_{K\cup j}=x_{K\cup j}\} \text{ is possible under $g$  if and only if $x_j=x^*_j$.}$$  
Define the set of \emph{constant nodes} on $\{X_K = x_K\}$ as
$$K^\ast := K^\ast(X_K =x_K) := \{j \in V: X_j \mbox{ is constant on } \{X_K =x_K\}\} $$ 
and nodes that are \emph{constant under $g$} as
$$K^\ast(g) := K^\ast(X_K =x_K,g) :=  \{j \in V: X_j \mbox{ is constant on  $\{X_K =x_K\}\cap \E(g)$}\}.$$
\end{definition}

Note that $K \subseteq K^\ast \subseteq K^\ast(g)$ for specific $g\in\mathfrak{G}$. Often these inclusions can be strict (see Example \ref{ex:butterfly2}). The following lemma characterizes these sets. Recall from Theorem~\ref{thm:impact} that each impact graph $g \in \mathfrak{G}$ is a galaxy.

\begin{lemma}\label{lem:k.ast.y} 
Suppose $g\in \mathfrak{G}(X_K = x_K)$ and $S=V(\sigma)$ is the node set of a star $\sigma$ in the galaxy $g$. Then, either
\begin{itemize}
    \item[(a)] $S \cap K \neq \emptyset$, in which case $S \subseteq K^\ast(g)$ and we call $S$ a \emph{constant star}; or
    \item[(b)] $S \cap K = \emptyset$, in which case $S \cap K^\ast(g) = \emptyset$.
\end{itemize}
In particular, 
$$K^\ast(g) = \bigcup_{\sigma\in g: S\cap K\neq \emptyset }V(\sigma).$$
\end{lemma}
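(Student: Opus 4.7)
The plan is to exploit the fact that $g$ is a galaxy by Theorem~\ref{thm:impact}, so its stars partition $V$; once I prove (a) and (b), the displayed formula for $K^\ast(g)$ is immediate by taking the union over those stars whose node set meets $K$.

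For part (a), the key observation is that under $g$ each node $j \in S$ satisfies $X_j = c^\ast_{j r} Z_r$ (with $c^\ast_{rr}=1$), where $r$ is the root of $\sigma$, so all of $X_S$ is a deterministic function of the single innovation $Z_r$. If some $i\in S\cap K$ is observed, then either $i=r$, forcing $Z_r = x_r$, or $i$ is a leaf with $r\to i\in g$, forcing $Z_r = x_i/c^\ast_{ir}$. Either way $Z_r$ is pinned down by the context together with the constraint $z\in \E(g)$, and hence $X_j = c^\ast_{jr} Z_r$ is uniquely determined for every $j\in S$. This places every $j\in S$ in $K^\ast(g)$.

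For part (b), I want to show that when $S\cap K=\emptyset$ no $X_j$ with $j\in S$ is constant on $\{X_K=x_K\}\cap \E(g)$. The set $\E(g)$ is cut out by finitely many strict inequalities of the form $c^\ast_{i,R_g(i)}Z_{R_g(i)} > c^\ast_{ij}Z_j$ (for all other competing sources $j$), so it is open in $\Rplus^V$ up to a null set. Since $g\in \mathfrak{G}(X_K=x_K)$, Definition~\ref{def:possible} yields a witness $z^0\in \E(g)\cap\{X_K=x_K\}$. Now perturb only the coordinate $z^0_r$: because $r$ is the root of a star whose node set misses $K$, we have $R_g(i)\neq r$ for every $i\in K$, so $X_i = c^\ast_{i,R_g(i)}Z_{R_g(i)}$ is unaffected, and the context $\{X_K=x_K\}$ continues to hold. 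For sufficiently small perturbations of $z^0_r$ we remain inside the open set $\E(g)$, and as $Z_r$ sweeps an open interval so does each $X_j = c^\ast_{jr}Z_r$ for $j\in S$ (since $c^\ast_{jr}>0$). Thus no $X_j$, $j\in S$, is constant, proving $S\cap K^\ast(g)=\emptyset$.

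The main obstacle is the perturbation argument in (b): I must be sure that moving $z^0_r$ neither breaks the context nor ejects $z$ from $\E(g)$. Both worries are dispatched by the same structural fact — namely that under a galaxy impact graph the coordinate $Z_r$ feeds only into the values $X_j$ for $j\in S$, so the $K$-coordinates of $X$ are decoupled from $Z_r$, while openness of $\E(g)$ supplies a nontrivial interval of admissible values. Everything else in the proof is bookkeeping following from the star structure guaranteed by Theorem~\ref{thm:impact}(b).
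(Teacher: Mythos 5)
Your proof is correct, and part (a) coincides with the paper's own argument: on $\E(g)$ every coordinate in a star is a fixed positive multiple of the root innovation $Z_r$, so observing any one node of the star pins down all of them. Part (b), however, takes a genuinely different route. The paper argues through the impact exchange matrix: Lemma~\ref{lem:zy} gives $M\odot z_R\le z_R$, the minimal-rank clause of Definition~\ref{def:possible} upgrades this to the strict inequality $M\odot z_R< z_R$, and splitting this into the two-sided bounds \eqref{eqn:lower.z.j1.x}--\eqref{eqn:upper.z.j1.x} shows that the roots of the non-constant stars retain a degree of freedom. You instead observe that $\E(g)$ is an open set (it is cut out by strict inequalities, since a tie would create a node with two parents and hence a graph different from $g$), take a witness $z^0\in\E(g)\cap\{X_K=x_K\}$, and perturb the single coordinate $z^0_r$; because $R_g(i)\ne r$ for every $i\in K$ the context is preserved, while $X_j=c^\ast_{jr}Z_r$ sweeps an open interval for each $j\in S$. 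Your version is more elementary and self-contained (no appeal to $M$ or Lemma~\ref{lem:zy}), whereas the paper's version produces explicit quantitative bounds on the free roots. One caveat that both proofs share: by Definition~\ref{defn:fixed}, ruling out constancy requires that $\{X_{K\cup j}=x_{K\cup j}\}$ be \emph{possible under $g$} for at least two values of $x_j$, and possibility includes the rank-minimality condition, not merely $\E(g)\cap\{X_{K\cup j}=x_{K\cup j}\}\ne\emptyset$. This is easily repaired: any competing impact graph of strictly smaller rank must place $j$ in a star meeting $K$ and therefore pins $X_j$ to a single value on $\{X_K=x_K\}$, so at most finitely many points of your open interval can be disqualified. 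Since the paper makes the identical leap, this is not a gap specific to your argument.
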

\begin{proof}
We first consider (a): Note that if $j \to i \in g$, then on $\mathcal{E}(g)$, $X_i = c^\ast_{ij}Z_j$ and thus $X_i = c^\ast_{ij}X_j$. Therefore, if either $X_i$ or $X_j$ is constant on $\{X_K = x_K \} \cap \mathcal{E}(g)$, then both must be  constant. So if one node in $S$ is in $K$, all nodes in $S$ must be in $K^\ast(g)$. This proves (a).

Next we show (b): Let $R$ be the set of root nodes in $g$, $R^1 \subset R$ be the set of root nodes for all stars $S$ in $g$ such that $S \cap K  = \emptyset$, and $R^c = R \setminus R^1$ be the set of roots of the constant stars. By the first statement, $ X_r$ is  constant for all $r \in R^c$ on $\E(g) \cap \{X_K \in x_K\}$. Indeed, on $ \mathcal{E}(g) \cap \{X_K = x_K\}$, $M \odot z_R \leq z_R$ by  Lemma~\ref{lem:zy}. Furthermore, by the minimal rank condition, we must have strict inequality, that is, $M \odot z_R < z_R$. This equation splits up into the following lower and upper-bounds for $z_{R^1}$ in terms of $z_{R^c}$: 
\begin{align}
M_{R^1R^c}\odot z_{R^c} &< z_{R^1}, \label{eqn:lower.z.j1.x} \\
M_{R^cR^1}\odot z_{R^1} &< z_{R^c}. \label{eqn:upper.z.j1.x}
\end{align}
Since $g \in \mathfrak{G}(X_K = x_K)$, the upper and lower bounds cannot coincide. In particular, there exist two values of $x_r$ such that $\{X_{K\cup r}=x_{K\cup r}\}$ is possible under $g$. This implies that $R^1 \cap K^\ast(g) = \emptyset$, and since $R^1$ are the roots, none of their children can be in $K^\ast(g)$. This completes the proof.
\end{proof}

\begin{example}[Bipartite]\label{ex:constant}
Consider again the DAG and coefficient matrix of Example~\ref{ex:eigencondition} and let $K=\{3\}$. 
\begin{figure}
\begin{center} 
\begin{tikzpicture}[->,>=stealth',shorten >=1pt,auto,node distance=1.2cm,semithick, el/.style = {inner sep=2pt, align=left, sloped}, font = \footnotesize],
%every label/.append style = {font=\footnotesize}]
  \tikzstyle{every node}=[circle,line width =1pt]
  
  \node (a) [draw] {$1$};
  \node (b) [right of=a,draw] {$2$} ;
  \node (c) [below of=a,draw,fill=red!75] {$3$};
  \node (d) [below of=b, draw] {$4$};
  
  \node (g) [below right = 0.2cm and 0.15 cm  of c] {$g_1$};
  
   \node (a1) [right = 1.4cm of b, draw,fill=red!75] {$1$};
  \node (b1) [right of=a1,draw] {$2$} ;
  \node (c1) [below of=a1,draw,fill=red!75] {$3$};
  \node (d1) [below of=b1, draw] {$4$};
  \path (a1) edge  (c1);
  
  \node (g1) [below right = 0.2cm and 0.15 cm  of c1] {$g_2$};

   \node (a2) [right = 1.4cm of b1, draw] {$1$};
  \node (b2) [right of=a2,draw] {$2$} ;
  \node (c2) [below of=a2,draw,fill=red!75] {$3$};
  \node (d2) [below of=b2, draw] {$4$};
  \path (a2) edge  (d2);
  %\path (b2) edge  (d2);
  %\path (a2) edge [loop] (a2);
  %\path (b2) edge [loop] (b2);
  
  \node (g2) [below right = 0.2cm and 0.15 cm  of c2] {$g_4$};
  
  \node (a3) [right = 1.4cm of b2, draw,fill=red!75] {$1$};
  \node (b3) [right of=a3,draw] {$2$} ;
  \node (c3) [below of=a3,draw,fill=red!75] {$3$};
  \node (d3) [below of=b3, draw,fill=red!75] {$4$};
  \path (a3) edge  (d3);
  \path (a3) edge (c3);
  
  \node (g3) [below right = 0.2cm and 0.15 cm  of c3] {$g_6$};
  
  \node (a4) [right = 1.4cm of b3, draw] {$1$};
  \node (b4) [right of=a4,draw,fill=red!75] {$2$} ;
  \node (c4) [below of=a4,draw,fill=red!75] {$3$};
  \node (d4) [below of=b4, draw] {$4$};
  \path (a4) edge  (d4);
  \path (b4) edge (c4);
  
  \node (g4) [below right = 0.2cm and 0.15 cm  of c4] {$g_8$};
  \end{tikzpicture}  
\end{center}
\vspace*{-5mm}
\caption{Impact graphs in $\mathfrak{G}(X_3=x_3)$ for the bipartite DAG in Figures~\ref{fig:eigencondition} and \ref{fig:all.impact.graphs}. Red nodes are constant under $g$ for the context $\{X_3=x_3\}$, i.e.\ elements of $K^\ast(g)$. These are nodes that are in the same star as the node $3$.} \label{fig:constant}
\end{figure}
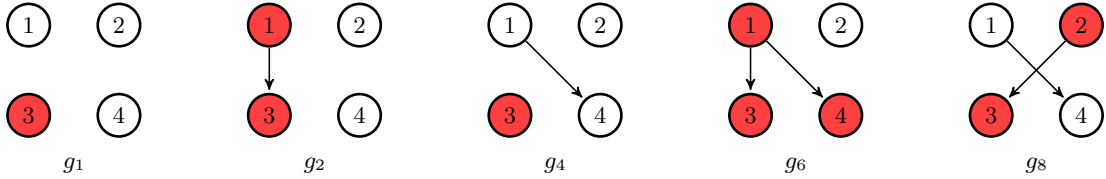
Figure~\ref{fig:constant} again displays impact graphs for this DAG, now with the constant nodes shaded. Nodes are constant in the context $\{X_3=x_3\}$ if and only if they belong to the same star as the node $3$. \halmos
\end{example}
Lemma~\ref{lem:y.xk} below identifies a crucial property of a compatible impact graph. 
\begin{lemma}\label{lem:y.xk}
Let $g \in \mathfrak{G}$ be an impact graph. If $g$ is compatible with  $\{X_K =x_K\}$, we have 
for all $i,j,h\in V$ that
\begin{align}
\frac{x_h}{c^\ast_{hj}} & < \frac{x_i}{c^\ast_{ij}}, \quad h,i \in K^\ast(g) \quad \implies \quad j \neq R_g(i)\label{eqn:extra1},\\
\exists j \in V: \, \, \frac{x_h}{c^\ast_{hj}} &= \frac{x_i}{c^\ast_{ij}}  , \quad h,i \in K^\ast(g) \quad\implies \quad R_g(i) = R_g(h).   \label{eqn:extra2} 
%c^\ast_{ji} = c^\ast_{jk}c^\ast_{ki}, \CK{} j \notin K^\ast, k \in K^\ast, i \neq j, i \leq k &\Rightarrow y_{ji} = 0. \label{eqn:triangle.2} 
\end{align}
\end{lemma}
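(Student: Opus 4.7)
The plan is to prove \eqref{eqn:extra1} by a direct contradiction, then reduce \eqref{eqn:extra2} to \eqref{eqn:extra1} through a case analysis on $j$, handling the residual case by exhibiting an alternative impact graph that would violate the minimal-rank condition in Definition~\ref{def:possible}.

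For \eqref{eqn:extra1}, I would suppose for contradiction that $j = R_g(i) =: r_i$. Since $i \in K^*(g)$, Lemma~\ref{lem:k.ast.y} places $i$ in a constant star rooted at $r_i$, so on the event $\mathcal{E}(g) \cap \{X_K = x_K\}$ the root innovation is pinned: $Z_{r_i} = x_i/c^*_{i,r_i}$. The max-linear bound $X_h \geq c^*_{hj} Z_j$ from \eqref{def:mlin} then gives $x_h \geq c^*_{hj}\cdot x_i/c^*_{ij}$, i.e.\ $x_h/c^*_{hj} \geq x_i/c^*_{ij}$, contradicting the strict inequality assumed.

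For \eqref{eqn:extra2} I would argue by contradiction, assuming $r_i \neq r_h$ where $r_h := R_g(h)$. Lemma~\ref{lem:k.ast.y} again makes both $r_i$ and $r_h$ roots of constant stars, so $Z_{r_i}$ and $Z_{r_h}$ are pinned on $\mathcal{E}(g) \cap \{X_K = x_K\}$. Using that every node has a unique parent in $g$ (Remark~\ref{rmk:onesource}) I then obtain the strict inequalities
\[
\frac{x_h}{c^*_{h,r_h}} < \frac{x_i}{c^*_{i,r_h}} \qquad\text{and}\qquad \frac{x_i}{c^*_{i,r_i}} < \frac{x_h}{c^*_{h,r_i}}.
\]
If the hypothesized $j$ equals $r_i$, then $Z_j$ is pinned at $x_i/c^*_{ij}$ and the equality $x_h/c^*_{hj} = x_i/c^*_{ij}$ gives $c^*_{hj} Z_j = x_h = X_h$; by the definition of the impact graph this forces $r_i \to h \in g$ (or $r_i = h$, making $h$ itself a root). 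Either outcome yields $r_h = r_i$, contradicting $r_i \neq r_h$; the case $j = r_h$ is symmetric.

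The remaining possibility $j \notin \{r_i, r_h\}$ is the main obstacle. Here I plan to use the equality $x_h/c^*_{hj} = x_i/c^*_{ij}$ to construct an alternative galaxy $g'$ in which the two constant stars of $g$ containing $i$ and $h$ are merged into a single star rooted at $j$ (or at the root of the star to which $j$ belongs in $g$), with all other stars of $g$ preserved. Since the rank of $\Pi_K \circ L_{g'}$ equals the number of its constant stars, $g'$ would have rank strictly less than that of $\Pi_K \circ L_g$, contradicting the minimal-rank condition of Definition~\ref{def:possible}. The delicate step is verifying that $g'$ satisfies the four conditions of Theorem~\ref{thm:impact}: the galaxy and triangle conditions should follow from combining the existing impact structure of $g$ with a critical path realizing $c^*_{ij}$ and $c^*_{hj}$, while the principal eigenvalue condition $\lambda(M(g')) < 1$ should be inherited from $\lambda(M(g)) < 1$ via a coordinate-wise comparison of the impact exchange matrices.
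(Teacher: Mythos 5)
Your proof of \eqref{eqn:extra1} is correct and is essentially the paper's: when $j=R_g(i)$ one has $x_i=c^\ast_{ij}Z_j$ on $\E(g)\cap\{X_K=x_K\}$, while $X_h\geq c^\ast_{hj}Z_j$ always holds, so $x_h/c^\ast_{hj}\geq x_i/c^\ast_{ij}$, contradicting the strict inequality. Your dispatch of the sub-cases $j\in\{R_g(i),R_g(h)\}$ of \eqref{eqn:extra2} via the pinned root innovations and uniqueness of parents is also sound. The genuine gap is the remaining case $j\notin\{R_g(i),R_g(h)\}$, which you yourself flag as the main obstacle and then only sketch. The proposed merger of the two constant stars into one rooted at $j$ (or $R_g(j)$) is not forced to lower the rank: those stars may contain further nodes $k\in K$ with $c^\ast_{kj}=0$, or with $x_k/c^\ast_{kj}$ incompatible with the common value $x_i/c^\ast_{ij}$, and every such orphaned node must be re-sourced — possibly as the root of a new constant star — so the number of constant stars, which is exactly the rank of $\Pi_K\circ L_{g'}$, need not decrease. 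You also never verify that $\E(g')\cap\{X_K=x_K\}\neq\emptyset$, i.e.\ that setting $Z_j=x_i/c^\ast_{ij}$ is consistent with all remaining bounds $c^\ast_{kj}Z_j\leq x_k$, and the triangle and eigenvalue conditions of Theorem~\ref{thm:impact} for $g'$ are asserted rather than checked. The proposal stops exactly where the work begins.

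The paper avoids this construction entirely. Writing $r=R_g(i)$ and $r'=R_g(h)$ and substituting $x_i=c^\ast_{ir}x_r$, $x_h=c^\ast_{hr'}x_{r'}$ into the hypothesised equality gives
\[
\frac{c^\ast_{hr'}}{c^\ast_{hj}}\,x_{r'}=\frac{c^\ast_{ir}}{c^\ast_{ij}}\,x_r,
\]
a linear relation between the values of two distinct roots of $g$, uniformly in $j$ and with no case analysis; this is precisely the degeneracy that the minimal-rank clause of Definition~\ref{def:possible}(a)(ii) excludes (the same device is used in the proof of Lemma~\ref{lem:k.ast.y}(b)), so the contradiction is reached without exhibiting a competitor graph. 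To repair your argument, either complete the construction of $g'$ with all the verifications listed above, or replace the entire final case by this direct appeal to rank minimality.
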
 

\begin{proof} 
%First we show that these conditions are necessary. 
Consider \eqref{eqn:extra1}. Clearly $j\neq i$; suppose then for contradiction that $j\to i$. Since $i \in K^\ast(g)$, Lemma \ref{lem:k.ast.y} implies that $j \in K^\ast(g)$. Then $x_i = c^\ast_{ij}x_j$, so \eqref{eqn:extra1} implies that ${x_h}/{c^\ast_{hj}} < {x_i}/{c^\ast_{ij}} = x_j$, so $x_h < c^\ast_{hj}x_j$. But this contradicts that $x_h \geq c^\ast_{hj}x_j$. 
Now consider \eqref{eqn:extra2}. Write $r = R_g(i)$ and $r' = R_g(h)$. Then $x_i= c^\ast_{ir} x_r$ and $x_h= c^\ast_{hr'}x_{r'}$. Suppose for contradiction that $r \neq r'$.  Substituting into the hypothesis of \eqref{eqn:extra2} we get
$$ \frac{c^\ast_{hr'}}{c^\ast_{hj}}x_{r'} = \frac{c^\ast_{ir}}{c^\ast_{ij}}x_r, $$
which is a linear relation on the roots $x_r$ and $x_{r'}$ of two different stars in $g$. But this contradicts that $g$ has minimal rank according to Definition~\ref{def:possible} and thus  $\{X_K=x_K\}$ is not  possible under $g$. Hence \eqref{eqn:extra2} must hold.
\end{proof}

\begin{example}[Half-butterfly]\label{ex:butterfly2} To illustrate Lemma~\ref{lem:y.xk} we again
consider the DAG and coefficient matrix $C$ of Example~\ref{ex:impacts}, depicted in Figure~\ref{fig:impacts} and the context $\{X_K=x_K\}$ where $K=\{ 4, 5 \}$ and $x_4=x_5=1$. We claim that the set of constant nodes is $K^\ast = \{3,4,5\}$ because  the events $\{X_4 = X_5 = 1\}$ and $\{X_4 = X_5 = 1, X_3 = 1/3\}$ are almost surely identical, and that there are exactly two impact graphs compatible with this context, depicted in Figure~\ref{fig:butterfly2}. 
\begin{figure}
\begin{center}
\begin{tikzpicture}[->,>=stealth',shorten >=1pt,swap,node distance=1.6cm,semithick, el/.style = {inner sep=2pt, align=left, sloped},
every label/.append style = {font=\tiny}]
  \tikzstyle{every node}=[circle,line width =1pt]
  \node (aa) [draw] {$1$};
  \node (cc) [below right of=aa,draw] {$3$} ;
  \node (bb) [above right of=cc,draw] {$2$} ;
  \node (dd) [below left of=cc,draw] {$4$};
  \node (ee) [below right of=cc, draw] {$5$};
  \path (aa) edge node[every node/.style={font=\sffamily\footnotesize},el,below]  {$1$}   (cc);
  \path (bb) edge node[every node/.style={font=\sffamily\footnotesize}, el,above]  {$0.5$} (cc);   
  \path (cc) edge  node[every node/.style={font=\sffamily\footnotesize},el,above]  {$3$} (dd);
  \path (cc) edge node[every node/.style={font=\sffamily\footnotesize},el,below]  {$3$} (ee);
  \path (bb) edge ["$4$"] (ee);
  \node (lab) [below = 1.3cm of cc] {$\dag(C)$};
  
     \node (a4) [right = 1cm of bb, draw] {$1$};
  \node (c4) [below right of=a4,draw] {$3$} ;
  \node (b4) [above right of=c4,draw] {$2$} ;
  \node (d4) [below left of=c4,draw] {$4$};
  \node (e4) [below right of=c4,draw] {$5$};
  \path (a4) edge   (c4);
  \path (b4) edge  (c4);   
  \path (c4) edge  (d4);
  \path (c4) edge  (e4);
  \path (b4) edge  (e4);
  \path (a4) edge ["$3$"] (d4); 
  \path (a4) edge [bend right=30, align=left, below](e4);
  \path (b4) edge [bend left=30, align=left, above](d4);
  \node [below right =0.5cm and 0.1cm of a4]{$3$};
  \node [below left =0.5cm and 0.1cm of b4]{$1.5$};
  \node [right= 3.1cm of lab] {$\D^\ast(C)$};
  
  \node (a) [right=1cm of b4, draw,fill=red!75] {$1$};
  \node (c) [below right of=a,draw,fill=red!75] {$3$} ;
  \node (b) [above right of=c,draw] {$2$} ;
  \node (d) [below left of=c,draw,fill=red!75] {$4$};
  \node (e) [below right of=c, draw,fill=red!75] {$5$};
  \path (a) edge [bend right=30, align=left, below](e);
  \path (a) edge ["$3$"] (d);
  \node [below right =0.5cm and 0.1cm of a]{$3$};
  \node [below right =0.1cm and 0.5cm of a]{$1$};
  \path (a) edge (c);
  %\path (b) edge [pos=0.55,"$4$"](e);
  %\path (b) edge ["$0.5$"](c);
  %\path (aa) edge [loop] (aa);
  %\path (bb) edge [loop] (bb);
    \node [below = 1.5cm of c] {$g_3$};
  
   \node (a3) [right = 1cm of b, draw] {$1$};
  \node (c3) [below right of=a3,draw,fill=red!75] {$3$} ;
  \node (b3) [above right of=c3,draw] {$2$} ;
  \node (d3) [below left of=c3,draw,fill=red!75] {$4$};
  \node (e3) [below right of=c3,draw,fill=red!75] {$5$};
  \path (c3) edge ["$3$"] (e3);
  \path (c3) edge ["$3$"] (d3);
  %\path (b3) edge [pos=0.45,"$4$"] (e3);
  %\path (a3) edge [loop] (a3);
  %\path (b3) edge [loop] (b3);
    \node [below = 1.5cm of c3] {$g_4$};
  \end{tikzpicture} 
  \end{center}
\caption{Impact graphs $g_3$ and $g_4$ for the half-butterfly compatible with the context $\{X_4=X_5=1\}$ are displayed to the right. To the left, the original DAG $\D(C)$ and its weighted reachability DAG $\D^\ast(C)$ are shown.}  \label{fig:butterfly2}
\end{figure}
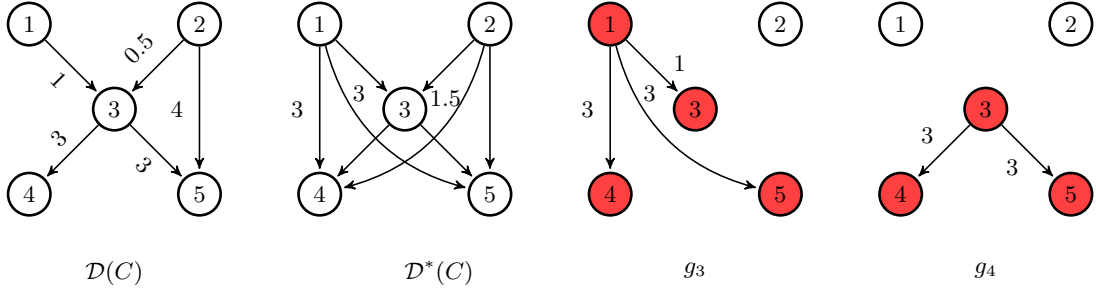

To see this, let $g$ be a compatible impact graph in $\mathfrak{G}(X_4 = X_5 = 1)$. Since ${x_4}/{c^\ast_{43}}={1}/{3}={x_5}/{c^\ast_{53}}$ we may apply \eqref{eqn:extra2} with $i = 4, h = 5, j=3$ to conclude that $4$ and $5$ belong to the same star in $g$, with common root $R_g(4) = R_g(5)$. 

By Theorem~\ref{thm:impact}(a), $g$ is a subgraph of $\D^\ast(C)$ and hence the roots must be parents in this graph, i.e.\ we have $R_g(4) \in \{1,2,3,4\}$, and $R_g(5) \in \{1,2,3,5\}$. So $R_g(4) = R_g(5)$ implies $R_g(4) = R_g(5) \in \{1,2,3\}$. On the other hand, it also holds that ${x_5}/{c^\ast_{52}}={1}/{4}<{1}/{1.5} = {x_4}/{c^\ast_{42}}$. Then \eqref{eqn:extra1} implies that $2 \notin \pa_g(4)$, so $R_g(4) = R_g(5) \in \{1,3\}$. By Theorem~\ref{thm:impact}(b), $g$ is a star so each node can not have more than one parent. So either it must hold that $R_g(4) = R_g(5) = 1$, or that $R_g(4) = R_g(5) = 3$. In the second case, $1$ and $2$ are left as isolated roots. In the first case, $1\to 4 \in g$ implies that we also have $1\to 3\in g$ by Theorem \ref{thm:impact}(c), that is, $3$ must belong to the same star as $4$ with $1$ as a root. This gives the two impact graphs to the right in Figure~\ref{fig:butterfly2}. 
In both cases, there is at most one non-isolated root, so $M(g)$ has no cycles and thus $\lambda(M(g))= 0 <1$ required for $g$ to be an impact graph. Thus both graphs are in $\mathfrak{G}(X_4 = X_5 = 1)$. By Definition~\ref{defn:fixed} the constant nodes are those that are in the same star as $4,5$ in the compatible impact graphs, so $K^\ast = \{3,4,5\}$ as we then have $X_3 = 1/3$ on $\{X_4 = X_5 = 1\}$, so $\{X_4 = X_5 = 1\} = \{X_4 = X_5 = 1, X_3 = 1/3\}$. 

We can double-check that $\mathfrak{G}(X_4=X_5=1) = \mathfrak{G}(X_4=X_5=1, X_3 = 1/3)$ by computing the latter set of graphs directly. Let $g \in \mathfrak{G}(X_4=X_5=1, X_3 = 1/3)$. Now we first apply \eqref{eqn:extra2} with $i=j=3$ and $h = 5$  to conclude that $R_g(3) = R_g(5)$. Thus again we have by Theorem \ref{thm:impact}(a) that $R_g(3) \in \{1,2,3\}$ and $R_g(5) \in \{1,2,3,5\}$. Therefore, $R_g(5) \neq 5$. Apply again \eqref{eqn:extra1} with $j = 2$, $i = 3$ and $h = 5$ to conclude that $2 \neq R_g(3)$. So $R_g(3) \in \{1,3\}$. The two cases $R_g(3) = 1$ and $R_g(3) = 3$ yield the two impact graphs to the right in Figure~\ref{fig:butterfly2} as expected.
\halmos
\end{example}
%\sll{I have put this remark back as I think it is important and belongs here rather than in the discussion} \CK{I agree.}
\begin{remark} 
Although in Definition~\ref{defn:impact} we have defined the impact graph $G=G(Z)$ (for almost all $Z$), $G$ can also be expressed in terms of $X$, as we indeed have for any $g\in \mathfrak{G}$ which is compatible with $\{X=x\}$ that
\begin{equation}\label{eqn:xcondition}
    j\to i \in g \implies x_i/x_j=c^*_{ij} \text{ on } \E(g)
\end{equation}
since on $\E(g)$ we must have $j\in R(g)$ and thus $X_j=Z_j$.  Hence with probability one there is a unique $g\in \mathfrak{G}$ that is compatible with $\{X=x\}$. Another way of expressing this is to say that the map $z\to g$ is almost surely $\sigma(X)$-measurable, where $\sigma(X)$ is the $\sigma$-algebra generated by the max-linear map $z\to x$ given by $x=C^*\odot z$. 
\end{remark}

\subsection{The source DAG}\label{sec:source}

Impact graphs describe how extreme events at their roots spread deterministically to other nodes. In this section we shall capitalize on this, but from the perspective of identifying which are the possible sources of extreme values responsible for a given possible context of the form $\{X_K=x_K\}$ (see Definition~\ref{def:possible}(c)). This will eventually make it possible for us to %simplify the representation in Proposition~\ref{lem:a.representation} \CK{forward referencing} and 
answer interesting queries concerning conditional independence. 

We first let $\mathcal{I}(X_K=x_K)$ denote the union of impact graphs which are compatible with the context: 
\begin{equation}\label{eqn:total_impact}
\mathcal{I}(X_K=x_K)=\bigcup_{g \in \mathfrak{G}(X_K =x_K)}g
\end{equation}
and we shall refer to this as the \emph{total impact graph} and note that it is a subgraph of the reachability DAG $\D^*$. %\CK{This is not weighted ?}
In other words, this graph yields all possible ways that impact could have spread across the network in a way that conforms with the observation $\{X_K=x_K\}$. 
%To give a compact description of the conditional distribution of variables in the network given this event, we need to modify this further.

%\sll{I guess we are implicitly using that the conditional distribution of $X_{\overline{K}}$ given $X_K=x_K$ in the original model, is identical to the conditional distribution of the same variables, but in the model determined by $\mathcal{I}(X_K=x_K)$}

%Consider a max-linear model with coefficient matrix $C$, Kleene star $C^*$,  and
%a possible event $\{X_K = x_K\}$. 

\begin{comment}
\begin{definition}[Redundant nodes under an event]\label{defn:redundant_node}
Consider the event $\{X_K = x_K\}$. The set of \emph{redundant nodes} of this event is given as
\begin{equation}\label{eqn:s.redundant}
S =S(X_K=x_K)= \{i \in K^\ast: \exists k\in K^\ast \mbox{ with }i\neq k \text{ and } x_i = c^\ast_{ik}x_k\} 
\end{equation}
\end{definition}
In words, a constant node is redundant (for source identification) if it is already pre-determined by another constant node; or, alternatively, we are only including potential \emph{ultimate sources}.
\begin{example}
In Figure \ref{fig:butterfly2}, under the event $\{ X_4=X_5=1 \}$, we know that $K^\ast=\{3,4,5\}$, and find now the set of redundant nodes to be $S=\{ 4,5 \}$.
\halmos
\end{example}
%\sll{ I have split the notion of redundant nodes and redundant edges from the definition of the source graph}
\end{comment}

%[Redundant edges in a context]
\begin{definition}\label{defn:redundant edge}
Let $K\subset V$ and $K^*(g)$ be the set of constant nodes under $g$ as in Definition~\ref{defn:fixed}.
An edge $j\to i$ in $\mathcal{I}(X_K=x_K)$
is \emph{redundant in the context $\{X_K=x_K\}$} if either 
\begin{itemize}
    \item $j \in K^\ast$, or
    \item $i \notin K^\ast$ and $X_j$ is  constant under all $g\in \mathfrak{G}(X_K=x_K)$ that contain the edge. % (see Example~\ref{ex:tent}).
\end{itemize}
 The set of redundant edges is denoted $E^- = E^-(X_K=x_K)$.
\end{definition}

%[Source DAG]
\begin{definition}\label{defn:context.graph}
The {\em source DAG\/} $\C(X_K = x_K)$  of a possible context $\{X_K = x_K\}$ is the graph obtained from $\mathcal{I}(X_K=x_K)$ by removing redundant edges; see Definition~\ref{defn:redundant edge}. 
\end{definition}

\begin{example}[Tent]\label{ex:tent}
Consider the DAG $\D$ to the left in Figure~\ref{fig:tent} %with out-degree 3 and in-degree 2, 
with all edge weights $c_{ij}=1$. 
Let $K=\{ 4,5 \}$ and $x_4=x_5=2$.
Note that $\mathcal{I}(X_K=x_K)=\D(C)$. However, $\C(X_K = x_K)$ is the strict subgraph of $\mathcal{I}(X_K = x_K)$ obtained by removing the dashed edges from the graph to the right in Figure \ref{fig:tent}. 

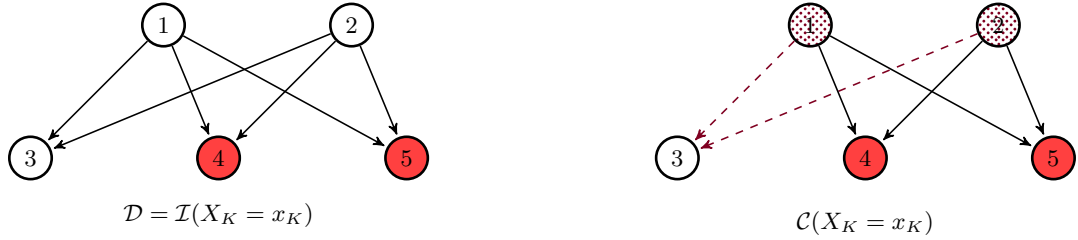
\begin{figure}
\begin{center}
\begin{tikzpicture}[->,>=stealth',shorten >=1pt,auto,node distance=2.5cm,semithick]
  \tikzstyle{every node}=[circle,line width =1pt]
  \node (1) [draw] {$1$};
  \node (2) [right of=1,draw] {$2$};
  \node (3) [below left of=1,draw] {$3$};  
  \node (4) [right of = 3,draw,fill=red!75] {$4$};
  \node (5) [right of = 4,draw,fill=red!75] {$5$};
  \path (1) edge (3);
  \path (1) edge (4);  
  \path (1) edge (5);    
  \path (2) edge (3);
  \path (2) edge (4);
  \path (2) edge (5);
  \node [below = -0.95 cm of 4] {$\dag = \mathcal{I}(X_K=x_K)$};
  
  \node (a3) [right= 3cm of 5, draw] {$3$};
 % \node (a1) [above right of=a3,draw, pattern=crosshatch dots, pattern color = burgundy] {$1$};  
 \node (a1) [above right of=a3,draw, pattern=crosshatch dots, pattern color = burgundy, pattern color = burgundy] {$1$}; 
  \node (a2) [right of=a1,draw,pattern=crosshatch dots, pattern color = burgundy] {$2$};
  \node (a4) [right of = a3,draw,fill=red!75] {$4$};
  \node (a5) [right of = a4,draw,fill=red!75] {$5$};
  \path (a1) edge[dashed, color = burgundy] (a3);
  \path (a2) edge[dashed, color = burgundy] (a3);
  \path (a1) edge (a4);  
  \path (a1) edge (a5);    
  \path (a2) edge (a4);
  \path (a2) edge (a5);
   \node [below = -0.5 cm of a4] {$\C(X_K = x_K)$};
\end{tikzpicture}
\vspace*{-1cm}
\end{center} 
\caption{{Tent graph:} To the left,  this displays $\D(C)= \D^\ast(C)=\mathcal{I}(X_K=x_K)$ when all coefficients are equal to 1. To the right, the source DAG $\C(X_K = x_K)$ for $K=\{4,5\}$ and $x_4=x_5=2$ is obtained by removing the dashed edges, which are redundant as each of the dotted nodes is constant in any impact graph containing the edge.} \label{fig:tent}
\end{figure}

The edge $1\to3$ is in $E^-$ since $1$ is constant under all impact graphs containing this edge, and similarly with the edge $2\to 3$.
To see this, note that $1 \to 3 \in g \in \mathfrak{G}(X_4 = X_5 = 2)$ 
if and only if $1 \to 4, 1 \to 5 \in g$, which then implies $1 \in K^\ast(g)$. 
Therefore, $1 \to 3$ is a redundant edge and not included in $\C(X_4 = X_5 = 2)$. A similar argument applies to the edge $2 \to 3$. 
From the node partition of Proposition~\ref{cor:dag.partition} below we see that the active nodes are $A=\{1,2,3\}$ and the constant nodes $K^*=K=L=\{4,5\}$.
\halmos
\end{example}

%\sll{Killed the remark on non-monotonicity of source DAG} 
\begin{comment}
Note that the source DAG $\C(X_K = x_K)$ is not monotone with respect to conditioning, as illustrated in the example below.
\begin{example}[The source DAG is not monotone with respect to conditioning]
 Continue with Example~\ref{ex:tent}. Suppose in addition to $\{X_4 = 2, X_5 = 2\}$, we now have $X_3 = 2$. Then the new source DAG would be the left-hand side of Figure~\ref{fig:tent}, with edges $1 \to 3$ and $2 \to 3$ included, and node $3$ shaded. In particular, $\C(X_4 = 2, X_5 = 2) \subsetneq \C(X_4 = 2, X_5 = 2, X_3 =2).$
 \halmos
\end{example}
\end{comment}

We first prove some results on the structure of the source DAG before linking it up to probabilistic statements. %The first says that shared parents of the constant nodes in $\C(X_K = x_K)$ satisfy a none-or-all condition. 
In particular we establish that the source DAG admits a nice partition structure, see Figure \ref{fig:conveyor} for an illustration. 

\begin{proposition} \label{cor:dag.partition}
Fix a possible context $\{X_K = x_K\}$, let $\mathcal{I} = \mathcal{I}(X_K=x_K)$ and $\C=\C(X_K = x_K)$ be the corresponding total impact graph and source DAG, respectively. Then for either of these graphs, its node set $V$ can be partitioned into disjoint sets $A \cup U \cup H\cup L$, where
\begin{enumerate}
    \item[(a)] $A$: $a \in A \iff a \notin K^\ast$ is the set of \emph{active nodes} (non-constant);
    \item[(b)] $U$: $u\in U \iff u\in K^\ast$ and $\exists k \in K^\ast, k \neq u$ such that $x_u = c^\ast_{uk}x_k$
    \item[(c)] $H$: $h \in H \iff h \in K^\ast$ and $\exists g \in \mathfrak{G}(X_K = x_K)$ such that $h \in R(g)$, and
    \item[(d)] $L$:  $\ell \in L\iff \ell \in K^\ast \backslash (H \cup U)$
\end{enumerate}
In addition, we have the following.
\begin{enumerate}
    \item[(e)] For all $k \in H \cup L$, $\pa_{\C}(k) = \pa_{\I}(k)$. 
    \item[(f)] For $k,k' \in H \cup L, k \neq k'$, either $\pa_{\C}(k) \cap \pa_{\C}(k') = \emptyset$ or $\pa_{\C}(k) = \pa_{\C}(k') \neq \emptyset$. 
    \item[(g)] If $h \in H$, then $\pa_{\C}(h) \cap \pa_{\C}(k) = \emptyset$ for all $k \in H \cup L, k \neq h$. 
    \item[(h)] The set $H \cup L$ can be partitioned into equivalence classes where $k\equiv k'\iff \pa_{\C}(k)=\pa_{\C}(k')$. Under this equivalence relation, elements of $H$ are singletons, and $L$ is partitioned into disjoint subsets $L = L_1 \cup \dots \cup L_m$. 
    \item[(i)] Any $\ell\in L$ has at least two parents. 
    \item[(j)] For all $a\in A$ and $\ell\in L$, there exists some $i \in \pa_{\C}(\ell)$ such that $i \notin \pa_{\C}(a)$.
\end{enumerate}
\end{proposition}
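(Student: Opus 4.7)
The plan is to verify claims (a)--(j) sequentially, relying on Definition~\ref{def:possible} (the minimal-rank condition on compatible impact graphs), Lemma~\ref{lem:k.ast.y}, and especially Lemma~\ref{lem:y.xk}. The partition part is mostly definitional: $A = V \setminus K^*$ is disjoint from $K^*$, and $L$ is defined as $K^* \setminus (H \cup U)$, so the one nontrivial disjointness is $H \cap U = \emptyset$. Suppose for contradiction that $h \in H \cap U$: pick a compatible $g$ with $h \in R(g)$ and a witness $k \in K^*$, $k \neq h$, with $x_h = c^*_{hk} x_k$. Since $c^*_{kk}=1$, we have $x_h/c^*_{hk} = x_k/c^*_{kk}$, so Lemma~\ref{lem:y.xk} equation~\eqref{eqn:extra2} yields $R_g(h) = R_g(k) = h$, making $h$ a $g$-parent of $k$ (as $g$ is a galaxy) and hence a $\D$-ancestor of $k$; combined with $c^*_{hk}>0$ from the $U$-condition, this produces a directed cycle in $\D$, a contradiction. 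Claim (e) is then immediate: for $k \in H \cup L \subseteq K^*$ the second clause of Definition~\ref{defn:redundant edge} is vacuous, and if $j \in \pa_\I(k) \cap K^*$ then a compatible $g$ with $j \to k \in g$ gives $x_k = c^*_{kj}x_j$, placing $k \in U$ and contradicting $k \in H \cup L$ via the disjointness just established.

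Claims (f)--(h) concern the parent-set structure on $H \cup L$. For (f), if distinct $k, k' \in H \cup L$ share $j \in \pa_\C(k) \cap \pa_\C(k')$, then (e) gives $j \notin K^*$, and compatible $g, g'$ witness $j \to k \in g$ and $j \to k' \in g'$. Any further $j' \in \pa_\C(k)$ comes with a compatible $g''$ containing $j' \to k$; a symmetric use of Lemma~\ref{lem:y.xk} on the ratios $x_k/c^*_{kj}$, $x_k/c^*_{kj'}$, $x_{k'}/c^*_{k'j}$ would produce a compatible impact graph containing $j' \to k'$, so $j' \in \pa_\C(k')$, and symmetry yields $\pa_\C(k) = \pa_\C(k')$. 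For (g), if $h \in H$ and $k \neq h$ in $H \cup L$ shared a parent $j$ in $\C$, then fixing $g_0$ with $h \in R(g_0)$ and applying Lemma~\ref{lem:y.xk} to the ratios inherited from $j$ would force $R_{g_0}(k) = h$, which via the triangle condition of Theorem~\ref{thm:impact}(c) imposes a linear dependency among distinct roots of $g_0$, violating its minimal-rank compatibility in the same spirit as the $H \cap U$ argument. Claim (h) follows formally: (f) makes $k \equiv k' \iff \pa_\C(k) = \pa_\C(k')$ a well-defined equivalence on $H \cup L$, and (g) forces every $h \in H$ into a singleton class.

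For (i), (e) gives $\pa_\C(\ell) = \pa_\I(\ell) \neq \emptyset$, since $\ell \notin H$ forces every compatible $g$ to contain some $j \to \ell$. If $|\pa_\C(\ell)| = 1$, every compatible $g$ contains this unique edge $j \to \ell$, so $X_\ell = c^*_{\ell j} X_j$ pointwise on $\{X_K = x_K\}$; constancy of $X_\ell$ then forces $X_j$ constant, i.e.\ $j \in K^*$, contradicting (e). Hence $|\pa_\C(\ell)| \geq 2$. For (j), I would proceed by contradiction, assuming $\pa_\C(\ell) \subseteq \pa_\C(a)$: in every compatible $g$, the chosen source $m \to \ell$ satisfies $m \in \pa_\C(a)$ and pins $z_m = x_\ell/c^*_{\ell m}$; tracing $a$'s possible sources via Theorem~\ref{thm:impact}(c), every route in $g$ either coincides with one of these pinned $m$'s or else forces a redundancy that would exclude it from $\pa_\C(a)$, so $X_a$ is pinned on each compatible event and thus constant on $\{X_K = x_K\}$, contradicting $a \in A$. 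I expect (j) to be the main obstacle, as ruling out every escape route for $a$ requires a careful combinatorial enumeration of $\mathfrak{G}(X_K = x_K)$ together with coordinated use of Theorem~\ref{thm:impact} and Lemma~\ref{lem:y.xk}.
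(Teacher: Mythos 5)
Your treatment of the partition and of claims (e)--(i) essentially tracks the paper's proof: disjointness of $H$ and $U$ via \eqref{eqn:extra2}, (e) via the definition of redundant edges, (f) via \eqref{eqn:extra1} forcing the ratio equality and \eqref{eqn:extra2} forcing equal roots in every compatible $g$, and (i) via the unique parent becoming constant. Two remarks on precision: in (g) the contradiction you name (``a linear dependency among distinct roots violating minimal rank'') is not the one that is actually available --- once $R_{g_0}(k)=h$ with $h$ a root, $h$ and $k$ lie in the \emph{same} star, so no two distinct roots become dependent; the correct contradiction is that $h\to k\in g_0$ gives $x_k=c^\ast_{kh}x_h$ with $h,k\in K^\ast$, placing $k\in U$ and contradicting $k\in H\cup L$ (or equivalently, $h$ can never be a root, so $h\notin H$). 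This is repairable. Your (f) is also stated as a sketch (``would produce a compatible impact graph containing $j'\to k'$''), but the intended chain --- shared parent forces $x_k/c^\ast_{kj}=x_{k'}/c^\ast_{k'j}$ by \eqref{eqn:extra1}, hence $R_g(k)=R_g(k')$ for every $g$ by \eqref{eqn:extra2}, hence $\pa_{\I}(k)=\pa_{\I}(k')$ and then (e) --- is the right one.

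The genuine gap is in (j). Your strategy is to show that $\pa_{\C}(\ell)\subseteq\pa_{\C}(a)$ forces $X_a$ to be pinned on every compatible event and hence constant, contradicting $a\in A$. This cannot work: the hypothesis only says every parent of $\ell$ is also a parent of $a$, not conversely, so $a$ may have additional parents in $\pa_{\C}(a)\setminus\pa_{\C}(\ell)$ and, in any case, retains its own innovation $Z_a$ in the representation \eqref{eqn:minimal.representation}. On a compatible event where $Z_a$ is large (but below its bound \eqref{eqn:z.bound}), $X_a=Z_a$, so $X_a$ is not constant and no contradiction with $a\in A$ arises. The correct contradiction is local to a single well-chosen impact graph: pick $r\in\pa_{\C}(\ell)$ minimizing $c^\ast_{ar}x_\ell/c^\ast_{\ell r}$ over $\pa_{\C}(\ell)$; since $r\to a\in\C$ there is a compatible $g$ with $r\to a\in g$ and $r\notin K^\ast(g)$, whence $r\to\ell\notin g$ and some other $j\in\pa_{\C}(\ell)$ realizes $\ell$, giving $Z_j=x_\ell/c^\ast_{\ell j}$ and $Z_r<x_\ell/c^\ast_{\ell r}$; the minimality of $r$ then yields $c^\ast_{aj}Z_j\geq c^\ast_{ar}x_\ell/c^\ast_{\ell r}>c^\ast_{ar}Z_r$, contradicting $X_a=c^\ast_{ar}Z_r$ on $\E(g)$. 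In other words, the consequence of $\pa_{\C}(\ell)\subseteq\pa_{\C}(a)$ is not that $X_a$ is constant but that the edge $r\to a$ could never have survived into the source DAG --- which is exactly what contradicts $r\in\pa_{\C}(a)$.
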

\begin{proof}
By definition, $V = A \cup U \cup H \cup L$, and all pairs are mutually disjoint except for possibly $U$ and $H$. Indeed, suppose $u \in U$. Let $k \in K^\ast$ be such that $x_u = c^\ast_{uk}x_k$, $k \neq u$. Let $g \in \mathfrak{G}(X_K = x_K)$. By \eqref{eqn:extra2}, $R_g(u) = R_g(k)$. But $\D$ is a DAG, so $c^\ast_{uk} > 0$ implies $c^\ast_{ku} = 0$, so in particular,  $u \neq R_g(k)$, hence, $u$ cannot be a root in $g$. Thus $u \notin H$, so $U \cap H = \emptyset$. This proves (a) to (d). 
Consider (e). By definition, $\pa_{\C}(k) \subseteq \pa_{\mathcal{I}}(k)$. Suppose for contradiction that the containment is strict, that is, there exists some $i \in V$ such that $i \to k \in \mathcal{I}$ but $i \to k \notin \C$. Then $i \to k \in E^-$. Since $k \in K^\ast$, we must have $i \in K^\ast$. Then $k \in U$, so $k \notin H \cup L$, a contradiction. This proves (e). 
Consider (f). 
 Let $k,k'$ be two such nodes. Let $i \in \pa_{\C}(k) \cap \pa_{\C}(k')$. If this set is empty then we are done. Otherwise, consider ${x_k}/{c^\ast_{ki}}$ and ${x_{k'}}/{c^\ast_{k'i}}$. 
If one of these two quantities are bigger, then either $i \to k$ or $i \to k'$ is not in $g$ for all $g \in \mathfrak{G}(X_K = x_k)$ by \eqref{eqn:extra1}, so $i \notin \pa_{\C}(k) \cap \pa_{\C}(k')$. So these two quantities must be equal. By \eqref{eqn:extra2}, for all $g \in \mathfrak{G}(X_K = x_K)$, $\pa_g(k) = \pa_g(k')$. Thus $\pa_{\mathcal{I}}(k) = \pa_{\mathcal{I}}(k')$. Since $k, k' \in H \cup L$, (e) then implies (f). 
Now consider (g). Suppose for contradiction that there exists some $k \in H \cup L$ such that $\pa_{\C}(h) = \pa_{\C}(k)$. As argued previously, this implies $h$ and $k$ cannot be the root of any $g \in \mathfrak{G}(X_K = x_K)$. So in particular, $h \notin H$, and we obtain the desired contradiction. 
Statement (h) follows immediately from (f) and (g). 
Now we prove (i). Suppose for contradiction that $\ell \in L$ has only one parent $i \in V$. Since $i \to \ell \notin E^-$, $i \to \ell \in \mathcal{I}(X_K = x_K)$. In other words, for all $g \in \mathfrak{G}(X_K = x_K)$, $R_g(\ell) = i$. By Lemma \ref{lem:k.ast.y}(a), this implies $i \in K^\ast$, so $\ell \in U$, a contradiction, as desired.
Now we prove (j). Suppose for contradiction that there exists some $a \in A$ and $\ell \in L$ such that $\pa_{\C}(\ell) \subseteq \pa_{\C}(a)$. Let $r \in \pa_{\C}(\ell)$ be a node with smallest coefficient $c^\ast_{aj}{x_{\ell}}/{c^\ast_{\ell j}}$ among $j \in \pa_{\C}(\ell)$, that is,
$$ c^\ast_{ar}\frac{x_{\ell}}{c^\ast_{\ell r}} \leq c^\ast_{aj}\frac{x_{\ell}}{c^\ast_{\ell j}} \mbox{ for all } j \in \pa_{\C}(\ell). $$
Since $r \to a \in \C(X_K = x_K)$, there exists some $g \in \mathfrak{G}(X_K = x_K)$ such that $r \to a \in g$ and $r \notin K^\ast(g)$. Thus this implies $r \to \ell \notin g$, so there exists some $j \in \pa_{\C}(\ell)$ with $j \to \ell \in g$. Since $g$ is a galaxy, $j \to a \notin g$.  Then by definition, on the event $\mathcal{E}(g)$, $j \to \ell \in g$ and $r \to \ell \notin g$ together imply
$$ x_{\ell} = c^\ast_{\ell j} Z_{j} > c^\ast_{\ell r}Z_{r}. $$
Rearranging gives 
$$ c^\ast_{aj}Z_{j} = c^\ast_{aj}\frac{x_{\ell}}{c^\ast_{\ell j}} \geq c^\ast_{ar}\frac{x_{\ell}}{c^\ast_{\ell r}} > c^\ast_{ar}Z_r,  $$
but this contradicts the fact that $j \to a \notin g$ and $r \to a \notin g$, since these two imply
$$ X_a = c^\ast_{a j'} Z_{j'} < c^\ast_{a r}Z_{r}. $$
So we have a contradiction, as needed.
 \end{proof}
 
 The nodes in $U$ have no direct effect on the conditional distribution, as their effect is mitigated through their (constant) parents.
 Proposition~\ref{cor:dag.partition} is illustrated in Figure~\ref{fig:conveyor}.
%Its condition (f) is illustrated in Figure~\ref{fig:umbrella}.

\begin{figure}
\begin{center}
\begin{tikzpicture}[->,>=stealth',shorten >=1pt,auto,node distance=2.3cm,semithick]
  \tikzstyle{every node}=[circle,line width =1pt, minimum height =.4cm]
  \node (1) [draw,pattern=crosshatch dots, pattern color = burgundy] {};
  \node (2) [below left of=1,draw,pattern=crosshatch dots, pattern color = burgundy] {};
  \node (4) [below right of=2,draw,fill=red!75] {};
  \node (a) [below =1cm of 4,xshift=1.37cm,draw,pattern color = blue, pattern = vertical lines]{};
 
  \node (3) [right of =2, draw,pattern=crosshatch dots, pattern color = burgundy] {};
  \path (1) edge (2);
  \path (1) edge (3);   
  \path (1) edge (4);
  \path (1) edge (2);
  \path (2) edge (4);
  \path (3) edge (4);
  \node (5) [right of =3,xshift=-1cm, draw,pattern=crosshatch dots, pattern color = burgundy]{};
  \node (6) [below of=5,yshift=.7cm,draw,fill=red!75]{};
  
  %\node (a) [right =2.3cm of 1,draw,fill=blue!50]{};
  
  \node (7) [right of=5, xshift=-1cm,draw,pattern=crosshatch dots, pattern color = burgundy]{};
  \node (8) [right of=6,xshift = -1cm, draw]{};
  
  \node (9) [above right of=7, draw]{};
  \path (1) edge (5);
  %Ngoc's added edges to fix figure
  \path (5) edge (a);
  \path (6) edge [dashed, color = burgundy] (a);  
  \path (7) edge (a);
  %end of Ngoc's block
  \path (5) edge (6);
  \path (7) edge (6);
  \path (7) edge (8);
  \path (9) edge (7);
  \node (10) [above right of=9, draw]{};
  \node (12) [below of=10, draw,pattern=crosshatch dots, pattern color = burgundy]{};
  \node (16) [right of=12,draw,pattern=crosshatch dots, pattern color = burgundy] {};
  \node (11) [below left of=12,draw] {};  
  \node (13) [right of = 11,xshift=-1cm,draw,fill=red!75] {};
  %\node (b) [below =1cm of 13,xshift=-0.5cm, draw,fill=blue!50]{};
  %\node (c) [below =1cm of 13,xshift=0.5cm, draw,fill=blue!50]{};
  \node (14) [right of = 13,xshift=-1.2cm,draw,fill=red!75] {};
  \node (15) [right of=14,xshift=-1.2cm,draw,fill=red!75] {};
  
  \node (h) [ below =1cm of 8,draw]{};
  \node (17) [right of=15,xshift=-1.2cm,draw]{};
  \path (8) edge (h);
  \path (6) edge [dashed, color = burgundy] (h);
  \path (10) edge (9);
  \path (9) edge (11);
  \path (10) edge (12);
  \path (12) edge (11);
  \path (12) edge (13);
  \path (12) edge (14);
  \path (12) edge (15);
  %Ngoc added: a non-trivial dashed edge for illustration
  %Steffen dashed one more?
  \path (12) edge [dashed, color = burgundy] (17);
  
  \path (16) edge (13);
  \path (16) edge (14);
  \path (16) edge (15);
  \path (16) edge [dashed, color = burgundy] (17);
  \node (g) [ below = 1cm of 17,draw]{};
  \node (18) [above right of=17, xshift=-1cm,draw,pattern=crosshatch dots, pattern color = burgundy]{};
  \node (22) [right of=18,draw,xshift=-1cm,pattern=crosshatch dots, pattern color = burgundy] {}; 
  \node (19) [right of = 17,draw,xshift=-1.2cm,fill=red!75] {};
  \node (f) [below of = 19,draw,pattern color = blue, pattern = vertical lines]{};
  \node (20) [right of = 19,xshift=-1.2cm,draw,fill=red!75] {};
  %\node (21) [right of=20,xshift=-1.2cm] {};
  %\node (d) [below =1cm of 21,xshift=-0.5cm, draw,fill=blue!50]{};
  %\node (e) [below =1cm of 21,xshift=0.5cm, draw,fill=blue!50]{};
  \path (18) edge (17);
  \path (18) edge (19);
  \path (18) edge (20);
  \path (17) edge  (g);
  %Ngoc's correction; Ngoc undid steffen's correction (dashing the edges). These edges don't qualify to be dashed. 
  \path (22) edge (f); 
  \path (18) edge (f);
  %end of Ngoc's correction
  %Steffen's correction:
  \path (19) edge [dashed, color = burgundy] (f);
  %end of steffen's correction
  \path (22) edge (19);
  \path (22) edge (20);

   \tikzstyle{background}=[rectangle, draw, 
                                                fill=white,
                                                inner sep=2mm,
                                                rounded corners=2mm]
  
      \begin{pgfonlayer}{background}
      
       \node [background,
                    fit= (4) (6), label = below:$H$]{};
      
                     \node [background,
                    fit= (13) (14) (15), label = below:$L_1$]{};
                    
                     \node [background,
                    fit= (19) (20), label = below:$L_2$]{};
  \end{pgfonlayer}
  
\end{tikzpicture}
\end{center}
\caption{Illustration of the partition in Proposition~\ref{cor:dag.partition}. Nodes in $H$ and $L$ are gray, nodes in $U$ are filled with blue lines. Active nodes $A$ are white or dotted burgundy, where the dotted nodes are parents of constant nodes in $K = H \cup L \cup U$. All edges, including the dashed edges, are in $\mathcal{I}(X_K = x_K)$, while only the solid edges are in $\C(X_K = x_K)$.} \label{fig:conveyor}
\end{figure}
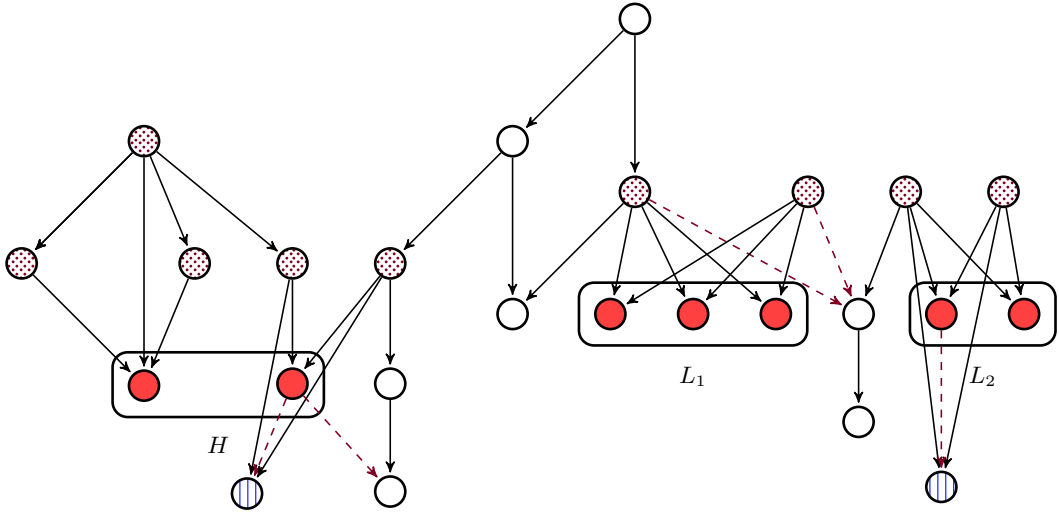

\begin{corollary}\label{cor:swap.out.S}
Let $\C(X_K = x_K)$ be the source DAG of a possible context $\{X_K = x_K\}$ and consider the node partition $V =A\cup K^*= A\cup H \cup L\cup U $ as given by Proposition~\ref{cor:dag.partition}. If $j \to i \in g$ for some $g \in \mathfrak{G}(X_K = x_K)$, $j,i \notin K^\ast$ and $j \in K^\ast(g)$, then $j \to h \in g$ for some $h \in H \cup L$.
\end{corollary}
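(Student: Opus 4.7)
The plan is to argue by contradiction. Assume no child of $j$ in $g$ lies in $H\cup L$; equivalently, $V(\sigma)\cap K^\ast\subseteq U$, where $\sigma$ is the star in $g$ rooted at $j$. Since $j\in K^\ast(g)$ and $j\notin K$, Lemma~\ref{lem:k.ast.y}(a) yields $V(\sigma)\cap K\neq\emptyset$, so pick $v\in V(\sigma)\cap K\subseteq U$.

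For each $u\in V(\sigma)\cap U$, choose a witness $\phi(u)\in K^\ast$ with $\phi(u)\neq u$ and $x_u=c^\ast_{u\phi(u)}x_{\phi(u)}$; on $\mathcal{E}(g)$ we also have $x_u=c^\ast_{uj}Z_j$. I would then build a chain $v=u_0,u_1,u_2,\ldots$ by choosing, whenever possible, $u_{i+1}=\phi(u_i)\in V(\sigma)$; under the contradiction hypothesis, each such $u_{i+1}$ lies again in $V(\sigma)\cap U$ and the chain can be continued. Since each $\phi(u_i)$ is a strict ancestor of $u_i$ in $\mathcal{D}$, the chain is finite and terminates at some $u:=u_r\in V(\sigma)\cap U$ whose every witness lies in $K^\ast\setminus V(\sigma)$ (otherwise either it could be extended or its terminal element would be in $V(\sigma)\cap(H\cup L)$, contradicting the assumption). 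Pick one such witness $k':=\phi(u)$ and set $p:=R_g(k')\neq j$. Combining $x_u=c^\ast_{uj}Z_j$, $x_u=c^\ast_{uk'}x_{k'}$ and $x_{k'}=c^\ast_{k'p}Z_p$ gives the linear identity $c^\ast_{uj}Z_j=c^\ast_{uk'}c^\ast_{k'p}Z_p$ on $\mathcal{E}(g)\cap\{X_K=x_K\}$; together with the pointwise bound $c^\ast_{up}Z_p\le x_u$ this forces the triangle equality $c^\ast_{up}=c^\ast_{uk'}c^\ast_{k'p}$, so $p$ is a legitimate alternate parent for $u$ in an impact graph.

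I would then construct $\tilde g$ from $g$ by detaching $u$ from $j$'s star and re-attaching it to $p$'s star, iterating for every $K$-child of $j$ that obstructs the conclusion. Conditions (a)--(c) of Theorem~\ref{thm:impact} are routine for $\tilde g$: (a) follows from $c^\ast_{up}>0$, (b) from $u$ being a leaf, and (c) for the new edge $p\to u$ is precisely the triangle equality combined with $p\to k'\in g\subseteq\tilde g$. Once $\tilde g\in\mathfrak{G}$ is shown to be compatible with $\{X_K=x_K\}$, the star of $j$ in $\tilde g$ no longer meets $K$, so $j$ is dropped from $\{R_{\tilde g}(k):k\in K\}$ while still belonging to $\{R_g(k):k\in K\}$; hence the rank of $\Pi_K\circ L_{\tilde g}$ is strictly smaller than that of $\Pi_K\circ L_g$, contradicting the minimality of the rank of $g$ in Definition~\ref{def:possible}(a). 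The main obstacle will be verifying condition~(d) of Theorem~\ref{thm:impact} for $\tilde g$, namely $\lambda(M(\tilde g))<1$: one has to leverage the strict inequality $M(g)\odot z_R<z_R$ available on $\mathcal{E}(g)\cap\{X_K=x_K\}$ (as used in the proof of Lemma~\ref{lem:k.ast.y}) together with the linear constraint relating $Z_j$ and $Z_p$ to build a genuine tropical subeigenvector for $M(\tilde g)$, so that Proposition~\ref{prop:lambda.1}(c) applies.
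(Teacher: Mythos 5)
Your skeleton — descend through witnesses of $U$-nodes inside the star of $j$ until the chain must stop, using that $\D$ is a finite DAG — is the same as the paper's. The divergence is in the one case that actually matters: what happens when a witness $k'=\phi(u)$ lies outside the star of $j$. The paper rules this case out immediately: since $x_u=c^\ast_{uk'}x_{k'}$ with $u,k'\in K^\ast\subseteq K^\ast(g)$, condition \eqref{eqn:extra2} of Lemma~\ref{lem:y.xk} (applied with the existential index equal to $k'$ itself, so that $x_u/c^\ast_{uk'}=x_{k'}/c^\ast_{k'k'}$) forces $R_g(u)=R_g(k')$, i.e.\ every witness of a $U$-node in $j$'s star is again in $j$'s star, and the chain terminates at a node of $S\cap K^\ast\setminus U\subseteq H\cup L$. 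You instead allow the witness to escape and try to manufacture a contradiction with rank minimality by surgery on $g$. That is where the proposal has a genuine gap, in two respects.

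First, the surgery is left unfinished at exactly the hard point: you concede that verifying $\lambda(M(\tilde g))<1$ (condition (d) of Theorem~\ref{thm:impact}) is ``the main obstacle'' and only sketch how one might attack it; you also do not verify Definition~\ref{def:possible}(a)(i) for $\tilde g$, i.e.\ that $\E(\tilde g)\cap\{X_K=x_K\}\neq\emptyset$, without which $\tilde g$ is not even in the set over which the rank is minimized. Second, the rank-drop claim does not follow from the surgery as described: you detach only the \emph{terminal} node $u=u_r$ of the chain, but the $K$-node $v=u_0$ with which the chain starts remains attached to $j$, so $j$'s star still meets $K$ and $j$ is not dropped from $\{R_{\tilde g}(k):k\in K\}$; to remove it you would have to re-root the entire chain $v=u_0,\dots,u_r$ at $p$ (which requires, and can be given, a telescoping triangle equality along the chain), and even then the count $|\{R_{\tilde g}(k):k\in K\}|$ need not decrease if $p$'s star did not already meet $K$. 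All of this machinery is unnecessary: the linear identity you derive, $c^\ast_{uj}Z_j=c^\ast_{uk'}c^\ast_{k'p}Z_p$ with $p\neq j$, is precisely the relation between roots of two distinct stars that Lemma~\ref{lem:y.xk} already shows to be incompatible with the minimal-rank condition, so the escape case simply cannot occur and the chain argument closes as in the paper.
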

\begin{proof}
Let  $S=V(\sigma) \subseteq V$ be the set of nodes in the star $\sigma$ with root $j$ in the galaxy $g$. Since $\mathfrak{G}(X_K = x_K) = \mathfrak{G}(X_{K^\ast} = x_{K^\ast})$, apply Lemma~\ref{lem:k.ast.y}(a) to $\mathfrak{G}(X_{K^\ast} = x_{K^\ast})$ giving $S \cap K^\ast \neq \emptyset$. Let $u \in S \cap K^\ast$. If $u \notin U$, then take $h = u$ and we are done. Else, by Proposition~\ref{cor:dag.partition}, there exist some $h \neq u, h \in K^\ast$ such that $x_u = c^\ast_{uh}x_{h}$. By Theorem~\ref{thm:impact}(c), $j \to h \in g$, so $h \in S \cap K^\ast$. If $h \notin U$ then we are done, else we repeat the above argument once more to find another node in $S \cap K^\ast$. Since $\mathcal{D}$ is a DAG, every time we repeat this argument we obtain a new node. Since the graph is finite, this procedure eventually terminates and returns some node $h \in S \cap K^\ast$ and $h \notin U$. By Proposition \ref{cor:dag.partition}, $h \in K \cup L$. 
\end{proof}

\section{Representing the conditional distribution}\label{sec:representation}
Before we derive conditional independence results, we need to have a good control of conditional distributions in a max-linear Bayesian network. We first derive a basic representation in Section~\ref{sec:basic} and subsequently a more compact representation without redundancy in Section~\ref{sec:compact}.

\subsection{Basic representation}\label{sec:basic}
Let $K \subset V$ and $\bar{K} = V \setminus K$. The conditional distribution of $X_{\bar{K}} \cd X_K = x_K$ can be represented by a system of max-linear equations %over a tropical polyhedron 
in the $Z_{\bar{K}}$ variables; %\citep{gaubert2011minimal,joswig:20,tran:14}
more precisely, we have:

\begin{proposition}\label{lem:a.representation}
The following is a representation for $X \cd X_K = x_K$ with respect to the innovations $Z$
%, \dots, Z_d$:
\begin{equation}\label{eqn:non.minimal}
X_{\overline{K}} = C^\ast_{\overline{K}K} \odot x_K \, \vee \, C^\ast_{\overline{K}\overline{K}} \odot Z_{\overline{K}}, 
\end{equation}
where the distribution of $Z$ is that of independent components, conditioned to satisfy
\begin{equation}\label{eqn:condition}
x_K = C^\ast_{KK} \odot Z_K \vee  C^\ast_{K\overline{K}} \odot Z_{\overline{K}}. 
\end{equation}
\end{proposition}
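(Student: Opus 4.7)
The plan is to start from the Kleene star identity $X = C^\ast \odot Z$ of \eqref{ml} and block-decompose it according to the partition $V = K \cup \overline{K}$. The $K$-block gives $X_K = C^\ast_{KK}\odot Z_K \vee C^\ast_{K\overline{K}}\odot Z_{\overline{K}}$, so the condition $\{X_K = x_K\}$ is precisely the constraint \eqref{eqn:condition}. The $\overline{K}$-block gives $X_{\overline{K}} = C^\ast_{\overline{K}K}\odot Z_K \vee C^\ast_{\overline{K}\overline{K}}\odot Z_{\overline{K}}$, and it remains to show that the first summand may be replaced by $C^\ast_{\overline{K}K}\odot x_K$ without changing the value.

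The key step is a sandwich of inequalities. On the one hand, since $c^\ast_{kk}=1$ for every $k\in K$, we have $x_k = \bigvee_{j} c^\ast_{kj}Z_j \geq Z_k$, hence $Z_K \leq x_K$ coordinatewise and therefore
\[ C^\ast_{\overline{K}K}\odot Z_K \;\leq\; C^\ast_{\overline{K}K}\odot x_K. \]
On the other hand, idempotency of the Kleene star, $C^\ast\odot C^\ast = C^\ast$, yields $c^\ast_{ij} \geq c^\ast_{ik}c^\ast_{kj}$ for all $i,j,k$. Thus for $i\in \overline{K}$,
\[ \bigvee_{k\in K} c^\ast_{ik}x_k \;=\; \bigvee_{k\in K}\bigvee_{j\in V} c^\ast_{ik}c^\ast_{kj}Z_j \;\leq\; \bigvee_{j\in V} c^\ast_{ij}Z_j \;=\; X_i, \]
so $C^\ast_{\overline{K}K}\odot x_K \leq X_{\overline{K}}$. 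Combining these two inequalities gives
\[ C^\ast_{\overline{K}K}\odot Z_K \;\leq\; C^\ast_{\overline{K}K}\odot x_K \;\leq\; X_{\overline{K}} \;=\; C^\ast_{\overline{K}K}\odot Z_K \vee C^\ast_{\overline{K}\overline{K}}\odot Z_{\overline{K}}, \]
so replacing $C^\ast_{\overline{K}K}\odot Z_K$ by $C^\ast_{\overline{K}K}\odot x_K$ in the $\overline{K}$-block leaves the maximum unchanged. This yields \eqref{eqn:non.minimal}.

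Finally, since $X$ is a deterministic function of $Z$, conditioning $X$ on $\{X_K=x_K\}$ is equivalent to conditioning $Z$ on the event $\{C^\ast_{KK}\odot Z_K \vee C^\ast_{K\overline{K}}\odot Z_{\overline{K}} = x_K\}$; under this conditional law, the right-hand side of \eqref{eqn:non.minimal} realizes the conditional distribution of $X_{\overline{K}}$, as claimed. I do not foresee a significant obstacle here: the statement is a direct algebraic rewriting powered entirely by the idempotency of $C^\ast$, and its role in the paper is to set up the compact representation (Theorem~\ref{thm:source.rep}), where the more substantive work — eliminating redundancy via the source DAG — is carried out.
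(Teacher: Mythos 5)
Your proof is correct and rests on the same two ingredients as the paper's: idempotency of the Kleene star ($c^\ast_{ij}\geq c^\ast_{ik}c^\ast_{kj}$) and the bound $Z_K\leq x_K$ coming from $c^\ast_{kk}=1$. The only difference is cosmetic — the paper first inserts the term $C^\ast_{\overline{K}K}\odot x_K$ via $X\geq C^\ast\odot X$ and then drops the dominated term $C^\ast_{\overline{K}K}\odot Z_K$, whereas you replace that term directly via a two-sided sandwich — so this is essentially the same argument.
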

\begin{proof}
By \eqref{eqn:idempotent} we have $X =  C^\ast  \odot X$ so 
$$X \geq   
\begin{bmatrix}
0 &  0 \\
C^\ast_{\overline{K}K} & 0
   \end{bmatrix} 
    \odot 
 \begin{bmatrix} 
 X_K \\  X_{\overline{K}}
 \end{bmatrix}.  
$$ Now, $X = C^\ast \odot Z$, therefore, 
$$X = \begin{bmatrix}
0 &  0 \\
C^\ast_{\overline{K}K} & 0
   \end{bmatrix} 
    \odot 
 \begin{bmatrix} 
 X_K \\  X_{\overline{K}}
 \end{bmatrix}   \vee C^\ast  \odot Z
\quad\mbox{and}\quad
X = \begin{bmatrix}
C^\ast_{KK} &  C^\ast_{K\bar{K}} \\
C^\ast_{\bar{K}K} & C^\ast_{\bar{K}\bar{K}}
   \end{bmatrix} 
    \odot 
 \begin{bmatrix} 
 Z_K \\  Z_{\overline{K}}
 \end{bmatrix}.$$
Writing out these equations, %for $X_K=x_K$ in block matrix form, 
we obtain 
\begin{align}
X_{\overline{K}} &= C^\ast_{\overline{K}K}  \odot x_K \vee
C^\ast_{\overline{K}K} \odot Z_K \vee
 C^\ast_{\overline{K}\overline{K}} \odot Z_{\overline{K}}, \label{eqn:redundant.bark}  \\
x_K &= C^\ast_{KK}\odot Z_K \vee C^\ast_{K\overline{K}} \odot Z_{\overline{K}}. \label{eqn:redundant.k}
\end{align}
The second equation is \eqref{eqn:condition}. For the first equation, note that $c^\ast_{ii} = 1$ for all $i$, so $x_K \geq Z_K$. Thus
$C^\ast_{\overline{K}K} \odot x_K \vee
C^\ast_{\overline{K}K} \odot Z_K = C^\ast_{\overline{K}K} \odot x_K$,
so \eqref{eqn:redundant.bark} is equivalent to \eqref{eqn:non.minimal}. Thus the context $\{X_K=x_K\}$ is equal to the conjunction of the events \eqref{eqn:condition} and \eqref{eqn:redundant.k}. The result follows.
\end{proof}

\begin{example}\label{ex:cassiopeia2} 
We shall illustrate Proposition~\ref{lem:a.representation} for the Cassiopeia graph of Figure~\ref{fig:cassiopeia0} in Example~\ref{ex:cassiopeia0}.
Assume that we have $c_{ji}=c^*_{ji}=1$ for all edges in this DAG and let $x_K=(x_4,x_5)$. Then \eqref{eqn:non.minimal} becomes
\[\begin{bmatrix}X_1\\X_2\\X_3\end{bmatrix}= \begin{bmatrix}0&0\\0&0\\0&0\end{bmatrix}\odot\begin{bmatrix}x_4\\x_5\end{bmatrix} \vee I_3\odot \begin{bmatrix}Z_1\\Z_2\\Z_3\end{bmatrix}=  \begin{bmatrix}Z_1\\Z_2\\Z_3\end{bmatrix},\]
whereas \eqref{eqn:condition} becomes 
\[\begin{bmatrix}x_4\\x_5\end{bmatrix} = \begin{bmatrix}1&0\\0&1\end{bmatrix}\odot\begin{bmatrix}Z_4\\Z_5\end{bmatrix} \vee \begin{bmatrix}1&1&0\\0&1&1\end{bmatrix}\odot\begin{bmatrix}Z_1\\Z_2\\Z_3\end{bmatrix}. \] 
This means that $x_4 \geq Z_4$, $x_5 \geq Z_5$ and 
\[\begin{bmatrix}x_4\\x_5\end{bmatrix}\geq \begin{bmatrix}1&1&0\\0&1&1\end{bmatrix}\odot\begin{bmatrix}Z_1\\Z_2\\Z_3\end{bmatrix}=  \begin{bmatrix}Z_1\vee Z_2\\Z_2\vee Z_3\end{bmatrix}. \] 

Depending on whether $x_4<x_5$,   $x_4>x_5$, or $x_4=x_5$, these inequalities are a.s.\ equivalent to respectively
\[\begin{bmatrix}x_4\\x_5\end{bmatrix}\geq \begin{bmatrix}Z_1\vee Z_2\\ Z_3\end{bmatrix}, \quad
\begin{bmatrix}x_4\\x_5\end{bmatrix}\geq \begin{bmatrix}Z_1\\ Z_2\vee Z_3\end{bmatrix}, \quad
\begin{bmatrix}x_4\\x_5\end{bmatrix}\geq \begin{bmatrix}Z_1\\ Z_3\end{bmatrix} \text{ and } Z_2=x_4=x_5.\]
Thus the conditioning under this restriction renders $Z_i$ bounded in all cases, and in the third case $Z_2$ becomes  constant. Note also that in these reduced inequalities, $Z_1$ and $Z_3$ never occur together in any inequality, rendering $X_1\ci X_3\cd X_{\{4,5\}}$. 
\halmos
\end{example}

\subsection{Compact representation}\label{sec:compact}

%In this section we  develop systematic ways of identifying constraints and simplifying these along the lines in Example~\ref{ex:cassiopeia2}.%, cf.\ Theorem~\ref{thm:source.rep} below.
The main result of this section, Theorem \ref{thm:source.rep}, states that the source DAG gives a representation of the active nodes $X_A \cd X_K = x_K$ with respect to the innovations $Z$. Compared to the representation of the conditional distribution in Proposition~\ref{lem:a.representation}, this is a representation with fewer terms. Most importantly, we shall show below that the system of equations involving $Z$ can be separated into blocks where no terms are redundant.

\begin{theorem}\label{thm:source.rep} Let $\C(X_K = x_K)$ be the source DAG of a possible context $\{X_K = x_K\}$, with 
node partition $V= A \cup H \cup L \cup U= A \cup H \cup (L_1 \dots \cup L_m)\cup U$ as in Proposition~\ref{cor:dag.partition}. 
For each $t = 1,\dots,m$, select a node $\ell_t \in L_t$. 
Then the following system of equations yields a representation for $X_A \cd X_K = x_K$ with respect to $Z$: 
\begin{equation}\label{eqn:minimal.representation}
    X_a = \alpha_a \vee Z_a \vee \bigvee_{j \in \pa_{\C}(a)} c^\ast_{aj}Z_j, \quad %\mbox{ for each } 
    a \in A, 
\end{equation}
where the constants $\alpha_a$ are given by 
\begin{equation}\label{eqn:alpha.i}
\alpha_a = \bigvee_{k \in K^\ast} c^\ast_{ak}x_{k} \vee\left(\bigvee_{\substack{j\in A \\ j \to a \in E^-}}\; \bigvee_{k \in \ch_{\D}(j)\cap (H\cup L)} c^\ast_{aj} \, \frac{x_{k}}{c^\ast_{kj}}\right), \quad a\in A, 
\end{equation}
%where the maximum over an empty set is considered to be equal to zero. 
and the distribution of $Z$ %\sll{$Z_{A\cup H\cup L}$? Or even $Z_{A\cup H\cup L \cup U}$?} 
is that of independent components, conditioned to satisfy the bounds  %\sll{Changed the first one to have the constant on the left side}
\begin{align}
Z_i &\leq \bigwedge_{k \in K^\ast: c^\ast_{ki} >0}\frac{x_k}{c^\ast_{ki}}, \quad %\mbox{ for } 
i \in V,\label{eqn:z.bound}\end{align}
as well as the equations
\begin{align}
x_h &= Z_h \vee \bigvee_{j \in \pa_{\C}(h)} c^\ast_{hj}Z_j, \quad
%\mbox{ and exactly one term achieves equality,} \mbox{ for } 
h \in H,   \label{eqn:equality.1} \\
x_{\ell_t} &= \bigvee_{j \in \pa_{\C}(\ell_t)} c^\ast_{\ell_t j}Z_j, \quad 
%\mbox{ and exactly one term achieves equality,} \mbox{ for } 
t = 1,\dots,m.  \label{eqn:equality.2}
\end{align}
Furthermore, each innovation term on the right-hand side of \eqref{eqn:minimal.representation}, \eqref{eqn:equality.1} and \eqref{eqn:equality.2} has positive probability of being the term that achieves equality. 
\end{theorem}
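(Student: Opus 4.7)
The plan is to start from the representation in Proposition~\ref{lem:a.representation} and simplify it in successive passes, using the partition $V = A \cup U \cup H \cup L_1 \cup \cdots \cup L_m$ of Proposition~\ref{cor:dag.partition} together with Corollary~\ref{cor:swap.out.S}. The first pass extends the constant part from $K$ to $K^\ast$: since every $k \in K^\ast$ is constant on $\{X_K = x_K\}$, the value $x_k$ is determined, and on any compatible impact graph $g$ with $k \in K^\ast(g)$ we have $Z_k \le X_k = x_k$, with equality exactly when $k \in R(g)$. Hence the contribution $c^\ast_{ak}Z_k$ to $X_a$ is bounded by the constant $c^\ast_{ak}x_k$ and the bound is realized with positive probability; this supplies the first summand of $\alpha_a$ in \eqref{eqn:alpha.i} and reduces the ``variable'' sum in Proposition~\ref{lem:a.representation} from one over $\bar K$ to one over $A$.

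The second pass reduces this sum over $A$ to one over $\pa_{\C}(a)$. For each $j \in A$ with $j \to a \in E^-$, Definition~\ref{defn:redundant edge} guarantees that $X_j$ is constant under every $g \in \mathfrak{G}(X_K=x_K)$ containing this edge; Corollary~\ref{cor:swap.out.S} then produces a child $k \in H \cup L$ of $j$ with $j \to k \in g$, so on that event $Z_j = x_k/c^\ast_{kj}$ and therefore $c^\ast_{aj}Z_j \le c^\ast_{aj}\,x_k/c^\ast_{kj}$. Taking the maximum over such $k$ and $j$ yields the second summand of $\alpha_a$. For $j \in A$ with $j \to a \notin E^-$, we have $j \in \pa_{\C}(a)$ and $c^\ast_{aj}Z_j$ remains as a genuine variable term, while the diagonal term $Z_a$ is always present because $c^\ast_{aa}=1$; this is \eqref{eqn:minimal.representation}. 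The same reasoning applied to the $K$-row constraints $x_i = \bigvee_j c^\ast_{ij}Z_j$ inherited from Proposition~\ref{lem:a.representation} yields \eqref{eqn:equality.1}--\eqref{eqn:equality.2}: for $h \in H$ the diagonal $Z_h$ survives because $h$ is a potential root of some compatible $g$; equations indexed by $U$ collapse to deterministic relations $x_u = c^\ast_{uk}x_k$ already forced by $K^\ast$ through Proposition~\ref{cor:dag.partition}(b); and within each class $L_t$ all nodes share the same $\C$-parent set by Proposition~\ref{cor:dag.partition}(h), so the single representative equation for $\ell_t$ encodes all of them. The bounds \eqref{eqn:z.bound} are immediate from idempotency $X = C^\ast\odot X$ in \eqref{eqn:idempotent} together with $Z \le X$: for any $k \in K^\ast$ and $i$ with $c^\ast_{ki}>0$, $x_k = X_k \ge c^\ast_{ki}X_i \ge c^\ast_{ki}Z_i$.

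For the positive-probability claim, I would construct, for each innovation term on the right-hand side of \eqref{eqn:minimal.representation}, \eqref{eqn:equality.1}, or \eqref{eqn:equality.2}, a compatible impact graph $g \in \mathfrak{G}(X_K=x_K)$ containing the relevant edge $j \to i$, then choose $Z$ in an open neighbourhood of a configuration of the type exhibited in the sufficiency part of Theorem~\ref{thm:impact} so that every other term in the system is strictly smaller. Such a $g$ exists precisely because $j \to i \notin E^-$, i.e.\ the edge survives in $\C(X_K=x_K)$, and the open-set construction supplies a positive-probability event on which that particular term attains the maximum. The main obstacle I anticipate is the bookkeeping behind \eqref{eqn:alpha.i}: one must verify that no $A$-parent of $a$ contributes both as a variable term in \eqref{eqn:minimal.representation} and as a constant inside $\alpha_a$, and that the two summands of $\alpha_a$ together dominate every contribution that has been removed; simultaneously, one must check the minimal-rank condition in Definition~\ref{def:possible} to confirm that the reduced equations \eqref{eqn:equality.2} do not degenerate into hidden linear relations that would have reduced the class structure of $L$ further.
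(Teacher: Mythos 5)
Your proposal follows essentially the same route as the paper's proof: start from Proposition~\ref{lem:a.representation}, pass from $K$ to $K^\ast$, discard the $U$-equations as redundant, collapse each class $L_t$ to a single representative equation via Lemma~\ref{lem:y.xk}, replace the $E^-$-terms with tails in $A$ by constants via Corollary~\ref{cor:swap.out.S}, and prove the positive-probability claims by exhibiting a compatible impact graph containing each surviving edge. The one point to tighten is your dichotomy ``$j \to a \in E^-$ versus $j \in \pa_{\C}(a)$'': terms $c^\ast_{aj}Z_j$ with $c^\ast_{aj} > 0$ but $j \to a$ contained in \emph{no} compatible impact graph fall into neither case and must be dropped separately (the paper does this in one line), and replacing a term by the constant in \eqref{eqn:alpha.i} requires the event decomposition over $\mathfrak{G}(X_K = x_K)$ --- not merely the upper bound $c^\ast_{aj}Z_j \leq c^\ast_{aj}x_k/c^\ast_{kj}$ --- to guarantee the substitution is exact, which is exactly the bookkeeping you flag at the end.
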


\begin{proof} 
We shall begin with the representation of $X \cd X_K = x_K$ given by Proposition~\ref{lem:a.representation} and then simplify terms until we obtain the representation above. The contexts $\{X_K = x_K\}$ and $\{X_{K^\ast}=x_{K^\ast}\}$ are clearly equivalent, so we may without loss of generality assume that $K = K^\ast$ %\CK{then why do we below write $K^*$ instead of the simpler $K$?} \sll{Indeed this would be simpler. We just need to keep $K^*$ in the theorem, not in the proof} 
and $A = \bar{K}$. This gives {for \eqref{eqn:non.minimal} and \eqref{eqn:condition}} the representations
\begin{align}
    X_A &= C^\ast_{A K^\ast}\odot x_{K^\ast} \vee C^\ast_{AA}\odot Z_A, \label{eqn:top.2} \\
    x_{K^\ast} &= C^\ast_{K^\ast K^\ast}\odot Z_{K^\ast} \vee C^\ast_{K^\ast A}\odot Z_A. \label{eqn:bottom.2}
\end{align}
First we simplify \eqref{eqn:bottom.2}.
With $K^*=H\cup L\cup U$ we expand this system of equations as follows:
\begin{align}
    x_{H} &= C^\ast_{H K^\ast}\odot Z_{K^\ast} \vee C^\ast_{H A}\odot Z_A \label{eqn:bottom2.h} \\
    x_{L} &= C^\ast_{L K^\ast}\odot Z_{K^\ast} \vee C^\ast_{L A}\odot Z_A \label{eqn:bottom2.l} \\
    x_{U} &= C^\ast_{U K^\ast}\odot Z_{K^\ast} \vee C^\ast_{U A}\odot Z_A \label{eqn:bottom2.u}.
\end{align}
For each $i \in V$ %\sll{why $H$? why not all of $V$?} 
and each $k \in K^\ast$, all inequalities on $Z_i$ implied by \eqref{eqn:bottom.2} are
$$ x_k \geq c^\ast_{ki}Z_i  $$ whenever $c^\ast_{ki} > 0$, and in particular this is equivalent to \eqref{eqn:z.bound}. 

Next we keep track of the equalities. For $u \in U$, by Proposition~\ref{cor:dag.partition}, $x_u = c^\ast_{uk}x_k$ for some $k \in K^\ast$, $k \neq u$. We have
\begin{align*}
x_u &= c^\ast_{uk}x_k \\
&= c^\ast_{uk}C^\ast_{kK^\ast}\odot Z_{K^\ast} \vee c^\ast_{uk}C^\ast_{kA}\odot Z_A \quad\mbox{ by } \eqref{eqn:bottom.2} \\
&\leq C^\ast_{uK^\ast}\odot Z_{K^\ast} \vee C^\ast_{uA}\odot Z_A \\
%\quad \CK{\mbox{by Corollary~3.12 of \cite{GK1}}} \\
&= x_u \quad \mbox{ by } \eqref{eqn:bottom.2}.
\end{align*}
%\sll{Did not think Corollary~3.12 of \cite{GK1} was necessary, as it simply uses the obvious fact that $c^\ast_{uk}C^\ast_{kB}\leq  C^\ast_{uB}$ for any $B$ by definition of $\odot$.}
Thus we conclude
$$C^\ast_{uK^\ast}\odot Z_{K^\ast} \vee C^\ast_{uA}\odot Z_A=c^\ast_{uk}\left(C^\ast_{kK^\ast}\odot Z_{K^\ast} \vee C^\ast_{kA}\odot Z_A\right),\quad k \in K^\ast, k \neq u,
$$
whence
the constraint imposed upon $Z$ by $x_u$ is identical to the constraint imposed upon $Z$ by $x_k$. Therefore all equations in \eqref{eqn:bottom2.u} are redundant. So \eqref{eqn:bottom.2} is equivalent to the conjunction of \eqref{eqn:bottom2.h} and \eqref{eqn:bottom2.l}. 

Next we simplify the terms that appear on the right-hand side of \eqref{eqn:bottom2.h} and \eqref{eqn:bottom2.l}.
Fix $k \in H \cup L$. 
By definition of the source DAG, we keep a term $c^\ast_{ki}Z_i$ for $i \neq k$ if and only if $i \to k \in g$ for some $g \in \mathfrak{G}(X_K = x_K)$, and we keep the term $c^\ast_{kk}Z_k = Z_k$ if and only if $k \in R(g)$ for some $g \in \mathfrak{G}(X_K = x_K)$. Since each $g$ is a galaxy, each remaining term has a positive probability of achieving the maximum. 
 Since each event under $\mathfrak{G}(X_K = x_K)$ with positive probability must correspond to some $g$,  there is always one that achieves the maximum among the remaining terms. 
Since $k \in K^\ast$, it holds that $i \to k \in g$ for some $g \in \mathfrak{G}(X_K =x_K)$ if and only if we also have $i \to k \in \C(X_K =x_K)$. 
Therefore, by Proposition~\ref{cor:dag.partition}, \eqref{eqn:bottom2.h} simplifies to \eqref{eqn:equality.1}, and \eqref{eqn:bottom2.l} simplifies to
\begin{equation}\label{eqn:x.ell}
x_\ell = \bigvee_{j \in \pa_{\C}(\ell)}c^\ast_{\ell j}Z_j. 
\end{equation}
Write $L = L_1 \cup \dots \cup L_m$ as given by Proposition~\ref{cor:dag.partition}.
If $\ell,\ell' \in L_t$ for some $t = 1,\dots,m$, then they share the same set of parents. By Lemma \ref{lem:y.xk}, this implies
$$ \frac{x_\ell}{c^\ast_{\ell j}} = \frac{x_{\ell'}}{c^\ast_{\ell' j}} $$
for all $j \in \pa_{\C}(\ell) = \pa_{\C}(\ell')$. So \eqref{eqn:x.ell} for $x_\ell$ and $x_{\ell'}$ are constant multiples of each other. So for each $t = 1,\dots,m$ \eqref{eqn:x.ell} for all $\ell \in L_t$ is equivalent to the single equation \eqref{eqn:equality.2}. 

Now we simplify \eqref{eqn:top.2}. Like in the previous step, we can for $a,j\in A$ drop terms $c^\ast_{aj}Z_j$ where $j \to a \notin g$ for every $g \in \mathfrak{G}(X_K = x_K)$. 
This gives
$$ X_a = C^\ast_{aK^\ast}\odot x_{K^\ast} \vee \left(\bigvee_{j \in A, j \to a \in E^-} c^\ast_{aj}Z_j\right) \vee Z_a \vee \bigvee_{j \in \pa_{\C}(a)}c^\ast_{aj}Z_j,\quad {a\in A}. $$
Now we argue that each term in $\bigvee_{j\in A, j \to a \in E^-} c^\ast_{aj}Z_j$
can be replaced by an appropriate constant. Let $j \in A$ be a node with $j \to a \in E^-$. 
Let $\E$ be the sub-event of $\mathfrak{G}(X_K = x_K)$ where $j$ is the root of $a$, that is, 
$$ \E = \bigcup\left\{\E(g): g \in \mathfrak{G}(X_K = x_K), j \to a \in g \right\}. $$
By definition, for each $g \in \mathfrak{G}(X_K = x_K)$ such that $j \to a \in g$, we have that $a,j \in K^\ast(g)$. Since $a,j \notin K^\ast(g)$, it follows that there exists some $k = k(g) \in K^\ast(g)$ such that $j \to k\in g$.  Thus, {on the event} $\E(g) \cap \{X_K = x_K\}$, we have
$$ X_a = c^\ast_{aj}Z_j = c^\ast_{aj}\frac{x_{k(g)}}{c^\ast_{k(g)j}},\quad {a\in A}. $$
By Corollary \ref{cor:swap.out.S}, we may assume that $k(g) \in H \cup L$. Therefore, on $\E$, 
\begin{equation}\label{eqn:on.e}
X_a = \bigvee_{k \in \ch_{\D}(j)\cap (H\cup L)} c^\ast_{aj}\frac{x_{k}}{c^\ast_{kj}},\quad {a\in A}. 
\end{equation}
By definition of $\E$, on the complement $\{X_K = x_K\} \setminus \E$, $X_a > c^\ast_{aj}Z_j$. Therefore, the term $c^\ast_{aj}Z_j$ can be dropped from the representation of $X_a$ and be replaced by the right-hand side of \eqref{eqn:on.e}. This gives
$$ X_a = C^\ast_{aK^\ast}\odot x_{K^\ast} \vee \left(\bigvee_{j\in A, j \to a \in E^-}\; \bigvee_{k \in \ch_{\D}(j)\cap (H\cup L)} c^\ast_{aj}\frac{x_{k}}{c^\ast_{kj}}\right) \vee Z_a \vee \bigvee_{j \in \pa_{\C}(a)}c^\ast_{aj}Z_j,\quad {a\in A}. $$
This is \eqref{eqn:minimal.representation}, with $\alpha_a$ equal to the constant terms in the equation above, which is the formula in \eqref{eqn:alpha.i}. 
%\sll{Changed the expression for the second term in \eqref{eqn:alpha.i} to be exactly like the second term above.}
Finally, by definition of $\C(X_K = x_K)$, for each $j \in \pa_{\C}(i)$ there exists some $g \in \mathfrak{G}(X_K = x_K)$ such that $j \to i$ and $j \notin K^\ast(g)$. Since $g$ is a galaxy and $X_i$ is \emph{not} constant on the  event $\E(g) \cap \{X_K = x_K\}$, on this event, $c^\ast_{ij}Z_j$ is the unique term that achieves the maximum in \eqref{eqn:minimal.representation}.
\end{proof}

\begin{remark}\label{rem:3.2}
We note that in \eqref{eqn:z.bound}, only the bounds for the variables $Z_{A\cup H}$ are directly relevant for the conditional distribution of $X_A$ given $X_K=x_K$, as the variables $Z_{L\cup U}$ do not enter into any of the equations \eqref{eqn:minimal.representation}, \eqref{eqn:alpha.i}, \eqref{eqn:equality.1}, or \eqref{eqn:equality.2}. However, we have included these in Theorem~\ref{thm:source.rep} to provide a full description of the conditional distribution of $Z$ given $X_K=x_K$, which may be of interest for other purposes.
\end{remark}

The following example of an umbrella graph illustrates some aspects of this representation.

\begin{figure}
\begin{center}
\begin{tikzpicture}[->,>=stealth',shorten >=1pt,auto,node distance=2cm,semithick]
  \tikzstyle{every node}=[circle,line width =1pt,font=\footnotesize]
  \node (1) [draw] {$1$};
  \node (4) [below left of=1,xshift=1.7cm,draw]{$4$}; %xshift=1.7cm,draw] {$4$};
  \node (5) [right of=4,draw] {$5$};
  \node (2) [below left of=4,draw] {$2$};  
  \node (6) [right of = 2,draw,fill=red!75] {$6$};
  \node (7) [right of = 6,draw,fill=red!75] {$7$};
  \node (3) [right of=7,draw] {$3$};
  %\path (1) edge [color=blue](2);
  \path (1) edge [densely dashdotted, blue](2);
  \path (1) edge [densely dashdotted, blue,bend left=20, align=left, below](3);
  \path (1) edge (7);    
  \path (4) edge [densely dashdotted, blue](2);
  \path (4) edge (6);
  \path (4) edge (7);
  \path (4) edge (3);
  \path (5) edge (2);
  \path (5) edge (6);
  \path (5) edge (7);
  \path (5) edge  [densely dashdotted, blue](3);
  
  \node (a1) [right of=1, xshift= 6cm, draw] {$1$};
  \node (a4) [below left of=a1,xshift=1.7cm,draw,pattern=crosshatch dots, pattern color = burgundy,pattern color = burgundy] {$4$};
  \node (a5) [right of=a4,draw, pattern=crosshatch dots, pattern color = burgundy] {$5$};
  \node (a2) [below left of=a4,draw] {$2$};  
  \node (a6) [right of = a2,draw,fill=red!75] {$6$};
  \node (a7) [right of = a6,draw,fill=red!75] {$7$};
  \node (a3) [right of=a7,draw] {$3$};
  \path (a1) edge [densely dashdotted, blue, ](a2);
  \path (a1) edge [densely dashdotted, blue,bend left=20, align=left, below](a3); 
  \path (a4) edge [densely dashdotted, blue](a2);
  \path (a4) edge (a6);
  \path (a4) edge (a7);
  \path (a5) edge (a6);
  \path (a5) edge (a7);
  \path (a5) edge [densely dashdotted, blue](a3);
  
\end{tikzpicture}
\end{center}
\caption{The umbrella graph: 
To the left,  $\D(C)=\D^*(C)=\D^*_K(C)$; to the right: the source DAG $\C(X_K = x_K)$ for $K= \{ 6, 7 \}$ and $x_6=x_7=3$. Black edges have weights 1, dash-dotted blue edges have weights 2.} \label{fig:umbrella}
\end{figure}
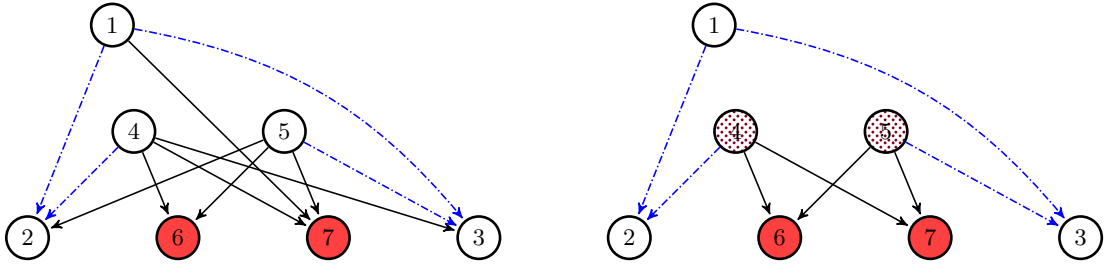
\begin{example}[Umbrella]\label{ex:umbrella}
Consider the graph to the left in Figure \ref{fig:umbrella} where solid black edges have weights 1 and dash-dotted blue edges have weights 2.
The partition in Theorem~\ref{thm:source.rep} yields nodes $A=V\setminus K^*=V\setminus K= \{ 1,2,3,4,5 \}$, and $K=L=\{6,7\}$.  The non-zero constants \eqref{eqn:alpha.i} in the representation of the active variables are $\alpha_2=\alpha_3=3$, calculated as %\CK{where does this come from? where do we get the numbers for $c_{25}^*$ from?}
% \sll{Have rewritten the calculation and added an explanation. Believe it is OK now.} \CK{I believe it now.}
$$\alpha_2=0\vee \bigvee_{5: 5\to 2}\;\bigvee_{k\in \{6,7\}}c^\ast_{25}\frac{x_k}{c^\ast_{k5}}=\max(3,3)=3,$$ since $c^\ast_{2k}=0$ for $k=6,7$ and there is only one edge $5\to 2\in E^-$ pointing to $2$. A similar calculation yields  $\alpha_3=3$. The full representation \eqref{eqn:minimal.representation} then becomes
\begin{align*}
    X_1 &= Z_1 ,\quad X_4 = Z_4 ,\quad X_5 = Z_5 \\
    X_2 &= 3\vee Z_2 \vee 2Z_1 \vee 2Z_4  \\
    X_3 &= 3\vee Z_3 \vee 2Z_1 \vee 2Z_5  
\end{align*}
 with inequalities from \eqref{eqn:z.bound} yielding the bounds $Z_1, Z_4, Z_5 \leq 3$ (and $Z_6,Z_7\leq 3$). %Indeed, the last two follow since $x_6 = 3 = Z_4 \vee Z_5$.
 Further, we have the equality \eqref{eqn:equality.2} yielding
 \begin{equation}\label{eqn:lbounds.ex}x_6=x_7=3=Z_4\vee Z_5\end{equation}
 whereas \eqref{eqn:equality.1} is void.
 
 For $Z_1$, we claim that it cannot simultaneously  achieve the bound in both of the equations for $X_2$ and $X_3$, illustrating Proposition~\ref{cor:dag.partition}(g). In fact, if $X_2 = 2Z_1$ then $3 < 2Z_1 \leq 6$, %\sll{I think it may need an additional argument to conclude that $2Z_1<6$? I can understand that $2Z_1\leq 6$.} 
 so that $2Z_4 < 2Z_1\leq 6$. Then \eqref{eqn:lbounds.ex} yields that $Z_4 < 3$ and thus $Z_5=3$, but then $X_3 = 6 > 2Z_1$.
 Moreover, since $Z_4$ and $Z_5$ both enter into the equation \eqref{eqn:lbounds.ex} we conclude that $ X_2 \notci X_3 \cd  X_{\{6,7\}} =(3,3).$ \halmos 
 %\sll{What exactly is the point of the last statement and how is it derived from the above?}
 %\sll{or should it be no matter the value of $x_6,x_7$?}
\end{example}

We next present some important consequences of Theorem~\ref{thm:source.rep}, enabling us to identify conditional independencies.

\begin{corollary}\label{lem:z.independent}
%\CK{drop, we never say this: Let $Z$ be the innovation vector.} % as in Theorem~\ref{thm:source.rep}. 
For each pair $i,j \in V$,  either
$Z_i, Z_j$ appear together in exactly one equation among those in \eqref{eqn:equality.1} and \eqref{eqn:equality.2}, or they do not appear together in any of those equations. In the first case they are conditionally dependent, i.e.\  $Z_i\notci Z_j \cd X_K = x_K$. In the second case they are conditionally independent, i.e.\  $Z_i \ci Z_j \cd X_K = x_K$. %In other words, $Z_i$ is not independent of $Z_j$ on $\{X_K = x_K\}$ if and only if they both appear together in exactly one equation. 
\end{corollary}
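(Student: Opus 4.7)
For each equation $e$ in \eqref{eqn:equality.1}--\eqref{eqn:equality.2} let $V(e)$ denote the set of innovation indices on its right-hand side: $V(e_h) = \{h\} \cup \pa_{\C}(h)$ for $h \in H$, and $V(e_{\ell_t}) = \pa_{\C}(\ell_t)$ for the chosen representative of $L_t$. Since every edge originating in $K^\ast$ is redundant by Definition~\ref{defn:redundant edge}, the parents in $\C$ of any node in $H \cup L$ lie entirely in $A$. Combining this with Proposition~\ref{cor:dag.partition}(f)--(h), the index sets $V(e)$ are pairwise disjoint: parent sets of representatives from distinct equivalence classes of $H \cup L$ are disjoint, and each $h \in H$ is itself an element of $K^\ast$ which cannot appear as a parent in another equation (those parent sets lie in $A$). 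Hence any pair $(i,j)$ appears together in at most one equation, establishing the stated dichotomy.

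Suppose first that $i, j$ do not share a common equation. Theorem~\ref{thm:source.rep} presents the conditional law of $Z$ given $X_K = x_K$ as that of independent components subject to the coordinate-wise upper bounds \eqref{eqn:z.bound} and the block equations \eqref{eqn:equality.1}--\eqref{eqn:equality.2}, whose blocks $V(e)$ are pairwise disjoint by the previous step. The product structure of the unconditional law together with these decoupled constraints factorizes the conditional law as a product over the blocks $V(e)$ and the singleton blocks $\{i\}$ for indices $i \notin \bigcup_e V(e)$, which immediately yields $Z_i \ci Z_j \mid X_K = x_K$.

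Now suppose $i, j$ share the unique equation $x_k = \bigvee_{s \in V(e)} c^\ast_{ks} Z_s$, and write $b_s := x_k / c^\ast_{ks}$. By the last sentence of Theorem~\ref{thm:source.rep}, $p_i := \P(Z_i = b_i \mid X_K = x_K) > 0$ and $p_j > 0$. Under the conditional law the block $Z_{V(e)}$ is supported on the union of top-dimensional faces $F_s = \{z_s = b_s, \; z_r \leq b_r \text{ for } r \neq s\}$ of the box $\prod_{s \in V(e)} [0, b_s]$. Atom-freeness of the innovations ensures that the conditional distribution on each $F_s$ is absolutely continuous with respect to Lebesgue measure on the face, so the strictly lower-dimensional intersection $F_i \cap F_j$ is a conditional null set, giving $\P(Z_i = b_i, Z_j = b_j \mid X_K = x_K) = 0$. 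Since $F_i$ depends only on $Z_i$ and $F_j$ only on $Z_j$, conditional independence would force $p_i p_j = 0$, contradicting $p_i, p_j > 0$. The principal obstacle is rigorously justifying this vanishing claim since we condition on an event of unconditional measure zero; I would make it precise via disintegration, or by approximating with conditioning on $X_K \in \prod_k [x_k, x_k + \epsilon)$ and checking that the joint event scales in $\epsilon$ at higher order than the marginal events before taking $\epsilon \to 0$.
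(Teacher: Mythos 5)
Your proposal is correct and follows essentially the same route as the paper's proof: both rely on Theorem~\ref{thm:source.rep} to reduce to the block structure of the equations \eqref{eqn:equality.1}--\eqref{eqn:equality.2}, deduce independence from disjointness of the blocks, and deduce dependence from the fact that each of $Z_i=b_i$ and $Z_j=b_j$ has positive conditional probability while their conjunction has conditional probability zero. The only differences are that you additionally spell out (via Proposition~\ref{cor:dag.partition}(f)--(h) and the redundancy of edges out of $K^\ast$) why a pair can share at most one equation, and that you flag the measure-zero conditioning issue, both of which the paper leaves implicit.
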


\begin{proof}
By Theorem~\ref{thm:source.rep}, the distribution of $Z\cd X_K = x_K$ is the distribution of $Z$ given the events defined by  \eqref{eqn:z.bound}, \eqref{eqn:equality.1}, and \eqref{eqn:equality.2}. The bounds \eqref{eqn:z.bound} only involve one variable at a time and thus play no role for independence issues.
Groups of $Z$'s that appear in different equations in \eqref{eqn:equality.1} and \eqref{eqn:equality.2} are independent. 
It remains to show that, if $Z_i,Z_j$ appear in the same equation, then they are dependent. Indeed, suppose that $Z_i,Z_j$ appear in \eqref{eqn:equality.1} for some $h \in H$, with coefficients $a_i,a_j > 0$. The event $\E_h$ defined by this equation can be rewritten as 
$$a_iZ_i \leq x_h, a_jZ_j \leq x_h\quad \mbox{or}\quad a_iZ_i \leq x_h, a_jZ_j \leq x_h, a_{j'}Z_{j'} \leq x_h \mbox{ for some other } j' \in \pa_{\C}(h),$$ and exactly one of these terms achieves equality.  
Further, % \emph{exactly one} of these terms must achieve equality, and that 
each term has a positive probability of achieving  equality. That is,
$$ \P(a_iZ_i = x_h \cd \E_h)> 0 \text{ and } \P(a_jZ_j = x_h \cd \E_h) > 0,$$
but
$$ \P(a_jZ_j = x_h \cd a_iZ_i = x_h, \E_h) = 0.$$
Therefore, $Z_i \notci Z_j \cd \E_h$. A similar argument applies for the equation \eqref{eqn:equality.2}.
\end{proof}

%\sll{I have rephrased this Corollary, as it is not used in the way it was formulated} 
\begin{corollary}\label{lem:i.k.common.source}
For $i,j \in A$, $j \to i \in \C(X_K = x_K)$, suppose that %$Z_j$ appears in some equation $x_k$ in \eqref{eqn:equality.1} or \eqref{eqn:equality.2} for some $k \in H \cup \{\ell_1,\dots,\ell_m\}$
$j\to k\in \C(X_K = x_K)$ for some $k\in H\cup L$. Then there exists $g \in \mathfrak{G}(X_K = x_K)$ such that $j \to i, j \to k \in g$. In other words, %\CK{$c^\ast_{kj}$?}
\begin{equation}\label{eqn:i.k.common.source}
\P(X_i = c^\ast_{ij}Z_j, Z_j = x_k/c^\ast_{kj}\cd X_K = x_K) > 0
\end{equation}
\end{corollary}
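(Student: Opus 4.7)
The plan is to exhibit an impact graph $g \in \mathfrak{G}(X_K = x_K)$ that contains both edges $j \to i$ and $j \to k$. Once such a $g$ is in hand, the probabilistic statement \eqref{eqn:i.k.common.source} follows immediately: on the event $\E(g) \cap \{X_K = x_K\}$, which has positive conditional probability given $\{X_K = x_K\}$ by Theorem~\ref{thm:source.rep}, the edge $j \to i \in g$ forces $X_i = c^\ast_{ij} Z_j$, while the edge $j \to k \in g$ combined with $X_k = x_k$ forces $Z_j = x_k / c^\ast_{kj}$.

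To construct $g$, I would start from the hypothesis $j \to k \in \C(X_K = x_K)$: by Definition~\ref{defn:context.graph} this yields some $g_2 \in \mathfrak{G}(X_K = x_K)$ with $j \to k \in g_2$. In $g_2$, $j$ is then a root, and since $k \in H \cup L \subseteq K^\ast$, Lemma~\ref{lem:k.ast.y}(a) says that the whole star of $j$ in $g_2$ is constant, so $j \in K^\ast(g_2)$ and $Z_j = x_k / c^\ast_{kj}$ on $\E(g_2) \cap \{X_K = x_K\}$. I would then modify $g_2$ to attach $i$ as an additional child of $j$, according to how $i$ currently sits in $g_2$: if $j \to i$ is already an edge of $g_2$, take $g := g_2$; if $i$ has some other parent $r \neq j$ in $g_2$, replace $r \to i$ by $j \to i$; if $i$ is an isolated singleton root of $g_2$, add $j \to i$; and if $i$ is the root of a non-trivial star in $g_2$, combine $g_2$ with an impact graph $g_1 \in \mathfrak{G}(X_K = x_K)$ containing $j \to i$ (which exists because $j \to i \in \C(X_K = x_K)$) by importing the local star structure of $g_1$ around $i$.

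The remaining work is to verify that the modified galaxy $g$ still lies in $\mathfrak{G}(X_K = x_K)$, i.e. satisfies the four conditions of Theorem~\ref{thm:impact} together with the minimum-rank condition of Definition~\ref{def:possible}. Conditions (a) and (b) are immediate from the construction, condition (c) is local and can be checked for the new edge $j \to i$ using the triangle condition inherited from $g_2$ (and from $g_1$ in the last case), while the compatibility is equivalent to exhibiting an innovation vector $z \in \Rplus^V$ with $X_K = x_K$ whose impact graph is $g$: I would take $z_j = x_k/c^\ast_{kj}$, choose $z_r$ at the remaining roots of $g$ so that $X_K = x_K$ still holds, and let $z_{j'}$ be small for every non-root $j'$, exactly in the spirit of the sufficiency part of the proof of Theorem~\ref{thm:impact}.

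The main obstacle is the principal eigenvalue condition (d) of Theorem~\ref{thm:impact} together with the minimum-rank requirement, since both are global constraints on the rewired galaxy; I expect this to be handled by a careful case analysis along the lines above, exploiting Lemma~\ref{lem:y.xk} to control how the constant star of $j$ interacts with the other stars after rewiring.
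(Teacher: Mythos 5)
Your reduction is the right one, and the derivation of \eqref{eqn:i.k.common.source} from the existence of a single $g \in \mathfrak{G}(X_K = x_K)$ containing both $j \to i$ and $j \to k$ is correct and matches what the paper does. The gap is in the construction of that $g$: you defer the verification that the rewired galaxy satisfies conditions (c) and (d) of Theorem~\ref{thm:impact}, the minimal-rank requirement, and nonempty intersection with $\{X_K = x_K\}$ to ``a careful case analysis,'' but that verification \emph{is} the content of the corollary. It is not routine, and the direction you chose makes it genuinely hard. Starting from $g_2 \ni j \to k$ pins $Z_j = x_k/c^\ast_{kj}$, so to attach $i$ to $j$ you must force every competing term $c^\ast_{ir}Z_r$ below the fixed constant $c^\ast_{ij}x_k/c^\ast_{kj}$. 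If the parent $r$ of $i$ in $g_2$ is itself the root of a constant star, $Z_r$ is pinned and cannot be lowered; you would then have to re-root that star, which can cascade through the galaxy, and your case analysis does not address this. The paper's Example~\ref{ex:umbrella} shows that two edges of the source DAG with a common tail need \emph{not} co-occur in any compatible impact graph, so the claim really does depend on the specific hypotheses ($i$ active, $k$ constant) and cannot be settled by a local rewiring argument alone.

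The paper avoids all of this by going in the opposite direction. It starts from a $g \in \mathfrak{G}(X_K = x_K)$ with $j \to i \in g$ and $j \notin K^\ast(g)$ (which exists by definition of the source DAG, and which forces $j \to k \notin g$), lets $j'$ be the root of $k$'s constant star in $g$, and applies the invertible linear swap $\phi$ with $\phi(z)_j = x_k/c^\ast_{kj}$, $\phi(z)_{j'} = z_j$, all other coordinates unchanged. Since $\phi$ is invertible and $\P(\E(g) \cd X_K = x_K) > 0$, the image has positive conditional probability, and on it $z_j$ now achieves the maximum defining $x_k$ while still realizing $X_i$; hence any compatible impact graph meeting the image contains both edges. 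This sidesteps any need to verify Theorem~\ref{thm:impact} or the rank condition for a hand-built galaxy. To complete your argument you would either need to carry out the cascading re-rooting rigorously (including the eigenvalue condition (d) for the final galaxy) or switch to a measure-theoretic perturbation of an existing compatible event, as the paper does.
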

\begin{proof} %\CK{are all indices of the $c^*$ the right way around?} \sll{This proof needs simplification and cleaning.}
Since $j \to i \in \C(X_K = x_K)$, there exists some $g \in \mathfrak{G}(X_K = x_K)$ such that $j \to i \in g$ and $j \notin K^\ast(g)$, so in particular, $j \to k \notin g$. Let $\E(g)$ as usual denote the $Z$-values corresponding to the impact graph $g$. % $Z(g)\subset \Rplus^{A\cup H\cup L}$ be the region of values of $Z_{A\cup H\cup L}$ that correspond to the impact graph $g$. 
Since $j \to k \notin g$, there exists some other $j' \neq j$ such that $z_{j'} = x_k/c^\ast_{kj'}$ and %\CK{$\leq$ in \eqref{eqn:z.bound}, should we add the number?} 
$z_j < x_k/c^\ast_{kj}$ for all $z \in \E(g)$. Transform the region $\E(g)$ to another region $\phi(\E(g))$ via the following linear map $\phi$, where
$$ \phi(z)_j = x_k/c^\ast_{kj}, \phi(z)_{j'} = z_j, \phi(z)_{j''} = z_{j''} \mbox{ for all } j'' \neq j,j'. $$
Since this is an invertible map and $\P(\E(g)\cd X_K = x_K) > 0$, we have $\P(\phi(\E(g))\cd X_K = x_K) > 0$.
Thus there exists some $g' \in \mathfrak{G}(X_K = x_K)$ such that $\P(\mathcal{E}(g') \cap \{X_K = x_K\} \cap \phi(\E(g))) > 0$. By definition of $\phi$, for such $g'$ we must have $j \to i, j \to k \in g'$. %\CK{does this link up with Theorem~3.4(c)?} \sll{I believe not; rather with (d) if anything.}  
This concludes the proof. %\CK{what about $L$? only $H$ considered}
\end{proof}

\begin{corollary}\label{cor:atomic}
For each $a \in A$, the atomic component of the distribution of $X_a$ is supported precisely on the following points: 
\begin{enumerate}[{\rm(a)}] %\CK{write a,b for later reference}
    \item $\alpha_a$ defined by \eqref{eqn:alpha.i} if $\alpha_a > 0$
    \item ${c^\ast_{aj}x_k}/{c^\ast_{kj}}$ for each $j \in \pa_{\C}(a) \cap \pa_{\C}(k)$ for some $k \in H \cup L$ % \{\ell_1,\dots,\ell_m\}$ \CK{write $L$ simply? also in proof} defined by \eqref{eqn:on.e}.
\end{enumerate}
\end{corollary}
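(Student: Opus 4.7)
The plan is to build on the compact representation from Theorem~\ref{thm:source.rep}, which writes $X_a=\alpha_a\vee Z_a\vee\bigvee_{j\in\pa_{\C}(a)}c^\ast_{aj}Z_j$, with the innovations $Z$ jointly governed by the bounds \eqref{eqn:z.bound} and the equality constraints \eqref{eqn:equality.1}--\eqref{eqn:equality.2}. A value $v>0$ is an atom of $X_a$ iff, with positive conditional probability, one of these finitely many summands equals $v$ while dominating all the others. I would treat the question by separately identifying the atomic part of each summand and then checking when it can be promoted to an atom of the maximum.

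First I would isolate the atomic structure of each $Z_j$ under the conditional law. Since $Z$ is a priori atom-free and the bounds \eqref{eqn:z.bound} only truncate, any atom of $Z_j\cd X_K=x_K$ must originate from $Z_j$ being the unique summand achieving equality in one of the max-linear equations \eqref{eqn:equality.1} or \eqref{eqn:equality.2}. This pins $Z_j$ at a value of the form $x_k/c^\ast_{kj}$ with $k\in H\cup L$ and $j\in\pa_{\C}(k)$ (or $j=k\in H$, where the atom of $Z_k$ is at $x_k$). Off these events, the conditional distribution of $Z_j$ is absolutely continuous, by the last clause of Theorem~\ref{thm:source.rep}.

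Next I would translate these innovation atoms into atoms of $X_a$. Case (a) is immediate: whenever $\alpha_a>0$, one may choose every summand $c^\ast_{aj}Z_j$ (including $j=a$) strictly below $\alpha_a$ while still satisfying \eqref{eqn:equality.1}--\eqref{eqn:equality.2} — some other summand realises the max in each of those equations — so $\P(X_a=\alpha_a\cd X_K=x_K)>0$. For case (b), fix $k\in H\cup L$ and a node $j$ for which $c^\ast_{aj}Z_j$ is present in the representation of $X_a$ (i.e.\ $j\in\pa_{\C}(a)$, with the convention that $j=a$ accounts for the $Z_a$ summand) and $j\in\pa_{\C}(k)$. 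Corollary~\ref{lem:i.k.common.source} furnishes an impact graph $g\in\mathfrak{G}(X_K=x_K)$ containing both $j\to a$ and $j\to k$; on the positive-probability event $\mathcal{E}(g)\cap\{X_K=x_K\}$ one has $X_a=c^\ast_{aj}Z_j$ and simultaneously $Z_j=x_k/c^\ast_{kj}$, delivering an atom at $c^\ast_{aj}x_k/c^\ast_{kj}$ (the variant $j=a$ is handled by the analogous impact graph in which $a$ is a root with child $k$).

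For the converse, if $v>0$ is an atom of $X_a$, then by the representation some summand must equal $v$ with positive probability and dominate the rest; the first step shows that this forces $v$ to be $\alpha_a$ or of the form $c^\ast_{aj}x_k/c^\ast_{kj}$ as in (b). The main obstacle I anticipate is juggling the two simultaneous ``who achieves the max'' conditions — one in the equation \eqref{eqn:equality.1}/\eqref{eqn:equality.2} that pins the atomic part of $Z_j$, and one in the representation \eqref{eqn:minimal.representation} that picks which summand of $X_a$ realises the max — but Corollary~\ref{lem:i.k.common.source} is designed precisely to produce a single impact graph satisfying both, and the structural partition in Proposition~\ref{cor:dag.partition} ensures that the graphs exhibited are genuinely in $\mathfrak{G}(X_K=x_K)$.
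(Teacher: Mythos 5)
Your proof is correct and follows essentially the same route as the paper's: both rest on the compact representation of Theorem~\ref{thm:source.rep} and on Corollary~\ref{lem:i.k.common.source} to realise the type-(b) atoms, the only difference being that you read off the atoms of the conditional law of the innovations directly from the equality constraints \eqref{eqn:equality.1}--\eqref{eqn:equality.2}, where the paper argues via the impact graphs (an atom occurs iff $a\in K^\ast(g)$ for some compatible $g$) together with Corollary~\ref{cor:swap.out.S}. If anything you are slightly more careful than the paper, which does not explicitly verify that $\alpha_a$ is attained with positive probability and which, in both statement and proof, glosses over the case $j=a$ (where $a$ itself is the root of a constant star) that you handle explicitly.
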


\begin{proof} 
Suppose $X_a$ has an atomic component at some $c \in \Rplus$. This happens if and only if there exists some $g \in \mathfrak{G}(X_K = x_K)$ such that $X_a = c$ on $\mathcal{E}(g) \cap \{X_K = x_K\}$. In particular, we must have $a \in K^\ast(g)$. Since $a \notin K^\ast$, $a \in K^\ast(g)$ if and only if $j \to a \in g$ for some $j \in V$, and either $j \in K^\ast$ or $\ch_g(j) \cap K^\ast \neq \emptyset$. We consider these two cases separately. 
\begin{enumerate}[{\rm(a)}]
    \item Suppose $j \in K^\ast$. Then $c = c^\ast_{aj}x_j \leq \alpha_a$ by \eqref{eqn:alpha.i}. If $c < \alpha_a$ then $\P(X_a = c \cd  X_K = x_K) = 0$ by \eqref{eqn:minimal.representation}, a contradiction. So $c = \alpha_a$.
    \item Suppose $j \notin K^\ast$. By Corollary \ref{cor:swap.out.S}, there exists some $k \in \ch_g(j) \cap H \cap L$. By Corollary \ref{lem:i.k.common.source}, $\P(X_a = c^\ast_{aj}Z_j, Z_j = x_k/c^\ast_{jk}\cd  X_K = x_K) > 0$, so the distribution of $X_a$ has an atom at ${c^\ast_{aj}x_k}/{c^\ast_{kj}}$. 
\end{enumerate}
So all the atomic components of the distribution of $X_a$ must be of the form given. 
\end{proof}

\section{Markov properties of max-linear Bayesian networks}\label{sec:cond.indep}

In this section we introduce the relevant separation criteria and state and prove the three conditional independence theorems. We first consider the most difficult context-specific case and then use the results for this case to derive the more generic results which are valid in all contexts. 

\subsection{Graphs and separation}\label{sec:separation}

%To define our separation criteria needed to reveal the associated conditional independence relations, we define various graphs that are derived from the original DAG $\dag$, a coefficient matrix $C$ supported by $\D$, and a \emph{context} $\{X_K=x_k\}$. The first two definitions correspond to an arbitrary coefficient matrix $C$ with support included in $\D$, and a fixed matrix $C$, respectively.
In addition to the source DAG as defined in Definition~\ref{defn:context.graph}, we shall need the following graphs to identify Markov properties of a max-linear Bayesian network. 
\begin{definition}\label{dfn:3.1}
Fix a DAG $\D$ on $V$ and $K \subset V$. Say that a directed path $\pi$ from $j$ to $i$ \emph{factors through $K$} if there exists a node $k \in \pi$, $k \neq i,j$ such that $k \in K$. The \emph{conditional reachability DAG} $\D^\ast_K$ is the graph on $V$ consisting of the following edges: $j \to i \in \D^\ast_K$ if and only if there exists a directed path from $j$ to $i$ that does \emph{not} factor through $K$. 
\end{definition}

\begin{definition}\label{dfn:3.2}
Fix a DAG $\D$ on $V$, $K \subset V$ and a coefficient matrix $C$ supported by $\D$. The \emph{critical DAG} $\D^\ast_K(C)$ is the graph on $V$ consisting of the following edges: $j \to i \in \D^\ast_K(C)$ if and only if $c^\ast_{ij} > 0$ and no critical directed path from $j$ to $i$ factors through $K$. 
\end{definition}
Note that in contrast to Definition~\ref{dfn:3.1} the existence of a \emph{single} critical path through $K$ removes the corresponding edge in the critical DAG $\D^\ast_K(C)$; this conforms with Example~\ref{ex:diamond0} in the introduction where it is sufficient to block a single critical path to obtain conditional independence.

%We further recall the \emph{source DAG} $\C(X_K=x_K)$ from Definition~\ref{defn:context.graph} as an object suited to capturing conditional independence relations which are \emph{context dependent} in the sense that they depend on the specific values of $x_K$ when conditioning on $\{X_K=x_K\}$. 

When $K = \emptyset$, we write $\dag^*=\dag^*_\emptyset$ for the reachability DAG of $\D$, and $\dag^*(C)=\dag^*_\emptyset(C)$ if the support of $C$ is $\dag$. 
The source DAG $\C(X_K = x_K)$ for $K=\emptyset$ does not have a direct meaning, but by convention we let this be $\mathcal{C}(X_\emptyset=x_\emptyset)=\dag^*$.

\begin{lemma}\label{lem:inclusion}
Let $C$ be a coefficient matrix with support $\D$. Furthermore, let $K \subset V$ and let $\{X_K = x_K\}$ be a possible context.
%\CK{in this section, we often say context and not possible context, is this ok?} \sll{Have added possible above. No need to repeat "possible" every time we say it.} \CK{but it has even a definition: Definition \ref{def:possible}}\\
Then 
\begin{align}\label{eqn:inclusion}
\D^\ast_K \supseteq \D^\ast_K(C) \supseteq \C(X_K = x_K).
\end{align}
\end{lemma}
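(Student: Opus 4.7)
My plan is to establish the two inclusions separately. The first, $\D^\ast_K \supseteq \D^\ast_K(C)$, is essentially immediate from the definitions: given $j \to i \in \D^\ast_K(C)$, I have $c^\ast_{ij} > 0$, so there is at least one directed path from $j$ to $i$ of maximum weight, i.e.\ critical; by hypothesis this critical path does not factor through $K$, and hence witnesses the membership $j \to i \in \D^\ast_K$.

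The substantive inclusion is $\D^\ast_K(C) \supseteq \C(X_K = x_K)$, which I would prove by contradiction. Suppose $j \to i \in \C$ but $j \to i \notin \D^\ast_K(C)$. Since $\C \subseteq \I(X_K = x_K)$, there exists $g \in \mathfrak{G}(X_K = x_K)$ with $j \to i \in g$, and Theorem~\ref{thm:impact}(a) yields $c^\ast_{ij} > 0$. The assumption $j \to i \notin \D^\ast_K(C)$ then produces a critical path from $j$ to $i$ passing through some $k \in K$ with $k \neq i,j$, and criticality gives the factorisation $c^\ast_{ij} = c^\ast_{ik}\,c^\ast_{kj}$. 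The key tool now is the triangle condition Theorem~\ref{thm:impact}(c), which applied to the edge $j \to i$ of $g$ forces $j \to k \in g$. Since $k \in K$, the star rooted at $j$ in $g$ (containing both $i$ and $k$) is a constant star by Lemma~\ref{lem:k.ast.y}, so $j,i \in K^\ast(g)$ and $X_j = x_k/c^\ast_{kj}$ on $\E(g) \cap \{X_K = x_K\}$. Because the critical path, and therefore the waypoint $k$, depends only on $(j,i,C)$ and not on the choice of $g$, the same constant value of $X_j$ is attained under \emph{every} $g' \in \mathfrak{G}(X_K = x_K)$ containing the edge $j \to i$.

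I would then conclude redundancy of $j \to i$, contradicting $j \to i \in \C$. If $j \in K^\ast$, the first bullet of Definition~\ref{defn:redundant edge} applies directly; if $j \notin K^\ast$ and $i \notin K^\ast$, the second bullet applies by the constancy of $X_j$ just established. I expect the main obstacle to be the residual case $j \notin K^\ast$, $i \in K^\ast$: here the identities $X_i = c^\ast_{ij} X_j = c^\ast_{ik} x_k$ on $\E(g) \cap \{X_K = x_K\}$, combined with $i \in K^\ast$ and $k \in K \subseteq K^\ast$ with $k \neq i$, force $i \in U$, so the edge $j \to i$ points into a node that is already pre-determined by $k$. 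As reflected in the partition of Proposition~\ref{cor:dag.partition} and the role played by $U$-nodes in Theorem~\ref{thm:source.rep}, such edges are redundant in the structural sense: the constraint on $Z$ imposed by $j \to i$ is already implied by the constraint coming from $j \to k \in g$ together with the critical factorisation $c^\ast_{ij} = c^\ast_{ik}\,c^\ast_{kj}$ and $x_i = c^\ast_{ik} x_k$, and hence the edge is not retained in the source DAG. In every case $j \to i \notin \C(X_K = x_K)$, yielding the desired contradiction.
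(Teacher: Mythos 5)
Your first inclusion is exactly the paper's argument, and your strategy for the second inclusion shares its core mechanism. The paper's own proof of $\C(X_K=x_K)\subseteq\D^\ast_K(C)$ is much shorter: assuming $j\to i\in\C(X_K=x_K)$ and a critical path through some $k\in K$, it observes that on the context $c^\ast_{ij}Z_j=c^\ast_{ik}c^\ast_{kj}Z_j\le c^\ast_{ik}x_k$, so the term $c^\ast_{ij}Z_j$ is dominated in the representation of $X_i$, and concludes directly that the edge cannot survive into the source DAG. Your route via Theorem~\ref{thm:impact}(c) and Lemma~\ref{lem:k.ast.y} makes the same mechanism explicit (whenever $j\to i$ lies in a compatible $g$, the edge $j\to k$ is forced into $g$ as well, pinning $Z_j=x_k/c^\ast_{kj}$), and it has the virtue of actually testing the conclusion against Definition~\ref{defn:redundant edge} clause by clause, which the paper does not do.

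The gap is precisely the residual case you flag, $j\notin K^\ast$ and $i\in K^\ast$. Definition~\ref{defn:redundant edge} declares $j\to i$ redundant only if $j\in K^\ast$, or if $i\notin K^\ast$ \emph{and} $X_j$ is constant under every compatible $g$ containing the edge; when $i\in K^\ast$ and $j\notin K^\ast$ neither clause applies, so your appeal to the edge being ``redundant in the structural sense'' is not an application of the definition and does not yield $j\to i\notin\C(X_K=x_K)$. The case is not vacuous: take $V=\{j,k,i,k'\}$ with edges $j\to k$, $j\to i$, $k\to i$, $i\to k'$, all weights $1$, $K=\{k,k'\}$ and $x_k=x_{k'}=c$. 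Then $X_i=c$ identically on the context, so $i\in K^\ast$ (indeed $i\in U$), while $X_j=Z_j$ is not constant there, so $j\notin K^\ast$; the path $j\to k\to i$ is critical and factors through $K$, yet the star rooted at $j$ with children $k,i,k'$ is a compatible impact graph containing $j\to i$, and neither clause of Definition~\ref{defn:redundant edge} removes that edge from $\I(X_K=x_K)$. To close your argument you would need either to show that this configuration forces $j\in K^\ast$ (it does not), or to work with a reading of redundancy in which constancy of $X_j$ under every $g$ containing the edge suffices even when $i\in K^\ast$. Be aware that the paper's own proof passes over this case silently, jumping from the displayed inequality to the contradiction; so you have isolated a genuine subtlety rather than merely missed a standard step, but as written your proof does not establish the inclusion in this case.
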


\begin{proof}
First we prove that $\D^\ast_K(C) \subseteq \D^\ast_K$. Let $j \to i \in \D^\ast_K(C)$. Since $c^\ast_{ji} > 0$ and no critical directed paths from $j$ to $i$ factor through $K$, there exists at least one critical directed path from $j$ to $i$ that does not factor through $K$. Therefore, $j \to i \in \D^\ast_K$. Now we prove that $\C(X_K = x_K) \subseteq \D^\ast_K(C)$. Suppose $j \to i \in \C(X_K = x_K)$. Clearly we must have $c^\ast_{ji} > 0$. Suppose for contradiction that $j \to k \to i$ is critical for some $k \in K$. Then on $\{X_K = x_K\}$, 
$$ c^\ast_{ij}Z_j = c^\ast_{ik}c^\ast_{kj}Z_j \leq c^\ast_{ik}x_k. $$
So $j \to k \notin \C(X_K = x_K)$, a contradiction. Therefore all critical paths from $j$ to $i$ do not factor through $K$, so $j \to i \in \D^\ast_K(C)$ by definition. 
\end{proof}

%We next define a simple separation concept for a DAG, to be used specifically on the derived DAGs above. 
%[$*$-connecting]
\begin{definition}\label{dfn:3.3}
An undirected path $\pi$ between $j$ and $i$ in a DAG is \emph{$*$-connecting} relative to $K$ if and only if it is one of the paths in Figure~\ref{fig:d.ast.connecting}.
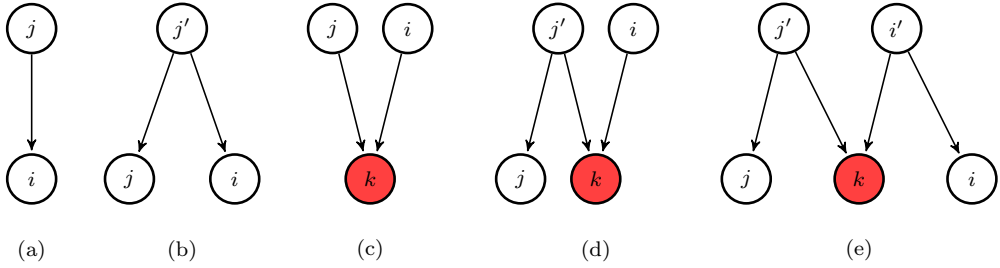
\begin{figure}
\begin{center}
%\begin{tikzpicture}[->,>=stealth',shorten >=1pt,auto,node distance=2.2cm,semithick]
\begin{tikzpicture}[->,>=stealth',shorten >=1pt,auto,node distance=2cm,semithick]
  \tikzstyle{every node}=[circle,line width =1pt,font=\scriptsize,minimum height =0.65cm]
 % \tikzstyle{every node}=[circle,line width =1pt, minimum height = .67cm]
  \node (i0) [draw] {$j$};
  \node (j0) [below of=i0,draw] {$i$} ;
  \path (i0) edge (j0) ;
  \node [below = 2.2cm of i0] {(a)};
  
\node (ksl) [right of =i0,draw]{$j'$};
\node (jsl) [below of = ksl,xshift =-0.7cm,draw] {$j$};
\node (isl) [below of = ksl,xshift =0.7cm,draw] {$i$};
\path (ksl) edge (jsl);
\path (ksl) edge (isl);

\node [below = 2.2cm of ksl] {(b)};

\node (i1) [right of =ksl, draw] {$j$};
  \node (j1) [below of=i1,xshift=0.5cm,draw,fill=red!75] {$k$} ;
  \node (k1) [right of=i1,xshift=-1cm,draw] {$i$} ;
  \path (i1) edge (j1) ;
  \path (k1) edge (j1);  
  
  \node [below= 2.2cm  of i1,xshift =0.5cm] {(c)};

  %\node (i0) [left of = i1, draw] {$j$};
  %\node (j0) [below of=i0,draw] {$i$} ;
 % \path (i0) edge (j0) ;
  
  \node (i2) [draw,right of = i1,xshift=1cm] {$j'$};
  \node (j2) [below of=i2,xshift=0.5cm,draw,fill=red!75] {$k$} ;
  \node (k2) [right of=i2,xshift=-1cm,draw] {$i$} ;
  \node (i2p) [draw,left of = j2,xshift=1cm] {$j$};    
  \path (i2) edge (j2) ;
  \path (k2) edge (j2);   
  \path (i2) edge (i2p);
  
  \node [below= 2.2cm  of i2,xshift =0.5cm] {(d)};
  
  %\textcolor{red}{
  %\node (i4) [draw,right of = k2,xshift=-0.5cm] {$j$};
  %\node (j4) [below of=i4,xshift=0.5cm,draw,fill=red] {$k$} ;
  %\node (k4) [right of=i4,xshift=-1cm,draw] {$i'$} ;
  %\node (j4p) [draw,right of = j4,xshift=-0.5cm] {$i$};    
  %\path (i4) edge (j4) ;
  %\path (k4) edge (j4);   
  %\path (k4) edge (j4p);  }
  
  \node (i3) [draw,right of = k2] {$j'$};
  \node (j3) [below of=i3,xshift=1cm,draw,fill=red!75] {$k$} ;
  \node (k3) [right of=i3,xshift =-0.5cm,draw] {$i'$} ;
  \node (i3p) [draw,left of = j3,xshift=0.5cm] {$j$};    
  \node (j3p) [draw,right of = j3,xshift=-0.5cm] {$i$};      
  \path (i3) edge (j3) ;
  \path (k3) edge (j3);   
  \path (i3) edge (i3p);  
  \path (k3) edge (j3p);  
  
  \node [below= 2.2cm  of i3,xshift =1cm] {(e)};
\end{tikzpicture}
\end{center}
\caption{%\sll{Maybe reduce all sizes and thicknesses as in Fig~\ref{fig:cass.impact}? At least this will reduce use of space.} 
%\CA{I like this size and feel the size in Fig~\ref{fig:cass.impact} may be too small, but we may be forced to do this for the journal version.} 
Types of $*$-connecting paths between $i$ and $j$. Nodes that are colored red are in $K$. 
%All arrows are directed edges (not paths). % and refer to the three derived DAGs above, but all edges correspond to some directed paths in the original DAG $\D$.
%\CK{This does not fit, should we get back to something like:} and refer to one of the three derived DAGs in \eqref{eqn:inclusion}, but all edges correspond to some directed paths in the original DAG $\D$.
} \label{fig:d.ast.connecting}
\end{figure}
\end{definition}

We shall consider $\ast$-connecting paths in the conditional reachability DAG $\dag^\ast_K$, in the critical DAG $\D^*_K(C)$, and in the source DAG $\C(X_K = x_K)$. Edges in these DAGs represent directed paths in the original DAG $\D$ and each of the paths in Figure~\ref{fig:d.ast.connecting}  may represent longer paths in the original DAG $\D$.
Note also that any  $\ast$-connecting path in a derived DAG corresponds to a $d$-connecting path in $\dag$, but not vice versa, as illustrated in Example~\ref{ex:cassiopeia1} below.

%\CK{moved cassiopeia after the defns.}
We now define three independence models by applying $\ast$-separation to $\D^\ast_K$, $\D^\ast_K(C)$ and the source DAG $\C(X_K=x_K)$, respectively. 

\begin{definition}\label{def:reachsep}
For three disjoint subsets $I$, $J$, and $K$  of the node set $V$ we say that $I$ and $J$ are \emph{$\D^\ast$-separated} 
by $K$ in $\D$ if there are no $*$-connecting paths from $I$ to $J$ in $\dag^*_K$ and  we then write $I\tildese J\cd K$ or $I\starse J\cd K$ in $\dag^*_K$.
\end{definition}

\begin{definition}\label{def:critsep}
For three disjoint subsets $I$, $J$, and $K$  of the node set $V$ we say that $I$ and $J$ are \emph{critically separated} 
by $K$ in $\dag$ if there is no $*$-connecting path $\pi$ from  $I$ to $J$ in $\dag^*_K(C)$.
We then write $I\critsep J\cd K$ {or  $I\starse J\cd K$ in $\dag^*_K(C)$.}
\end{definition}

\begin{definition}\label{def:contextsep}
For three disjoint subsets $I$, $J$, and $K$  of the node set $V$ we say that $I$ and $J$ are \emph{source separated} 
by $X_K = x_K$ in $\dag$ if there are no $*$-connecting paths from  $I$ to $J$ in $\C(X_K = x_K)$. %\sll{Beware, we need $K^\ast$ here}
We then write $I\perp_{(C^\ast,x_K)} J\cd K$ or $I\starse J\cd K$ in $\C(X_k=x_k)$.
\end{definition}

\begin{example}[Cassiopeia]\label{ex:cassiopeia1} 
Example \ref{ex:cassiopeia0} illustrates that $\D^*$-separation is strictly weaker than $d$-separation. Here $\dag=\dag^*_K=\dag^*_K(C)$  for any $C$ with support $\dag$. The path between $1$ and $3$ is $d$-connecting, but it is not $\ast$-connecting. \halmos
%and we have $\neg( 1\dse 3 \cd \{4,5\})$ but $1\starse 3\cd \{4,5\}$ since $\dag^*_{\{4,5\}}$ ($=\dag$) has no configuration which is $*$-connecting as in Figure~\ref{fig:d.ast.connecting}. 
\end{example}

%Note that the set of constant nodes $K^*$ appear in the condition for source separation rather than $K$ which may in general be a proper subset of $K^*$. 
%\sll{check $K$ vs $K^*$ here and in the following}

We emphasize that our separation criteria follow the form of the moralization procedure in \cite{Lauritzen1990}, which is not stated in a directly path-based form. Rather, we check for separation by constructing derived graphs --- $\D^\ast_K$, $\D^\ast_K(C)$, and $\C(X_k=x_k)$ --- and then use a single common separation criteria for all of these. This formulation shall simplify some of the proofs.
As a consequence of Lemma~\ref{lem:inclusion} we get:\\

\begin{corollary}\label{cor:ciprelations} For $I,J,K$ disjoint subsets of $V$ and any possible context $\{ X_K = x_K\}$, it holds that %\CK{need $K^*$ as well?}
\[I \dse J \cd K \implies I \tildese J\cd K \implies I\critsep J\cd K\implies I\perp_{(C^\ast,x_K)} J\cd K,\]
where $\dse$ denotes $d$-separation.
\end{corollary}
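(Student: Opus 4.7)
The plan is to handle the two easier implications first using Lemma~\ref{lem:inclusion}, then establish the $d$-separation implication by an explicit path expansion.

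For the chain $I \tildese J \cd K \Rightarrow I \critsep J \cd K \Rightarrow I \perp_{(C^\ast, x_K)} J \cd K$, I observe that $\ast$-separation is anti-monotone in the underlying graph. If $G \subseteq G'$ are two DAGs on $V$, then every path in $G$ is also a path in $G'$ with identical edge orientations, so the node-type pattern required by Figure~\ref{fig:d.ast.connecting} is preserved; hence any $\ast$-connecting path in $G$ relative to $K$ is also a $\ast$-connecting path in $G'$ relative to $K$. Taking the contrapositive, $\ast$-separation in $G'$ implies $\ast$-separation in $G$. Applying this with $(G,G') = (\D^\ast_K(C), \D^\ast_K)$ and with $(G,G') = (\C(X_K = x_K), \D^\ast_K(C))$, using the inclusions $\C(X_K = x_K) \subseteq \D^\ast_K(C) \subseteq \D^\ast_K$ from Lemma~\ref{lem:inclusion}, yields both implications.

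For the remaining implication $I \dse J \cd K \Rightarrow I \tildese J \cd K$, I argue the contrapositive: given a $\ast$-connecting path $\pi$ between some $i \in I$ and $j \in J$ in $\D^\ast_K$ relative to $K$, I construct a $d$-connecting walk $\tilde\pi$ between $i$ and $j$ in $\D$ given $K$. By Definition~\ref{dfn:3.1}, every edge $u \to v$ of $\D^\ast_K$ arises from a directed path $u = u_0 \to u_1 \to \cdots \to u_s = v$ in $\D$ with $u_1,\ldots,u_{s-1} \notin K$. I replace each edge of $\pi$ by such a directed path in $\D$ and concatenate the resulting pieces to obtain $\tilde\pi$.

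To verify that $\tilde\pi$ is $d$-connecting given $K$, observe first that the internal nodes of each expanded piece appear as non-colliders on $\tilde\pi$ (arrows run monotonically along the piece) and lie outside $K$ by construction, so they are harmless. The only remaining nodes to check are the junction nodes, i.e.\ the labelled endpoints in Figure~\ref{fig:d.ast.connecting}. Case analysis over the five types (a)--(e) shows that the nodes $j'$ in (b), (d), (e) and $i'$ in (e) are non-colliders on $\tilde\pi$ (their two incident edges diverge from them) and are not in $K$, while the node $k$ in (c), (d), (e) is a collider on $\tilde\pi$ and lies in $K$. Hence $\tilde\pi$ is a $d$-connecting walk given $K$, and the standard walk-to-path reduction produces a $d$-connecting path between $I$ and $J$ given $K$ in $\D$, contradicting $I \dse J \cd K$. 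The main --- though mild --- obstacle is the junction case analysis, but since Figure~\ref{fig:d.ast.connecting} admits only five path types and the expansion by definition avoids $K$ internally, the verification is routine.
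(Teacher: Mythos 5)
Your proof is correct and takes essentially the same route as the paper, which states the corollary as an immediate consequence of Lemma~\ref{lem:inclusion} (anti-monotonicity of $\ast$-separation under the inclusions $\C(X_K=x_K)\subseteq\D^\ast_K(C)\subseteq\D^\ast_K$) together with its earlier unproved remark that every $\ast$-connecting path in a derived DAG corresponds to a $d$-connecting path in $\D$. Your edge-expansion and junction-node case analysis simply makes explicit the detail the paper leaves implicit.
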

We note that these implications are strict, as illustrated in the next example and other examples further below. 
\begin{example}[Diamond]\label{ex:diamond} Consider the DAG in Figure~\ref{fig:diamondsep} in a situation where
the path $1\to 2\to 4$ is critical: $c_{42}c_{21} \geq c_{43}c_{31} $. 
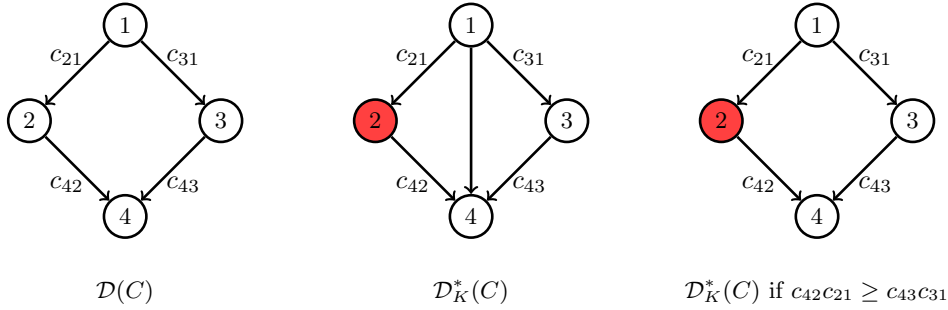
\begin{figure}
\begin{center}
\begin{tikzpicture}
\begin{scope}[->,every node/.style={circle,draw},line width=1pt, node distance=1.8cm]
\node (1) {$1$};
\node (2) [below left of=1] {$2$};
\foreach \from/\to in {1/2}
\draw (\from) -- (\to);
\path[every node/.style={font=\sffamily\small}]
(1) -- (2) node [near start, left] {$c_{21}$};
\node (3) [below right of=1] {$3$};
\foreach \from/\to in {1/3}
\draw (\from) -- (\to);
\path[every node/.style={font=\small}]
(1) -- (3) node [near start, right] {$c_{31}$};
\node (4) [below right of=2] {$4$};
\foreach \from/\to in {2/4,3/4}
\draw (\from) -- (\to);
\path[every node/.style={font=\small}]
(2) -- (4) node [near end, left] {$c_{42}$};
\path[every node/.style={font=\small}]
(3) -- (4) node [near end, right] {$c_{43}$};

\node (11)[right = 4cm of 1] {$1$};
\node (21) [below left of=11,fill=red!75] {$2$};
\foreach \from/\to in {11/21}
\draw (\from) -- (\to);
\path[every node/.style={font=\sffamily\small}]
(11) -- (21) node [near start, left] {$c_{21}$};
\node (31) [below right of=11] {$3$};
\foreach \from/\to in {11/31}
\draw (\from) -- (\to);
\path[every node/.style={font=\small}]
(11) -- (31) node [near start, right] {$c_{31}$};
\node (41) [below right of=21] {$4$};
\foreach \from/\to in {21/41,31/41}
\draw (\from) -- (\to);
\path[every node/.style={font=\small}]
(21) -- (41) node [near end, left] {$c_{42}$};
\path[every node/.style={font=\small}]
(31) -- (41) node [near end, right] {$c_{43}$};
%\draw (11) -- (41) [dashed];
\draw (11) -- (41);

\node (12)[right = 4cm of 11] {$1$};
\node (22) [below left of=12,fill=red!75] {$2$};
\foreach \from/\to in {12/22}
\draw (\from) -- (\to);
\path[every node/.style={font=\sffamily\small}]
(12) -- (22) node [near start, left] {$c_{21}$};
\node (32) [below right of=12] {$3$};
\foreach \from/\to in {12/32}
\draw (\from) -- (\to);
\path[every node/.style={font=\small}]
(12) -- (32) node [near start, right] {$c_{31}$};
\node (42) [below right of=22] {$4$};
\foreach \from/\to in {22/42,32/42}
\draw (\from) -- (\to);
\path[every node/.style={font=\small}]
(22) -- (42) node [near end, left] {$c_{42}$};
\path[every node/.style={font=\small}]
(32) -- (42) node [near end, right] {$c_{43}$};
\end{scope}
\begin{scope}[every node/.style ={rectangle},node distance = 1cm]
\node[below of = 4]{$\dag(C)$};
\node [below of = 41]{$\dag_K^\ast(C)$};
\node [below of = 42]{$\dag_K^\ast(C)$ if $c_{42}c_{21} \geq c_{43}c_{31} $};
\end{scope}
\end{tikzpicture} \caption{Diamond graph with $K=\{2\}$. The conditional reachability DAG (middle figure) is equal to the reachability DAG $\D^\ast$, whereas the edge $1\to 4$ is missing in the critical DAG (right-hand figure) since the path $1\to2\to 4$ is critical and factors through $K$. Note that the path $1\to3\to 4$ in $\dag^*_K(C)$ is not $*$-connecting as it is not one of the configurations in Figure~\ref{fig:d.ast.connecting}.} \label{fig:diamondsep}
\end{center}
\end{figure} 
It then holds that $1 \ci 4 \mid 2$ even though there is a $d$-connecting path $1 \to 3 \to 4$. By Definition~\ref{dfn:3.3}, this path is not $*$-connecting in $\dag^*_K(C)$ so $1\critsep 4\cd 2$. Note also that $\tildese$ is strictly weaker than $\critsep$, as $1\critsep 4\cd 2$ if $c_{21}c_{42} \geq c_{31} c_{43}$, but it holds that $\neg(1\tildese 4\cd 2)$ since $1\to3\to 4$ is $*$-connecting in $\dag^*_K$. \halmos
\end{example}

%\begin{example}[Cassiopeia]\label{ex:cassiopeia1} 
  
\begin{comment}
\begin{figure}
\begin{center}
\begin{tikzpicture}[->,every node/.style={circle,draw},line width=1pt, node distance=1.5cm]
%\hspace*{-1cm}
  \node (1)  {$1$};
  \node (4) [below right of=1,fill=red!75]{$4$};
  \node (2) [above right of=4] {$2$};
  \node (5) [below right of=2,fill=red!75] {$5$};
  \node (3) [above right of=5] {$3$}; 
\foreach \from/\to in {1/4,2/4,2/5,3/5}
\draw (\from) -- (\to);   
 \end{tikzpicture}  
 \caption{The Cassiopeia graph with $K=\{4,5\}$.  %Here $\dag=\dag^*=\dag^*_K=\dag^*_K(C)$ for any $C$ with support $\dag$. } 
 }\label{fig:cassiopeia}
\end{center}
\end{figure}
\end{comment}

%\end{example}

\subsection{The context-specific case}
We first consider the case of a specific context and a fixed coefficient matrix $C$.
To prove our main Theorem~\ref{thm:starse.xk} %, where source separation %\emph{not} $\ast$-connecting in $\C(X_K = _K)$ \CK{(we abbreviate this as {\em source separating})} 
%implies independence, 
we need the following lemma.

%\CK{It seems that we could merge the next Lemma with Theorem~\ref{thm:starse.xk}, there is not so much happening.} \sll{Don't think so. This would clutter Theorem \ref{thm:starse.xk} which is now very clean.} \CK{it would only modify the proof of the theorem}

\begin{lemma}\label{lem:not.ast.connecting}
Let $\C(X_K = x_K)$ be the source DAG of a possible context $\{X_K = x_K\}$. Then $i,j \in A$ (the active nodes of Proposition~\ref{cor:dag.partition})
are source separated
%not $\ast$-connected 
%in $\C(X_K = x_K)$ 
if and only if 
\begin{equation}\label{eqn:type.a.b}
    \left(\{i\} \cup \pa_{\C}(i) \right) \cap  \left(\{j\} \cup \pa_{\C}(j) \right) = \emptyset,
\end{equation}
and that there is no triple of nodes $i',j',k$ such that
\begin{equation}\label{eqn:ip.jp.k}
i' \in  \left(\{i\} \cup \pa_{\C}(i) \right), \quad j' \in \left(\{j\} \cup \pa_{\C}(j) \right), \quad k \in H \cup L \mbox{ and }i', j' \in \pa_{\C}(k).
\end{equation}
\end{lemma}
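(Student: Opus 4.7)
The plan is to prove both directions of the iff by inspecting the possible shapes of $*$-connecting paths in $\C=\C(X_K=x_K)$ listed in Figure~\ref{fig:d.ast.connecting}. Since $\{X_K=x_K\}$ almost surely coincides with $\{X_{K^\ast}=x_{K^\ast}\}$, a collider on such a path may a priori lie anywhere in $K^\ast=H\cup L\cup U$; the substance of the lemma is that every $U$-collider can be replaced by an $H\cup L$-collider, and this reduction is the main technical obstacle.

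For the forward direction I argue the contrapositive. If \eqref{eqn:type.a.b} fails, a shared element of $\{i\}\cup \pa_\C(i)$ and $\{j\}\cup\pa_\C(j)$ immediately produces a path of type~(a) (a direct edge $i\to j$ or $j\to i$) or type~(b) (a common parent), both of which are $*$-connecting. If a triple $(i',j',k)$ as in \eqref{eqn:ip.jp.k} exists with $k\in H\cup L\subseteq K^\ast$, then concatenating the edges $i\gets i'$ (when $i'\neq i$), $i'\to k\gets j'$, and $j'\to j$ (when $j'\neq j$) yields a $*$-connecting path of type~(c), (d), or (e), according to how many of $i=i'$ and $j=j'$ hold.

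For the backward direction, suppose there is a $*$-connecting path $\pi$ between $i$ and $j$ in $\C$. If $\pi$ is of type~(a) or (b), inspection gives $i\in \pa_\C(j)$, $j\in \pa_\C(i)$ or $\pa_\C(i)\cap \pa_\C(j)\neq\emptyset$, violating \eqref{eqn:type.a.b}. Otherwise $\pi$ has a unique collider $k\in K^\ast$, and if $k\in H\cup L$ the path already yields a triple as in \eqref{eqn:ip.jp.k}. The remaining case $k\in U$ is where the real work lies: using Corollary~\ref{cor:swap.out.S} I pick $h\in H\cup L$ with $x_k=c^\ast_{kh}x_h$, and argue that every $a\in \pa_\C(k)$ is also in $\pa_\C(h)$. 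Given $a\to k\in g$ for some $g\in \mathfrak{G}(X_K=x_K)$, let $r=R_g(h)$. From $X_h=c^\ast_{hr}Z_r=x_h$, $X_k\ge c^\ast_{kr}Z_r$, and $x_k=c^\ast_{kh}x_h$ one obtains $c^\ast_{kr}\le c^\ast_{kh}c^\ast_{hr}$, which must be an equality by the Kleene triangle inequality. Theorem~\ref{thm:impact}(c), applied to $r\to h\in g$ at this tight triangle, forces $r\to k\in g$; uniqueness of parents in a galaxy then gives $r=a$, so that $a\to h\in g$ and $a\in \pa_\I(h)=\pa_\C(h)$ by Proposition~\ref{cor:dag.partition}(e). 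Applying this to both $a=i'$ and $a=j'$ produces the required common $h\in H\cup L$ with $i',j'\in \pa_\C(h)$, yielding a triple as in \eqref{eqn:ip.jp.k}.
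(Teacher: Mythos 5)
Your proof follows the same basic route as the paper's own --- match the five configurations of Figure~\ref{fig:d.ast.connecting} against the two stated conditions --- but it does substantially more work, and usefully so: the paper's proof is essentially a two-sentence assertion that never engages with the possibility that the collider of a type (c)--(e) path lies in $U$ rather than in $H\cup L$, and your reduction of a $U$-collider to an $H\cup L$-collider with the same $\C$-parents is precisely the missing content. That reduction is correct in substance, but both citations you give for it are off. Corollary~\ref{cor:swap.out.S} is stated for edges $j\to i$ with $i,j\notin K^\ast$ and does not literally produce the node $h\in H\cup L$ with $x_k=c^\ast_{kh}x_h$; what you need is Proposition~\ref{cor:dag.partition}(b) iterated along the chain of $U$-nodes (the iteration does appear inside the proof of Corollary~\ref{cor:swap.out.S}). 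Likewise Theorem~\ref{thm:impact}(c) runs in the opposite direction (from $r\to k\in g$ and the tight triangle it concludes $r\to h\in g$), so it cannot be ``applied to $r\to h$''; but you do not need it, since the chain of equalities you derive already yields $X_k=c^\ast_{kr}Z_r$ on $\E(g)$, which is $r\to k\in g$ directly by Definition~\ref{defn:impact}. A cleaner route to the inclusion $\pa_{\C}(k)\subseteq\pa_{\C}(h)$ is \eqref{eqn:extra2} of Lemma~\ref{lem:y.xk} with $j=h$: since $x_k/c^\ast_{kh}=x_h/c^\ast_{hh}$, every compatible $g$ satisfies $R_g(k)=R_g(h)$, so any $a\in\pa_{\C}(k)$ (necessarily $a\notin K^\ast$, hence $a\neq h$) is the root of $h$'s star and lies in $\pa_{\I}(h)=\pa_{\C}(h)$ by Proposition~\ref{cor:dag.partition}(e). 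Finally, your opening observation that a collider may a priori lie anywhere in $K^\ast$ rather than in $K$ is the right reading: Definition~\ref{dfn:3.3} literally places colliders in $K$, but the way the lemma is consumed in Theorems~\ref{thm:starse.xk} and~\ref{thm:starse.faith} requires colliders ranging over $H\cup L\subseteq K^\ast$, and only under that interpretation do both directions of the equivalence hold; the paper's proof is silent on this point, so flagging it is a genuine improvement rather than a deviation.
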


\begin{proof} The nodes 
$i$ and $j$ are $\ast$-connected if and only if there is
a path $\pi$ in $\C(X_K = x_K)$ that matches one of the five configurations in Figure~\ref{fig:d.ast.connecting}. % that is a subset of $\C(X_K = x_K)$. 
One can choose a path $\pi$ of type (a) or (b) if and only if \eqref{eqn:type.a.b} does not hold. For types (c) to (e), let $j' = j, i' = i$ for case (c), $j' = j$ for case (d), and $j'$, $i'$ be as-is for case (e).
By definition of $\C(X_K = x_K)$,  it holds that $\pi \subset \C(X_K = x_K)$ if and only if \eqref{eqn:ip.jp.k} holds for this particular triple of nodes $i',j',k$. 
\end{proof}

We are now ready for the proof of the main theorem of this section. 

\begin{theorem}[Context-specific, fixed $C$]\label{thm:starse.xk}
Let $X$ be a max-linear Bayesian network over a directed acyclic graph $\D = (V,E)$ with fixed coefficient matrix $C$.  
%Then for all $I,J,K \subseteq V$,
%Let $\D = (V,E)$ be a \CK{DAG} and $C$ a fixed coefficient matrix with support $\D$. Let $X$ follow a recursive max-linear model with coefficient matrix $C$.
Let $K \subseteq V$ and $\C(X_K = x_K)$ be the source DAG of the possible context $\{X_K = x_K\}$. Then for all $I,J \subseteq V$, 
$$I \starse J \cd K \mbox{ in } \C(X_K = x_K) \implies X_I \ci X_J \cd X_K =x_K $$
\end{theorem}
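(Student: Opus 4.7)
The plan is to apply the compact representation of Theorem~\ref{thm:source.rep} and show that, conditional on $\{X_K = x_K\}$, the components of $X_I$ and of $X_J$ are measurable functions of disjoint and conditionally independent blocks of innovations.

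First, every $v \in K^\ast \setminus K$ is almost surely constant on $\{X_K = x_K\}$ by Definition~\ref{defn:fixed}, so $X_{K^\ast\setminus K}$ is a deterministic function of $x_K$ and we may reduce to $I, J \subseteq A$. By Theorem~\ref{thm:source.rep}, each $X_a$, $a\in A$, is the pointwise maximum of a constant and of innovations indexed by $\{a\}\cup \pa_{\C}(a)$. Writing $V_I := \bigcup_{i\in I}(\{i\}\cup \pa_{\C}(i))$ and $\mathcal{Z}_I := (Z_v)_{v\in V_I}$, with $V_J, \mathcal{Z}_J$ defined analogously, we obtain $X_I = f_I(\mathcal{Z}_I)$ and $X_J = f_J(\mathcal{Z}_J)$ for deterministic $f_I, f_J$; note that $V_I, V_J \subseteq A$, because any edge out of $K^\ast$ is redundant (Definition~\ref{defn:redundant edge}). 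It thus suffices to prove $\mathcal{Z}_I \ci \mathcal{Z}_J \cd X_K = x_K$.

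Next, Lemma~\ref{lem:not.ast.connecting} translates the hypothesis $I \starse J \cd K$ in $\C(X_K = x_K)$ into the two combinatorial statements (a) $V_I \cap V_J = \emptyset$ and (b) no $k\in H\cup L$ has parents both in $V_I$ and in $V_J$. Combined with Proposition~\ref{cor:dag.partition}(e)--(h), condition (b) says that the ``equation blocks'' $\{h\}\cup \pa_{\C}(h)$, for $h\in H$, and $\pa_{\C}(\ell_t)$, for $t=1,\dots,m$, that appear in \eqref{eqn:equality.1}--\eqref{eqn:equality.2} never simultaneously meet both $V_I$ and $V_J$.

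Finally, I would upgrade Corollary~\ref{lem:z.independent} from pairs to blocks. By Theorem~\ref{thm:source.rep} the conditional law of $Z$ given $\{X_K=x_K\}$ is that of independent coordinates subject to the componentwise bounds \eqref{eqn:z.bound} and the equalities \eqref{eqn:equality.1}--\eqref{eqn:equality.2}. The bounds act on each coordinate separately, while each equality couples only the $Z$-variables in one block, and by Proposition~\ref{cor:dag.partition}(f)--(h) these blocks are pairwise disjoint. Hence the conditional law factorises as a product over blocks and free coordinates, and (a)--(b) place $\mathcal{Z}_I$ and $\mathcal{Z}_J$ into disjoint such factors, giving $\mathcal{Z}_I \ci \mathcal{Z}_J \cd X_K=x_K$. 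The main obstacle lies precisely here: Corollary~\ref{lem:z.independent} only asserts pairwise independence, so one has to verify with care both the pairwise disjointness of the $Z$-blocks and the fact that marginalising over the blocks not met by $V_I\cup V_J$ preserves the product structure. This is exactly the content that condition \eqref{eqn:ip.jp.k} of Lemma~\ref{lem:not.ast.connecting} extracts from the $\ast$-separation hypothesis.
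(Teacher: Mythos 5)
Your proposal is correct and follows essentially the same route as the paper: reduce to the active nodes, invoke the compact representation of Theorem~\ref{thm:source.rep}, translate $\ast$-separation via Lemma~\ref{lem:not.ast.connecting} into disjointness of the sets $I\cup\pa_{\C}(I)$ and $J\cup\pa_{\C}(J)$ together with the absence of a shared equation block, and conclude independence of the corresponding innovation blocks. Your explicit upgrade of Corollary~\ref{lem:z.independent} from pairwise to block independence (via the product factorisation over disjoint equation blocks) is a point the paper's proof passes over quickly, but it is exactly the argument implicit in the proof of that corollary, so the two proofs coincide in substance.
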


\begin{proof}
Suppose that $I$ and $J$ are source separated by $\{X_K=x_K\}$. By Lemma~\ref{lem:not.ast.connecting}, this implies that
$$ \left(I \cup \pa_{\C}(I) \right) \cap \left(J \cup \pa_{\C}(J) \right) = \emptyset, $$ 
and that there are no pairs $i' \in I \cup \pa_{\C}(I)$, $j' \in J \cup \pa_{\C}(J)$ that simultaneously appear in the same equation among those in \eqref{eqn:equality.1} and \eqref{eqn:equality.2}. By Corollary \ref{lem:z.independent}, this implies 
$$ \{Z_i: i \in I \cup \pa_{\C}(I)\} \ci \{Z_j: j \in J \cup \pa_{\C}(J)\} \cd X_K = x_K $$
and by the representation \eqref{eqn:minimal.representation}, this implies $X_I \ci X_J \cd  X_K = x_K$. 
%\CK{or should we write by Theorem~\ref{thm:source.rep}?} \sll{I think not. The theorem is a very long one, but right here we only need \eqref{eqn:minimal.representation}.}
\end{proof}

\begin{example}[Tent]\label{ex:tent2}
%\CK{refer to the Intro}
%we get the partition $A = \{1,2,3\}$, $H = \emptyset$ and $L = \{4,5\}$ which cannot be further subdivided. %Let us choose $\ell_1 = 4$.  

Applying Theorem \ref{thm:starse.xk} to the source DAG in Figure~\ref{fig:tent} of Example~\ref{ex:tent} yields the %valid context-specific 
conditional independence statement 
$X_3 \ci ( X_1,X_2) \cd  X_4=X_5=2$, as also stated in the introduction, see Example~\ref{ex:cassiopeia0}.
\halmos
\end{example}

\subsection{The context-free cases}
In the previous subsection we identified sufficient conditions for conditional independence given a specific possible context $\{X_K=x_K\}$. We now exploit this result to derive conditions for independence that are valid in any context.

\begin{theorem}[Context-free, fixed $C$]\label{thm:starse.C}
Let $X$ be a max-linear Bayesian network over a directed acyclic graph $\D = (V,E)$ with fixed coefficient matrix $C$. Then for all $I,J,K \subseteq V$,
$$I\starse J\cd K \text{ in } \dag^*_K(C)\implies X_I \ci X_J \cd X_K. $$
\end{theorem}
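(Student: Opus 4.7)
The plan is to reduce Theorem~\ref{thm:starse.C} to the context-specific Theorem~\ref{thm:starse.xk} by invoking the graph inclusion $\C(X_K=x_K) \subseteq \dag^*_K(C)$ established in Lemma~\ref{lem:inclusion}.

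The first step is to observe that $\ast$-separation is monotone with respect to edge-subgraph inclusion on a fixed node set. Indeed, each of the five $\ast$-connecting path templates in Figure~\ref{fig:d.ast.connecting} consists of directed edges of the ambient graph, so removing edges can only destroy, not create, $\ast$-connecting paths. Hence whenever $G_1$ is an edge-subgraph of $G_2$ on $V$, the implication
\[
I \starse J \cd K \text{ in } G_2 \;\Longrightarrow\; I \starse J \cd K \text{ in } G_1
\]
holds for all disjoint $I,J,K \subseteq V$.

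Applying this with $G_2 = \dag^*_K(C)$ and $G_1 = \C(X_K=x_K)$, the hypothesis $I\starse J\cd K$ in $\dag^*_K(C)$ together with Lemma~\ref{lem:inclusion} yields $I\starse J\cd K$ in $\C(X_K=x_K)$ for \emph{every} possible context $\{X_K=x_K\}$. Theorem~\ref{thm:starse.xk} then gives the pointwise conditional independence
\[
X_I \ci X_J \cd X_K = x_K
\]
for every possible $x_K$.

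The remaining step is to lift this pointwise statement to the global conditional independence $X_I \ci X_J \cd X_K$. Here I would use that by Definition~\ref{def:possible} and the discussion following it, the impossible points of $\mathcal{L}^C_K$ form a null set under the distribution of $X_K$; on the full-measure set of possible contexts the pointwise independence holds, and by a standard disintegration of the joint law of $(X_I,X_J,X_K)$ this is equivalent to $X_I \ci X_J \cd X_K$. The main obstacle I anticipate is the measure-theoretic bookkeeping, since by Corollary~\ref{cor:atomic} the conditional distributions can carry atoms: one must verify that every atom of the law of $X_K$ corresponds to a possible context, so that no exceptional $x_K$ can carry positive mass. This is immediate from the construction of $\mathfrak{G}(X_K=x_K)$, so the lifting step causes no real difficulty, and the proof reduces to the two previously established results.
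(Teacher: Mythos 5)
Your proposal is correct and follows essentially the same route as the paper: invoke Lemma~\ref{lem:inclusion} to get $\C(X_K=x_K)\subseteq \dag^*_K(C)$, note that edge removal cannot create $\ast$-connecting paths, and then apply Theorem~\ref{thm:starse.xk} for each possible context. Your added care about lifting the pointwise statements to $X_I \ci X_J \cd X_K$ (handling impossible contexts as a null set and checking atoms) is a step the paper's proof passes over silently, but it does not change the argument.
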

\begin{proof}
It is enough to prove the result for  $K \neq \emptyset$.
Suppose that there are no $\ast$-connecting paths in $\D^\ast_K(C)$. For any possible context $\{X_K = x_K \}$, by Lemma~\ref{lem:inclusion}, $\D^\ast_K(C) \supseteq \C(X_K = x_K)$, therefore there is no $\ast$-connecting path in $\C(X_K = x_K)$. Thus we have $X_i \ci X_j \cd  X_K$ by Theorem \ref{thm:starse.xk}. 
\end{proof}
Finally, we can give the generic Markov condition which does not involve knowledge of the coefficient matrix $C$:

\begin{theorem}[Context-free, independent of $C$]\label{thm:starse.ind}
Let $X$ be a max-linear Bayesian network over a directed acyclic graph $\D = (V,E)$.
%Let $\D = (V,E)$ be a directed acyclic graph. If $X$ follows a recursive max-linear model with support $\D$, 
Then for all $I,J,K \subseteq V$,
$$I \starse J \cd K \mbox{ in } \D^\ast_K \implies  X_I \ci X_J \cd X_K \mbox{ for all $C$ with support included in $\D$.} $$
\end{theorem}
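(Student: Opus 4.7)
The plan is to reduce this to Theorem~\ref{thm:starse.C} by observing that enlarging the DAG only adds more potential connecting paths, so a separation statement in the larger graph $\D^\ast_K$ automatically implies the corresponding statement in the smaller graph $\D^\ast_K(C)$.

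First I would record the inclusion $\D^\ast_K(C) \subseteq \D^\ast_K$ for any coefficient matrix $C$ whose support is included in $\D$. This is essentially the first half of Lemma~\ref{lem:inclusion}, but stated there under the assumption that the support of $C$ equals $\D$; the extension to ``support included in $\D$'' is immediate, since replacing $\D$ by a subgraph $\D(C) \subseteq \D$ can only remove edges from the conditional reachability DAG. Concretely, if $j\to i \in \D^\ast_K(C)$ then $c^\ast_{ij}>0$ and there is a critical directed path from $j$ to $i$ in $\D(C)$ not factoring through $K$; this path lies in $\D$ and so witnesses $j\to i \in \D^\ast_K$.

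Next, because $*$-connecting paths (Definition~\ref{dfn:3.3}) are defined purely in terms of the edge set of the underlying DAG, any $*$-connecting path between $I$ and $J$ relative to $K$ in the smaller graph $\D^\ast_K(C)$ is also a $*$-connecting path in the larger graph $\D^\ast_K$. Taking contrapositives, $I \starse J \cd K$ in $\D^\ast_K$ implies $I \starse J \cd K$ in $\D^\ast_K(C)$.

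Finally, I would apply Theorem~\ref{thm:starse.C} to the latter separation statement to conclude that $X_I \ci X_J \cd X_K$. Since $C$ was an arbitrary coefficient matrix with support included in $\D$, this yields the conditional independence for every such $C$, as required. There is no real obstacle here: the theorem is an immediate corollary of Theorem~\ref{thm:starse.C} together with the monotonicity of the graph construction $C \mapsto \D^\ast_K(C)$, and the only minor point requiring attention is ensuring that Lemma~\ref{lem:inclusion} is applied in the form appropriate to ``support included in $\D$'' rather than ``support equal to $\D$''.
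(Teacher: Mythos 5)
Your proposal is correct and follows the paper's own proof essentially verbatim: the paper likewise deduces the result from the inclusion $\D^\ast_K \supseteq \D^\ast_K(C)$ of Lemma~\ref{lem:inclusion} and then applies Theorem~\ref{thm:starse.C}. Your extra remark about extending the inclusion from ``support equal to $\D$'' to ``support included in $\D$'' is a fine clarification but does not change the argument.
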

\begin{proof} %As before it is enough to prove the result for $I =\{i\}, J = \{j\}$ and $K \neq \emptyset$. 
By Lemma \ref{lem:inclusion}, $\D^\ast_K \supseteq \D^\ast_K(C)$, so if there are no $\ast$-connecting paths in $\D^\ast_K$, then there are also no $\ast$-connecting paths in $\D^\ast_K(C)$ for all $C$ supported by $\D$. Thus $X_i \ci X_j \cd  X_K$ for all such $C$ by Theorem \ref{thm:starse.C}. 
\end{proof}
\begin{remark}
We note that Theorem~\ref{thm:starse.ind} is corresponding to what is known as the \emph{global Markov property} for Bayesian networks, i.e.\ it establishes that separation in a suitable graph always implies conditional independence simultaneously for all possible values of the conditioning variables, and this statement holds for any choice of coefficient matrix $C$.\end{remark}
\section{Completeness}\label{sec:faith}

In this section, we shall investigate to what extent the separation criteria developed in  Section~\ref{sec:cond.indep}  are complete for conditional independence in max-linear Bayesian networks, i.e.\ yield all conditional independence relations that are valid. 
%\CK{Throughout we assume a fixed coefficient matrix $C$ and the set $K\subset V$ given.}
As before, we divide the discussion into the context-specific and context-free cases.

\subsection{The context-specific case} We first establish the converse to Theorem~\ref{thm:starse.xk} in the context-specific case. 
The next lemma is used several times in the proof. 
\begin{lemma}\label{lem:can.both.appear}
Suppose there is a $\ast$-connecting path between $i$ and $j$ in $\C(X_K = x_K)$ of types (a) or (b) in Figure~\ref{fig:d.ast.connecting}. Suppose further for type (b) that there exists some $g \in \mathfrak{G}(X_K = x_K)$ such that $j' \to i, j' \to j \in g$ and $j' \notin K^\ast(g)$. Then $X_i \notci X_j \cd  X_K = x_K$. 
\end{lemma}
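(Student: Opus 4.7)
The plan is to exhibit in each case an event of positive conditional probability on which $X_i$ and $X_j$ are tied by a deterministic multiplicative relation while $X_j$ still varies continuously, and then to use this to obstruct the product structure required by conditional independence. Since any node in $K^\ast$ takes a deterministic value under $\{X_K = x_K\}$ and is therefore trivially conditionally independent of anything, the nontrivial case is $i, j \in A$, which I shall assume (matching the regime of Lemma~\ref{lem:not.ast.connecting}).

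First I will produce the required impact graph $g \in \mathfrak{G}(X_K = x_K)$. In case (a) the edge $j \to i$ lies in the source DAG $\C(X_K = x_K)$, so by Definition~\ref{defn:redundant edge} it is not redundant; combined with $i, j \in A$, this forces the existence of some $g$ with $j \to i \in g$ and $j \notin K^\ast(g)$. In case (b) such a $g$ is handed to us by hypothesis. In either case let $r$ be the root in $g$ of the star containing both $i$ and $j$ (so $r = j$ in (a) and $r = j'$ in (b)), set $\alpha := c^\ast_{ir}/c^\ast_{jr} > 0$, and write $F := \mathcal{E}(g) \cap \{X_K = x_K\}$.

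Next I will show that on $F$ the pair $(X_i, X_j)$ concentrates on the line $L_\alpha := \{(x,y) : x = \alpha y\}$ with a non-atomic distribution. By Theorem~\ref{thm:impact}(b) and the galaxy structure of $g$, on $F$ we have $X_i = c^\ast_{ir} Z_r$ and $X_j = c^\ast_{jr} Z_r$, so $X_i = \alpha X_j$ identically there. Since $r \notin K^\ast(g)$, the strict-inequality argument in the proof of Lemma~\ref{lem:k.ast.y}(b) leaves $Z_r$ free over an open interval on $F$, and the atom-free distribution of $Z_r$ then guarantees that the conditional law of $X_j$ on $F$ has an absolutely continuous component; by compatibility of $g$ with the context, $\P(F \cd X_K = x_K) > 0$.

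Finally I will derive the contradiction. Assume $X_i \ci X_j \cd X_K = x_K$, so that the conditional joint law $\mu_{ij}$ of $(X_i, X_j)$ factors as $\mu_i \otimes \mu_j$. Any product measure assigns to the line $L_\alpha$ the purely atomic mass
\[
(\mu_i \otimes \mu_j)(L_\alpha) \;=\; \sum_{b} \mu_i(\{\alpha b\})\,\mu_j(\{b\}),
\]
with $b$ ranging over the (countable) atoms of $\mu_j$; this is concentrated on a countable set and hence conflicts with the non-atomic positive-mass component on $L_\alpha$ placed there by $F$. The hard part is precisely that continuity claim for $X_j$ on $F$: one must argue that $r \notin K^\ast(g)$ genuinely gives $Z_r$ a non-degenerate range (not one that collapses to a point once the context is imposed), which is what the strict inequalities in Lemma~\ref{lem:k.ast.y}(b) combined with the atom-free innovations are designed to deliver.
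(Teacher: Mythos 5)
Your proof is correct and is essentially the paper's argument: both exhibit, on the positive-conditional-probability event $\E(g)\cap\{X_K=x_K\}$, the deterministic relation $X_i/c^\ast_{ir}=X_j/c^\ast_{jr}=Z_r$ at a non-atomic value (the paper routes the non-atomicity through Corollary~\ref{cor:atomic}, you through Lemma~\ref{lem:k.ast.y} and the atom-free innovations), and then contradict the product structure, whose mass on the line $\{x=\alpha y\}$ is necessarily purely atomic. One small imprecision: atom-free innovations only give a \emph{non-atomic} (continuous), not necessarily absolutely continuous, component for $X_j$ on that event, but since your final step uses only non-atomicity the argument is unaffected.
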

\begin{proof}
 By Corollary \ref{cor:atomic}(b), type (b) implies
    $$  \P\left\{\frac{X_i}{c^\ast_{ij'}} = \frac{X_j}{c^\ast_{jj'}} \neq \mbox{ an atomic value of $X_i$ or $X_j$}\; \Bigg|\; X_K = x_K\right\} \geq \P(g \cd X_K = x_K) > 0,  $$
    and in type (a), we have the same inequality with $j = j'$. In either case, $X_i \notci X_j \cd  X_K = x_K$, as claimed. %\CK{Ngoc's symbol? as later}
\end{proof} 
The main difficulty in proving Theorem~\ref{thm:starse.faith} below is that having two edges $j' \to i, j' \to j \in \C(X_K = x_K)$ does not in general (as in the above Lemma~\ref{lem:can.both.appear}) imply that there exists a compatible impact graph $g \in \mathfrak{G}(X_K = x_K)$, where both of these edges appear simultaneously. Indeed, Example~\ref{ex:umbrella} above shows that this need not be the case, whereas Corollary~\ref{lem:i.k.common.source} establishes this fact in a specific case. 
\begin{theorem}[Context-specific completeness]\label{thm:starse.faith}
Let $X$ be a max-linear Bayesian network over a directed acyclic graph $\D = (V,E)$ with fixed coefficient matrix $C$.  
Let $K \subseteq V$ and $\C(X_K = x_K)$ be the source DAG of a possible context $\{X_K = x_K\}$. For all subsets $I,J \subseteq V$ it holds that
$$X_I \ci X_J \cd X_K =x_K \implies I \starse J \cd K \mbox{ in } \C(X_K = x_K) $$
\end{theorem}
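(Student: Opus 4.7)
The plan is to prove the contrapositive: suppose some $i\in I$ and $j\in J$ are $*$-connected in $\C(X_K=x_K)$; I show $X_i \notci X_j \cd X_K = x_K$. Since $X_v$ is almost surely constant whenever $v \in K^*$, such a variable is trivially independent of everything given $\{X_K=x_K\}$, so I may assume $i,j \in A$. By Lemma~\ref{lem:not.ast.connecting}, the $*$-connecting path matches one of the five configurations in Figure~\ref{fig:d.ast.connecting}, which I treat in three groups using Theorem~\ref{thm:source.rep} together with Corollaries~\ref{lem:i.k.common.source} and~\ref{cor:atomic}.

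First, for the collider paths of types (c), (d), and (e), the path passes through a node $k\in H\cup L$ with distinct parents $i'\in\{i\}\cup\pa_{\C}(i)$ and $j'\in\{j\}\cup\pa_{\C}(j)$. By Corollary~\ref{cor:atomic}(b), the values $c_i := c^*_{ii'}x_k/c^*_{ki'}$ and $c_j := c^*_{jj'}x_k/c^*_{kj'}$ are atoms of the respective conditional distributions of $X_i$ and $X_j$ given $X_K = x_K$. Tracing how these atoms arise in the representation~\eqref{eqn:minimal.representation} shows that the joint event $\{X_i=c_i\}\cap\{X_j=c_j\}$ would demand $Z_{i'}=x_k/c^*_{ki'}$ and $Z_{j'}=x_k/c^*_{kj'}$ simultaneously, forcing two distinct terms to attain the maximum in equation~\eqref{eqn:equality.1} or~\eqref{eqn:equality.2} for $x_k$. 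Since the innovations are atom-free this joint event has conditional probability zero, while each marginal atom has positive probability; hence the joint distribution of $(X_i,X_j)$ cannot factor, and $X_i \notci X_j \cd X_K = x_K$.

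Next, for path types (a) and (b), Lemma~\ref{lem:can.both.appear} gives the dependence directly whenever its hypothesis holds: for type (a), $j \to i \in \C(X_K = x_K)$ forces $j \in A$ by Definition~\ref{defn:redundant edge}, so the lemma applies with $j'=j$; for type (b), the lemma applies whenever some $g\in\mathfrak{G}(X_K=x_K)$ contains both $j'\to i$ and $j'\to j$ with $j' \notin K^*(g)$. The remaining and most delicate case, which I expect to be the main obstacle, is a type (b) $*$-connecting path admitting no such witnessing $g$. I plan to argue that the obstruction to simultaneous realization must come from the bound~\eqref{eqn:z.bound}: some $k \in K^*$ with $c^*_{kj'} > 0$ caps $Z_{j'}$ too tightly to dominate the competing terms in both the $X_i$ and $X_j$ representations at once, and this $k$ can be taken in $H \cup L$ so that $j' \in \pa_{\C}(k)$. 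Analyzing the dominant competing terms via the impact exchange matrix of Theorem~\ref{thm:impact}(d), I would extract $i'' \in \pa_{\C}(i)$ and $j'' \in \pa_{\C}(j)$ both lying in $\pa_{\C}(k)$, producing an alternative $*$-connecting path of type (c), (d), or (e) between $i$ and $j$ that reduces to the first case.
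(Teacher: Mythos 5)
Your overall architecture --- reduce to singletons in $A$, split according to the five path types of Figure~\ref{fig:d.ast.connecting}, dispatch type (a) and the ``witnessed'' part of type (b) via Lemma~\ref{lem:can.both.appear}, and handle the collider types by exhibiting two events of positive conditional probability whose intersection is conditionally null --- matches the paper's. But the step you yourself flag as the main obstacle is a genuine gap, and the reduction you propose for it does not work. When the type-(b) path $i \leftarrow j' \to j$ admits no $g\in\mathfrak{G}(X_K=x_K)$ containing both edges with $j'\notin K^\ast(g)$, you propose to extract a collider path $i \leftarrow i'' \to k \leftarrow j'' \to j$ with $k\in H\cup L$ and fall back on the case (c)--(e) argument. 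That argument, however, derives its contradiction from $R_g(i)\neq R_g(j)$ for every compatible $g$, which is exactly what condition \eqref{eqn:type.a.b} buys --- and \eqref{eqn:type.a.b} fails in type (b) by hypothesis, since $j'$ is a common parent of $i$ and $j$ in $\C(X_K=x_K)$. A compatible $g$ with $R_g(i)=R_g(j)=R_g(k)=j'$ is not excluded, so the two atomic events can be realized simultaneously and no contradiction follows. Moreover the extracted $i''$ and $j''$ need not be distinct; if they coincide you are back in a type-(b) configuration and the recursion does not terminate.

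The paper closes this case without any reduction. It splits type (b) according to whether some block $L_t$ satisfies $\pa_{\C}(\ell_t)=\tilde I_t\cup\tilde J_t$, where $\tilde I_t=\pa_{\C}(\ell_t)\cap\pa_{\C}(i)$ and $\tilde J_t=\pa_{\C}(\ell_t)\cap\pa_{\C}(j)$. If no block does, an explicit $\epsilon$-construction (pushing $Z_{j'}$ up to its bound \eqref{eqn:z.bound}, assigning each $L_t$ a root $r_t\notin\pa_{\C}(i)\cup\pa_{\C}(j)$, and making all other innovations tiny) produces a witnessing $g$, so Lemma~\ref{lem:can.both.appear} applies; note that you also omit this construction, and the existence of a witness in this subcase is not automatic. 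If some block does, the paper considers the events $\E_1=\{X_i<\min_{r\in\tilde I_t}c^\ast_{ir}x_{\ell_t}/c^\ast_{\ell_t r}\}$ and $\E_2=\{X_j<\min_{r\in\tilde J_t}c^\ast_{jr}x_{\ell_t}/c^\ast_{\ell_t r}\}$: each has positive conditional probability, yet every compatible $g$ must place $\pa_g(\ell_t)$ inside $\tilde I_t\cup\tilde J_t$, so whichever parent realizes $x_{\ell_t}$ forces $X_i$ or $X_j$ above its threshold, giving $\P(\E_1\cap\E_2\cd X_K=x_K)=0$. You would need to supply arguments of this kind (or a corrected reduction) for the proof to go through; the remaining cases of your proposal are essentially the paper's.
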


\begin{proof}
To prove this, we separately consider the five different types of $*$-connectivity in Figure~\ref{fig:d.ast.connecting} and in each of them establish that the variables are dependent, using the node partition $V\setminus U= A\cup H \cup L$ and the representation as in Theorem~\ref{thm:source.rep} and its corollaries. 
%\begin{lemma}\label{lem:starconn.dep}Suppose that $i$ and $j$ are $\ast$-connecting on $\C(X_K = x_K)$. Then it holds that $X_i\notci X_j\cd X_K=x_K$.\end{lemma}
%\begin{proof} 
%Throughout the proof we consider the partition $V\setminus U= A\cup H \cup L$ %= A\cup H \cup L_1\cup \cdots L_m$ and selected representatives $\ell_1, \ldots, \ell_m$ as in Theorem~\ref{thm:source.rep}.

First we claim that it is sufficient to consider the case where  $I = \{i\}$ and $J = \{j\}$ are singletons with $i,j\in A$. For if $I$ and $J$ are $*$-connected, there must be
%Fix $I,J \subseteq V\setminus K^*$. \CK{say that this is sufficient to look at.}  Suppose that $I$ and $J$ are $\ast$-connecting \CK{drop: w.r.t.\ $K^\ast$} in $\C(X_K = x_K)$. This implies there exists 
$i \in I, j \in J$ such that $i$ and $j$ are $\ast$-connected, so if $i$ and $j$ are dependent, so are $I$ and $J$.

 Suppose then that $i$ and $j$ are $\ast$-connected, i.e.\ there is a path $\pi$ of the types shown in Figure~\ref{fig:d.ast.connecting}. The proof considers the five different cases (a)--(e)  of this figure in turn and gives an appropriate event for each one to establish conditional dependence. 

\medskip

\noindent{\bf Case (a):}
This follows directly from Lemma~\ref{lem:can.both.appear}.

\noindent{\bf Case (b):}
%\CK{drop first sentence?} Here the $\ast$-connecting path in consideration is Figure~\ref{fig:d.ast.connecting}(b). 
For each $t = 1,\dots,m$, let 
$$ \tilde{I}_t = \pa_{\C}(\ell_t) \cap \pa_{\C}(i), \tilde{J}_t = \pa_{\C}(\ell_t) \cap \pa_{\C}(j). $$
There are two mutually exclusive subcases.
\paragraph{Case {(b)I}}
\emph{For each $t = 1,\dots,m$ we have
 $\tilde{I}_t \cup \tilde{J}_t \subsetneq \pa_{\C}(\ell_t). $} %\sll{I think we do not need a number on this $\ell_t$?}
 
In particular, for each such $t$, there exists some $r_t \in \pa_{\C}(\ell_t)$ such that
  \begin{equation}\label{eqn:no.shared.ell}
r_t \notin  \pa_{\C}(i) \cup \pa_{\C}(j).
\end{equation}
Our goal is to construct an appropriate $g$ and appeal to Lemma \ref{lem:can.both.appear}. 
Apply \eqref{eqn:z.bound} to $j'$, let $\beta_{j'}$ be the constant on the right-hand side of this inequality. For a sufficiently small constant $\epsilon>0$, consider the event $\mathcal{E}$ defined by
\begin{itemize}
  \item $\beta_{j'} - \epsilon < Z_{j'} < \beta_{j'}$ ($Z_{j'}$ is only slightly smaller than the largest value possible in the context $\{X_K = x_K\}$)
  \item $Z_r < \epsilon$ for all $r \in \pa_{\C}(i) \cup \pa_{\C}(j) \setminus \{j'\}$ (any other parent of $i$ or $j$, except $j'$, has very small $Z$-value)  
  \item $Z_i,Z_j < \epsilon$ ($i$ and $j$ also have very small $Z$ values)
  \item for each $h \in H$, set $Z_{r'} < \epsilon$ for all $r' \in \pa_{\C}(h) \backslash \{j'\}$, and $Z_h = x_h$ (any node in $h$ realizes itself: its parents have small $Z$-values, and its own $Z$-value is $x_h$.)
  \item for each $\ell_t$ for $t = 1,\dots,m$, let $r_t$ satisfy \eqref{eqn:no.shared.ell}, and set it to achieve the maximum in \eqref{eqn:equality.2}. (Each block $L_t$ gets a parent whose $Z$-value is not already constrained by the previous conditions). 
\end{itemize}
In the above, the only nodes that were mentioned but did not get set to be less than $\epsilon$ are $Z_{j'}$, $Z_{r_t}$ for $t = 1,\dots,m$ and $Z_h$ for $h \in H$. By Proposition \ref{cor:dag.partition} and \eqref{eqn:no.shared.ell}, these nodes are all distinct, so the event $\E$ is well-defined. Furthermore, by Proposition \ref{cor:dag.partition} and  Corollary~\ref{lem:z.independent}, $\{Z_{r_t}, Z_h: t = 1,\dots, m, h \in H\}$ are independent, and either $Z_{j'}$ is independent of $\{Z_{r_t}, Z_h: t = 1,\dots, m, h \in H\}$, or it is independent of all but exactly one of them, say, $Z_u$ for $u \in \{r_t: t = 1,\dots, m\} \cup H$. In both cases, by Theorem \ref{thm:source.rep} and Corollary \ref{lem:z.independent}, 
$\P(\mathcal{E} \cd  X_K = x_K) > 0$. So there exists at least one $g \in \mathcal{C}(X_K = x_K)$  such that $\P(\E(g) \cap \mathcal{E}\cd X_K = x_K) > 0$. By construction of this event,  $j' \to i, j' \to j \in g$ and $j' \notin K^\ast(g)$. Hence $X_i \notci X_j \cd  X_K = x_K$ by Lemma \ref{lem:can.both.appear}.

 \paragraph{Case {(b)II}} \emph{There exists at least one $t = 1,\dots,m$ such that }
   \begin{equation}\label{eqn:shared.ell}
\tilde{I}_t \cup \tilde{J}_t = \pa_{\C}(\ell_t).
\end{equation}
Fix such a $t$.
Define
\begin{align*}
\E_1= \left\{X_i < \min_{r \in \tilde{I}_t}\frac{c^\ast_{ir}x_{\ell_t}}{c^\ast_{\ell_t r}}\right\}
\quad\mbox{and}\quad
\E_2= \left\{X_j < \min_{r \in \tilde{J}_t}\frac{c^\ast_{jr}x_{\ell_t}}{c^\ast_{\ell_t r}}\right\}.
\end{align*}
 By Proposition \ref{cor:dag.partition}(j), $\tilde{I}_t, \tilde{J}_t \neq \emptyset$, so the above events are well-defined. Let $r_0$ denote a node $r \in \tilde{J}_t$ that achieves the minimum in $\E_2$ above. Since $r_0 \to j \in \C(X_K = x_K)$, there exists some $g \in \mathfrak{G}(X_K = x_K)$ such that $r_0 \to j$, $r_0 \notin K^\ast(g)$. This implies that on $\E(g)$, 
$$Z_{r_0} < {x_{\ell_t}}/{c^\ast_{\ell_t r_0}}, \quad X_j = c^\ast_{jr_0}Z_{r_0}.$$
Together these imply that on $\E(g)$,
$$ X_j < \frac{c^\ast_{jr_0}x_{\ell_t}}{c^\ast_{\ell_t r_0}} = \min_{r \in \tilde{J}_t}\frac{c^\ast_{jr}x_{\ell_t}}{c^\ast_{\ell_t r}}. $$
So $\E(g) \subseteq \E_2$. Therefore, $\P(\E_2 \cd  X_K = x_K) > 0$ and, by symmetry,  $\P(\E_1 \cd  X_K = x_K) > 0$. 

By \eqref{eqn:equality.2} in Theorem~\ref{thm:source.rep}, for each $g \in \mathfrak{G}(X_K = x_K)$,
$\pa_g(\ell_t) \in \pa_{\C}(\ell_t)$. By \eqref{eqn:shared.ell}, either $\pa_g(\ell_t) \in \tilde{I}_t$ or $\pa_g(\ell_t) \in \tilde{J}_t$; note that both can occur simultaneously as we are \emph{not} claiming that $\tilde{I}_t \cap \tilde{J}_t = \emptyset$.
Consider all $g$ such that
$\pa_g(\ell_t) \in \tilde{J}_t$. Let $r = \pa_g(\ell_t)$. By definition of the max-linear model,
$$ X_j \geq c^\ast_{jr}Z_r = \frac{c^\ast_{jr}x_{\ell_t}}{c^\ast_{\ell_t r}} \mbox{ on } \mathcal{E}(g) \mbox{ for any } g \mbox{ s.t. } \quad r = \pa_g(\ell_t) \in \tilde{J}_t. $$
In particular, for any $g$ such that $\pa_g(\ell_t) \in \tilde{J}_t$, 
$$ \P(\E(g) \cap \E_2 | X_K = x_K) = 0. $$
By the same argument, for any $g$ such that $\pa_g(\ell_t) \in \tilde{I}_t$, 
$$ \P(\E(g) \cap \E_1 | X_K = x_K) = 0. $$
But $\pa_g(\ell_t) \in \tilde{I}_t \cup \tilde{J}_t$ for all $g \in \mathfrak{G}(X_K = x_K)$ as mentioned above. Therefore, there is no $g \in \mathfrak{G}(X_K = x_K)$ such that $\E(g) \subseteq \E_1 \cap \E_2$. That is,
$$ \P(\E_1 \cap \E_2 | X_K = x_K) = 0. $$
But $\P(\E_1  | X_K = x_K) > 0, \P(\E_2  | X_K = x_K) > 0$, so the events $\E_1$ and $\E_2$ are not independent conditioned on $\{X_K = x_K\}$. Therefore, $X_i \notci X_j \cd  X_K = x_K$.

\noindent{\bf Cases (c), (d) and (e):}
We may assume that cases (a) and (b) do not apply. In particular, \eqref{eqn:type.a.b} holds. For case (c), let
$$\E_1 = \left\{X_i = \frac{x_k}{c^\ast_{ki}}, X_K = x_K\right\}, \quad \E_2 = \left\{X_j = \frac{x_k}{c^\ast_{kj}}, X_K = x_K\right\}.$$
For case (d), let 
$$\E_1 = \left\{X_i = \frac{x_k}{c^\ast_{ki}}, X_K = x_K\right\}, \quad \E_2= \left\{X_j =\frac{c^\ast_{jj'}x_k}{c^\ast_{kj'}}, X_K = x_K\right\}. $$
For case (e), let 
$$ \E_1 = \left\{X_i = \frac{c^\ast_{ii'}x_k}{c^\ast_{ki'}}, X_K = x_K\right\}, \quad \E_2 = \left\{X_j = \frac{c^\ast_{jj'}x_k}{c^\ast_{kj'}}, X_K = x_K\right\}.$$
We now claim that in all three cases we have
$$ \P(\E_1 \cd X_K = x_K) > 0 \mbox{ and } \P(\E_2 \cd X_K = x_K) > 0. $$
Indeed, these follow in case (c) from $i \to k, j \to k \in \C(X_K = x_K)$, and in cases (d) and (e) from Corollary~\ref{lem:i.k.common.source} applied to the triples $k \leftarrow j' \to j$ and $k \leftarrow i' \to i$. 
By \eqref{eqn:extra2} in Lemma~\ref{lem:y.xk}, any $g \in \mathfrak{G}(\E_1)$ must have $R_g(k) = R_g(i)$. Similarly, any $g \in \mathfrak{G}(\E_2)$ must have $R_g(k) = R_g(j)$. But \eqref{eqn:type.a.b} implies $R_g(i) \neq R_g(j)$ for all $g \in \mathfrak{G}(X_K = x_K)$. Therefore, $$ \P(\E_1 \cd \E_2, X_K = x_K) = \P(\E_2 \cd  \E_1, X_K = x_K) = 0. $$
So $X_i \notci X_j \cd X_K = x_K$ in each of the three cases, as needed.
 % 
%\end{enumerate}
Since all cases have been considered, this concludes the proof.
\end{proof}
\subsection{The context-free cases}
Next we consider the context-free case for a given coefficient matrix $C$.  We begin by showing that the direct converse to Theorem~\ref{thm:starse.C} is false, as demonstrated in the following example.

\begin{example}\label{ex:counter.1.3}
Consider the graph in Figure~\ref{fig:counterexample} with all edge weights equal to one.
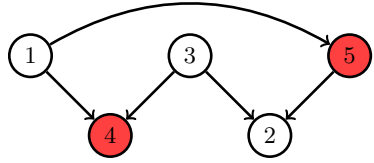
\begin{figure}
\begin{center}
\begin{tikzpicture}[->,every node/.style={circle,draw},line width=1pt, node distance=1.5cm]
%\hspace*{-1cm}
  \node (1)  {$1$};
  \node (4) [below right of=1,fill=red!75]{$4$};
  \node (3) [above right of=4] {$3$};
  \node (2) [below right of=3] {$2$};
  \node (5) [above right of=2,fill=red!75] {$5$}; 
\foreach \from/\to in {1/4,3/4,3/2,5/2}
\draw (\from) -- (\to);   
   \path (1) edge [bend left=30, align=left, above](5);
 \end{tikzpicture}  
\end{center}
\caption{The counterexample with $\D=\D_K^*$ and observed nodes $K=\{4,5\}$. 
 Here it holds that $X_1\ci X_2\cd  X_{\{4,5\}}$ even though 1 and 2 are $\ast$-connected relative to $K$ with the path $1\to 4\leftarrow 3\to 2$.
 }\label{fig:counterexample}
\end{figure}
 We have
 \begin{align*}
     X_5 &= Z_5 \vee Z_1 \\
     X_4 &= Z_4 \vee Z_1 \vee Z_3 \\
     X_2 &= Z_2 \vee Z_3 \vee Z_5
 \end{align*}
The important feature of this example is that $c^\ast_{21}=c^\ast_{25} c^\ast_{51}$, i.e.\ there is a critical directed path from 1 to 2 that factors through $K$, so $1 \to 2 \notin \D^\ast_K(C)$ and $1 \to 2 \notin \D^\ast_K$. On the other hand, $\pi = 1 \to 4 \leftarrow 3 \to 2$ is a $\ast$-connecting path. Nevertheless, we claim below that $X_1\ci X_2\cd  X_{4,5}$.
 
 Indeed, if $x_5 \geq x_4$, then also $x_5 \geq Z_3$ so $\C(x_4,x_5)$ is a subgraph of the graph to the left in Figure~\ref{fig:subgraphs}
 \begin{figure}
\begin{center}
\begin{tikzpicture}[->,every node/.style={circle,draw},line width=1pt, node distance=1.5cm]
%\hspace*{-1cm}
  \node (1)  {$1$};
  \node (4) [below right of=1,fill=red!75]{$4$};
  \node (3) [above right of=4] {$3$};
  \node (2) [below right of=3] {$2$};
  \node (5) [above right of=2,fill=red!75] {$5$}; 
  %\node (6) [below = 2cm of 3]{$\C_1$};
\foreach \from/\to in {1/4,3/4,5/2}
\draw (\from) -- (\to);   

\node (a1) [right= 2cm of 5] {$1$};
  \node (a4) [below right of=a1,fill=red!75]{$4$};
  \node (a3) [above right of=a4] {$3$};
  \node (a2) [below right of=a3] {$2$};
  \node (a5) [above right of=a2,fill=red!75] {$5$}; 
  %\node (a6) [below = 2cm of a3]{$\C_2$};
\foreach \from/\to in {a3/a4,a3/a2,a5/a2}
\draw (\from) -- (\to);   
   \path (1) edge [bend left=30, align=left, above](5);
 \end{tikzpicture}  
\end{center}
\caption{The source DAG $\mathcal{C}(x_4,x_5)$ in a context satisfying $\{x_5\geq x_4\}$ is a subgraph of the graph to the left and of the graph to the right if $\{x_5<x_4\}$.}\label{fig:subgraphs}
\end{figure}
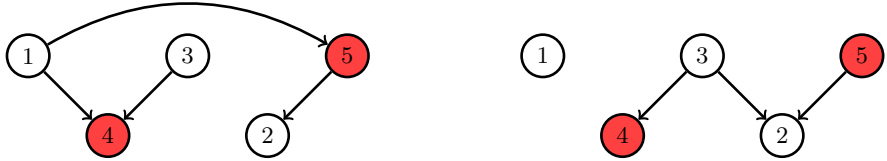
On the other hand, if $x_5 < x_4$, then $1 \to 4 \notin  \C(x_4,x_5)$ so that $\C(x_4,x_5)$ is a subgraph of the graph to the right in Figure~\ref{fig:subgraphs}.

In both cases there is no $*$-connecting path between 1 and 2, hence by Theorem \ref{thm:starse.xk} we have 
 $X_1\ci X_2\cd  X_{\{4,5\}}$. \halmos
\end{example}
We note that the phenomenon here has some similarity to `path cancellations' in standard linear Bayesian networks, where specific values of the coefficients may allow dependence relations to cancel and yield conditional independence which does not follow from the separation criterion. Below we shall discuss and resolve this type of problem. Here the set of coefficients corresponding to such a phenomenon may have positive Lebesgue measure, in contrast to the standard case, which makes this discussion necessary. The key concept is that of an effective edge or path, as further described below. 
\subsubsection{Effective edges and paths}
To obtain converses for the context-free cases, we wish to construct a  possible context $\{X_K=x_K\}$ that violates the context-specific Markov condition. However, Example~\ref{ex:counter.1.3} above shows that this is not always possible.  
We need to ensure that no inequalities along $*$-connecting paths imply further equalities and to control this we need the following concept.

\begin{definition}%[Substitution matrix of a $\ast$-connecting path] \CA{Kill here and everywhere.}
\label{defn:bmatrix}
Let $X$ be a max-linear Bayesian network over a directed acyclic graph $\D = (V,E)$ with fixed coefficient matrix $C$ and $K\subset V$. For an edge $j \to i \in \D^\ast_K(C)$, the \emph{substitution matrix}  $\Xi_K^{ij}$ of this edge relative to $K$ is a $|K| \times |K|$ matrix with the following 
	non-zero entries:% of $\Xi^{ij}$ are precisely the following:
	\begin{align}
	(\xi^{ij}_K)_{k\ell } &= \frac{c^\ast_{kj}c^\ast_{i\ell}}{c^\ast_{ij}} \mbox{ for } k \in K \cap \ch_{\D^\ast}(j), \; \ell \in K \cap (\pa_{\D^\ast}(i)\cup \{i\}), k\neq \ell. \label{eqn:bij.kl}
	\end{align}
%\CK{sentence needed here? Note that diagonal elements of $\Xi_K^{ij}$ are equal to zero and $c^\ast_{ii}=1$.} \sll{Just makes it clearer, I think} \CK{strange in a definition, though}
If $\pi$ is a $\ast$-connecting path between $i$ and $j$, then its \emph{substitution matrix} $\Xi_K^\pi$ relative to $K$ is defined as
$$ \Xi_K^\pi = \bigvee_{v \to u \in \pi} \Xi_K^{uv}. $$
\end{definition}

\begin{remark}
Example~\ref{ex:counter.1.3} above features a path  $\pi = 1 \to 4 \leftarrow 3 \to 2$ in $\D^\ast_K(C)$, but there is no $x_K$ such that $\pi \subset \C(X_K = x_K)$.  More importantly, as we show in Proposition~\ref{prop:lambda.less.than.1} below, existence of such an $x_K$ is equivalent to the additional condition \eqref{eqn:lambda.bpi} ensuring that the path is effective, as defined below. 
\end{remark}
\begin{definition}\label{def:eff.path} A $*$-connecting path
$\pi$ from  $I$ to $J$ in $\dag^*_K(C)$ is said to be \emph{effective}  if it  satisfies the tropical eigenvalue condition
\begin{equation}\label{eqn:lambda.bpi}
	\lambda(\Gamma_{KK} \vee \Xi_K^\pi) < 1,
	\end{equation}
	where  $\Xi_K^\pi$ is the substitution matrix of $\pi$ with respect to $K$ and
	$\Gamma_{KK}$ is the restriction of the weak transitive closure $\Gamma(C)$ as in \eqref{eq:transclos} to the components in $K$.
	%$\Gamma=\Gamma(C)=C\vee C^{\odot2}\vee\cdots C^{\odot(|V|-1)}$ is the weak transitive closure of $C$.
\end{definition}

\begin{example}The
condition \eqref{eqn:lambda.bpi} is necessary in general. % when $\pi$ is of type (d) or (e) for inducing dependence. 
In Example~\ref{ex:counter.1.3} we have a single $\ast$-connecting path $\pi$ in $\D^\ast_K(C)$ between $1$ and $2$ and for this path \eqref{eqn:lambda.bpi} fails, as we shall now show.
The substitution matrix for the path $\pi=1\to 4\leftarrow 3\to 2$ is 
$$\Xi_K^\pi= \Xi_K^{41}\vee \Xi_K^{43}\vee \Xi_K^{23}.$$
We find positive entries 
$$b_{54}^{41}= \frac{c^*_{51}}{c^*_{41}} = 1 \quad \text{ and } \quad b_{45}^{23}= \frac{c^*_{43}c^*_{25}}{c^*_{23}} = 1,$$ so 
$$\Gamma_{KK}\vee \Xi_K^\pi =
\begin{pmatrix}
0 & 1\\
0  & 0 
\end{pmatrix} \vee \begin{pmatrix}
0 & 1\\
1  & 0 
\end{pmatrix} = 
\begin{pmatrix}
0 & 1\\
1  & 0 
\end{pmatrix}
$$ 
and hence we get $$	\lambda(\Gamma_{KK} \vee \Xi_K^\pi) = 1, $$ which violates \eqref{eqn:lambda.bpi}.  
Here, as noticed in Example~\ref{ex:counter.1.3}, $X_i \ci X_j \cd X_K$ despite the existence of a $*$-connecting path. 
 \halmos
\end{example}

It turns out that condition \eqref{eqn:lambda.bpi} is often automatically satisfied.  As we shall study effective edges in a specific context, we need the following concept.

\begin{comment}
\begin{lemma}\label{lem:abc}
Consider a weighted DAG  $\D=\D(C)$ with fixed coefficient matrix $C$ and let $\pi$ be a path in the critical DAG $\D^\ast_K(C)$ of 
 of type (a), (b) or (c). Then $\pi$ is effective, i.e.\ \eqref{eqn:lambda.bpi} holds.
\end{lemma}

\begin{proof}
\nt{@Ngoc: TODO} \sll{Or @Ngoc: maybe rewrite proofs to make it unnecessary? On the other hand, it appears to be an interesting result in its own right.}
\end{proof}
\end{comment}

\begin{definition}
The \emph{completion} of the coefficient matrix $C$ with respect to a possible context $\{X_K = x_K\}$ is the $|V| \times |V|$ coefficient matrix $\bar{C}$, with
$$ \bar{c}_{ij} = 
\left\{\begin{array}{cc}
x_i/x_j & \mbox{ if } i,j \in K^\ast, \\
c_{ij} & \mbox{ else}.
\end{array} \right. $$
\end{definition}

We write $\bar{C}^\ast=(\bar c^*_{kh})$ for the Kleene star of $\bar{C}$ and
note that all cycles in  $\D(\bar C)$ that only involve nodes in $K^\ast$ have weight equal to one:
\[\bar c_{i_1i_2}\bar c_{i_2i_3}\cdots \bar c_{i_ki_1}=
\frac{x_{i_1}}{x_{i_2}}\frac{x_{i_2}}{x_{i_3}}\cdots \frac{x_{i_k}}{x_{i_1}}=1.\]
% Note that $(\bar C)^\ast\geq \overline{(C^\ast)}$ because $\bar c_{ij}\geq c_{ij}$ for any possible context. Hence also $\bar C^\ast\odot C^\ast=\bar C^\ast$. 
\begin{lemma}
Let $\bar{C}$ be the completion of $C$ with respect to a possible context $\{X_K = x_K\}$. Then $\lambda(\bar{C}) = 1$. 
\end{lemma}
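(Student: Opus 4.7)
The plan is to establish $\lambda(\bar{C}) \leq 1$ and $\lambda(\bar{C}) \geq 1$ separately.

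For the upper bound, I will apply Proposition~\ref{prop:lambda.1}(a) by exhibiting a vector $y \in \Rplus^V$ with $\bar{C} \odot y \leq y$. Since the context $\{X_K = x_K\}$ is possible, there exists $z^* \in \Rplus^V$ realizing the observation, and I set $y := C^\ast \odot z^*$, which extends $x_K$ to all of $V$; by Definition~\ref{defn:fixed} the value $y_i$ is forced to equal $x_i$ for every $i \in K^\ast$, so this is consistent with the data used in the completion. Idempotency of $C^\ast$ gives $y = C^\ast \odot y$, so $y_i \geq c^\ast_{ij} y_j$ for all $i, j$. It then suffices to check the coordinate-wise inequality $\bar{C} \odot y \leq y$ in the two cases of the definition of $\bar{C}$: when $i, j \in K^\ast$, $\bar{c}_{ij} y_j = (x_i/x_j) x_j = x_i = y_i$; otherwise $\bar{c}_{ij} = c_{ij} \leq c^\ast_{ij}$, so $\bar{c}_{ij} y_j \leq c^\ast_{ij} y_j \leq y_i$.

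For the lower bound, note that $K \neq \emptyset$ forces $K^\ast \supseteq K$ to be nonempty. When $|K^\ast| \geq 2$, pick distinct $i, j \in K^\ast$ and observe that the 2-cycle $i \to j \to i$ in $\D(\bar{C})$ has weight $\bar{c}_{ji}\bar{c}_{ij} = (x_j/x_i)(x_i/x_j) = 1$ and hence cycle mean $1$; when $|K^\ast| = 1$, the unique element $k$ carries a self-loop of weight $\bar{c}_{kk} = x_k/x_k = 1$. Either way $\lambda(\bar{C}) \geq 1$, and combining with the upper bound yields the desired equality.

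The proof becomes essentially immediate once the extension $y$ of $x_K$ is identified; the only real content is the recognition that the ratios $x_i/x_j$ inserted on $K^\ast \times K^\ast$ are calibrated to saturate the Kleene-star inequality along edges internal to $K^\ast$ while still respecting it on all other edges. No nontrivial obstacle is anticipated.
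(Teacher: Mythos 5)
Your proof is correct in outline but follows a genuinely different route from the paper's for the upper bound, and one step needs repair. The paper bounds $\lambda(\bar C)$ from above by directly estimating the weight of every simple cycle of $\D(\bar C)$: cycles contained in $K^\ast$ telescope to weight one, and for a cycle $1\to 2\to\dots\to r\to 1$ with $1,r\in K^\ast$ and intermediate nodes outside $K^\ast$ it combines the Kleene-star inequality $c^\ast_{r1}\geq c^\ast_{21}c^\ast_{r2}$ with $x_r\geq c^\ast_{r1}x_1$ (valid because the context is possible) to get weight at most one. You instead exhibit a tropical subeigenvector $y$ with $\bar C\odot y\leq y$ and invoke Proposition~\ref{prop:lambda.1}(a). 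This is arguably cleaner: it handles all cycle shapes at once (the paper's reduction to cycles that leave $K^\ast$ only once is itself left implicit), and both arguments ultimately rest on the same fact, namely that the vector of constant values $x_{K^\ast}$ satisfies the max-linear inequalities $x_k\geq c^\ast_{kh}x_h$. The lower bound --- a cycle or self-loop inside $K^\ast$ of weight one --- is identical in both proofs.

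The step that needs repair is the claim that for \emph{any} $z^\ast$ with $(C^\ast\odot z^\ast)_K=x_K$ the vector $y=C^\ast\odot z^\ast$ automatically satisfies $y_i=x_i$ for all $i\in K^\ast$. This is false: in the half-butterfly of Example~\ref{ex:butterfly2}, with $x_4=x_5=1$ one has $K^\ast=\{3,4,5\}$ and $x_3=1/3$, yet $z^\ast=(\epsilon,\epsilon,\epsilon,1,1)$ realizes $x_K$ while giving $y_3=\epsilon<1/3$; for that $y$ your Case~1 verification fails, since $\bar c_{34}y_4=1/3>y_3$. The constancy in Definition~\ref{defn:fixed} only constrains realizations lying in $\mathcal{E}(g)\cap\{X_K=x_K\}$ for impact graphs $g$ compatible with the context --- ``possible'' in Definition~\ref{def:possible} excludes the strata violating the rank-minimality condition. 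The fix is one line: since the context is possible, choose $z^\ast\in\mathcal{E}(g)\cap\{X_K=x_K\}$ for some $g\in\mathfrak G(X_K=x_K)$; then $(C^\ast\odot z^\ast)_i$ does equal the constant value $x_i$ for every $i\in K^\ast$, and the rest of your verification goes through verbatim.
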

\begin{proof}
If $|K|=1$ this is obviously true for a self-loop. Assume that  $|K|\geq 2$. Since $\D(C)$ is acyclic and all cycles in $\D(\bar{C})$ involving only nodes in $K^\ast$ have length 1, it is sufficient to consider simple cycles $\pi = 1 \to 2 \dots \to r \to 1$, with $1,r \in K^\ast$ and other nodes not in $K^\ast$. Write $\bar{c}(\pi)$ for the product of the edge weights of this cycle in $\bar{C}$. We claim that $\bar{c}(\pi) \leq 1$. Indeed,
$$ \bar{c}(\pi) \leq c^\ast_{r2}c^\ast_{21}\bar{c}_{1r} = c^\ast_{r2}c^\ast_{21}\frac{x_1}{x_r}\leq \frac{c^\ast_{r1} x_1}{x_r}\leq 1$$
where we have used that
$c^\ast_{r1}\geq c^\ast_{21}c^\ast_{r2}$ and the context $\{X_K=x_K\}$ is possible, so $x_r\geq c^*_{r1}x_1$. 
Hence the maximum cycle mean is  $\lambda(\bar C)=1$, as desired. 
\end{proof}

\begin{corollary}\label{cor:direct} 
For $k,h\in K^\ast$ we have that $\bar{c}^\ast_{kh} = \bar{c}_{kh} = {x_k}/{x_h}$. 
\end{corollary}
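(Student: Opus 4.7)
The plan is to prove both inequalities separately. The easy direction $\bar{c}^\ast_{kh} \geq \bar{c}_{kh}$ is immediate from $\bar{C}^\ast = I_d \vee \Gamma(\bar{C}) \geq \bar{C}$, which is a well-defined Kleene star because the preceding lemma gives $\lambda(\bar{C}) = 1 \leq 1$. Since $\bar{c}_{kh} = x_k/x_h$ by construction of the completion whenever $k,h \in K^\ast$, this half is done.

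For the reverse inequality $\bar{c}^\ast_{kh} \leq x_k/x_h$, I would use the path-weight characterization of the Kleene star: $\bar{c}^\ast_{kh}$ is the supremum of the product of edge weights over all directed paths from $h$ to $k$ in $\D(\bar{C})$ (and equals $1 = x_k/x_k$ if $k=h$). Fix such a path $\pi = (h = v_0 \to v_1 \to \cdots \to v_n = k)$ and partition it at the indices where it visits $K^\ast$: let $0 = i_0 < i_1 < \cdots < i_s = n$ be the indices with $v_{i_t} \in K^\ast$, and set $u_t := v_{i_t}$. It suffices to bound each segment's weight by $x_{u_{t+1}}/x_{u_t}$, as the telescoping product then yields the desired inequality.

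Within each segment from $u_t$ to $u_{t+1}$ there are two cases. If the segment is a single edge $u_t \to u_{t+1}$ (both endpoints in $K^\ast$), then by definition $\bar{c}_{u_{t+1} u_t} = x_{u_{t+1}}/x_{u_t}$ exactly. If the segment has length $\geq 2$, then all internal nodes $v_{i_t+1}, \ldots, v_{i_{t+1}-1}$ lie outside $K^\ast$, so every edge in this segment has at least one endpoint outside $K^\ast$ and its $\bar{C}$-weight coincides with the corresponding $C$-weight; hence the product along this segment is bounded above by $c^\ast_{u_{t+1} u_t}$. Since the context $\{X_K = x_K\}$ is possible, there exists $z \in \Rplus^V$ with $x_{K^\ast} = (C^\ast \odot z)_{K^\ast}$, and combining with the idempotency relation $X = C^\ast \odot X$ from \eqref{eqn:idempotent} gives $x_{u_{t+1}} \geq c^\ast_{u_{t+1} u_t} x_{u_t}$. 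This bounds the segment weight by $x_{u_{t+1}}/x_{u_t}$ in both cases.

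The main (and rather mild) obstacle is just bookkeeping: ensuring that the path is decomposed at \emph{every} $K^\ast$-node it visits, so that no segment has $K^\ast$-nodes in its interior, which is what makes the case analysis above exhaustive. Once that partition is in place, multiplying the per-segment bounds and taking a supremum over paths $\pi$ yields $\bar{c}^\ast_{kh} \leq x_k/x_h$, which combined with the first half completes the proof.
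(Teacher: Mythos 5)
Your proof is correct, but it takes a genuinely different route from the paper's. The paper argues via the maximum cycle mean: from the preceding lemma it has $\lambda(\bar{C})=1$, deduces $\lambda(\bar{C}^\ast)=1$, and then considers the two-cycle $k\to h\to k$ in $\D(\bar{C}^\ast)$ to sandwich
\begin{equation*}
1=\lambda(\bar{C}^\ast)\geq \bar{c}^\ast_{kh}\bar{c}^\ast_{hk}\geq \bar{c}_{kh}\bar{c}_{hk}=\frac{x_k}{x_h}\frac{x_h}{x_k}=1,
\end{equation*}
forcing equality throughout. You instead prove the upper bound $\bar{c}^\ast_{kh}\leq x_k/x_h$ directly by decomposing an arbitrary path in $\D(\bar{C})$ at its visits to $K^\ast$ and telescoping, using $x_{u_{t+1}}\geq c^\ast_{u_{t+1}u_t}x_{u_t}$ for the segments that stay outside $K^\ast$ internally. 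The paper's argument is shorter and exploits the symmetry $\bar{c}_{kh}\bar{c}_{hk}=1$, at the cost of relying on the (unproved but true) step that $\lambda(\bar{C})=1$ implies $\lambda(\bar{C}^\ast)=1$; your argument is more self-contained at the level of path weights and in effect re-derives inline the same segment inequality that powers the preceding lemma, using that lemma only to guarantee the Kleene star is well defined. The one point worth being careful about, which you essentially handle, is that $\D(\bar{C})$ is no longer acyclic (it contains the complete directed graph on $K^\ast$), so the supremum defining $\bar{c}^\ast_{kh}$ ranges over walks; your telescoping bound applies verbatim to walks since every segment between consecutive $K^\ast$-visits lies in the DAG $\D(C)$, so no gap results.
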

\begin{proof}
If $h=k$ this is obviously true. Now assume $h\neq k$.
By definition of the Kleene star, $\bar{c}^\ast_{kh} \geq \bar{c}_{kh}$ and 
since $\lambda(\bar{C}) = 1$, we also have $\lambda(\bar{C}^\ast) = 1$. 
Since $k \to h$ and $h \to k$ are edges in $\bar{C}$, we may consider the cycle $k\to h\to k$ and get
%\CK{should we drop $c^\ast(k \to h \to k)$? we have never used this. We could repeat that $\la$ is the maximum cycle mean?}
$$1=\lambda(\bar{C}^\ast) \geq \bar{c}^\ast_{kh}\bar{c}^\ast_{hk} \geq \bar{c}_{kh}\bar{c}_{hk} = \frac{x_k}{x_h}\frac{x_h}{x_k} = 1.$$
Thus we must have equalities; that is $\bar{c}^\ast_{kh} = \bar{c}_{kh}$ and $\bar{c}^\ast_{hk} = \bar{c}_{hk}$.
\end{proof}

\begin{definition}
Say that an edge $j \to i$ in $\D^\ast_K(C)$ is \emph{effective} in the possible  context $\{X_K = x_K\}$ if $j \notin K^\ast$, 
%\begin{itemize}
	%\item $i \notin K^\ast$ and no critical directed paths from $j$ to $i$ in %$\D(\bar{C}^\ast)$ factor through $K^\ast$. 
	no critical directed paths from $j$ to $i$ %in $C^\ast$
	factor through $K^\ast$, and $c^\ast_{ij} = \bar{c}^\ast_{ij}$.
%\end{itemize}
Let $E^+(X_K = x_K)$ denote the set of effective edges in the context $\{X_K = x_K\}$. Edges in $\D^\ast_K(C)$ which are not effective are \emph{ineffective}. Finally, a path $\pi$ is \emph{effective} in a context if all its edges are. 
\end{definition}
%Note that if an edge is effective with respect to a context, we have $c^\ast_{ji}=\bar c^\ast_{ji}$. 
Now we give an algebraic characterization of edges that are effective in a context. 

\begin{lemma}\label{lem:e+.alg}
Let $j \to i \in \D^\ast_K(C)$ and consider a possible context $\{X_K=x_K\}$. Then $j \to i \in E^+(X_K = x_K)$ if and only if for all $k \in K^\ast \cap \ch_{\D^\ast}(j)$, $\ell \in K^\ast \cap (\pa_{\D^\ast}(i)\cup \{i\})$, it holds that 
	\begin{equation}\label{eqn:bij.kl.x}
	(\xi^{ij}_{K^\ast})_{k\ell}x_\ell < x_k
	\end{equation}
	for $\Xi_{K^\ast}^{ij}$ being the substition matrix relative to $K^\ast$ as defined in \eqref{eqn:bij.kl}. 
\end{lemma}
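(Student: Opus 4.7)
The plan is to reinterpret \eqref{eqn:bij.kl.x} in terms of the completion $\bar{C}$. Since $k,\ell\in K^\ast$, Corollary~\ref{cor:direct} gives $\bar{c}^\ast_{\ell k}=x_\ell/x_k$, so that \eqref{eqn:bij.kl.x} is equivalent to
\[
c^\ast_{kj}\,\bar{c}^\ast_{\ell k}\,c^\ast_{i\ell}<c^\ast_{ij}.
\]
The left-hand side is precisely the weight in $\bar{C}$ of the concatenated path going from $j$ to $k$ along an optimal $C$-path, then from $k$ to $\ell$ along a completion shortcut, and finally from $\ell$ to $i$ along an optimal $C$-path. Thus \eqref{eqn:bij.kl.x} asserts that every such $\bar{C}$-path from $j$ to $i$ factoring through an ordered pair of distinct $K^\ast$-nodes is strictly dominated by the best $C$-path from $j$ to $i$.

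For the forward direction, I would assume $j\to i\in E^+(X_K=x_K)$, so $c^\ast_{ij}=\bar{c}^\ast_{ij}$ and no critical $C$-path from $j$ to $i$ factors through $K^\ast$. The displayed $\bar{C}$-path has weight at most $\bar{c}^\ast_{ij}=c^\ast_{ij}$, yielding the non-strict form of \eqref{eqn:bij.kl.x}. For strict inequality I would argue by contradiction: if equality held, the shortcut segment from $k$ to $\ell$ in $\bar{C}$ could be unfolded into a sequence of critical $C$-paths linking $K^\ast$-nodes, using Lemma~\ref{lem:k.ast.y} and Proposition~\ref{cor:dag.partition} to trace how each node in $K^\ast\setminus K$ acquires its constancy from a critical $C$-path connecting it to $K$. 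Assembling these pieces yields a critical $C$-path from $j$ to $i$ that factors through $K^\ast$, contradicting effectiveness.

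For the reverse direction, I would assume all the strict inequalities in \eqref{eqn:bij.kl.x} and verify the three defining properties of $E^+$ in turn. That $j\notin K^\ast$: if instead $j\in K^\ast$, picking a descendant $k\in K^\ast$ of $j$ together with an appropriate $\ell\in K^\ast$ paired to $k$ via the constancy structure would collapse \eqref{eqn:bij.kl.x} to the possibility-of-context equality $x_k=c^\ast_{kj}x_j$, a contradiction. That no critical $C$-path factors through $K^\ast$: if a critical $j\to\cdots\to h\to\cdots\to i$ existed with $h\in K^\ast$, then $h$ would be both a descendant of $j$ and an ancestor of $i$, and the constancy block containing $h$ in Proposition~\ref{cor:dag.partition} would provide a companion $K^\ast$-node so that the resulting pair $(k,\ell)$ with $k\neq\ell$ makes \eqref{eqn:bij.kl.x} an equality. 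Finally, $c^\ast_{ij}=\bar{c}^\ast_{ij}$: any path in $\bar{C}$ from $j$ to $i$ that uses at least one completion edge must cross between two $K^\ast$-nodes, and the hypotheses \eqref{eqn:bij.kl.x} then strictly bound its weight by $c^\ast_{ij}$, while the reverse inequality $\bar{c}^\ast_{ij}\geq c^\ast_{ij}$ is automatic from $\bar{C}\geq C$.

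The principal obstacle will be the translation between $\bar{C}$-paths and $C$-paths. Completion edges in $\bar{C}$ encode the equalities among $\{x_h\}_{h\in K^\ast}$ imposed by the context, but these ultimately originate from critical relationships in $C$ between constant nodes. Rigorously unfolding a critical $\bar{C}$-path into a critical $C$-path through $K^\ast$ --- and, conversely, producing a companion $(k,\ell)$ with $k\neq\ell$ whenever a critical $C$-path passes through a single $K^\ast$-node --- will require careful bookkeeping via the partition $K^\ast=H\cup L\cup U$ from Proposition~\ref{cor:dag.partition} and the impact-graph characterization in Theorem~\ref{thm:impact}.
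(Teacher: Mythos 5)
The forward direction has a genuine gap. To obtain strictness in \eqref{eqn:bij.kl.x} you propose, given the hypothetical equality $c^\ast_{kj}\,(x_\ell/x_k)\,c^\ast_{i\ell}=c^\ast_{ij}$, to ``unfold'' the completion edge $k\to\ell$ into a chain of critical $C$-paths and so assemble a critical directed path from $j$ to $i$ in $\D$ that factors through $K^\ast$. This unfolding is impossible in general: the completion $\bar C$ inserts an edge between \emph{every} ordered pair of distinct nodes of $K^\ast$, including pairs with no ancestral relation whatsoever in $\D$, so the weight $x_\ell/x_k$ is typically not realized by any directed $C$-path from $k$ to $\ell$. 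Example~\ref{ex:sll.newthm} is exactly such a case: the constant nodes $2$ and $3$ are unrelated in $\D$, and the relevant $\bar C$-path $1\to2\to3\to4$ has no counterpart in $\D(C)$. Worse, under your reading of the second clause of effectiveness as ``no critical directed path \emph{in $\D(C)$} from $j$ to $i$ factors through $K^\ast$'', the forward implication is actually false: in that example with $c_{41}=c_{43}(x_3/x_2)c_{21}$ one has $K^\ast=\{2,3\}$, no $C$-path from $1$ to $4$ meets $K^\ast$ at all, $c^\ast_{41}=\bar c^\ast_{41}$ and $1\notin K^\ast$, so $1\to4$ would be effective while \eqref{eqn:bij.kl.x} holds with equality rather than strictly. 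The criticality clause must be applied to paths in $\D(\bar C)$, and then no unfolding is needed: the path $j\to k\to\ell\to i$ is itself a directed path in $\D(\bar C)$ factoring through $K^\ast$, hence not critical, hence of weight strictly below $\bar c^\ast_{ij}=c^\ast_{ij}$. This is how the paper argues.

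Your reverse direction is essentially the paper's (decompose any $\bar C$-path factoring through $K^\ast$ at its first and last $K^\ast$-nodes, collapse the middle segment to a ratio of $x$-values via Corollary~\ref{cor:direct}, and invoke \eqref{eqn:bij.kl.x}), and the sketch is sound modulo the same reinterpretation. Two smaller caveats: a $\bar C$-path meeting only a single node of $K^\ast$ uses no completion edge and must be handled separately (the paper does this via membership in $\D^\ast_K(C)$), and your verification of $j\notin K^\ast$ relies on picking a descendant $k\in K^\ast$ of $j$, which need not exist --- a node of $K$ may have no proper descendant in $K^\ast$, making \eqref{eqn:bij.kl.x} vacuous. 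The paper's converse silently omits this clause altogether, so you are not worse off on that point, but the step as written does not go through.
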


\begin{proof}
%First we prove this equivalence when $i \notin K^\ast$. 
Suppose $j \to i \in E^+(X_K = x_K)$. For each  $k \in K^\ast \cap \ch_{\D^\ast}(j)$ and $\ell \in K^\ast \cap (\pa_{\D^\ast}(i)\cup \{i\})$, the path $j \to k \to \ell \to i$ (or $j \to k \to  i$ if $i=\ell$) has $\bar{C}$-weight
$$ c^\ast_{i\ell}\frac{x_\ell}{x_k}c^\ast_{kj}. $$
Since this path factors through $K^\ast$, it is not critical, so 
$$ c^\ast_{i\ell}\frac{x_\ell}{x_k}c^\ast_{kj} < c^\ast_{ij}. $$
Rearranging gives \eqref{eqn:bij.kl.x}. Conversely, suppose that \eqref{eqn:bij.kl.x} holds. Let $\pi$ be a path from $j$ to $i$ in $\D(\bar{C})$ that factors through $K^\ast$. If it only goes through one node of $K^\ast$, then it is also a path in $C$ that factors through $K^\ast$, so $\bar{c}^\ast(\pi) = c^\ast(\pi)$. Since $j \to i \in \D^\ast_K(C)$, by definition of $\D^\ast_K(C)$, we have
$$ \bar{c}^\ast(\pi) = c^\ast(\pi) < c^\ast_{ij}. $$
If $\pi$ goes through two or more nodes of $K^\ast$, then without loss of generality we can assume
$$\pi = j \to \dots \to k_1 \to \dots \to k_2 \to \dots \to k_r \to \dots \to i,$$ 
where $r \geq 2$, $k_1,\dots,k_r \in K^\ast$, and $\to \dots \to$ are sequences of critical edges that do not go through $K^\ast$. By this criticality assumption, we get the equality
$$ \bar{c}^\ast(\pi) = c^\ast_{k_1j}\bar{c}^\ast_{k_2k_1}\dots\bar{c}^\ast_{k_rk_{r-1}}c^\ast_{ik_r}. $$
By Corollary \ref{cor:direct}, 
$$ \bar{c}^\ast_{k_2k_1}\dots\bar{c}^\ast_{k_rk_{r-1}} = \bar{c}^\ast_{k_rk_1} = \frac{x_{k_1}}{x_{k_r}}. $$
Note that $k_1 \in \ch_{\D^\ast}(j)$ and $k_r \in \pa_{\D^\ast}(i)$. Apply \eqref{eqn:bij.kl.x} with $k_1 = k$ and $k_r = \ell$ yields
$$ \frac{c^\ast_{kj}c^\ast_{i\ell}}{c^\ast_{ij}}x_\ell < x_k.$$
Rearranging, we get
$$ \bar{c}^\ast(\pi) = c^\ast_{kj}\frac{x_\ell}{x_r}c^\ast_{i\ell} < c^\ast_{ij}. $$
This shows that any critical path $\pi$ that factors through $K^\ast$ has weight strictly less than $c^\ast_{ij}$, as desired. 
\end{proof}

A simple corollary is the following, showing that all edges in the source DAG for a given context are indeed effective in that context.

\begin{corollary}\label{cor:e.subset}
If $j \to i$ is an edge in $ \C(X_K = x_K)$ then $j \to i \in E^+(X_K = x_K)$. 
\end{corollary}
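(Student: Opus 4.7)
The plan is to verify the three defining conditions for $j \to i \in E^+(X_K = x_K)$ by leveraging the algebraic characterization of effective edges from Lemma~\ref{lem:e+.alg}. By Lemma~\ref{lem:inclusion}, $\C(X_K = x_K) \subseteq \D^\ast_K(C)$, so we are already within the scope of Lemma~\ref{lem:e+.alg}. The condition $j \notin K^\ast$ is forced directly by Definition~\ref{defn:redundant edge}: any edge whose tail lies in $K^\ast$ is redundant and hence absent from $\C(X_K = x_K)$. It therefore suffices to establish the family of strict inequalities \eqref{eqn:bij.kl.x}, namely $c^\ast_{kj} c^\ast_{i\ell} x_\ell < c^\ast_{ij} x_k$ for all admissible $k \in K^\ast \cap \ch_{\D^\ast}(j)$ and $\ell \in K^\ast \cap (\pa_{\D^\ast}(i) \cup \{i\})$ with $k \neq \ell$.

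To check this inequality for a fixed pair $(k,\ell)$, I would select a compatible impact graph $g \in \mathfrak{G}(X_K = x_K)$ containing the edge $j \to i$; its existence follows from $j \to i \in \mathcal{I}(X_K = x_K)$ and the non-redundancy criterion in Definition~\ref{defn:redundant edge}, which additionally supplies a $g$ in which $X_j$ is non-constant. On the positive-probability event $\E(g) \cap \{X_K = x_K\}$ the edge $j \to i \in g$ yields $X_i = c^\ast_{ij} Z_j$, while max-linearity combined with the pinning $X_\ell = x_\ell$ (valid whether $\ell \in K^\ast$ or $\ell = i$) gives $X_i \geq c^\ast_{i\ell} x_\ell$. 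Rearranging produces the lower bound
\[
Z_j \;\geq\; \frac{c^\ast_{i\ell} x_\ell}{c^\ast_{ij}}.
\]

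The matching strict upper bound rests on showing $j \to k \notin g$. If $j \to k$ were in $g$, the star rooted at $j$ would contain the $K^\ast$-node $k$; Lemma~\ref{lem:k.ast.y}(a) would then propagate constancy through the entire star, placing $j$ in $K^\ast(g)$, which contradicts the choice of $g$. With $j \to k \notin g$ and $g$ a galaxy, the identity $X_k = x_k$ on $\E(g)$ is realized through some parent of $k$ (or through $k$ itself as a root) independently of $Z_j$; the atom-free distribution of $Z_j$ then forces $c^\ast_{kj} Z_j < x_k$ almost surely on $\E(g) \cap \{X_K = x_K\}$. Chaining the two bounds yields
\[
\frac{c^\ast_{kj} c^\ast_{i\ell} x_\ell}{c^\ast_{ij}} \;\leq\; c^\ast_{kj} Z_j \;<\; x_k,
\]
which is precisely the inequality \eqref{eqn:bij.kl.x}, and Lemma~\ref{lem:e+.alg} finishes the proof.

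The principal obstacle will be making the argument $j \to k \notin g$ airtight in the borderline case where a critical $C$-path from $j$ to $i$ happens to factor through $K^\ast$; one may need to invoke Theorem~\ref{thm:impact}(c) on the critical triangle at $(j,k,i)$ together with Lemma~\ref{lem:y.xk} to transfer the compatibility constraints onto the roots of the galaxy before the atomlessness argument can be used to upgrade weak to strict inequality.
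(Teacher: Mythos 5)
Your overall architecture matches the paper's --- reduce to the algebraic characterization in Lemma~\ref{lem:e+.alg}, and get $j\notin K^\ast$ from Definition~\ref{defn:redundant edge} --- but the central step fails. You need a compatible impact graph $g\in\mathfrak{G}(X_K=x_K)$ with $j\to i\in g$ \emph{and} $j\notin K^\ast(g)$, and you claim non-redundancy supplies one. It does so only when $i\notin K^\ast$: the clause of Definition~\ref{defn:redundant edge} about $X_j$ being constant under all $g$ containing the edge is a disqualifier only in that case. When $i\in K^\ast$ --- which covers every edge of $\C(X_K=x_K)$ pointing into $H\cup L$, all of which are retained by Proposition~\ref{cor:dag.partition}(e) --- any compatible $g$ with $j\to i\in g$ has $i\in K^\ast\subseteq K^\ast(g)$, and Lemma~\ref{lem:k.ast.y}(a) then places the entire star, including its root $j$, inside $K^\ast(g)$. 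So the required witness $g$ does not exist, and the step ``$j\to k\notin g$'' collapses; note that $k=i$ is itself an admissible index in \eqref{eqn:bij.kl.x} whenever $i\in K^\ast\cap\ch_{\D^\ast}(j)$, and for that choice ``$j\to k\notin g$'' contradicts $j\to i\in g$ outright.

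The gap is not repairable within your strategy, because the strict inequalities you are trying to verify can genuinely fail for edges of the source DAG. In the tent graph of Example~\ref{ex:tent} with $x_4=x_5=2$, the edge $1\to 4$ lies in $\C(X_K=x_K)$, the only admissible pair is $(k,\ell)=(5,4)$ with $\ell=i$, and the target inequality $(\xi^{41}_{K^\ast})_{54}\,x_4<x_5$ reads $2<2$. (The boundary case $\ell=i$ is delicate in the paper's own argument as well, which at that point only extracts the vacuous identity $X_i=x_i\vee c^\ast_{ij}Z_j\vee\cdots$.) The paper avoids exhibiting any witness event: it argues by contradiction that a violated inequality, combined with the universal bound $x_k\geq c^\ast_{kj}Z_j$, forces $c^\ast_{ij}Z_j\leq c^\ast_{i\ell}x_\ell$ identically on $\{X_K=x_K\}$, so the $Z_j$-term is weakly dominated by a constant in the representation of $X_i$ and the edge cannot survive into $\C(X_K=x_K)$. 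Restricted to $i\notin K^\ast$ your event-based argument is essentially sound --- and there the strictness of $c^\ast_{kj}Z_j<x_k$ already follows deterministically from $j\to k\notin g$ together with $x_k\geq c^\ast_{kj}Z_j$, with no appeal to atomlessness --- but the case $i\in K^\ast$ is exactly where the content of the corollary lies, and it is not covered.
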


\begin{proof}
Assume that $j \to i \in \C(X_K = x_K)$ so we have $j \notin K^\ast$ and $c^\ast_{ij} > 0$. 
First we claim that $j \to i \in \D^\ast_K(C)$. Indeed, suppose for contradiction that a critical path from $j$ to $i$ in $\D$ factors through some node $k \in K$, then 
$$ c^\ast_{ij}Z_j = c^\ast_{ik}c^\ast_{kj}Z_j \leq c^\ast_{ik}x_k. $$
Since 
$$ X_i = c^\ast_{ik}x_k \vee c^\ast_{ij}Z_j \vee \dots, $$
this implies that $j \to i \notin \C(X_K = x_K)$, a contradiction as needed.  Hence $j \to i \in \D^\ast_K(C)$.

Next suppose for contradiction that $j \to i \notin E^+(X_K = x_K)$. 
By Lemma \ref{lem:e+.alg}, this implies for some $k \in K^\ast \cap \ch_{\D^\ast}(j)$ and $\ell \in K^\ast \cap (\pa_{\D^\ast}(i)\cup \{i\})$, 
$$ (\xi^{ij}_{K^\ast})_{k\ell}x_\ell \geq x_k. $$
We apply the definition of the substitution matrix $\Xi_{K^\ast}^{ij}$ in \eqref{eqn:bij.kl} and rearrange; then we get
$$ c^\ast_{i\ell}x_\ell \geq \frac{c^\ast_{ij}}{c^\ast_{kj}}x_k. $$
Since $j \to k \in \D^\ast_K(C)$, it holds that $x_k \geq c^\ast_{kj}Z_j$ so
$$ \frac{c^\ast_{ij}}{c^\ast_{kj}}x_k \geq c^\ast_{ij}Z_j. $$
Since
$$ X_i = c^\ast_{i\ell}x_\ell \vee c^\ast_{ij}Z_j \vee \dots, $$
it follows that $j \to i \notin \C(X_K = x_K)$, a contradiction as needed. The proof is complete.
\end{proof}

\begin{example}\label{ex:sll.newthm}
 Consider the graph to the left in Figure \ref{fig:counter.3.13}.
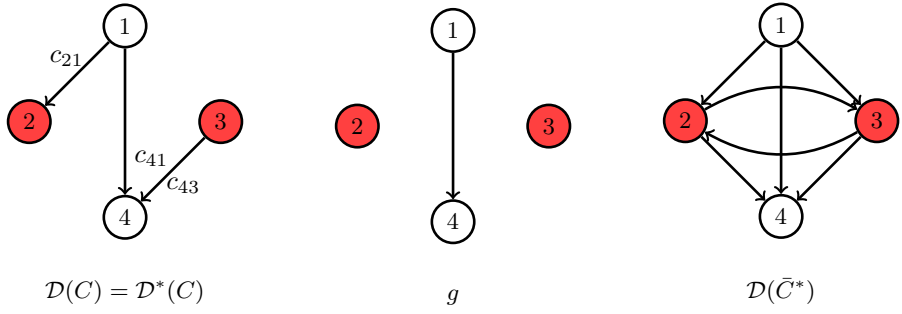
\begin{figure}
\begin{center}
\begin{tikzpicture}
\begin{scope}[->,every node/.style={circle,draw},line width=1pt, node distance=1.8cm]
\node (1) {$1$};
\node (2) [below left of=1, fill=red!75] {$2$};
\foreach \from/\to in {1/2}
\draw (\from) -- (\to);
\path[every node/.style={font=\sffamily\small}]
(1) -- (2) node [near start, left] {$c_{21}$};
\node (3) [below right of=1, fill = red!75] {$3$}; 
\node (4) [below right of=2] {$4$};

\draw (3) -- (4);
\path[every node/.style={font=\small}]
(3) -- (4) node [near end, right] {$c_{43}$};
\draw (1) -- (4);
\path[every node/.style={font=\small}]
(1) -- (4) node [near end, right] {$c_{41}$};
\end{scope}
\begin{scope} \node (5) [below of = 4] {$\D(C)=\D^*(C)$};
\end{scope}
\end{tikzpicture}
\hspace{1cm}
%start of second pic
\begin{tikzpicture}
\begin{scope}[->,every node/.style={circle,draw},line width=1pt, node distance=1.8cm]
\node (1) {$1$};
\node (2) [below left of=1, fill=red!75] {$2$};
\node (3) [below right of=1, fill = red!75] {$3$}; 
\node (4) [below right of=2] {$4$};
\draw (1) -- (4);
\end{scope}
\begin{scope} \node (5) [below of = 4] {$g$};
\end{scope}
\end{tikzpicture}
\hspace{1cm}
%start of third pic
\begin{tikzpicture}
\begin{scope}[->,every node/.style={circle,draw},line width=1pt, node distance=1.8cm]
\node (1) {$1$};
\node (2) [below left of=1, fill=red!75] {$2$};
\node (3) [below right of=1, fill = red!75] {$3$}; 
\node (4) [below right of=2] {$4$};
\foreach \from/\to in {1/2,1/3,1/4,2/4,3/4}
\draw (\from) -- (\to);
   \path (2) edge [bend left=30, align=left, above](3);
    \path (3) edge [bend left=30, align=left, above](2);
\end{scope}
\begin{scope} \node (5) [below of = 4] {$\D(\bar C^\ast)$}; 
\end{scope}
\end{tikzpicture}
\end{center}
\caption{The graph to the left  displays $\D(C) = \D(C^\ast)$ with coefficients. The impact graph $g$ in the middle is not compatible with  $\{X_K=x_K\}$ if ${c_{43}c_{21}x_3} \geq c_{41}x_2$, as shown in Example~\ref{ex:sll.newthm}, thus rendering the edge $1\to 4$ ineffective. The graph to the right is the completion $\D(\bar{C}^\ast)$.}\label{fig:counter.3.13}
\end{figure}
 Here $C = C^\ast$. In this case  we have $X_1 \notci X_4 \cd X_2, X_3$ although this is not true in all contexts.  We first show that the graph $g$ in the middle is not compatible with $\{X_K=x_K\}$ if ${c_{43}c_{21}x_3}\geq c_{41}x_2$. 
We thus write out the max-linear model:
\begin{align}
X_1 &= Z_1 \nonumber \\ 
x_3 &= Z_3 \nonumber \\ 
x_2 &= c_{21}Z_1 \vee Z_2 \label{eqn:x2} \\ 
X_4 &= c_{43}x_3 \vee c_{41}Z_1 \vee Z_4. \label{eqn:x4}
\end{align}
From \eqref{eqn:x2}, we have that $Z_1 \leq {x_2}/{c_{21}}$, so $c_{41}Z_1 \leq {c_{41}x_2}/{c_{21}}$. Thus, if ${c_{41}x_2}/{c_{21}} \leq c_{43}x_3$, or equivalently, $ c_{41}x_2 \leq {c_{43}c_{21}x_3} $, we also have $c_{41}Z_1<c_{43}x_3$ and hence $(x_2,x_3)$ is not in the image of $L_g$ so $g$ is not compatible with the context.

Further, the support of $\bar{C}^\ast$ is shown to the right of Figure \ref{fig:counter.3.13}. With the addition of the edges $\bar{c}_{23} = x_2/x_3$ and $\bar{c}_{32} = x_3/x_2$, we have
$$ \bar{c}^\ast_{41} = c_{41} \vee c_{43}\bar{c}_{32}c_{21} = c_{41} \vee c_{43}\frac{x_3}{x_2}c_{21}. $$
In particular, $1 \to 4$ is not effective w.r.t.\  $\{X_K = x_K\}$ if $c_{41} < c_{43}({x_3}/x_2)c_{21}$. \halmos
\end{example}

\begin{remark}
By definition of $\bar{C}$ and the critical graph $\D^\ast_{K^\ast}(C)$, if $j \to i \in E^+(X_K = x_K)$, then it holds that $j \to i \in \D^\ast_{K^\ast}(C) \subseteq \D^\ast_{K}(C)$. But the converse fails. That is, $E^+(X_K = x_K)$ can be a strictly smaller set of edges than those in $\D^\ast_K(C)$ or $\D^\ast_{K^\ast}(C)$.%, and this is the key to the failure of Theorem 5.15. 
\end{remark}

%\sll{However, I have been looking for something like Lemma \ref{lem:lambda.less.than.1} which should be moved to elsewhere. I shall move it later.}
The following says that if a path is effective in a context, it is effective in the sense of Definition~\ref{def:eff.path}. Note that, crucially, Definition~\ref{def:eff.path} refers to the original set of conditioned variables $K$, while being effective in a given context $\{X_K = x_K\}$ is a property that involves the potentially bigger set $K^\ast$ of variables which are  constant in this context.
\begin{proposition}\label{prop:lambda.less.than.1}
Let $\pi$ be a $\ast$-connecting path in $\D^\ast_K(C)$. If  $\pi$ is effective in a possible context $\{X_K=x_K\}$, then $\lambda(\Gamma_{KK} \vee \Xi_K^\pi) < 1$.
\end{proposition}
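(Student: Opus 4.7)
The plan is to apply Proposition~\ref{prop:lambda.1}: I would exhibit $x_K$ as a tropical subeigenvector of $A := \Gamma_{KK} \vee \Xi_K^\pi$ to conclude $\lambda(A) \leq 1$, and then rule out equality using the strict inequalities furnished by effectiveness, together with the fact that $\D$ is acyclic.

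First I would establish the subeigenvector bound
\[(\Gamma_{KK} \vee \Xi_K^\pi) \odot x_K \leq x_K.\]
For the $\Gamma_{KK}$ block, idempotency of the Kleene star (equation~\eqref{eqn:idempotent}) yields $X_k \geq c^\ast_{k\ell} X_\ell$, hence $x_k \geq \gamma_{k\ell} x_\ell$ for all $k,\ell \in K$ with $k \neq \ell$; and $\gamma_{kk} = 0$ since $\D$ is acyclic. For the $\Xi_K^\pi$ block, each edge $v \to u$ of $\pi$ lies in $E^+(X_K = x_K)$ by the effectiveness hypothesis, so Lemma~\ref{lem:e+.alg} gives $(\xi^{uv}_{K^\ast})_{k\ell} x_\ell < x_k$ for all admissible $k,\ell \in K^\ast$. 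Because $K \subseteq K^\ast$ and the substitution-matrix entries depend only on the combinatorial constraints $k \in \ch_{\D^\ast}(v)$ and $\ell \in \pa_{\D^\ast}(u) \cup \{u\}$, this specializes to $(\xi^{uv}_K)_{k\ell} x_\ell < x_k$ for every positive entry; taking the max over the edges of $\pi$ preserves strictness, so $(\Xi_K^\pi)_{k\ell} x_\ell < x_k$ whenever $(\Xi_K^\pi)_{k\ell} > 0$. Combining, we get the displayed inequality, and Proposition~\ref{prop:lambda.1}(a) yields $\lambda(A) \leq 1$.

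Next I would suppose for contradiction that $\lambda(A) = 1$. Proposition~\ref{prop:lambda.1}(b), applied with subeigenvector $x_K$, shows that $x_S$ is an eigenvector of $A_{SS}$, where $S$ is the union of the supports of the critical cycles of $A$. Fix any critical cycle $\sigma = k_1 \to k_2 \to \dots \to k_r \to k_1$ of $A$. By Lemma~\ref{lem:critical.edges}, each edge satisfies $a_{k_{i+1}k_i} x_{k_i} = x_{k_{i+1}}$ (indices modulo $r$). On an edge with $(\Xi_K^\pi)_{k_{i+1}k_i} > 0$, combining this equality with the strict inequality from the previous paragraph forces $(\Xi_K^\pi)_{k_{i+1}k_i} < a_{k_{i+1}k_i}$, so the maximum must be realized by $(\Gamma_{KK})_{k_{i+1}k_i}$; and if $(\Xi_K^\pi)_{k_{i+1}k_i} = 0$, then trivially $a_{k_{i+1}k_i} = (\Gamma_{KK})_{k_{i+1}k_i}$. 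In either case $\gamma_{k_{i+1}k_i} > 0$, meaning there is a directed path from $k_i$ to $k_{i+1}$ in $\D$. Concatenating these paths along $\sigma$ produces a directed closed walk $k_1 \to \dots \to k_1$ in $\D$, which contains a directed cycle and contradicts that $\D$ is a DAG. Hence $\lambda(A) < 1$.

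The main subtlety I anticipate lies in the bookkeeping of the last step: disentangling which summand $\Gamma_{KK}$ or $\Xi_K^\pi$ can achieve the maximum along a hypothetical critical cycle, and then showing that the only non-contradictory option already forces a cycle in $\D$. Everything else reduces to a direct invocation of Proposition~\ref{prop:lambda.1}, Lemma~\ref{lem:critical.edges}, and the translation of Lemma~\ref{lem:e+.alg} from the index set $K^\ast$ down to the possibly smaller set $K$, which is immediate since the substitution-matrix entries are defined by the same formula on either index set.
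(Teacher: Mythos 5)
Your proof is correct, and the first half (exhibiting $x_K$ as a tropical subeigenvector of $A = \Gamma_{KK}\vee\Xi_K^\pi$ via idempotency for the $\Gamma_{KK}$ block and Lemma~\ref{lem:e+.alg} for the $\Xi_K^\pi$ block, then invoking Proposition~\ref{prop:lambda.1}(a)) coincides with the paper's argument. Where you diverge is in ruling out $\lambda(A)=1$. The paper also passes to a critical cycle $\sigma$ and uses $a_{uv}x_v=x_u$ along its edges, but it only invokes the strict inequality $(\xi^{ij}_K)_{uv}x_v<x_u$ for path edges $j\to i$ with $i\notin K$; it then argues that at most one edge of $\sigma$ can be realized by a $\Xi$-entry (using $|K\cap\pi|\le 1$), that acyclicity of $\D$ forces exactly one such edge, and finally computes $w(\sigma)=c^\ast_{uj}c^\ast_{vu}/c^\ast_{vj}<1$ directly from the definition of $\D^\ast_K(C)$ --- a purely combinatorial fact about $C$ that does not use the context. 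You instead use the strict inequality for \emph{every} positive entry of $\Xi_K^\pi$ (which is legitimate: Lemma~\ref{lem:e+.alg} covers all admissible pairs $(k,\ell)$ with $\ell\in K^\ast\cap(\pa_{\D^\ast}(i)\cup\{i\})$, including $\ell=i$ for the collider, and the entries of $\Xi_K^{ij}$ agree with those of $\Xi_{K^\ast}^{ij}$ on $K\subseteq K^\ast$), so every edge of $\sigma$ must be realized by $\Gamma_{KK}$ and the concatenated directed paths yield a cycle in the DAG $\D$ --- an immediate contradiction. Your endgame is shorter and avoids the paper's bookkeeping about which $\Xi$-entries can sit on a critical cycle; the trade-off is that it leans on the full strength of Lemma~\ref{lem:e+.alg}, whereas the paper's version only needs strictness away from the collider and closes the last case by an argument independent of $x_K$. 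One small point worth making explicit in your write-up: Lemma~\ref{lem:critical.edges} is stated for an eigenvector of the full matrix, so you should (as you do) first restrict to $A_{SS}$ via Proposition~\ref{prop:lambda.1}(b), or alternatively extract the equalities $a_{uv}x_v=x_u$ directly from the proof of that proposition.
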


\begin{proof}
For each edge $j \to i \in \pi$, let $\Xi_K^{ij}$ be the substitution matrix of this edge with respect to $K$ (cf.\ Definition \ref{defn:bmatrix}). Since $K \subseteq K^\ast(X_K = x_K)$, by Lemma~\ref{lem:e+.alg}, 
$$ \Xi_K^{ij}\odot x_K < x_K. $$
Thus
$$ \Big(\bigvee_{j \to i \in \pi} \Xi_K^{ij}\Big)\odot x_K = \Xi_K^\pi\odot  x_K \leq x_K. $$
Since $x_K$ satisfies the max-linear model, we have
$$ x_K = (C^\ast\odot x)_K \geq \Gamma_{KK}\odot x_K. $$
So 
$$ (\Gamma_{KK} \vee \Xi_K^\pi)\odot  x_K=\Gamma_{KK}\odot x_K\vee \Xi_K^\pi\odot  x_K   \leq x_K. $$
By Proposition \ref{prop:lambda.1}(a), this implies $\lambda(\Gamma_{KK} \vee \Xi^\pi) \leq 1$. Now we want to argue that this eigenvalue must be strictly less than 1.
By Proposition \ref{prop:lambda.1}(b), it is sufficient to show that there does not exist a cycle in $\D(\Gamma_{KK} \vee \Xi_K^\pi)$ with weight 1. 

Suppose then for contradiction that there exists  a cycle $\sigma$ with weight $w(\sigma)=1$,  let
$S$ be its support, and let  $A_{SS} = (\Gamma_{KK} \vee \Xi_K^\pi)_{SS} $.
Since $\D(\Gamma_{KK})$ is a DAG and $\Xi_K^\pi$ has zero diagonal, we must have $|S| \geq 2$. Again by Proposition \ref{prop:lambda.1}(b) we have 
$$ A_{SS}\odot x_S = x_S. $$
Now consider an edge $v \to u \in \sigma$;
by definition of $A_{SS}$ we have
$$a_{uv} = c^\ast_{uv} \vee \bigvee_{j \to i \in \pi, i \notin K} (\xi^{ij}_K)_{uv} \vee \bigvee_{j \to i \in \pi, i \in K} (\xi^{ij}_K)_{uv}.$$
 By Lemma \ref{lem:critical.edges} we further have
$ a_{uv}x_v = x_u$ and 
by \eqref{eqn:bij.kl}
$$ \bigvee_{j \to i \in \pi, i \notin K} (\xi^{ij}_K)_{uv}x_v < x_u. $$
Thus
$$a_{uv} = c^\ast_{uv} \vee \bigvee_{j \to i \in \pi, i \in K} (\xi^{ij}_K)_{uv}$$
for all edges $v \to u \in \sigma$. By definition, for $i \in K$, $(\xi^{ij}_K)_{uv} > 0$ if and only if $v = i$. In other words, for each edge $v \to u \in \sigma$ such that $a_{uv} > c^\ast_{uv}$ it holds that $v \in K \cap \pi$. Since $\pi$ is a $\ast$-connecting path, $|K \cap \pi| \leq 1$, there is at most one edge $v \to u$ of $\sigma$ where $a_{uv} = (\xi^{vj}_K)_{uv} > c^\ast_{uv}$, while for all other edges $v' \to u'$ of $\sigma$, $a_{u'v'} = c^\ast_{u'v'}$. 
Since $\D(C)$ is a DAG, there must be exactly one such edge. Therefore, 
$$ w(\sigma) = (\xi^{vj}_K)_{uv}c^\ast_{vu_1}c^\ast_{u_1u_2}\dots c^\ast_{u_ru} = (\xi^{vj}_K)_{uv}c^\ast_{vu} = \frac{c^\ast_{uj}c^\ast_{vu}}{c^\ast_{vj}}, $$
with $v,u \in K$, $c^\ast_{vu} >  0$ and $j \to v, j \to u \in \D^\ast_K(C)$. But $j \to u \to v$ is a path from $j$ to $v$ that factors through $K$. Since $j \to v \in \D^\ast_K(C)$, we have
$$ c^\ast_{vu}c^\ast_{uj} < c^\ast_{vj}. $$
Rearranging gives $w(\sigma) < 1$, which is our desired contradiction. 
\end{proof}

\subsubsection{Context-free completeness}
To establish context-free completeness, we must understand the geometry of the set $\mathcal{L}^C_K$  defined in \eqref{eqn:feasible}.
For an edge $j \to i \in \D^\ast_K(C)$ we let $x_K(j \to i)$ be the set of $x_K$ such that $j \to i$ is an effective edge in the possible context $\{X_K = x_K\}$. That is,  we abbreviate
$$x_K(j \to i) = \{x_K: j \to i \in E^+(X_K = x_K)\}$$
and for a path $\pi$ we similarly write
$$x_K(\pi) = \bigcap_{v \to u \in \pi} x_K(v \to u).$$
Now consider a  $\ast$-connecting path $\pi$ in $\D^\ast_K(C)$ and let
$$ \Sigma(\pi) = \{x_K\in \mathcal{L}^C_K: (\Gamma_{KK} \vee \Xi_K^\pi)\odot x_K < x_K\}.  $$
\begin{lemma}\label{lem:dim.spi}
Let $\pi$ be a $\ast$-connecting path in $\D^\ast_K(C)$. Suppose there exists $x_K$ such that all edges of $\pi$ are effective in the possible context $\{X_K=x_K\}$. Then $\Sigma(\pi)$ is a non-empty full-dimensional subset of $\Rplus^K$. 
\end{lemma}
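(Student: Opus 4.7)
The plan is to recognize that $\Sigma(\pi)$ is precisely the (relative) interior cone of strict tropical subeigenvectors of $A:=\Gamma_{KK}\vee\Xi_K^\pi$, intersected with $\mathcal{L}^C_K$, and to show these two conditions are in fact equivalent. First I would invoke Proposition~\ref{prop:lambda.less.than.1}, applied to the context $\{X_K=x_K\}$ in which every edge of $\pi$ is effective (the hypothesis of the lemma is exactly that $\pi$ itself is effective in this context). This yields $\lambda(A)<1$. Then by Proposition~\ref{prop:lambda.1}(c), the set
\[
U\;:=\;\{y\in\Rplus^K:\; A\odot y< y\}
\]
is non-empty; and being defined by finitely many strict inequalities in the continuous piecewise max-affine functions $y\mapsto (A\odot y)_k-y_k$, it is open in $\Rplus^K$, hence full-dimensional.

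The core step is then to show $U\subseteq\mathcal{L}^C_K$, for this forces $\Sigma(\pi)=U$. Given $y\in U$, since $\Gamma_{KK}\leq A$ componentwise we have $\Gamma_{KK}\odot y\leq A\odot y<y$. I would construct an explicit innovation realizing $y$: set $z_k:=y_k$ for $k\in K$ and $z_j:=\epsilon$ for $j\notin K$, where $\epsilon>0$ is chosen small enough that
\[
\epsilon\cdot \max_{k\in K,\; j\notin K,\; c^\ast_{kj}>0}\; c^\ast_{kj}\;<\;\min_{k\in K}y_k .
\]
A direct expansion using $c^\ast_{kk}=1$ and $c^\ast_{kj}=\gamma_{kj}$ for $k\neq j$ gives, for each $k\in K$,
\[
(C^\ast\odot z)_k \;=\; y_k\;\vee\;(\Gamma_{KK}\odot y)_k\;\vee\;\bigvee_{j\notin K} c^\ast_{kj}\epsilon \;=\; y_k,
\]
because the second term is strictly below $y_k$ by the inequality $\Gamma_{KK}\odot y<y$ and the third is below $y_k$ by the choice of $\epsilon$. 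Thus $y=(C^\ast\odot z)_K\in\mathcal{L}^C_K$.

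Combining, the trivial inclusion $\Sigma(\pi)\subseteq U$ together with $U\subseteq\mathcal{L}^C_K$ gives $\Sigma(\pi)=U$, which is non-empty and open in $\Rplus^K$, hence full-dimensional. I do not anticipate a serious obstacle: the non-trivial algebraic content (the bound $\lambda(A)<1$) is supplied by Proposition~\ref{prop:lambda.less.than.1}, and the rest is a standard tropical-subeigenvector construction. The only mildly delicate point is arranging a single $\epsilon$ that works for all coordinates $k\in K$ at once, which is immediate from $y_k>0$.
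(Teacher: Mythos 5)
Your proof is correct, and its skeleton matches the paper's: Proposition~\ref{prop:lambda.less.than.1} gives $\lambda(\Gamma_{KK}\vee\Xi_K^\pi)<1$, Proposition~\ref{prop:lambda.1}(c) gives a strict subeigenvector, and openness of the strict-subeigenvector set yields full-dimensionality (the paper establishes openness via the log map onto an open classical polyhedron, you via continuity of $y\mapsto (A\odot y)_k$ --- a cosmetic difference). The genuine divergence is your explicit verification that $U=\{y:A\odot y<y\}\subseteq\mathcal{L}^C_K$ via the innovation $z_K=y_K$, $z_j=\epsilon$ for $j\notin K$: the paper's proof applies Proposition~\ref{prop:lambda.1}(c) and immediately asserts $\Sigma(\pi)\neq\emptyset$, silently passing over the membership constraint $x_K\in\mathcal{L}^C_K$ in the definition of $\Sigma(\pi)$ (the $x_K$ furnished by the lemma's hypothesis lies in $\mathcal{L}^C_K$ but need only satisfy the non-strict inequality $A\odot x_K\le x_K$, so it cannot be used directly). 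Your construction closes that gap and moreover upgrades the conclusion to the equality $\Sigma(\pi)=U$, so this version of the argument is the more complete of the two. One small point worth spelling out when you write $(\Gamma_{KK}\odot y)_k=\bigvee_{j\in K,\,j\neq k}\gamma_{kj}y_j$: the diagonal term is absent because $\gamma_{kk}=0$, as $\D$ is acyclic.
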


\begin{proof}
By Proposition \ref{prop:lambda.less.than.1}, $\lambda(\Gamma_{KK} \vee \Xi_K^\pi) < 1$ so $\Sigma(\pi) \neq \emptyset$ by Proposition \ref{prop:lambda.1}(c). Now, the smooth and invertible map $x \mapsto \log(x)$ has a regular total derivative and  maps  $\Sigma(\pi)$  to the relative interior of a classical polyhedron $Q$ defined by strict inequalities of the form
$$ y \in Q \iff \mbox{ for all } u,v \in V: y_v - y_u > 
%\left\{\begin{array}{cc}
\log((\Gamma_{KK} \vee \Xi_K^\pi)_{uv}), %& \mbox{ if } (\Gamma_{KK} \vee \Xi^\pi)_{uv} > 0 %\\
%-\infty & \mbox{ else.} 
%\end{array}\right. 
$$
where we have let $\log(0)=-\infty$.
Thus, $Q$ is an intersection of finitely many open half-spaces. Since $\Sigma(\pi) \neq \emptyset$ we have $Q \neq \emptyset$ and $Q$ is open and full-dimensional. So $\Sigma(\pi)$ is open and full-dimensional. 
\end{proof}

\begin{proposition}\label{lem:L.emptyset}
Consider a $\ast$-connecting path $\pi$ in $\D^\ast_K(C)$ with $\Sigma(\pi) \neq \emptyset$. Then there exists some $x_K \in \Sigma(\pi)$ such that
in the possible context $\{X_K = x_K\}$ with corresponding node partition $V = A \cup H \cup L \cup U$, we have
$L = \emptyset$, $K^\ast = K$, 
and	all edges of $\pi$ are effective with respect to $\{X_K = x_K\}$ . 
\end{proposition}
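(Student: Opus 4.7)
The plan is to find $x_K$ in the open, full-dimensional set $\Sigma(\pi)\subseteq\Rplus^K$ supplied by Lemma~\ref{lem:dim.spi}, and to argue that two additional generic conditions can be imposed simultaneously: $K^\ast=K$ and $K\subseteq H$. Together these give $L=\emptyset$, because $U\cap H=\emptyset$ (as established in the proof of Proposition~\ref{cor:dag.partition}) forces $L=K^\ast\setminus(H\cup U)\subseteq K\setminus H=\emptyset$ as soon as $K^\ast=K$ and $K\subseteq H$.

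To make $K\subseteq H$ accessible, I would first exhibit a concrete candidate impact graph. Given $x_K\in\Sigma(\pi)$, set $z_k=x_k$ for $k\in K$ and $z_j=\epsilon$ for $j\notin K$ with $\epsilon>0$ sufficiently small. The strict inequality $\Gamma_{KK}\odot x_K<x_K$ encoded in the definition of $\Sigma(\pi)$, together with small $\epsilon$, forces $(C^\ast\odot z)_K=x_K$ and makes every $k\in K$ a root of the impact graph $g_z$ determined by $z$. Thus $g_z\in\mathfrak{G}$ satisfies $K\subseteq R(g_z)$ and $\E(g_z)\cap\{X_K=x_K\}\neq\emptyset$, so clause (i) of compatibility in Definition~\ref{def:possible} holds for $g_z$.

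The core of the argument is to show that an open dense subset of $\Sigma(\pi)$ in addition satisfies: (i) $g_z$ realises the minimal-rank clause of Definition~\ref{def:possible}, so that $g_z\in\mathfrak{G}(X_K=x_K)$ and hence $K\subseteq H$; and (ii) no $u\notin K$ is constant on $\{X_K=x_K\}$, so $K^\ast=K$. Each failure of (i) or (ii) amounts to an algebraic equality of the form ``monomial in $x_K$ equals monomial in $x_K$'' with coefficients built from entries of $C^\ast$; the locus of each such equality is a proper algebraic subvariety of $\Rplus^K$, and there are only finitely many to exclude. Removing them from $\Sigma(\pi)$ leaves an open dense set, and any point of this set yields the desired $x_K$.

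Effectiveness of every edge $j\to i\in\pi$ then follows directly. Since $\pi\subseteq\D^\ast_K(C)$ and $K^\ast=K$, the two structural clauses in the definition of effectiveness ($j\notin K^\ast$ and no critical path from $j$ to $i$ factors through $K^\ast$) are inherited from the definition of $\D^\ast_K(C)$ together with the fact that the tails of edges along a $\ast$-connecting path never lie in $K$. The remaining clause $c^\ast_{ij}=\bar c^\ast_{ij}$ is, by Lemma~\ref{lem:e+.alg}, equivalent to $(\xi^{ij}_K)_{k\ell}x_\ell<x_k$ for $k\in K\cap\ch_{\D^\ast}(j)$ and $\ell\in K\cap(\pa_{\D^\ast}(i)\cup\{i\})$; aggregating these inequalities over $j\to i\in\pi$ reproduces exactly the condition $\Xi_K^\pi\odot x_K<x_K$ built into the definition of $\Sigma(\pi)$. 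The main obstacle I anticipate is step (i): explicitly ruling out compatibility of strictly smaller-rank impact graphs, which should reduce to a dimension count showing that the image of $\Pi_K\circ L_g$ lies in a subvariety of positive codimension in $\Rplus^K$ whenever $|R(g)|<|K|$.
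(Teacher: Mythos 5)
Your overall strategy is the same as the paper's: pick a generic point of the full-dimensional set $\Sigma(\pi)$ supplied by Lemma~\ref{lem:dim.spi}, excise finitely many lower-dimensional degeneracy loci, exhibit an explicit innovation configuration witnessing a compatible impact graph in which every node of $K$ is a root, and read effectiveness off the inequality $\Xi_K^\pi\odot x_K<x_K$ built into $\Sigma(\pi)$ via Lemma~\ref{lem:e+.alg}. Your treatment of the minimal-rank issue (images of $\Pi_K\circ L_g$ of rank $<|K|$ are proper linear subspaces, of which there are finitely many to avoid) is also exactly the role played by the loci $\mathcal{L}_{hkv}$ in the paper's proof, so that part is recoverable as sketched.

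The genuine gap is in the step $K^\ast=K$. Your witness $z_j=\epsilon$ for $j\notin K$ works against you here: for small $\epsilon$, every $j\notin K$ with an ancestor in $K$ satisfies $X_j=\bigvee_{k\in K}c^\ast_{jk}x_k$, i.e.\ $j$ becomes a \emph{child} of a $K$-root in $g_z$ and hence lies in $K^\ast(g_z)$ by Lemma~\ref{lem:k.ast.y}; so $g_z$ cannot be used to bound $K^\ast$ from above via $K^\ast\subseteq K^\ast(g_z)$. You instead fall back on the assertion that ``some $u\notin K$ is constant on $\{X_K=x_K\}$'' cuts out a proper algebraic subvariety of $x_K$-space. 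That assertion is the actual content of the claim and is not justified: by Definition~\ref{defn:fixed}, constancy of $X_u$ is a statement quantified over \emph{all} compatible impact graphs, not a single monomial equation, and it is not clear a priori that its locus is thin. The paper's resolution is to change the witness: for each $i\notin K$ place $Z_i$ in the open window $\bigl(\max_{k\in K:\,c^\ast_{ik}>0}c^\ast_{ik}x_k,\ \min_{k\in K:\,c^\ast_{ki}>0}x_k/c^\ast_{ki}\bigr)$, which is non-empty because $\Gamma_{KK}\odot x_K<x_K$ on $\Sigma(\pi)$. The lower bound prevents any $i\notin K$ from joining a star rooted in $K$, and the upper bound prevents any $k\in K$ from being a child, so the resulting compatible $g$ has only singleton constant stars and $K^\ast(g)=K$; the containment $K^\ast\subseteq K^\ast(g)$ then gives $K^\ast=K$ in one stroke, with no further genericity needed for this step. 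With that modification your argument closes; as written, the $K^\ast=K$ step is not established.
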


\begin{proof} %\sll{$M$ was not good. Used for source exchange matrix. Have changed to $\mathcal L$} \CA{That's fine}
For each $v \in V$ and each pair $h,k \in K$ such that $c^\ast_{hv}, c^\ast_{kv} > 0$, let
$$ \mathcal{L}_{hkv} = \left\{x_K: \frac{x_h}{c^\ast_{hi}} = \frac{x_k}{c^\ast_{ki}} \mbox{ for some } i \in V\right\}\quad
\mbox{and}\quad \mathcal{L} = \bigcup_{h,k,v: \mathcal{L}_{hkv} \neq \emptyset} \mathcal{L}_{hkv}. $$
Note that $\mathcal{L}$ is a finite union of subspaces, each of codimension 1 in $\Rplus^V$. By Lemma \ref{lem:dim.spi}, $\Sigma(\pi)$ is full-dimensional and non-empty, so $\Sigma(\pi) \setminus (\mathcal{L} \cap \Sigma(\pi))$ is non-empty. Let $x_K$ be in this set. Write $V = A \cup H \cup L \cup U$ w.r.t. the context $\{X_K = x_K\}$. Since $x_K \in \Sigma(\pi)$, 
$$ \Gamma_{KK}\odot x_K \leq (\Gamma_{KK} \vee \Xi_K^\pi)\odot x_K < x_K. $$
Thus there are no pairs $h,k \in K$ with $h\neq k$ such that $x_h = c^\ast_{hk}x_k$. So $U = \emptyset$. In addition, $x_K \notin \mathcal{L}$. Thus by definition, $L = \emptyset$. So $K^\ast(X_K = x_K) = H$. Define the event $\E \subset \Rplus^V$ via
\begin{itemize}
	\item $Z_k = x_k$ for all $k \in K$
	\item $\max_{k \in K: c^\ast_{ik} > 0} c^\ast_{ik}x_k < Z_i < \min_{k \in K: c^\ast_{ki} > 0} {x_k}/{c^\ast_{ki}}$ for all remaining $i \in V$ (where the minimum over an empty set is $0$ and the maximum over an empty set is $\infty$). 
\end{itemize}
Since $U = \emptyset$, for each $i$, 
$\max_{k \in K: c^\ast_{ik} > 0} c^\ast_{ik}x_k < \min_{k \in K: c^\ast_{ki} > 0} {x_k}/{c^\ast_{ki}}$. Thus $\E$ is well-defined. By construction, $\E \subset \{X_K = x_K\}$ and it is full-dimensional w.r.t.\ this set. Thus there exists at least one $g \in \mathfrak{G}(X_K = x_K)$ such that $\E(g) \cap \E \neq \emptyset$. For this $g$, the only constant stars of $g$ have roots in $K$ and have no children. Thus $K^\ast(g) = K$. Since $K \subseteq K^\ast \subseteq K^\ast(g)$, it follows that $K^\ast = K$. Finally, since $x_K \in \Sigma(\pi)$, %\CK{we have this already above}
$$ \Xi_K^\pi \odot x_K \leq (\Gamma_{KK} \vee \Xi_K^\pi) \odot x_K < x_K. $$
So in particular, for each edge $j \to i \in \pi$, 
$$ \Xi_K^{ij}\odot x_K < x_K. $$
Since $K = K^\ast(X_K = x_K)$, \eqref{eqn:bij.kl.x} holds. %and \eqref{eqn:bkj.uk.x} are 
 Lemma \ref{lem:e+.alg} then implies $j \to i \in E^+(X_K = x_K)$. 
\end{proof}

%\CK{should the next be a proposition?}
\begin{lemma}\label{lem:in.cxk}
Let $\pi$ be a $\ast$-connecting path in $\D^\ast_K(C)$ with $\Sigma(\pi) \neq \emptyset$. Let $x_K\in \Sigma(\pi)$ that satisfies the conclusion of Proposition \ref{lem:L.emptyset}. Then $\pi \subseteq \C(X_K = x_K)$. 
\end{lemma}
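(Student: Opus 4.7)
The goal is to show that every edge $v \to u$ of $\pi$ lies in the source DAG $\C(X_K = x_K)$. My plan is to proceed edge by edge: for a fixed edge $v \to u$ of $\pi$, I will exhibit an impact graph $g \in \mathfrak{G}(X_K = x_K)$ containing $v \to u$ in which $v$ is a non-constant root. Because effective edges have their source outside $K^\ast$, we already know $v \notin K^\ast = K$, ruling out the first redundancy condition of Definition~\ref{defn:redundant edge}; and the construction will make $X_v$ non-constant under the chosen $g$, ruling out the second. This will suffice to place $v \to u$ in $\C(X_K = x_K)$.

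The construction mirrors the sufficiency argument of Theorem~\ref{thm:impact}. Set $\beta_v = \min\{x_k/c^\ast_{kv} : k \in K,\ c^\ast_{kv} > 0\}$, with $\beta_v = +\infty$ if this set is empty. For a small $\epsilon > 0$ consider the event
\[
\mathcal{E}_{vu} = \{Z_k = x_k \text{ for all } k \in K\} \cap \{\beta_v - \epsilon < Z_v < \beta_v\} \cap \{Z_j < \epsilon \text{ for all } j \in V \setminus (K \cup \{v\})\}.
\]
Because $L = U = \emptyset$ by Proposition~\ref{lem:L.emptyset}, the compact representation in Theorem~\ref{thm:source.rep} reduces \eqref{eqn:equality.1} to $x_k = Z_k \vee \bigvee_{j \in \pa_{\C}(k)} c^\ast_{kj} Z_j$ for each $k \in K = H$, and \eqref{eqn:equality.2} is vacuous. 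For each $k \in K$ the term $Z_k$ achieves the maximum in its own equation with positive conditional probability, and Corollary~\ref{lem:z.independent} makes the events $\{Z_k = x_k\}$ conditionally independent across $k \in K$; combined with the atom-free laws of $Z_v$ and the remaining $Z_j$ within their allowed intervals, this yields $\P(\mathcal{E}_{vu} \mid X_K = x_K) > 0$.

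I then verify that on $\mathcal{E}_{vu}$ we have $X_K = x_K$ and $X_u = c^\ast_{uv} Z_v$. The equality $X_k = x_k$ follows from $Z_k = x_k$, together with $c^\ast_{kh} x_h < x_k$ for $h \in K \setminus \{k\}$ (which comes from $\Gamma_{KK} \odot x_K < x_K$), $c^\ast_{kv} Z_v < c^\ast_{kv} \beta_v \leq x_k$, and $c^\ast_{kj} Z_j < c^\ast_{kj} \epsilon$ for $j \notin K \cup \{v\}$. The decisive strict domination $c^\ast_{uv} Z_v > c^\ast_{u\ell} x_\ell$ for all $\ell \in K$ with $c^\ast_{u\ell} > 0$ (equivalently $\ell \in K \cap (\pa_{\D^\ast}(u) \cup \{u\})$) is precisely what effectiveness delivers: applying Lemma~\ref{lem:e+.alg} to the edge $v \to u$ with $k \in K$ achieving $\beta_v$, we obtain
\[
c^\ast_{u\ell} x_\ell \;<\; \frac{c^\ast_{uv}}{c^\ast_{kv}}\, x_k \;=\; c^\ast_{uv}\, \beta_v,
\]
so for sufficiently small $\epsilon$ we get $c^\ast_{uv} Z_v > c^\ast_{u\ell} x_\ell$. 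The terms $c^\ast_{uj} Z_j \leq c^\ast_{uj} \epsilon$ for $j \notin K \cup \{v\}$ are evidently dominated, hence $X_u = c^\ast_{uv} Z_v$ on $\mathcal{E}_{vu}$.

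Finally, the events $\{\mathcal{E}(g) : g \in \mathfrak{G}(X_K = x_K)\}$ partition $\{X_K = x_K\}$ up to a null set, so some $g \in \mathfrak{G}(X_K = x_K)$ satisfies $\P(\mathcal{E}(g) \cap \mathcal{E}_{vu}) > 0$. On $\mathcal{E}(g) \cap \mathcal{E}_{vu}$ the identity $X_u = c^\ast_{uv} Z_v$ forces $v \to u \in g$, while $Z_v$ ranges over an interval of positive length, so $v$ is a root of its star in $g$ and $v \notin K^\ast(g)$. Both redundancy conditions therefore fail and $v \to u \in \C(X_K = x_K)$. Taking the union over all edges of $\pi$ yields $\pi \subseteq \C(X_K = x_K)$. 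The most delicate step is converting algebraic effectiveness into the pointwise strict domination on $\mathcal{E}_{vu}$; once this is secured, the remaining bookkeeping follows the familiar pattern of the sufficiency construction in Theorem~\ref{thm:impact}.
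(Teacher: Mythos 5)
Your construction works for edges $v \to u$ of $\pi$ with $u \notin K$, and there it is essentially the paper's argument (the paper's Case~1, with your interval $(\beta_v-\epsilon,\beta_v)$ playing the role of its $(\gamma_j,\beta_j)$). But you treat every edge of $\pi$ uniformly, and the construction fails outright for edges whose head lies in $K$. These edges are unavoidable: in a $\ast$-connecting path of type (c), (d) or (e) of Figure~\ref{fig:d.ast.connecting}, the collider node $k$ belongs to $K$, so $\pi$ contains edges such as $j' \to k$ and $i' \to k$ with $k \in K$. For such an edge $v\to u$ with $u\in K$, your "decisive strict domination" $c^\ast_{uv}Z_v > c^\ast_{u\ell}x_\ell$ for all $\ell\in K$ with $c^\ast_{u\ell}>0$ is impossible: taking $\ell=u$ it reads $c^\ast_{uv}Z_v > x_u$, while your event forces $Z_v < \beta_v \le x_u/c^\ast_{uv}$, i.e.\ $c^\ast_{uv}Z_v < x_u$. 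On your event $\mathcal{E}_{vu}$ the node $u$ is therefore realized by its own innovation $Z_u=x_u$, so $v\to u\notin g$ for any impact graph $g$ meeting $\mathcal{E}_{vu}$, and you have produced no witness that $v\to u\in\mathcal{I}(X_K=x_K)$.

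The fix is a separate case, as in the paper: for $u\in K$ one must set $Z_v$ \emph{exactly} equal to $x_u/c^\ast_{uv}$ (so that $X_u=c^\ast_{uv}Z_v=x_u$ and the edge enters the impact graph), and then check that this value does not violate the bounds $Z_v\le x_k/c^\ast_{kv}$ coming from the other $k\in K$. Effectiveness of the edge gives $x_u/c^\ast_{uv}\le x_k/c^\ast_{kv}$, and the property $L=\emptyset$ guaranteed by Proposition~\ref{lem:L.emptyset} upgrades this to a strict inequality, which is what makes the resulting event full-dimensional inside $\{X_K=x_K\}$. Note that for $u\in K=K^\ast$ neither redundancy condition of Definition~\ref{defn:redundant edge} can bite once $v\notin K^\ast$, so membership in $\mathcal{I}(X_K=x_K)$ already suffices there. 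Aside from this missing case (and a small gloss in the $u\notin K$ case, where the pair $k=\ell$ is excluded from the substitution matrix and must instead be handled by the no-critical-path-through-$K$ property of $\D^\ast_K(C)$ — a gloss the paper shares), your argument follows the paper's route.
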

\begin{proof}
Fix an edge $j \to i$ of $\pi$ and $x_K$ as above. Since $\pi$ is $\ast$-connecting, $j \notin K$. %Write $G = \D^\ast_K(C)$. 
There are two cases.\\
\textbf{Case 1. } $i \notin K$. We shall show that there exists some $g \in \mathfrak{G}(X_K = x_K)$ that contains the edge $j \to i$, and that this edge is not part of a constant star of $g$. To do this, we construct a region $\E$ in the manner similar to the proof of Theorem~\ref{thm:starse.faith}, case b(I). Apply \eqref{eqn:z.bound} to $j$ and let $\beta_{j}$ be the constant on the right-hand side of this inequality. That is,
$$ \beta_j = \min_{k \in K: k \in \ch^\ast_K(j)} \frac{x_k}{c^\ast_{kj}}. $$
Let
$$ \gamma_j = \frac{1}{c^\ast_{ij}}\max_{\ell \in \pa^\ast_K(i)} x_\ell c^\ast_{i\ell}. $$
Since $j \to i \in E^+(X_K = x_K)$ and $i \notin K$, by \eqref{eqn:bij.kl.x} we have
$$ (\xi^{ij}_K)_{k\ell}x_\ell < x_k $$
for all $k \in \ch_{\D^\ast}(j)$ and $\ell \in \pa_{\D^\ast}(i)$. Rearranging gives
$$ \frac{x_k}{c^\ast_{kj}} > \frac{x_\ell c^\ast_{i\ell}}{c^\ast_{ij}} \mbox{ for all } k \in \ch^\ast_K(j), \ell \in \pa^\ast_K(i),$$
or equivalently, $\beta_j > \gamma_j$.
For a sufficiently small constant $\epsilon>0$, consider the region $\E$ defined by
\begin{itemize}
  \item $\gamma_j < Z_j < \beta_j$  
  \item for each $h \in H$, set $Z_h = x_h$ 
  \item $Z_r < \epsilon$ for all other nodes. 
\end{itemize}
In the above, the only nodes that were mentioned but did not get set to be less than $\epsilon$ are $Z_j$ and $Z_h$ for $h \in H$. By Proposition \ref{cor:dag.partition} and \eqref{eqn:no.shared.ell}, these nodes are all distinct. Since $\beta_j > \gamma_j$, $Z_j$ is well-defined. Since $L = \emptyset$, $\E$ is a non-empty polyhedron in $\Rplus^V$, $\E \subseteq \{X_K = x_K\}$, and $\E$ is full-dimensional relative to the region $\{X_K = x_K\}$. Therefore, there exists at least one $g \in \mathfrak{G}(X_K = x_K)$ with $\E(g) \cap \E \neq \emptyset$. 
Now suppose $ Z \in \E$. Then $Z_j > \gamma_j$ implies
$$ c^\ast_{ij}Z_j > c^\ast_{i\ell}x_\ell $$
for all $\ell \in K$. In addition, $Z_j \gg \epsilon > Z_r, Z_i$ implies
$$  c^\ast_{ij}Z_j > c^\ast_{ir}x_r $$ 
for all $r \neq j, r \notin K$ such that $c^\ast_{ir} > 0$. Thus $R_g(i) = j$, so in particular, $j \to i \in g$. \\
Since $Z_j < \beta_j$, it follows that
$$ c^\ast_{ki}Z_j < x_k $$
for all $k \in K \cap \ch^\ast_K(j)$. Since $K^\ast = K$, $j \notin K^\ast(g)$, so $j \to i \notin E^-(X_K = x_K)$. Thus $j \to i \in \mathcal{C}(X_K = x_K)$. We are done. \\
\textbf{Case 2. } $i \in K$. Since $j \to i \in E^+(X_K = x_K)$, we can rearrange \eqref{eqn:bij.kl.x} to obtain
\begin{equation}\label{eqn:z.j}
\min_{k \in \ch_{G}(j), k \neq i} \frac{x_k}{c^\ast_{kj}} \geq \frac{x_i}{c^\ast_{ij}}. 
\end{equation}
As $L = \emptyset$, by definition of $L$, $x_K$ satisfies the stronger inequality
\begin{equation}\label{eqn:z.j.strict}
\min_{k \in \ch_{G}(j), k \neq i} \frac{x_k}{c^\ast_{kj}} > \frac{x_i}{c^\ast_{ij}}. 
\end{equation}
Since $i \in K$, it is sufficient to show that $j \to i \in \mathcal{I}(X_K = x_K)$. That is, we need to construct $g \in \mathfrak{G}(X_K = x_K)$ such that $j \to i \in g$. For a very small constant $\epsilon>0$, consider the region $\E_1$ defined by
\begin{itemize}
  \item for each $h \in H, h \neq i$, set $Z_h = x_h$
  \item $Z_j = {x_j}/{c^\ast_{ij}}$
  \item $Z_r < \epsilon$ for all other nodes. 
\end{itemize}
Since $L = \emptyset$, $\E$ is well-defined and is full-dimensional relative to the region $\{X_K = x_K\}$. By \eqref{eqn:z.j.strict}, $Z_j$ satisfies \eqref{eqn:z.bound}, so $\E \subset \{X_K = x_K\}$. Therefore, there exists some $g \in \mathfrak{G}(X_K = x_K)$ such that $\E(g) \cap \E_1 \neq \emptyset$. On $\E_1$, by construction, $j \to i \in g$ and the proof is complete.
\end{proof}

For given matrix $C$ we now have the following result. This implies that $\critsep$ is not fully complete w.r.t.\ conditional independence for a given matrix $C$, as not  all $*$-connecting critical paths may be effective, as seen in Example~\ref{ex:counter.1.3}.
\begin{theorem}[Context-free completeness]\label{thm:faith.C}
Let $X$ be a max-linear Bayesian network over a directed acyclic graph $\D = (V,E)$ with fixed coefficient matrix $C$.  It then holds that 
\[X_I\notci X_J\cd X_K.\]
 if and only if there is an effective $*$-connecting path in the critical DAG $\D^*_K(C)$.
\end{theorem}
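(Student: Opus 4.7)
The plan is to prove both implications by bridging the context-specific completeness result, Theorem~\ref{thm:starse.faith}, to the context-free setting. The key technical machinery is the chain of inclusions $\C(X_K = x_K) \subseteq \D^*_K(C)$ from Lemma~\ref{lem:inclusion} and Corollary~\ref{cor:e.subset}, together with the eigenvalue characterization of effectiveness established in Proposition~\ref{prop:lambda.less.than.1} and Lemma~\ref{lem:dim.spi}.

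For the ``if'' direction I would start from an effective $*$-connecting path $\pi$ in $\D^*_K(C)$ with endpoints $i \in I$, $j \in J$. Since $\lambda(\Gamma_{KK} \vee \Xi_K^\pi) < 1$, Proposition~\ref{prop:lambda.1}(c) supplies a strict subeigenvector, which together with Lemma~\ref{lem:dim.spi} shows that $\Sigma(\pi)$ is nonempty and full-dimensional in $\Rplus^K$. Invoking Proposition~\ref{lem:L.emptyset}, I pick an $x_K \in \Sigma(\pi)$ with $L = \emptyset$, $K^* = K$, and with every edge of $\pi$ effective in the context $\{X_K = x_K\}$. Lemma~\ref{lem:in.cxk} then yields $\pi \subseteq \C(X_K = x_K)$, and since the path-types in Figure~\ref{fig:d.ast.connecting} depend only on the undirected path and the fixed set $K$, the path $\pi$ is still $*$-connecting in $\C(X_K = x_K)$. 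Theorem~\ref{thm:starse.faith} then gives $X_i \notci X_j \cd X_K = x_K$. To lift this to $X_I \notci X_J \cd X_K$, I would observe that the conclusion of Proposition~\ref{lem:L.emptyset} persists on a full-dimensional subset of $\Sigma(\pi)$, which --- by the piecewise-linear description in Remark~\ref{rmk:linear.pieces} --- carries positive $X_K$-probability; dependence on a positive-probability set of contexts forbids $X_I \ci X_J \cd X_K$.

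For the ``only if'' direction I would argue by contrapositive. Assuming $X_I \notci X_J \cd X_K$, a regular-conditional-distribution argument produces a set $S$ of values $x_K$ with positive $X_K$-probability on which $X_I \notci X_J \cd X_K = x_K$. For each such $x_K$, Theorem~\ref{thm:starse.faith} furnishes a $*$-connecting path in $\C(X_K = x_K)$ between some $i \in I$ and $j \in J$. Only finitely many subgraphs of $\D$ can arise as source DAGs, so by pigeonhole there exists a single path $\pi$ and at least one $x_K \in S$ with $\pi \subseteq \C(X_K = x_K)$. Then $\pi$ is a $*$-connecting path in $\D^*_K(C)$ by Lemma~\ref{lem:inclusion}, every edge of $\pi$ is effective in the context $\{X_K = x_K\}$ by Corollary~\ref{cor:e.subset}, and Proposition~\ref{prop:lambda.less.than.1} finally delivers $\lambda(\Gamma_{KK} \vee \Xi_K^\pi) < 1$, i.e.\ effectiveness in the sense of Definition~\ref{def:eff.path}.

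The main obstacle will be the measure-theoretic step that bridges context-specific and context-free independence, which requires justifying that the open subset of $\Sigma(\pi)$ on which $K^* = K$ and $L = \emptyset$ receives positive mass from the pushforward of the innovation distribution. This is not purely combinatorial and should be handled by exploiting the piecewise-linearity of the max-linear map (Remark~\ref{rmk:linear.pieces}): since full-dimensional cells correspond to impact graphs $g$ with full-rank $\Pi_K \circ L_g$, they map atom-free innovation densities to absolutely continuous components of the $X_K$-distribution, hence any open full-dimensional subset of $\mathcal{L}^C_K$ meeting such a cell has positive probability. Once this is in hand, the remaining steps are direct assembly of the preceding lemmas.
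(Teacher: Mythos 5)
Your proposal is correct and follows essentially the same route as the paper: both directions reduce to showing that the eigenvalue condition \eqref{eqn:lambda.bpi} holds if and only if $\pi\subseteq\C(X_K=x_K)$ for some possible context, using Proposition~\ref{prop:lambda.1}(c), Lemma~\ref{lem:dim.spi}, Proposition~\ref{lem:L.emptyset} and Lemma~\ref{lem:in.cxk} for one implication, and Corollary~\ref{cor:e.subset} together with Proposition~\ref{prop:lambda.less.than.1} for the other. The only difference is that you spell out the measure-theoretic passage from dependence in a single context to context-free dependence, which the paper compresses into its opening appeal to Theorems~\ref{thm:starse.C} and~\ref{thm:starse.faith}; your sketch of that step (full-dimensionality of $\Sigma(\pi)$ off a finite union of hyperplanes, plus positivity of the pushforward of the innovation measure) is sound.
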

\begin{proof}By Theorem~\ref{thm:starse.C} and Theorem~\ref{thm:starse.faith}, $X_I \notci X_J \cd X_K$ if and only if there exists some $i \in I, j \in J$, some possible $x_K$, and some $\ast$-connecting path $\pi$ between $i$ and $j$ such that $\pi \subseteq \C(X_K = x_K)$. Thus the  statement in the proposition is equivalent to the claim that \eqref{eqn:lambda.bpi} holds if and only if there exists $x_K$ such that $\pi \subseteq \C(X_K = x_K)$.

So suppose \eqref{eqn:lambda.bpi} holds. By Proposition~\ref{prop:lambda.1}(c), $\Sigma(\pi) \neq \emptyset$. 
By Proposition~\ref{lem:L.emptyset}, we can pick a special $x_K$. 
Applying Lemma \ref{lem:in.cxk} to this special $x_K$, we conclude that $\pi \subseteq \C(X_K = x_K)$.

For the converse, suppose $x_K$ is such that $\pi \subseteq \C(X_K = x_K)$. By Corollary \ref{cor:e.subset}, each edge of $\pi$ is in $E^+(X_K = x_K)$. By Proposition \ref{prop:lambda.less.than.1}, this implies that $\pi_K$ is effective. 
\end{proof}
\begin{remark}
We note that we now could define yet another form of separation which we could name \emph{effective $*$-separation} and by Theorem~\ref{thm:faith.C} this would now be strongly complete for conditional independence for a given $C$ or, in other words, \emph{any} max-linear Bayesian network would be faithful to this criterion. However, whereas checking critical separation is a simple task, checking effective $*$-separation would involve calculating $\lambda(\Gamma_{KK} \vee \Xi_K^\pi)$ for all $*$-connecting paths $\pi$, so we have preferred not to introduce this variant of separation in this article. 
\end{remark}

Finally we are able to establish full completeness of $\tildese$-separation for an unspecified coefficient matrix $C$.
\begin{theorem}[Completeness of $\tildese$-separation]\label{thm:genfaith}
Let $X$ be a max-linear Bayesian network over a directed acyclic graph $\D = (V,E)$ and assume there is a $*$-connecting path in $\D^\ast_K$ between $I$ and $J$. Then there is a coefficient matrix $C$ with support included in $\D$ such that the corresponding max-linear Bayesian network satisfies 
$$X_I \notci X_J \cd X_K.$$
\end{theorem}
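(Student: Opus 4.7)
The plan is to construct an explicit coefficient matrix $C$ with support in $\D$ under which the given $*$-connecting path $\pi$ in $\D^\ast_K$ (between some $i\in I$ and some $j\in J$) survives into the critical DAG $\D^\ast_K(C)$ and is effective in the sense of Definition~\ref{def:eff.path}; then Theorem~\ref{thm:faith.C} yields $X_I\notci X_J\cd X_K$. Concretely, I would fix $\epsilon\in(0,1)$ small and set
\[
c_{ij}\;=\;\begin{cases}\epsilon & \text{if } j\in K,\\ 1 & \text{otherwise,}\end{cases}\qquad j\to i\in E,
\]
so exactly the edges outgoing from $K$ carry the small weight. The structural fact driving both steps below is that inspection of Figure~\ref{fig:d.ast.connecting} shows every edge of a $*$-connecting path has its source outside $K$, and the only $K$-vertex that can appear on $\pi$ is the collider $k_0$ of types (c), (d), (e).

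First I would verify that every edge $(u,v)$ of $\pi$ belongs to $\D^\ast_K(C)$. Since $(u,v)\in\D^\ast_K$ there is a directed path from $u$ to $v$ in $\D$ that does not factor through $K$; because $u\notin K$ and no internal vertex lies in $K$, every edge on this path starts at a non-$K$ node and hence has weight $1$, giving total weight $1$. Any directed path from $u$ to $v$ factoring through some $k'\in K\setminus\{u,v\}$ must contain an edge out of $k'$ of weight $\epsilon$, so its weight is at most $\epsilon<1$. Therefore $c^\ast_{vu}=1$ and no critical path from $u$ to $v$ factors through $K$, which is exactly the condition $(u,v)\in\D^\ast_K(C)$.

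Next I would bound the principal eigenvalue of $\Gamma_{KK}\vee\Xi^\pi_K$. For $k,k'\in K$ the entry $(\Gamma_{KK})_{kk'}$ is the max weight of a path from $k'$ to $k$, which begins with an $\epsilon$-edge out of $k'$, so $(\Gamma_{KK})_{kk'}\le\epsilon$. For an edge $(u,v)$ of $\pi$, the entries $(\xi^{vu}_K)_{k\ell}=c^\ast_{ku}c^\ast_{v\ell}/c^\ast_{vu}$ satisfy $c^\ast_{vu}=1$ and $c^\ast_{ku}\le 1$, while $c^\ast_{v\ell}\le\epsilon$ whenever $\ell\ne v$ (because the path out of $\ell\in K$ starts with an $\epsilon$-edge). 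The only way to exceed $\epsilon$ is $\ell=v$, which forces $v\in K$, hence $v=k_0$. So every entry of $\Gamma_{KK}\vee\Xi^\pi_K$ is at most $\epsilon$ except possibly in column $k_0$, where entries are at most $1$. Because the matrix has zero diagonal, any simple cycle has length $n\ge 2$ and visits $k_0$ at most once, so it uses at most one edge out of $k_0$; its weight is at most $\epsilon^{n-1}$ and its geometric mean at most $\epsilon^{(n-1)/n}\le\sqrt{\epsilon}<1$. Hence $\lambda(\Gamma_{KK}\vee\Xi^\pi_K)<1$, so $\pi$ is effective and Theorem~\ref{thm:faith.C} completes the argument.

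The main obstacle is really the preliminary structural observation about $*$-connecting paths: once one records that their edge-sources avoid $K$ and that at most one $K$-vertex (the collider) appears on $\pi$, the two-weight construction makes both the critical-DAG survival and the tropical-eigenvalue bound essentially automatic; everything else is the case check built into Figure~\ref{fig:d.ast.connecting}.
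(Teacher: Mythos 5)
Your proof is correct. The overall strategy coincides with the paper's: build a two-valued coefficient matrix under which the given $\ast$-connecting path $\pi$ survives into the critical DAG $\D^\ast_K(C)$ and satisfies the eigenvalue condition \eqref{eqn:lambda.bpi}, then invoke Theorem~\ref{thm:faith.C}. The construction itself is genuinely different, though. The paper first chooses, for each edge $v\to u$ of $\pi$, a realizing path $\pi_{uv}$ in $\D$ avoiding $K$, assigns weight $1$ exactly to the edges of $\bigcup\pi_{uv}$ and weight $<1$ elsewhere, and then argues that every entry of $\Xi_K^\pi$ is strictly below $1$ (so a critical cycle of $\Gamma_{KK}\vee\Xi_K^\pi$ could only use $\Gamma_{KK}$-edges, which is impossible by acyclicity). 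You instead penalize precisely the edges leaving $K$; this buys you that \emph{every} path touching $K$ other than at its endpoint picks up a factor $\epsilon$, which makes the survival of $\pi$ into $\D^\ast_K(C)$, the bound $\Gamma_{KK}\le\epsilon$ entrywise, and the bound on $\Xi_K^\pi$ all follow from one observation. The price is that the collider column of $\Gamma_{KK}\vee\Xi_K^\pi$ can carry entries up to $1$, which you handle correctly by noting that a simple cycle of length $n\ge 2$ uses at most one edge out of $k_0$ and hence has mean at most $\sqrt{\epsilon}$; this replaces the paper's acyclicity argument and avoids the slightly delicate claim there that all substitution entries are strictly below $1$. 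Both your preliminary structural observations (edge sources of a $\ast$-connecting path lie outside $K$, and $|K\cap\pi|\le 1$) are consistent with Definition~\ref{dfn:3.3} and are used implicitly by the paper as well, so no gap there.
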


\begin{proof}
Let $\pi$ be a $\ast$-connecting path in $\D^\ast_K$ between $I$ and $J$. For each of the five types, our goal is to construct a $C$ such that $\pi \subset \D^\ast_K(C)$ and that \eqref{eqn:lambda.bpi} holds, i.e.\ the path $\pi$ is effective.

For each edge $v \to u \in \pi$, let $\pi_{uv} \subset \D$ be a path in $\D$ from $v$ to $u$ that does not factor through $K$. Define $C=C(\pi)$ as follows. 
\begin{itemize}
    \item If $a \to b \in \bigcup_{v\to u \in \pi}\pi_{uv}$, set $c_{ba} = 1$
    \item Otherwise, set $c_{ba}$ to be some constant such that $c_{ba} < 1$. 
\end{itemize}
First we claim that for this choice of $C$, $\pi \subset \D^\ast_K(C)$. That is, for each edge $a \to b \in \pi$, no critical paths from $a$ to $b$ on $C$ factor through $K$. Indeed, fix such an edge $a \to b \in \pi$. Let $\pi'_{ba}$ be another path in $\D$. Then either $\pi'_{ba}$ contains an edge not in $\bigcup_{u \to v \in \pi}\pi_{vu}$, in which case
$$ c(\pi'_{ba}) < c(\pi_{ba}) = 1, $$
or that it only uses edges in $\bigcup_{v \to u \in \pi}\pi_{uv}$ and 
$$ c(\pi'_{ba}) = c(\pi_{ba}) = 1. $$
But in this case, since none of the paths $\pi_{vu}$ factor through $K$, $\pi'_{ba}$ does not factor through $K$. This establishes our first claim. 

We now prove \eqref{eqn:lambda.bpi}. %
%By Lemma \ref{lem:abc}, when $\pi$ is of type (a), (b) or (c), \eqref{eqn:lambda.bpi} is automatically satisfied. So we are left with two cases. \\
%\xi^{ij}_{k\ell } &= \frac{c^\ast_{kj}c^\ast_{i\ell}}{c^\ast_{ij}} \mbox{ for } k \in K \cap \ch^\ast_K(j), \; \ell \in K \cap (\pa^*_K(i)\cup \{i\}), k\neq \ell. \label{eqn:bij.kl}
%{\bf If $\pi$ is of type (d).} 
Note that all relevant substitution matrices are formed by combining substitution matrices for single edges $\Xi_K^{ij}$ for $j\notin K$ and we now
claim that each entry of such a matrix is strictly less than 1. 

As shown  above, we must have  $c^\ast_{ij} = 1$. Let $k \in K \cap \ch_{\D^\ast}(j), \; \ell \in K \cap (\pa_{\D^\ast}(i)\cup \{i\}), k\neq \ell$ so we again have $c^\ast_{i\ell}=1$. 
Since $k \notin \pi$, any path in $\D$ from $j$ to $k$ must utilize an edge of $C$ whose weight is strictly less than 1 with the choice of $C$ made above. Thus $c^\ast_{kj} < 1$.
By \eqref{eqn:bij.kl}, 
$$ (\Xi_K^{ij})_{k\ell} = \frac{c^\ast_{kj}c^\ast_{i\ell}}{c^\ast_{ij}}= c^\ast_{kj}c^\ast_{i\ell}=c^\ast_{i\ell}< 1.  $$
%A similar argument applies to $B^{kj'}$ and $B^{jj'}$. 
So each entry of $\Xi_K^{ij}$ is strictly less than 1, as claimed, and hence this also holds for $\Xi_K^\pi$. Since $c_{uv} \leq 1$ for all edges $v \to u \in \D$, we have $\gamma_{uv}=c^\ast_{uv} \leq 1$ for all edges $v \to u \in \D^\ast$. Thus
$$ \lambda(\Gamma_{KK} \vee \Xi_K^\pi) \leq 1. $$
Suppose now for contradiction that $\lambda(\Gamma_{KK} \vee \Xi_K^\pi) = 1$. Since all entries of $\Xi_K^\pi$ are strictly less than 1, a critical cycle of $\Gamma_{KK} \vee \Xi_K^\pi$ must only involve edges in $\Gamma_{KK}$. But $\D$ is a DAG, so $\D(\Gamma_{KK})$ is cycle-free, yielding a contradiction. This concludes the proof. 
\end{proof}

\section{Outlook}\label{sec:outlook}
\subsection{Properties of max-linear independence}\label{sec:graphoid}
%\sll{This text needs modification. Seems slightly garbled now}
In the previous section we have defined two abstract independence models $\tildese$ and $\critsep$ in the sense of \citep{studeny:05} and showed that they are sound and the former of them is also complete, whereas the latter needs additional conditions for completeness. 

One can show without too much effort that these are both \emph{compositional graphoids} (we refrain from giving the details) as also holds for most other graphical separation criteria  (see e.g.\ \cite{lauritzen:sadeghi:18}).  
However, we should emphasize that $\tildese$ is \emph{not strongly complete} as the Diamond example shows: in this example the classical $d$-separation $\dse$ and $\tildese$ coincide and there is no single coefficient matrix $C$ such that  the corresponding max-linear Bayesian network is faithful to $\tildese$, i.e.\ in that case $\tildese$ is strictly weaker than critical separation $\critsep$, and the same will happen for DAGs with more than a single directed path between any two points.  But even in this case, the context-specific analysis typically yields further valid conditional independence statements. 

Generally, the study of properties of conditional independence for max-linear models opens up several new avenues: concerning e.g.\ Markov equivalence as in \cite{verma1991,frydenberg:90}, or the algebraic properties of \emph{maxoids} as an analogue of gaussoids; see, for example \cite{gaussoids}.

\subsection{Extensions and special cases}\label{sec:extensions}  We have so far in this article not discussed identifiability, estimation, or any other statistical issues associated with these models. These have been briefly considered in \cite{GKL}; see also \cite{nadinethesis}.
This work was extended to a recursive max-linear model with propagating noise in \cite{BK}, but we are not considering models with noise in this article.

Extreme value models often rely on regular variation and several publications have combined Bayesian networks with such heavy-tailed innovations. In \cite{GKO} and \cite{KK}, algorithms have been proposed for statistically learning the model based on the estimated tail dependence matrix and on a scaling method, respectively. In \cite{engelke:hitz:18} for undirected graphs the authors apply a peaks-over-threshold approach giving a multivariate generalized Pareto distribution for exceedances such that a density exists. For a decomposable graph, this density factorizes into lower dimensional marginal densities, whereas \cite{Davis3} deals with conditional densities.

Natural extensions in the framework of recursive max-linear models are based on making dependent innovations $(Z_1,\dots,Z_d)$, thus defining the analogue of classical path analysis (\citep{wright:21,wright:34}), or \emph{recursive causal models}; see 
\cite{kiiveri:speed:carlin:84}. The models introduced by \cite{engelke:hitz:18}  could be interesting candidates for this.

An alternative for an appropriate model may originate from multivariate max-stable Fr\'echet distributions with distribution function (see e.g. \cite{DHF}, Section~6.1.4 and in particular Remark~6.1.16 with parametrization given in Theorem~1.1.3)
$$F(z) = \exp\Big\{-\int_{\mathcal S^{d-1}} \bigvee_{1\le i\le d} \frac{\omega_i}{z_i} \, \Theta(d\omega)\Big\},\quad z=(z_1,\dots,z_d),$$
where $\omega=(\omega_1,\dots,\omega_d)\in\mathcal S^{d-1}$, the unit sphere in $\R_+^d$ (with respect to any norm), and $\Theta$ is a finite measure on $S^{d-1}$, called the spectral measure. Then the innovation vector $(Z_1,\dots,Z_d)$ has Fr\'echet margins with algebraically decreasing tails.
If the spectral measure has a Lebesgue density, then the above integral becomes a Lebesgue integral. 
Then a large jump can happen in every direction with some probability.
The Bayesian network introduces additional dependence into the model, which directs the large jumps in special directions.
\subsection{Some open problems}\label{sec:open}% We list major open problems in this paper.   

Proposition~\ref{cor:dag.partition}
gives some necessary conditions for a graph to be the source DAG for some context $\{X_K = x_K\}$. It would be of interest to know whether these are also sufficient. Formally this is stated as Problem~\ref{prob:source} below:
\begin{problem}\label{prob:source}
Fix $\D$. Find a characterization for all possible source DAGs. 
\end{problem}
Further, even though we have a full characterization of situations with conditional independence, there is still an issue about how to verify conditional independence from a computational point of view. Formally, we state this as
\begin{problem}\label{prob:compute.C}
Give an efficient algorithm to compute the source DAG $\mathcal{C}(X_K = x_K)$ and analyze its complexity.
\end{problem}

Critical directed paths in a graph can be computed with tropical matrix multiplication \cite[\S 3]{But2010}, and thus $\D^\ast_K$ and $\D^\ast_K(C)$ can both easily be computed in time at most $O(d^4)$. However, computing the source DAG is harder. A straight-forward algorithm using the characterization of the impact graphs $\mathfrak{G}(X_K = x_K)$ in Lemma \ref{lem:y.xk} goes as follows.
\begin{enumerate}
  \item Enumerate all elements in $\mathfrak{G}(X_K = x_K)$ using the system of equations and inequalities given in Theorem \ref{thm:impact} and Lemma \ref{lem:y.xk}, with $K^\ast(g)$ characterized by Lemma \ref{lem:k.ast.y}.
  \item Compute $K^\ast = K^\ast(X_K = x_K)$ from $\mathfrak{G}(X_K = x_K)$ via Definition \ref{defn:fixed}. 
  \item Compute the source DAG via Theorem~\ref{thm:source.rep}. 
\end{enumerate}
Of these steps, step 1 is the most computationally intensive. The set $\mathfrak{G}(X_K = x_K)$ represents all possible hitting scenarios in \cite{Wang2011}. For general $C$ (not necessarily supported on a DAG), \cite{Wang2011} noted that enumerating $\mathfrak{G}(X_K = x_K)$ is related to the NP-hard set covering problem. For our case, $C$ is a DAG, so we were able to characterize $\mathfrak{G}(X_K = x_K)$ in much greater detail than \cite{Wang2011}. 
However, it is unclear what is the complexity of enumerating this set.  The difficulty is that the inequalities corresponding to \eqref{eqn:Lambda}, \eqref{eqn:extra1} and \eqref{eqn:extra2} depend on $g$. So while it is easy to check whether $g \in \mathfrak{G}(X_K = x_K)$ for a given $g$, there are exponentially many impact graphs $g$ one needs to consider. 

\begin{comment}
A simpler problem is to compute $K^\ast$. This problem is already interesting, for it is a special case of the Boolean satisfiability problem $k$-SAT. In particular, if we identify each directed graph with its adjacency matrix and view the entries as variables, then Theorem \ref{thm:impact} and Lemma \ref{lem:y.xk} can be rewritten as a conjunctive normal form on these variables. This does not automatically imply that finding $K^\ast$ is NP-hard, since the fact that $\D$ is a DAG yields special structures in the clauses that perhaps could be exploited.
\begin{problem}\label{prob:alg}
Give an efficient algorithm to compute $K^\ast = H \cup L \cup U$ for a given context $\{X_K = x_K\}$, and analyze its complexity.
\end{problem}
\end{comment}
We remark that Problem \ref{prob:compute.C} can be seen as finding the tropical analogue of Gaussian elimination. While there has been work on the tropical Fourier-Motzkin elimination \cite{allamigeon2014tropical}, we are not aware of algorithms to solve tropical Gaussian elimination. The geometric relative of this problem is to find minimal external representations of tropical polyhedra, to which algorithms and characterizations in terms of hypergraphs have been developed e.g.\ in \cite{allamigeon2013computing, allamigeon2011tropical, allamigeon2013minimal}. It would be interesting to deepen these connections between extreme value theory and tropical convex geometry.
A related problem is the following:
\begin{problem}\label{prop:efficient}
Give an efficient algorithm to simulate from the conditional distribution of $X$, given a context $\{X_K = x_K\}$.
\end{problem}
This problem was also considered by \cite{Wang2011} and has particular interest for Bayesian inference about the unknown parameters of a max-linear Bayesian network. Most Markov chain Monte Carlo (MCMC) algorithms will have such a simulation step built in at some point. In addition, this could be of interest if an unknown source for an observed extreme event should be identified, potentially of interest in environmental science.

\section*{Acknowledgements}Carlos Am\'endola was partially supported by the Deutsche For\-schungs\-ge\-mein\-schaft (DFG) in the context of the Emmy Noether junior research group KR 4512/1-1. Steffen Lauritzen has benefited from financial support from the Alexander von Humboldt Stiftung. Ngoc Tran would like to thank the Hausdorff Center for Mathematics for  making her summer research visits to Germany possible. 

% AOS,AOAS: If there are supplements please fill:
%\begin{supplement}[id=suppA]
%  \sname{Supplement A}
%  \stitle{Title}
%  \slink[doi]{10.1214/00-AOASXXXXSUPP}
%  \sdatatype{.pdf}" 
%  \sdescription{Some text}
%\end{supplement}
%\input main-10-7.bbl

%\bibliographystyle{plain}
%\bibliography{Graphs}

\begin{thebibliography}{10}

\bibitem{allamigeon2014tropical}
X.~Allamigeon, U.~Fahrenberg, S.~Gaubert, R.~D. Katz, and A.~Legay.
\newblock Tropical {F}ourier--{M}otzkin elimination, with an application to
  real-time verification.
\newblock {\em International Journal of Algebra and Computation},
  24(05):569--607, 2014.

\bibitem{allamigeon2013computing}
X.~Allamigeon, S.~Gaubert, and {\'E}.~Goubault.
\newblock Computing the vertices of tropical polyhedra using directed
  hypergraphs.
\newblock {\em Discrete \& Computational Geometry}, 49(2):247--279, 2013.

\bibitem{allamigeon2011tropical}
X.~Allamigeon, S.~Gaubert, and R.~D. Katz.
\newblock Tropical polar cones, hypergraph transversals, and mean payoff games.
\newblock {\em Linear algebra and its Applications}, 435(7):1549--1574, 2011.

\bibitem{allamigeon2013minimal}
X.~Allamigeon and R.~D. Katz.
\newblock Minimal external representations of tropical polyhedra.
\newblock {\em Journal of Combinatorial Theory, Series A}, 120(4):907--940,
  2013.

\bibitem{BCOQ}
F.~Baccelli, G.~Cohen, G.~J. Olsder, and J-P. Quadrat.
\newblock {\em Synchronization and Linearity: An Algebra for Discrete Event
  Systems}.
\newblock John Wiley \& Sons Ltd, 1993.

\bibitem{gaussoids}
T.~Boege, A.~D’Al{\'i}, T.~Kahle, and B.~Sturmfels.
\newblock The geometry of gaussoids.
\newblock {\em Foundations of Computational Mathematics}, 19:775--812, 2019.

\bibitem{BK}
J.~Buck and C.~Kl\"uppelberg.
\newblock Recursive max-linear models with propagating noise.
\newblock arXiv:2003.00362, 2020.

\bibitem{But2010}
P.~Butkovi\v{c}.
\newblock {\em Max-linear Systems: Theory and Algorithms}.
\newblock Springer, London, 2010.

\bibitem{Davis3}
D.~Cooley, R.A. Davis, and P.~Naveau.
\newblock Approximating the conditional density given large observed values via
  a multivariate extremes framework, with application to environmental data.
\newblock {\em Annals of Applied Statistics}, 6:1406--1429, 2012.

\bibitem{DHF}
L.~de~Haan and A.~Ferreira.
\newblock {\em Extreme Value Theory: An Introduction}.
\newblock Springer, New York, 2006.

\bibitem{engelke:hitz:18}
S.~Engelke and A.~Hitz.
\newblock Graphical models for extremes.
\newblock {\em J.\ Roy.\ Statist.\ Soc.\ Ser.\ B}, 82:871--932, 2020.

\bibitem{frydenberg:90}
M.~Frydenberg.
\newblock The chain graph {M}arkov property.
\newblock {\em Scandinavian Journal of Statistics}, 17:333--353, 1990.

\bibitem{geiger:verma:pearl:90}
D.~Geiger, T.~S. Verma, and J.~Pearl.
\newblock Identifying independence in {B}ayesian networks.
\newblock {\em Networks}, 20(5):507--534, 1990.

\bibitem{nadinethesis}
N.~Gissibl.
\newblock {\em Graphical Modeling of Extremes: Max-linear models on directed
  acyclic graphs}.
\newblock {P}h.{D}. thesis, Technical University of Munich, 2018.

\bibitem{GK1}
N.~Gissibl and C.~Kl\"uppelberg.
\newblock Max-linear models on directed acyclic graphs.
\newblock {\em Bernoulli}, 24(4A):2693--2720, 2018.

\bibitem{GKL}
N.~Gissibl, C.~Kl\"uppelberg, and S.~L. Lauritzen.
\newblock Estimation, identifiability, and structure learning of recursive
  max-linear models.
\newblock {\em Scand.\ J.\ Statist.}, 47:To appear, 2020.
\newblock https://doi.org/10.1111/sjos.12446.

\bibitem{GKO}
N.~Gissibl, C.~Kl\"uppelberg, and M.~Otto.
\newblock Tail dependence of recursive max-linear models with regularly varying
  noise variables.
\newblock {\em Econometrics and Statistics}, 6:149 -- 167, 2018.

\bibitem{joswig:20}
M.~Joswig.
\newblock {\em Essentials of Tropical Combinatorics}.
\newblock Springer, Heidelberg, 2020.

\bibitem{kiiveri:speed:carlin:84}
H.~Kiiveri, T.~P. Speed, and J.~B. Carlin.
\newblock Recursive causal models.
\newblock {\em Journal of the Australian Mathematical Society}, 36:30--52,
  1984.

\bibitem{KK}
C.~Kl\"uppelberg and M.~Krali.
\newblock Estimating an extreme {B}ayesian network via scalings.
\newblock Journal of Multivariate Analysis, to appear; arXiv:1912.03968, 2020.

\bibitem{KL2017}
C.~Kl\"uppelberg and S.~Lauritzen.
\newblock Bayesian networks for max-linear models.
\newblock In F.~Biagini, G.~Kauermann, and T.~Meyer-Brandis, editors, {\em
  Network Science - An Aerial View}, pages 79--97. Springer Nature, 2019.

\bibitem{KF}
D.~Koller and N.~Friedman.
\newblock {\em Probabilistic Graphical Models: Principles and Techniques}.
\newblock MIT Press, Cambridge, Mass., 2009.

\bibitem{lauritzen:sadeghi:18}
S.~Lauritzen and K.~Sadeghi.
\newblock Unifying {M}arkov properties for graphical models.
\newblock {\em Annals of Statistics}, 46:2251--2278, 2018.

\bibitem{Lauritzen1996}
S.~L. Lauritzen.
\newblock {\em Graphical Models}.
\newblock Oxford University Press, Oxford, 1996.

\bibitem{Lauritzen1990}
S.~L. Lauritzen, A.~P. Dawid, B.~N. Larsen, and H-G. Leimer.
\newblock Independence properties of directed {M}arkov fields.
\newblock {\em Networks}, 20(5):491--505, 1990.

\bibitem{MS}
D.~Maclagan and B.~Sturmfels.
\newblock {\em Introduction to Tropical Geometry}.
\newblock Graduate Studies in Mathematics, Vol.~161. American Mathematical
  Society, Providence, Rhode Island, 2015.

\bibitem{studeny:05}
M.~Studen{\'y}.
\newblock {\em Probabilistic Conditional Independence Structures}.
\newblock Information Science and Statistics. Springer, London, 2005.

\bibitem{verma1991}
T.~Verma and J.~Pearl.
\newblock Equivalence and synthesis of causal models.
\newblock In {\em {P}roceedings of the 6th Conference on {U}ncertainty in
  {A}rtifical {I}ntelligence}, pages 220--227, Cambridge, Mass., 1990. MIT
  Press.

\bibitem{Wang2011}
Y.~Wang and S.~A. Stoev.
\newblock Conditional sampling for spectrally discrete max-stable random
  fields.
\newblock {\em Advances in Applied Probability}, 43(2):461--483, 2011.

\bibitem{wright:21}
S.~Wright.
\newblock Correlation and causation.
\newblock {\em Journal of Agricultural Research}, 20:557--585, 1921.

\bibitem{wright:34}
S.~Wright.
\newblock The method of path coefficients.
\newblock {\em Annals of Mathematical Statistics}, 5:161--215, 1934.

\end{thebibliography}

\end{document}